\documentclass[a4paper, 11pt,reqno]{amsart}
\usepackage{amsfonts, amsthm, amssymb, amsmath, stackengine, scalerel}
\usepackage{mathrsfs,array,tikz-cd}
\usepackage{hyperref}
\usepackage{eucal,fullpage,times,color,enumerate,accents, comment}
\usepackage[all]{xy}
\usepackage{xr}
\usepackage{url}
\usepackage{enumitem}
\usepackage[new]{old-arrows}
\usepackage{extpfeil}
\usepackage{turnstile} 
\usepackage{verbatim}  
\usepackage{graphics,float} 
\usetikzlibrary{graphs,decorations.pathmorphing,decorations.markings}
\input xy
\xyoption{all}
\usepackage{adjustbox}

\usepackage{algorithm,caption}  
\DeclareCaptionFont{lightblue}{\color{teal}}
\newcommand{\MING}[1]{{\color{blue} {\tiny \bf (M:)} {\bf #1}}}
\newcommand{\EXP}[1]{{\color{teal} {\tiny \bf (Exposition:)} {\bf #1}}}
\newcommand{\KIH}[1]{{\color{red} {\tiny \bf (K:)} {\bf #1}}}
\newcommand{\Ktxt}[1]{{\color{magenta} {#1}}}

\newextarrow{\xbigtoto}{{20}{20}{20}{20}}
{\bigRelbar\bigRelbar{\bigtwoarrowsleft\rightarrow\rightarrow}}    

\newcommand{\calX}{{\mathcal{X}}}

\newcommand{\calL}{{\mathcal{L}}}
\newcommand{\calU}{{\mathcal{U}}}
\newcommand{\calV}{{\mathcal{V}}}
\newcommand{\calA}{{\mathcal{A}}}
\newcommand{\calE}{\mathcal{E}}
\newcommand{\calB}{{\mathcal{B}}}

\newcommand{\calD}{{\mathcal{D}}}

\newcommand{\calR}{{\mathcal{R}}}
\newcommand{\calF}{\mathcal{F}}
\newcommand{\calG}{\mathcal{G}}
\newcommand{\calI}{\mathcal{I}}

\newcommand{\calH}{\mathcal{H}}
\newcommand{\calC}{\mathcal{C}}
\newcommand{\calS}{\mathcal{S}}
\newcommand{\calP}{\mathcal{P}}

\newcommand{\W}{\mathrm{W}}

\newcommand{\id}{\mathrm{id}}

\newcommand{\Tmod}{\mathbb{T}\text{-}\mathrm{mod}}

\newcommand{\frap}{\mathfrak{p}}

\newcommand{\m}{\mathfrak{m}}
\newcommand{\baseS}{\mathcal{S}}

\newcommand{\thT}{\mathbb{T}}

\newcommand{\N}{\mathbb{N}}

\newcommand{\E}{\mathcal{E}}

\newcommand{\Q}{\mathbb{Q}}

\DeclareFontFamily{U}{wncy}{}
\DeclareFontShape{U}{wncy}{m}{n}{<->wncyr10}{}
\DeclareSymbolFont{mcy}{U}{wncy}{m}{n}
\DeclareMathSymbol{\Sh}{\mathord}{mcy}{"58}

\newcommand{\U}{\mathcal{U}}
\newcommand{\V}{\mathcal{V}}
\DeclareSymbolFont{matha}{OML}{txmi}{m}{it}
\DeclareMathSymbol{\varv}{\mathord}{matha}{118}
\newcommand{\om}{\omega}

\newcommand{\dom}{\mathrm{dom}}
\newcommand{\Eff}{\mathrm{Eff}}

\newcommand{\rstr}{\!\!\upharpoonright\!\!_}

\newcommand{\lranglet}[2]{\langle#1,#2\rangle}
\newcommand{\lrangles}[1]{\langle#1\rangle}

\def\forkindep{\mathrel{\raise0.2ex\hbox{\ooalign{\hidewidth$\vert$\hidewidth\cr\raise-0.9ex\hbox{$\smile$}}}}}

\newcommand{\frat}{\mathfrak{t}}
\newcommand{\Fin}{\mathrm{Fin}}

\newcommand{\code}[1]{\lceil{#1}\rceil}

\newcommand{\Pw}{\calP(\w)}

\newcommand{\dn}{{\lnot\lnot}}

\newcommand{\calW}{\mathcal{W}}

\newcommand{\ww}{{^{\omega}{\omega}} }
\newcommand{\lww}{{^{<\omega}{\omega}} }

\newcommand{\lrk}{\leq_{\mathrm{RK}}}
\newcommand{\erk}{\equiv_{\mathrm{RK}}}

\newcommand{\lk}{\leq_{\mathrm{K}}}

\newcommand{\glt}{\leq_{\mathrm{LT}}^{\mathsf{o}}}
\newcommand{\elt}{\equiv_{\mathrm{LT}}^{\mathsf{o}}}
\newcommand{\lT}{\leq_{\mathrm{Tuk}}}
\newcommand{\eT}{\equiv_{\mathrm{Tuk}}}

\newcommand{\w}{\omega}
\newcommand{\restr}[1]{\!\!\upharpoonright_{#1}}
\newcommand{\Poly}{\mathcal{P}\mathrm{oly}}

\newcommand{\clt}{\leq_{\mathrm{LT}}}
\newcommand{\eclt}{\equiv_{\mathrm{LT}}}
\newcommand{\sclt}{<_{\mathrm{LT}}}
\newcommand{\sglt}{<^{\circ}_{\mathrm{LT}}}
\newcommand{\Sumf}{\mathrm{Sum}_f}
\newcommand{\Sumn}{{\mathrm{Sum}_{1/n}}}
\newcommand{\EDfin}{{\mathrm{ED}_{\mathrm{fin}}}}

\newcommand{\pcolon}{\colon\!\!\subseteq}

\newcommand{\tto}{\rightrightarrows}

\newcommand{\fr}{\mbox{}^\smallfrown}

\newcommand{\dar}{\!\!\downarrow}
\newcommand{\uar}{\!\!\uparrow}
\newcommand{\GUV}{\mathfrak{G}(\calU,\calV)}

\newcommand{\up}{\uparrow}
\newcommand{\Vod}{\calV^{\otimes[\delta]}}
\newcommand{\Uod}{\calU^{\otimes[\delta]}}

\newcommand{\wT}{\widetilde{T}}

\newcommand{\Bmid}{\,\mathrel{\Big|}\,} 
\newcommand{\bmid}{\mathrel{\big|}} 

\newcommand{\mer}{\rm Merlin}
\newcommand{\nim}{{\rm Nimue}}
\newcommand{\art}{{\rm Arthur}}

\newcommand{\bil}[2]{(#1 \; | \;#2)}

\newcommand{\phin}{\Phi^{[\lranglet{m}{n}]}}
\newcommand{\wphin}{\widetilde{\Phi}^{[n]}}

\newcommand{\Dt}{\mathcal{D}_{\rm T}}

\makeatletter
\newcommand*{\relrelbarsep}{.386ex}
\newcommand*{\relrelbar}{%
	\mathrel{%
		\mathpalette\@relrelbar\relrelbarsep
	}%
}
\newcommand*{\@relrelbar}[2]{%
	\raise#2\hbox to 0pt{$\m@th#1\relbar$\hss}%
	\lower#2\hbox{$\m@th#1\relbar$}%
}
\providecommand*{\rightrightarrowsfill@}{%
	\arrowfill@\relrelbar\relrelbar\rightrightarrows
}
\providecommand*{\leftleftarrowsfill@}{%
	\arrowfill@\leftleftarrows\relrelbar\relrelbar
}
\providecommand*{\xrightrightarrows}[2][]{%
	\ext@arrow 0359\rightrightarrowsfill@{#1}{#2}%
}
\providecommand*{\xleftleftarrows}[2][]{%
	\ext@arrow 3095\leftleftarrowsfill@{#1}{#2}%
}
\makeatother

\makeatletter
\newcommand*\Lslash
{%
	\text{\rlap{\kern.03em\@xxxii}}L%
}
\makeatother

\newcommand{\calUS}{\overline{\calU}}
\newcommand{\calVS}{\overline{\calV}}
\newcommand{\calWS}{\overline{\calW}}

 \newcommand{\wgamma}{\widetilde{\gamma}^{[n]}}

\newcommand{\cosimp}[3]{\xymatrix@1{#1 \ar@<.4ex>[r] \ar@<-.4ex>[r] & {\ }#2 \ar@<0.8ex>[r] \ar[r] \ar@<-.8ex>[r] & {\ } #3 \ar@<1.2ex>[r] \ar@<.4ex>[r] \ar@<-.4ex>[r] \ar@<-1.2ex>[r] & \cdots }}
\newsavebox{\pullback}
\sbox\pullback{%
	\begin{tikzpicture}%
	\draw (0,0) -- (1ex,0ex);%
	\draw (1ex,0ex) -- (1ex,1ex);%
	\end{tikzpicture}}

\makeatletter
\newcommand*\dotp{\mathpalette\dotp@{.5}}
\newcommand*\dotp@[2]{\mathbin{\vcenter{\hbox{\scalebox{#2}{$\m@th#1\bullet$}}}}}
\makeatother

\newcommand{\equalizer}[2]{\xymatrix@1{#1 \ar@<.4ex>[r] \ar@<-0.4ex>[r] & {\ } #2}}

\newcommand{\adjunction}[4]{\xymatrix@1{#1{\ } \ar@<-0.3ex>[r]_{ {\scriptstyle #2}} & {\ } #3 \ar@<-0.3ex>[l]_{ {\scriptstyle #4}}}}

\usepackage{epigraph}

\usepackage{xcolor}
\usepackage{csquotes}
\usepackage{lipsum}

\definecolor{quotemark}{gray}{0.7}
\makeatletter
\newlength\origparskip

\newcommand{\fquote}{%
	\@ifnextchar[{\fquote@i}{\fquote@i[]}
}

\def\fquote@i[#1]{%
	\@ifnextchar[{\fquote@ii{#1}}{\fquote@ii{#1}[]}
}%

\def\fquote@ii#1[#2]{%
	\def\pqm@tempa{#1}%
	\def\pqm@tempb{#2}%
	\noindent
	\list
	{}
	{\setlength{\leftmargin}{0.3\textwidth}%
		\setlength{\rightmargin}{0.1\textwidth}%
		\setlength{\origparskip}{\parskip}}%
	\item[]%
	\begin{picture}(0,0)%
	\put(-15,-8){\makebox(0,0){\scalebox{4}{%
				\textcolor{quotemark}{\textquotedblright}}}}%
	\end{picture}%
	\begingroup
	\itshape
	\ignorespaces}%

\def\endfquote{%
	\endgroup
	\par
	\raggedleft
	\ifx\pqm@tempa\empty
	\else
	{\bfseries --- \pqm@tempa\par}%
	\setlength{\parskip}{\origparskip}%
	\ifx\pqm@tempb\empty
	\else
	(\pqm@tempb)%
	\fi
	\fi
	\par
	\endlist}
\makeatother

\begin{document}
	\bibliographystyle{alpha}
	\newtheorem{theorem}{Theorem}[section]
	\newtheorem*{theorem*}{Theorem}
	\newtheorem*{condition*}{Condition}
	\newtheorem*{definition*}{Definition}
	\newtheorem*{corollary*}{Corollary}
	\newtheorem{proposition}[theorem]{Proposition}
	\newtheorem{lemma}[theorem]{Lemma}
	\newtheorem{corollary}[theorem]{Corollary}
	\newtheorem{claim}[theorem]{Claim}
	\newtheorem{conclusion}[theorem]{Conclusion}
	\newtheorem{hypothesis}[theorem]{Hypothesis}
	\newtheorem{conjecture}[theorem]{Conjecture}
	\newtheorem{setup}[theorem]{Setup}
	\newtheorem{sumthm}[theorem]{Summary Theorem}

	\newtheorem{maintheorem}{Theorem}
	\renewcommand*{\themaintheorem}{\Alph{maintheorem}}
	\newtheorem{mainprop}[maintheorem]{Proposition}
	
	\theoremstyle{definition}
	\newtheorem{definition}[theorem]{Definition}
	\newtheorem{question}[theorem]{Question}
	\newtheorem{action}[theorem]{Action Item}
	\newtheorem{answer}[theorem]{Answer}
	\newtheorem{goal}[theorem]{Goal}
	\newtheorem{exercise}[theorem]{Exercise}
	\newtheorem{remark}[theorem]{Remark}
	\newtheorem{observation}[theorem]{Observation}
	\newtheorem{discussion}[theorem]{Discussion}
	\newtheorem{guess}[theorem]{Guess}
	\newtheorem{example}[theorem]{Example}
	\newtheorem{condition}[theorem]{Condition}
	\newtheorem{warning}[theorem]{Warning}
	\newtheorem{notation}[theorem]{Notation}
	\newtheorem{construction}[theorem]{Construction}
	
	\newtheorem{problem}{Problem}
	\newtheorem{fact}[theorem]{Fact}
	\newtheorem{thesis}[theorem]{Thesis}
	\newtheorem{convention}[theorem]{Convention}
	\newtheorem{summary}[theorem]{Summary}

	\title{The Game-Theoretic Kat\v{e}tov Order and Idealised Effective Subtoposes}

	\author{Takayuki Kihara and Ming Ng}
	\thanks{TK was partially supported by JSPS KAKENHI Grant Numbers 22K03401 and 23K28036. MN was partially supported by a JSPS PostDoc Fellowship (Short-Term). } 
	\begin{abstract} This paper addresses the longstanding problem of determining the structure of the $\leq_{\mathrm{LT}}$-order in the Effective Topos, known to effectively embed the Turing degrees. In a surprising discovery, we show that the $\leq_{\mathrm{LT}}$-order is in fact tightly controlled by the combinatorics of filters on $\omega$, raising deep questions about how combinatorial and computable complexity interact, both within this order and beyond it.
    
    To make the connection precise, we introduce a game-theoretic (``gamified'') variant of the Kat\v{e}tov order on filters over $\omega$, which turns out to exhibit a striking mix of coarseness and subtlety. For one, it is strictly coarser than the classical Rudin-Keisler order and, when viewed dually on ideals, collapses all MAD families to a single equivalence class. On the other hand, the order also supports a rich internal structure, including an infinite strictly ascending chain of ideal classes, which we identify by way of a new separation technique. 
    
 From the computability-theoretic perspective, we show that a computable (and extended) variant of the gamified Kat\v{e}tov order is isomorphic to the original $\leq_{\mathrm{LT}}$-order. Moreover, our work brings into focus a new degree-spectrum invariant for filters $\mathcal{F}$, $$\mathcal{D}_{\rm T}(\mathcal{F}):=\{\,[f\colon\omega\to\omega] \mid f\leq_{\mathrm{LT}} \mathcal{F} \},$$ 
   which is shown to always determine a proper initial segment of the Turing degrees. Extending this, given any $\Delta^1_1$ filter $\calF$, 
   we show that $\mathcal{D}_{\rm T}(\calF)$ is precisely the class of hyperarithmetic degrees. This significantly generalises previous results obtained by van Oosten \cite{vO14} and Kihara \cite{Kih23}.

The proofs draw on ideas from general topology, descriptive set theory, and computability theory.
	\end{abstract}

	\maketitle
	
	The guiding perspective of this paper is that different notions of complexity, arising in areas as far apart as computability theory and set theory, can be seen to be controlled by the same underlying mechanism — once placed in the right topological framework. Logicians are familiar with the key notion of a filter, either as a generalised point or as an abstract notion of largeness (e.g. in ultraproducts). Topos theorists push this idea much further by asking: what is a {\em generalised space}?
	
	We will focus on one such answer, namely {\em Lawvere-Tierney topologies} [hereafter: LT topologies]. Informally, an LT topology gives an abstract notion of localness. A space $X$ might be said to satisfy some property $P$ locally if $X$ can be covered by a family of opens  $\{U_i\}_{i\in I}$, each satisfying the property $P$. Formalising this, one might require:
	\begin{enumerate}
		\item If ``$P$ implies $Q$'', then ``locally-$P$ implies locally-$Q$''
		\item $P$ implies locally-$P$
		\item Locally-locally-$P$ is equivalent to locally-$P$.
	\end{enumerate}
	Such a notion is well-defined in any (elementary) topos, and there are several equivalent formulations; compare, for instance, \cite[\S V.1]{MM} with \cite[\S 16]{HylandEffective}. Importantly, given any topos, the collection of all its LT topologies carries a natural partial order $\clt$, under which it forms a complete lattice.

    The present paper focuses on the longstanding problem of understanding this $\clt$-order in the case of the Effective Topos $\Eff$, whose structure reveals substantive connections with the Turing degrees. In this setting, the LT topologies admit the following explicit description:
	
	\begin{definition}\label{def:LT-topology} 
    An {\em LT topology} in $\Eff$ is an endomorphism 
		$$j\colon \Pw\to \Pw$$  
		subject to the following conditions:
		\begin{enumerate}
			\item $\forall p,q\in\Pw. \left( p\to q\right )\to \left(j(p)\to j(q)\right)$
			\item  $\forall p\in\Pw. \left( p\to j(p)\right)$
			\item  $\forall p\in\Pw.\,\, j\circ j(p)=j(p)$
		\end{enumerate}	
		Here, $\Pw$ denotes the powerset of the natural numbers, and ``$\to$'' denotes the standard realisability interpretation of implication. That is, fixing an enumeration of partial computable functions $\{\varphi_{e}\}_{e\in\w}$,
		$$p\to q:=\left\{e\mid  \forall a\in p\,,\, \varphi_{e}(a) \,\text{is defined and}\, \varphi_e(a)\in q\right\}.$$
		This means, for instance, $j$ validates Condition (2) just in case there exists $e\in \w$ such that $e\in (p\to j(p))$ for all $p\in\Pw$.
	\end{definition} 

	Once we fix a notion of localness, one can ask precise questions about when locally consistent data assemble into a coherent global structure, as well as the barriers to such assembly. 
Viewing the LT topologies in this light points to a kind of hidden geometry underlying the Turing degrees, yet our understanding of this geometry — and of how different notions of localness interact — remains limited.
	
	
This paper significantly advances this understanding by uncovering a surprising connection: the $\clt$-order on LT topologies in $\Eff$ is tightly connected to the combinatorics of ultrafilters on $\w$. To make this precise, we introduce a game-theoretic variant of the Kat\v{e}tov order on the upper sets over $\w$. An organising theme of this paper is exploring how this order admits meaningful connections to well-known orders in set theory (e.g. Rudin-Keisler, Tukey) whilst remaining finely tuned to shifts in complexity within the LT topologies. For further background and discussion of the main results, see the introduction below.

	\subsection*{Acknowledgments} It is a pleasure to acknowledge S. Fuchino, M. Maietti, M. Malliaris and S. Nakata for thoughtful remarks that improved the exposition of the paper. Special thanks are due to F. Parente for bringing to our attention the negative solution of Isbell's Problem regarding the Tukey Order. 
	
	
	\section{Introduction}\label{sec:Intro}
	
	The Effective Topos $\Eff$ was introduced by Hyland \cite{HylandEffective} as a category-theoretic framework for computable mathematics. There are various levels on which to appreciate both the power and depth of this perspective (see, for instance, \cite{vOBook}). Abstractly, each LT topology $j$ determines a subtopos $\Eff_j\subseteq \Eff$, and these subtoposes are partially ordered by inclusion. In this paper, we shall focus on the corresponding partial order on the LT topologies, given explicitly by\footnote{For the category theorist: here we view the LT topologies externally, but see the discussion after \cite[Proposition 16.1]{HylandEffective} for the internal perspective. Also, when we say ``these subtoposes are partially ordered by inclusion'', the term ``inclusion'' should be taken to mean {\em geometric inclusion} in the sense of geometric morphisms.}
	$$j\clt k : \iff \forall p.j(p)\to k(p) \, \text{is valid}\,.$$
   A key structural fact is that the Turing Degrees embed effectively into this $\clt$-order \cite[\S 17]{HylandEffective}. Concretely, each decision problem $A \subseteq \w$ determines an LT topology $j_A$, whose associated subtopos may be understood as ``the universe of $A$-computable mathematics'', and this assignment preserves order: $j_A \clt j_B$ if and only if $A \leq_T B$. This perspective substantiates the opening remarks of this paper: topology, properly understood, can be used to calibrate computable complexity in a meaningful way.

	Nonetheless, in the 40 plus years since Hyland's seminal paper, progress on understanding the $\clt$-order in $\Eff$ has been slow. General topos theory establishes the existence of a maximum and minimum class on non-trivial LT topologies \cite[Ch. 5]{PittsPhD}:\footnote{This paper focuses on the non-trivial LT topologies, but for completeness: the  {\em trivial topology} in $\Eff$ is the endomorphism $\top\colon \Pw\to\Pw$ sending all subsets to $\w$. By definition of $\clt$, it is obvious $\top$ defines the $\eclt$-maximum class.
	}
	\begin{itemize}
		\item[] \textbf{Minimum.} The {\em identity topology} $\id$, which sends $p\mapsto p$. 
		\item[] \textbf{Maximum.} The {\em double-negation topology} $\dn$, which sends $\emptyset\mapsto\emptyset$, and everything else to $\w$.
	\end{itemize}
	Phoa \cite{Pho89} further showed that any non-trivial topology $j$ that $\clt$-dominates all the Turing degree topologies must be the double negation topology. A couple other non-trivial LT topologies were also identified: Pitts~\cite{PittsPhD} constructed a topology in which the representable functions are exactly the hyperarithmetic ones\footnote{This is shown in \cite{vO14}. Note that this does not imply correspondence with hyperarithmetic realisability. In fact, its multi-valued strength possesses a strength exceeding hyperarithmetic; see \cite{Kih23}.}, while van Oosten introduced a topology corresponding to Lifschitz realisability ~\cite{vO91,vO96}. Aside from these isolated results, not much else was known for a long while.
	
	The main difficulty has to do with the level of abstraction involved. Most toposes occuring ``in nature'' (say, in algebraic geometry) are {\em Grothendieck toposes}, which are by definition categories of sheaves on some Grothendieck site. This site presentation allows for concrete descriptions of their LT topologies -- e.g. as Grothendieck topologies \cite[Theorem V.4.1]{MM}, as quotients of a first-order geometric theory \cite[\S 3.2]{Caramello} etc. By contrast, the Effective Topos is the standard example of a topos which is {\em not} a Grothendieck topos. In practice, this means having fewer tools at our disposal, and so the $\clt$-order on LT topologies in $\Eff$ remains comparatively mysterious.

	A key turning point emerged in Lee-van Oosten's striking discovery that all LT topologies in $\Eff$ are constructed from certain basic building blocks \cite{Lee,LvO13}. The following summary theorem makes this statement precise, and includes a couple other highlights. 
	\begin{sumthm}\label{sumthm-1} Any family of subsets $\calA\subseteq \Pw$ defines an LT topology, which we denote as $$j_\calA\colon \Pw\to \Pw.$$
		An LT topology of this form is called a {\em basic topology}. In particular:
		\begin{enumerate}[label=(\roman*)]
			\item Every LT topology in $\Eff$ is a recursive join of basic topologies.
			\item Denote $\bigcap \calA$ to be the intersection of all $A\in\calA$. \underline{Then}: 
			$$j_\calA=\id \iff \bigcap\calA\neq \emptyset\quad .$$
			\item Say $\calA$ has the {\em $n$-intersection property} if for any $n$ elements $A_1,\dots,A_n\in\calA$, their intersection is non-empty: $$\bigcap^{n}_{i=1} A_i\neq \emptyset.$$ 
			Suppose $\calA\subseteq\Pw$ has the $n$-intersection property whereas $\calB\subseteq\Pw$ {\em fails} this property. 
			\underline{Then}:
			$$j_\calB \not\clt j_{\calA}.$$
		\end{enumerate}
	\end{sumthm}
	\begin{proof} For details on Lee-van Oosten's construction\footnote{In Lee-van Oosten's notation, a basic topology is represented as $L(G_\calA)$. See also Section~\ref{sec:poly-functors}.} of $j_\calA$, see \cite[\S 2]{LvO13}. Item (i) follows from \cite[Theorem 2.3]{LvO13}. Item (ii) is stated without proof in \cite[Proposition 3.1]{LvO13}, and attributed to Hyland-Pitts; the details are given in \cite[Calculation 3.3.1]{Lee}. Item (iii) is \cite[Proposition 4.17]{LvO13}.    
	\end{proof}

	These results bring into focus a central question: what kind of complexity is $\clt$ actually measuring?
	\begin{itemize}
		\item \textbf{Computable Complexity.} Hyland's effective embedding of the Turing degrees \cite{HylandEffective} already shows that $\clt$ detects the computable complexity of decision problems (single-valued functions). Subsequent work by the first author \cite{Kih22,Kih23} extends this picture: the full $\clt$-order on LT topologies reflects the computable complexity of a broader class of problems, namely the gamified Weihrauch degrees of search problems (partial multi-valued functions).

		\item \textbf{Combinatorial Complexity.} By contrast, Summary Theorem~\ref{sumthm-1} ties the complexity of LT topologies to the structure of subset families in $\w$. Items (ii) and (iii) are particularly suggestive; they show how $\clt$ systematically sounds out the depth of intersection within a family $\calA\subseteq\Pw$. It is therefore interesting to ask: what other intersection patterns might $\clt$ detect?
	\end{itemize}

	To proceed, let us translate item (ii) of the Summary Theorem as follows: $\clt$ induces a preorder on subset families $\calA\subseteq \Pw$, and $\calA$ belongs to the $\eclt$-minimum class iff $\bigcap\calA\neq\emptyset$. This restatement will be suggestive to the set theorist because of the following well-known fact:
	
	\begin{fact}[see e.g. {{\cite[Exercise 1.6.3]{Gold22}}}] The {\em Rudin-Keisler order}, denoted $\lrk$, defines a preorder on ultrafilters over $\w$. In particular, there exists a minimum class, which is characterised by the principal ultrafilters. More explicitly: an ultrafilter $\calF\subseteq\Pw $ belongs to the $\erk$-minimum class iff $\bigcap\calF\neq\emptyset$.
	\end{fact}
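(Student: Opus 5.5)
The plan is to prove two things. First, that $\lrk$ is a preorder on ultrafilters over $\w$. Second, that its minimum class consists exactly of the principal ultrafilters, which for an ultrafilter $\calF$ is equivalent to $\bigcap\calF\neq\emptyset$. Recall that $\calF\lrk\calG$ means there is a function $f\colon\w\to\w$ with $\calF=f_*\calG:=\{A\subseteq\w \mid f^{-1}[A]\in\calG\}$.

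\emph{Preorder.} Reflexivity is witnessed by the identity map. For transitivity, if $\calF=f_*\calG$ and $\calG=g_*\calH$, then $(f\circ g)^{-1}[A]=g^{-1}[f^{-1}[A]]$. Hence $\calF=(f\circ g)_*\calH$. One should also note that $f_*\calG$ is again an ultrafilter whenever $\calG$ is, since preimages commute with complements and intersections.

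\emph{Principal ultrafilters are minimum.} Let $\calP_n=\{A\mid n\in A\}$. For any ultrafilter $\calG$, the constant map $c_n$ satisfies $c_n^{-1}[A]=\w$ if $n\in A$ and $\emptyset$ otherwise. So $(c_n)_*\calG=\calP_n$, giving $\calP_n\lrk\calG$. In particular all principal ultrafilters are $\erk$-equivalent and lie below everything, so they form the minimum class.

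\emph{Converse.} Suppose $\calF\lrk\calP_n$, say $\calF=f_*\calP_n$. Then $A\in\calF$ iff $n\in f^{-1}[A]$ iff $f(n)\in A$, so $\calF=\calP_{f(n)}$ is principal. Thus the minimum class is exactly the principal ultrafilters.

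\emph{Principal iff nonempty intersection.} If $\calF=\calP_n$ then $n\in\bigcap\calF$. Conversely, suppose $n\in\bigcap\calF$. Then every $A\in\calF$ contains $n$, so $\calF\subseteq\calP_n$. Maximality of the ultrafilter $\calF$ forces equality. Alternatively, if $n\notin A$ then $\w\setminus A\in\calF$ whenever $A\notin\calF$, which directly shows that $A\in\calF\iff n\in A$.

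There is no real obstacle in any of these steps. The only point needing care is the last one, which uses the ultrafilter property (maximality). For mere filters, $\bigcap\calF\neq\emptyset$ does not force $\calF$ to be principal in the sense of being generated by a singleton.
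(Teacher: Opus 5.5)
Your proof is correct and complete. It gives the standard verification: reflexivity and transitivity via the identity map and composition, constant maps $c_n$ showing that each $\calP_n$ lies below every ultrafilter, pushforwards of $\calP_n$ being principal, and maximality giving $\calF=\calP_n$ once $n\in\bigcap\calF$. The paper gives no proof of its own and only cites Goldbring's Exercise~1.6.3, so there is no alternative route to compare against.

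One small fix: in your ``alternatively'' sentence the clause ``if $n\notin A$'' is misplaced. The intended argument is that $A\notin\calF$ gives $\w\setminus A\in\calF$, hence $n\in\w\setminus A$, so $n\in A$ forces $A\in\calF$.
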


	As it turns out, the Rudin-Keisler order is well-defined not just on ultrafilters, but also on upper sets over $\w$ (i.e. subset families closed upwards under $\subseteq$). Let us therefore distill our previous observations into the following test problem:
	
	\begin{problem}\label{prob:test} How do the partial orders $\clt$ and $\lrk$ compare on the upper sets over $\w$? In addition, how might their structural connections, if any, reflect back onto computability theory? 
	\end{problem}

	\subsection*{Discussion of Main Results} Our first main result is the discovery of a new partial order on upper sets over $\w$, which we call the  {\bf Game-Theoretic Kat\v{e}tov order} (or {\bf Gamified Kat\v{e}tov order}, for short). Sections~\ref{sec:conv} and \ref{sec:brave-new-order} lay the foundations, and establish the surprising connection: 

\begin{maintheorem}[Theorem~\ref{thm:GTK}]\label{mthm:main-thm}  The Gamified Kat\v{e}tov order, written suggestively as
	$$\calU\glt\calV,\qquad\qquad\qquad\calU,\calV\subseteq \; \text{upper sets}\,,$$
	 is a preorder on upper sets over $\w$. In particular:
	\begin{enumerate}[label=(\roman*)]
		\item The \emph{Gamified Kat\v{e}tov order} is equivalent to the {\em Kat\v{e}tov order} closed under {\em well-founded iterations of Fubini powers}. In particular, it is strictly coarser than {\em Rudin--Keisler} and {\em Kat\v{e}tov}.
		\item The {\em Gamified Kat\v{e}tov order} admits an explicit game-theoretic description, justifying its name.
		\item The {\em computable Gamified Kat\v{e}tov order} is equivalent to the {\em $\clt$-order} on upper sets over $\w$.
	\end{enumerate}
\end{maintheorem}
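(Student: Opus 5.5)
Since the statement is a summary theorem, I will establish its three items through a chain of equivalent descriptions of one preorder, and read off preorder-ness and the comparisons with $\lrk$ and $\lk$ at the end. I will take the game-theoretic description as the primary definition. For upper sets $\calU,\calV$ over $\w$, let $\GUV$ be the game in which Arthur and Nimue alternate. Arthur is trying to ``certify'' a $\calU$-large set. Nimue plays sets $V\in\calV$ (or elements of $\w$ on behalf of $\calV$), and Arthur answers either by querying further or by eventually committing to a point of $\w$. Arthur wins if, whatever Nimue plays, the committed outputs land in every $U\in\calU$ prescribed at the start. We set $\calU\glt\calV$ iff Arthur has a winning strategy. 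Reflexivity is immediate: Arthur copies the move. Transitivity comes from composing strategies: Arthur plays the $\calV$-vs-$\calW$ strategy as a subroutine to answer each query of the $\calU$-vs-$\calV$ strategy. This gives the preorder claim and item (ii).

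For item (i), I will first define Fubini powers $\calV^{\otimes[\delta]}$ along well-founded trees $\delta$. At leaves one uses $\calV$ itself, and at inner nodes one uses the sum $\calV$-$\lim$ of the children, with $\Vod$ as in the notation. The goal is then to show
\[
\calU\glt\calV \iff \exists\,\delta\ \text{well-founded},\ \calU\lk\Vod .
\]
For ($\Leftarrow$), a Katětov map $f\colon\w^{<\w}\supseteq[\delta]\to\w$ yields a strategy. Arthur walks down $\delta$, asks Nimue for a $\calV$-large set at each node, picks an element, and outputs $f$ of the leaf reached. Well-foundedness of $\delta$ guarantees that the play ends.

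For ($\Rightarrow$), a winning strategy need not terminate uniformly. However, any position tree of a strategy in which every play terminates is well-founded. I therefore take $\delta$ to be the tree of query positions, and $f$ sends each terminal position to Arthur's output. Checking that $f$ is Katětov then reduces to the winning condition. The main obstacle here is Nimue's adaptive choice of sets. To handle it, I will argue that the preimages of $U\in\calU$ are $\Vod$-large by induction on the rank of $\delta$, using closure of $\calV$ upward at each node. Strictness over $\lrk$ and $\lk$ follows from this equivalence, since $\calV\glt\calV\otimes\calV$ trivially. It then suffices to exhibit $\calV$ (e.g. a Fréchet-type filter or a non-principal ultrafilter) with $\calV\otimes\calV\not\lk\calV$. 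The standard argument is that for an ultrafilter, $\calV\otimes\calV\lrk\calV$ fails, since a non-principal ultrafilter is RK-strictly below its Fubini square.

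For item (iii), I will define the computable version by requiring Arthur's strategy to be a partial computable function. The ``extended'' clause will allow Arthur to play a set-index via realisers rather than points, so that arbitrary upper sets are handled. I will then match this with Lee--van Oosten's basic topologies $j_\calA$ from Summary Theorem~\ref{sumthm-1}. I will unfold $j_\calU\clt j_\calV$ by viewing $j_\calA(p)$ as the inductive closure of realisers of the form ``given an $A\in\calA$, for each $a\in A$ continue'', via its presentation as the least fixed point of the polynomial functor $L(G_\calA)$. This identifies elements of $j_\calV(p)$ with well-founded computable dialogue trees. A realiser of $j_\calU(p)\to j_\calV(p)$, uniformly in $p$, is then exactly a computable Arthur strategy. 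The uniformity in $p$ is what forces the outputs to be honest points of $\calU$-sets, via a Yoneda-style choice of $p$ as a generic subset.

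I expect this last translation to be the hardest step. The difficulty lies in showing that uniform realisers cannot exploit $p$-specific information. I would first try instantiating $p$ with sets coding the current position, and in the limiting cases invoke the recursion theorem to extract the strategy.
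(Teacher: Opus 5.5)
Your preorder argument takes a different route from the paper, and it is sound once the game is stated correctly. Composing strategies works: Nimue computes the intermediate $B\in\calV$ from $A$ and the outer history, then plays the inner Nimue with secret input $B$; an infinite composed play would give either an infinite inner play or an infinite outer play. The paper calls this direct argument cumbersome. It instead proves $\calU\glt\calV\iff\calU\clt f+\calV$ and imports transitivity of $\clt$ from Kihara's theorem. Your route is more elementary; the paper's route yields the relativisation theorem as a byproduct.

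Do fix the roles in your game. Nimue chooses the $\calV$-large sets cooperatively and knows $A$. The adversary (neither Arthur nor Nimue) picks an element of each set. Arthur sees only those elements, and his output must lie in the single set $A\in\calU$ chosen at the start. Also, the paper defines $\glt$ by a partial continuous $\Phi$ and $\calV$-branching trees. By taking the game as your definition, you leave item (ii), the tree/game translation, unproved.

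The genuine gap is direction ($\Rightarrow$) in (i). Your $\delta$ must be one tree serving every $A\in\calU$. What is well-founded is each $A$-specific play tree, i.e. the game tree whose first move is $A$. Its projection to $\lww$ (Arthur's tree of query positions, or the union over $A$ of the play trees) need not be well-founded, and this cannot be repaired. Take $\calU=\Pw\setminus\{\emptyset\}$ and $\calV=\{S\mid 0\in S\}\cup\{S\mid \w\setminus\{0\}\subseteq S\}$. Then $\calU\glt\calV$: Nimue picks $y\in A$, plays $\{0\}$ exactly $y$ times and then $\w\setminus\{0\}$, and Arthur outputs the number of leading zeros. Yet no well-founded $\delta$ works. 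Every $S\in\calV$ contains $0$ or $1$, so any witness for $A=\{n\}$ contains a binary terminal position $e_n$ with output $n$. The $e_n$ are pairwise distinct, so by K\"onig's lemma their prefixes contain an infinite path, all of whose initial segments must lie in $\delta$.

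The paper avoids this by letting the fence be the antichain $\delta=\dom(\varphi)$ of terminal positions. It generates $\Vod$ from the sets $\code{T\cap\delta}$ for trees $T$ that are $\calV$-branching up to $\delta$, and demands well-foundedness only of each such $T$ (condition (b)); the downward closure of $\delta$ itself may be ill-founded. With that definition, ($\Rightarrow$) needs no rank induction: $\code{T_A\cap\delta}\in\Vod$ and $h[\code{T_A\cap\delta}]=\Phi[T_A]\subseteq A$.

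Two smaller points.
\begin{itemize}
\item Strictness needs $\calV\otimes\calV\glt\calV$ (from (i) with the height-two fence), not $\calV\glt\calV\otimes\calV$, together with $\calV\otimes\calV\not\lk\calV$.
\item For (iii), the paper observes that the LT-game for constant sequences with a computable Arthur is the computable gamified Kat\v{e}tov reduction, and then cites Kihara's theorem. Rederiving the basic case from $L(G_\calA)$ is a legitimate self-contained alternative, and your generic-$p$ idea is the right one. Take $p:=A$ and push the code of ``query once and return the answer'', which lies in $j_\calU(A)$ for every $A\in\calU$, through the realiser. This gives a single element of $\bigcap_{A\in\calU}j_\calV(A)$, which unfolds into a computable Arthur strategy. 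As written, though, this is a plan rather than a proof.
\end{itemize}
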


Theorem~\ref{mthm:main-thm} highlights several design features which distinguish the Gamified Kat\v{e}tov order from other preorders in the set theory literature. First, it is formulated in terms of the Kat\v{e}tov order on upper sets, as opposed to e.g. the classical Rudin-Keisler order on ultrafilters. Second, it relies on ``well-founded iterations of Fubini powers'',  which we formalise via our notion of a $\delta$-Fubini power (Definition~\ref{def:delta-Fubini}). This notion is of independent interest, and generalises the Ramsey-theoretic construction of generating an ultrafilter by trees on a front, as seen in \cite{Dob20}. Third, rather than working with the Kat\v{e}tov order {\em as is}, we close it under all such well-founded Fubini iterations. Related closure principles appear implicitly in Blass' work on the Kleene degrees of ultrafilters \cite{Bl85}, but were not isolated there for explicit study.

Taken by themselves, these choices may seem puzzling to the reader. In fact, they reflect a single underlying structural phenomenon, which becomes visible only after a shift in perspective. The key takeaway from Theorem~\ref{mthm:main-thm} is that our Gamified Kat\v{e}tov order shows up naturally in three different guises, each of interest in its own right:
	\begin{itemize} 
		\item[$\diamond$] as a natural variant of the Kat\v{e}tov order (set theory);
		\item[$\diamond$] as a concrete representation of abstract topologies, namely LT topologies (category theory);

        \item[$\diamond$] as a new hierarchy of computability notions, namely ``computability by majority''\footnote{This involves running multiple computations in parallel, and adopting the result decreed by the ``majority'' of these computations; for a discussion on what majority means and further details, see Section \ref{sec:comp-majority}.
        } (computability theory).

	\end{itemize}
	Seen in this light, the topos theory brings to the surface a series of deep structural connections that we would otherwise be hard-pressed to identify. The remainder of this paper explores some consequences of this unexpected convergence.

\medskip
    
	Section~\ref{sec:Tukey} considers another well-known partial order called the {\em Tukey order} and proves the following:
	
	\begin{maintheorem}[Theorem~\ref{thm:Tukey}]\label{mthm:Tukey} The Gamified Kat\v{e}tov and Tukey order are incomparable on filters over $\w$ (in ZFC).
	\end{maintheorem}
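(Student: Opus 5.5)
To prove Theorem~\ref{mthm:Tukey}, I would exhibit two pairs of filters over $\w$, each witnessing one failure of implication.

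\textbf{Direction 1: $\calF\glt\calG$ but $\calF\not\lT\calG$.} By Theorem~\ref{mthm:main-thm}(i), $\glt$ is the Kat\v{e}tov order closed under well-founded iterations of Fubini powers. In particular, $\calF^{\otimes 2}\glt\calF$ for every filter $\calF$, because a Fubini power is $\glt$-equivalent to its base. So I need a filter whose Fubini square is Tukey-strictly above it. Here I would invoke the negative solution of Isbell's problem (mentioned in the acknowledgments), which I recall as: in ZFC there is an ultrafilter $\calU$ with $\calU\cdot\calU\not\lT\calU$. Then $\calU\otimes\calU\glt\calU$ while $\calU\otimes\calU\not\lT\calU$. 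I will need to check that the Fubini product used in the paper's convention (ideal versus filter side, $\calU\otimes\calU$ versus $\calU\cdot\calU$) matches Isbell's, but the Fubini product of ultrafilters is the standard one, so this should be routine.

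\textbf{Direction 2: $\calF\lT\calG$ but $\calF\not\glt\calG$.} I would aim for a Tukey-trivial or Tukey-small comparison that $\glt$ still separates. A natural source is the intersection-property obstruction of Summary Theorem~\ref{sumthm-1}(iii), transported through Theorem~\ref{mthm:main-thm}(iii): $\glt$ on upper sets refines the $\clt$-order up to oracle. My first candidate would use the Fr\'echet filter, which is Tukey equivalent to $(\w,\le)$, compared with a filter such as the one generated by a countable decreasing sequence. Under the Tukey order, countably generated nonprincipal filters are all equivalent to $\w$. I would look for two countably generated filters (hence $\eT$) that are $\glt$-inequivalent. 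For example, take the principal filter $\{A: 0\in A\}$ against the Fr\'echet filter. Principal filters are Tukey-minimal, equivalent to $1$, and the Fr\'echet filter is equivalent to $\w$, so that particular pair does not work. Instead I would take the Fr\'echet filter $\calF$ and the filter $\calG$ dual to $\Fin\times\Fin$ (the Fubini square of Fr\'echet); both are $\glt$-equivalent, so this pair is also not useful.

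Better: I would use a filter $\calG$ with a low Tukey type, namely countably generated (Tukey type $\w$), and a filter $\calF\lT\calG$ that is $\glt$-above it. Since $\calF\lT\w$ forces $\calF$ to be countably generated, I need to show that some countably generated filter is not $\glt$-below the Fr\'echet filter. This requires a separation invariant for $\glt$ that is preserved under Kat\v{e}tov reductions and Fubini iterations, for example a cardinal or definability invariant. The most plausible choice is an intersection-type property in the spirit of Summary Theorem~\ref{sumthm-1}(iii): the Fr\'echet filter has every finite intersection infinite, and this should be stable under the game. Alternatively, I would take $\calG$ to be an ultrafilter, with $\calF$ the Fr\'echet filter, which satisfies $\calF\lT\calG$ since $\w\lT\calU$ for nonprincipal $\calU$. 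I would then argue $\calF\not\glt\calU$, or, in the other order, find that $\calU\not\glt$ of something Tukey-above it, such as the maximal Tukey ultrafilter $\calU\lT[\mathfrak c]^{<\w}$.

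The main obstacle is this second direction: proving a $\glt$-nonreduction. I expect to need the paper's separation technique or game characterisation (Theorem~\ref{mthm:main-thm}(ii)), showing that a winning strategy would yield a forbidden Kat\v{e}tov map on a Fubini power.
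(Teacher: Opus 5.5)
Both directions have genuine gaps. A note on the target first: the paper proves a sharper, equivalence-level form (Theorem~\ref{thm:Tukey}). Your relational reading follows from it and is a legitimate target, but neither of your arguments establishes it. In Direction 1 your skeleton matches the paper's. Since $\calF\otimes\calF\elt\calF$ (Corollary~\ref{cor:coarse}), it suffices to find a filter with $\calF\otimes\calF\not\lT\calF$. The problem is the input you chose. The ``ZFC fact'' you recall is not provable in ZFC, and Isbell's problem is not about Fubini squares. It asks for a nonprincipal ultrafilter that is not Tukey-top, and its negative solution says that consistently \emph{all} nonprincipal ultrafilters are Tukey equivalent. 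In that model $\calU\otimes\calU\eT\calU$ for every nonprincipal $\calU$, and Fubini squares of principal ultrafilters are principal. So no ultrafilter can witness your Direction 1; Proposition~\ref{prop:indep} shows precisely that the ultrafilter version is independent of ZFC. You must leave the class of ultrafilters. The paper uses the dual filter of $\Poly$: Lemma~\ref{lem:FinFubini} gives $\ww\lT\Poly\otimes\Poly$, while $\ww\not\lT\Poly$ by Louveau--Veli\v{c}kovi\'{c}. As far as I can see, the Fr\'echet filter already suffices. It is Tukey equivalent to $\w$, whereas $\ww\lT\Fin\otimes\Fin$ by Lemma~\ref{lem:FinFubini} and $\ww\not\lT\w$.

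In Direction 2 you give no argument, and the concrete candidates you consider cannot work. A filter generated by a decreasing sequence $A_0\supseteq A_1\supseteq\cdots$ is Kat\v{e}tov-below $\Fin^*$, hence $\glt$-below it, via any $h$ with $h(n)\in A_n$. So no countably generated filter escapes the Fr\'echet filter. Likewise, $\Fin^*\lk\calU$ via the identity for every nonprincipal ultrafilter $\calU$. What is missing is a workable $\glt$-obstruction. The paper uses a counting argument (Theorem~\ref{thm:not-LT}). If $\calV$ is a filter, any two $\calV$-branching trees share an infinite path, so a fixed $\Phi$ witnesses $\calU\glt\calV$ for at most one ultrafilter $\calU$. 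Hence an $\elt$-class contains at most $2^{\aleph_0}$ ultrafilters, while the Tukey-top class contains $2^{2^{\aleph_0}}$. Alternatively, your ``intersection-type'' idea can be made precise as a kernel obstruction. If $c\in\bigcap\calV$, the constant path $(c,c,\dots)$ lies in every $\calV$-branching tree, so $\calU\glt\calV$ forces $\bigcap\calU\neq\emptyset$. For example, the filter generated by the sets $\{0\}\cup[n,\infty)$ is Tukey equivalent to $\Fin^*$, since both have a cofinal $\w$-chain. Yet $\Fin^*$ is not $\glt$-below it.
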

	
	Theorem~\ref{mthm:Tukey} speaks to two different readers. For the \textbf{set theorist}, it is worth recalling that Tukey is also coarser than Rudin-Keisler. Theorem~\ref{mthm:Tukey} thus indicates that the Gamified Kat\v{e}tov order detects something related to, yet fundamentally distinct from, the cofinality types of filters. For the \textbf{constructive mathematician}, we remark that incomparability cannot be proved in ZFC if one restricts to just ultrafilters;  
	in order to obtain decisive evidence of incomparability, one needs to consider a wider class of objects. One can therefore read Section~\ref{sec:Tukey} as a case study on how relying on the Axiom of Choice sometimes obscures the basic picture as opposed to clarifying it. 

\medskip
    
	Section~\ref{sec:separation} develops the combinatorial perspective on the Gamified Kat\v{e}tov order, and highlights how its structure posseses a kind of productive coarseness. On the one hand, in stark contrast to the Rudin-Keisler order, we show that all MAD (Maximally Almost Disjoint) families in $\w$ are $\elt$-equivalent. On the other hand, the $\glt$ order is still sensitive to other shifts in combinatorial complexity:
	
	\begin{maintheorem}[Theorem~\ref{thm:strict-ascend}]\label{mthm:separation} Consider the $\glt$-order induced on the ideals of $\w$.\footnote{Recall: any ideal $\calI\subseteq\Pw$ defines $\calI^* :=\{ \w\setminus A \mid A\in\calI \}$, i.e. the dual filter of $\calI$.} \underline{Then}, there exists an infinite strictly ascending chain 
	$$\Fin \sglt \EDfin[1]\sglt \dots \sglt \EDfin[m]\sglt \EDfin[m+1]\sglt \dots \sglt \EDfin\sglt \mathrm{Sum}_{1/n}\,$$
	on ideals over $\w$.
	\end{maintheorem}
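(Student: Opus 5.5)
The strategy is to prove two kinds of facts. The first is the easy reductions $\calI\glt\calJ$ along the chain. The second is the strictness $\calJ\not\glt\calI$, which needs the new separation technique.

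Throughout, $\glt$ on ideals is read via the dual filters $\calI^*$. We also use the characterisation from Theorem~\ref{mthm:main-thm}(i): $\glt$ is the Kat\v{e}tov order closed under well-founded iterated ($\delta$-)Fubini powers. Concretely, $\calU\glt\calV$ iff there is a well-founded $\delta$ and a Kat\v{e}tov reduction from $\calU$ into $\calV^{\otimes[\delta]}$.

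\textbf{Reductions.} Each inclusion in the chain is witnessed by an honest Kat\v{e}tov (or Rudin--Keisler) map, which is a $\glt$-reduction with $\delta$ of rank one.
\begin{enumerate}
\item $\Fin\subseteq$ every ideal considered, so $\Fin$ is Kat\v{e}tov below all of them.
\item Realise $\EDfin[m]$ on $\{(n,k):k\le n\}$ as the sets meeting all but finitely many columns in at most $m$ points. Then $\EDfin[m]\subseteq\EDfin[m+1]$, and $\EDfin[m]\subseteq\EDfin$ after the standard identification.
\item $\EDfin$ is Kat\v{e}tov below $\Sumn$ by the classical map placing the $n$-th column into a block of reals $1/i$ whose total mass is roughly $1/n$ per point and at least $1$ per block.
\end{enumerate}
It remains to check the exact definitions of $\EDfin[m]$ used in Section~\ref{sec:separation}, but these are routine.

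\textbf{Strictness.} The main obstacle is that $\glt$ is coarse: Fubini iteration can collapse ideals (it already collapses MAD families). So classical Kat\v{e}tov non-reductions do not suffice. I would look for an invariant $\Phi$ of upper sets that satisfies two conditions:
\begin{enumerate}
\item[(a)] $\Phi$ is monotone under Kat\v{e}tov maps.
\item[(b)] $\Phi$ is preserved by $\delta$-Fubini powers, i.e.\ $\Phi(\calU^{\otimes[\delta]})=\Phi(\calU)$ for all well-founded $\delta$.
\end{enumerate}
Condition (b) would be proved by induction on the rank of $\delta$.

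The model is Summary Theorem~\ref{sumthm-1}(iii): the $n$-intersection property is exactly such an invariant. A family with the $n$-intersection property has Fubini powers with the $n$-intersection property, since $n$ sets of the power intersect along a branch chosen stepwise. So I would use graded intersection properties of the form "any $k$ positive sets contain a common element", phrased for the filter and its positive sets.

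For $\EDfin[m]$, the relevant parameter is the maximal number of pairwise disjoint positive sets that remain positive in each column, which is $m+1$. The plan:
\begin{enumerate}
\item Define $\Phi_m(\calU)$ as a game-theoretic version of "every $m+1$ members of $\calU^*$-positive fragments meet".
\item Show via the game description of Theorem~\ref{mthm:main-thm}(ii) that Nimue/Arthur strategies transfer $\Phi_m$ downward.
\item Verify that $\EDfin[m]$ satisfies $\Phi_m$ but $\EDfin[m+1]$ fails it.
\end{enumerate}

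For $\Fin\sglt\EDfin[1]$, I expect non-principality-type invariants to suffice. For $\EDfin[m]\sglt\EDfin$ and $\EDfin\sglt\Sumn$, I would use a limit of these invariants, for instance a measure-style invariant for $\Sumn$ capturing that positive sets can be split into arbitrarily many positive pieces uniformly. The hardest step is proving preservation under transfinite Fubini iteration. There I would first try induction on $\delta$, using a finite-combinatorial (Ramsey-type) argument at successor stages and uniformity of the witness at limit stages.
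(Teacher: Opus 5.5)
\textbf{Reductions.} Your reductions match the paper's Step~1: identity maps for the inclusions, then Theorem~\ref{thm:fubini-GTK}. One fix is needed. The map you describe for $\EDfin\lk\Sumn$ fails as stated. If every point of the $n$th interval pulls back to $\Sumn$-mass about $1/n$, then a set containing one point from each interval lies in $\EDfin[1]$, yet its preimage has mass about $\sum_n 1/n=\infty$. You need a sparse family of intervals instead, for example sending $[2^r,2^{r+1})$ bijectively onto $I_{2^r}$ as the paper does. Then $m$ points per interval cost at most $m\sum_r 2^{-r}$.

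\textbf{The gap: strictness.} Reducing strictness to finding a Kat\v{e}tov-monotone invariant preserved by all $\delta$-Fubini powers only restates the problem, and none of your candidates separates anything.
\begin{enumerate}
\item All the upper sets involved are filters, so intersection properties in the style of Summary Theorem~\ref{sumthm-1}(iii) hold for every one of them.
\item ``Any $k$ positive sets meet'' already fails at $k=2$ for every ideal in the chain.
\item $\w$ splits into infinitely many pairwise disjoint $\EDfin[m]$-positive sets (unions of whole intervals $I_a$ over disjoint infinite index sets), so there is no finite ``$m+1$'' count.
\item $\Fin^*$ and $\EDfin[1]^*$ are both non-principal, so non-principality cannot separate them.
\end{enumerate}
You also give no argument for preservation under transfinite iteration, which you yourself flag as the hardest step.

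\textbf{The missing idea.} The paper avoids abstract invariants and uses a labelling technique.
\begin{itemize}
\item Fix a putative witness $\Phi$ for $\calH\glt\calI$. Define $\nu^{\calI}_\Phi$ (Definition~\ref{def:label}) by a single well-founded recursion on $T_\Phi$, starting from the values of $\Phi$ on finite strings. A node inherits the label $c$ when its $c$-labelled successors form an $\calI$-positive set, and gets $\bot$ otherwise. So the ``induction on $\delta$'' is done once, for this particular witness.
\item Lemma~\ref{lem:separation-1} shows that a $c$-labelled node in any $\calI^*$-branching tree carries a path with $\Phi$-value $c$. Hence the root cannot carry a label $c\in\w$, since $\w\setminus\{c\}\in\calH^*$.
\item Lemma~\ref{lem:separation-2} then shows that every witness tree contains a critical node. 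Below a critical node, every label class of successors lies in $\calI$.
\item Strictness becomes a concrete diagonalisation over the countably many critical nodes $\tau_i$. You build $C\in\calH$ such that every $\calI^*$-large set of successors of each $\tau_i$ contains a successor whose label lies in $C$. The construction depends on the pair:
\begin{itemize}
\item $\EDfin[1]$ over $\Fin$: one fresh label per interval.
\item $\EDfin[m+1]$ over $\EDfin[m]$: $m+1$ successors in one large interval.
\item $\Sumn$ over $\EDfin$: $l+1$ successors in one interval whose labels have total $1/n$-mass at most $2^{-k}$.
\end{itemize}
\item Then $\w\setminus C\in\calH^*$ admits no witness tree.
\end{itemize}
What does the separating is this diagonalisation property of the pair $(\calH,\calI)$ against critical labellings, not an intersection property of a single family.
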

	\noindent All relevant definitions will be given in  Section~\ref{sec:separation}. For now, let us say a few words on why we view this result to be an important highlight of this paper. 
    
    \begin{itemize}
        \item 
    For the \textbf{set theorist}, Theorem~\ref{mthm:separation} essentially says: despite the apparent coarseness of the $\glt$-order (e.g. the collapse of MAD families), there still exists a high degree of separation within the order, pointing to a rich internal structure. The proof is also interesting: it relies on a newly-developed separation technique (Section~\ref{sec:separation-technique}) which we expect to find useful applications elsewhere.
    

\item For the \textbf{computability theorist}, Theorem~\ref{mthm:main-thm} combines with Theorem~\ref{mthm:separation} to yield the same separation within the $\clt$-order; see Corollary~\ref{cor:strict-clt}. Concretely, the chain above realises infinitely many distinct strengths of ``computability by majority'' notions (where the notion of ``majority'' is determined by the ideal), and thus provides an answer to \cite[Question~1]{Kih23}. Combined with results already obtained in \cite{Kih23}, we obtain Figure~\ref{fig:game-Katetov}.
    \end{itemize}

\begin{figure}[h!]
\includegraphics[width=12cm]{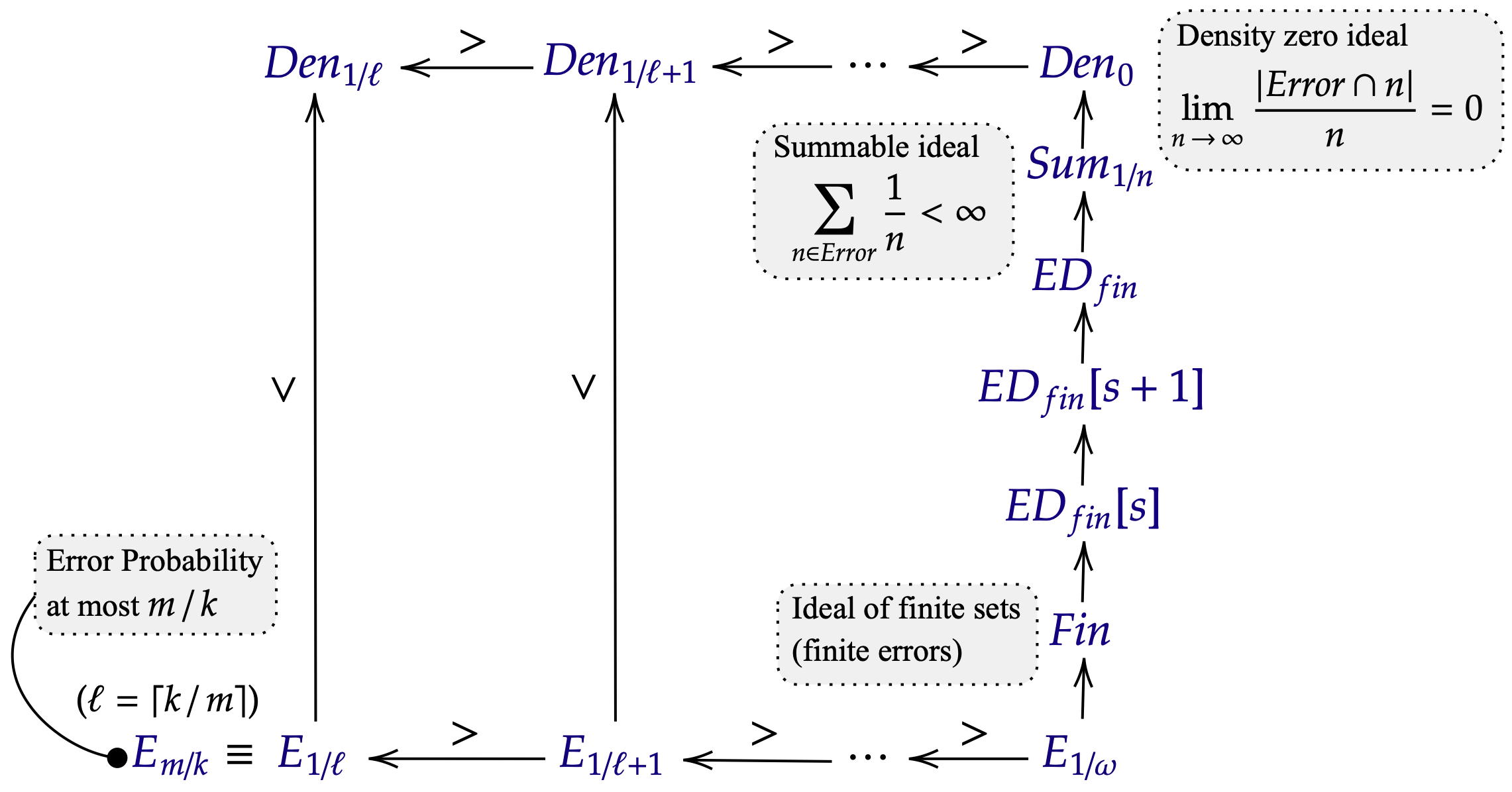}
\caption{The structure of the Gamified Kat\v{e}tov order on lower sets}\label{fig:game-Katetov}
\end{figure}

In Section~\ref{sec:Turing}, we turn to the interaction between combinatorial and computable complexity within the $\clt$-order. Serious connections between set theory and computability theory are, of course, not new, and there are many previous results in this direction. What is distinctive in the present setting, however, is that the $\clt$-order provides a uniform framework in which both forms of complexity can be compared directly. 

\medskip

Our first main result in this vein shows that the Gamified Kat\v{e}tov order can be understood as the $\clt$-order relativised to an arbitrary Turing oracle. More precisely:

\begin{maintheorem}[Theorem~\ref{thm:preorder}]\label{mthm:preorder} Let $\calU,\calV$ be upper sets on $\w$.
\underline{Then}, $$\calU\glt\calV \iff \calU\clt f+ \calV\;,\quad\text{for some }\; f\colon\w\to\w\;.$$
\end{maintheorem}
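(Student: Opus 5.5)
We establish the two implications separately. In both directions we rely on Theorem~\ref{thm:GTK}(iii): the computable Gamified Kat\v{e}tov order coincides with $\clt$ on upper sets. The strategy is to treat $\glt$ as the boldface (oracle-free) version of the computable order, and to absorb the oracle into the join $f+\calV$. Here $f+\calV$ is the join of the Turing-degree topology of $f$ with the basic topology of $\calV$; at the level of games, Merlin's side gains access to the oracle $f$.

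\emph{($\Rightarrow$).} Suppose $\calU\glt\calV$. By the game-theoretic description (Theorem~\ref{thm:GTK}(ii)), this is witnessed by a winning strategy $\tau$ for the relevant player in the reduction game. Equivalently, by Theorem~\ref{thm:GTK}(i), it is witnessed by a Kat\v{e}tov reduction from $\calU$ to some $\delta$-Fubini power $\Vod$ with $\delta$ well-founded. Such a strategy, or such a tree together with its Kat\v{e}tov map, is a countable object, so it can be coded by a single function $f\colon\w\to\w$. Once $f$ is available as an oracle, every move of $\tau$ is $f$-computable. Rerunning the proof of Theorem~\ref{thm:GTK}(iii) relative to $f$ (that is, in the relativised realizability where the partial computable functions are replaced by $\varphi^f_e$) then yields $\calU\clt f+\calV$. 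The point to verify here is that relativising the realizers to $f$ is exactly what joining with the topology $j_f$ accomplishes. I would check this using the standard fact that $j\clt j_f\vee k$ amounts to a realizer that may query $f$ at finitely many points along each play.

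\emph{($\Leftarrow$).} Suppose $\calU\clt f+\calV$. I would unfold the join $f+\calV$ via Lee--van Oosten (Summary Theorem~\ref{sumthm-1}(i)). It is the basic topology of an upper set whose game interleaves oracle queries to $f$ with $\calV$-moves. Theorem~\ref{thm:GTK}(iii), applied in its extended form to this combined upper set, produces a computable winning strategy in a game in which the opponent answers queries about $f$. The final step is to eliminate the oracle. Since $\glt$ allows arbitrary, not necessarily computable, strategies, the strategy can simply answer every query with the true value $f(n)$ itself. This produces an outright winning strategy in the game for $\calU$ versus $\calV$, and hence $\calU\glt\calV$.

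\emph{Main obstacle.} I expect the main difficulty to be the $(\Leftarrow)$ direction, specifically the claim that the oracle queries in $f+\calV$ impose no combinatorial constraint beyond $\calV$. The risk is that the opponent's answers in the $f$-part are adversarial or multivalued in a way that a single fixed $f$ cannot simulate. My first approach is to note that $j_f$ is the basic topology of the family $\{\{\langle n,f(n)\rangle\}\}$-style singletons, so each query has a unique correct answer. Such queries contribute only intersecting (hence trivial) upper sets to the Fubini iteration. By Summary Theorem~\ref{sumthm-1}(ii) they can then be collapsed once non-computable strategies are permitted. This would reduce the $\delta$-Fubini witness for $f+\calV$ to one for $\calV$ alone.
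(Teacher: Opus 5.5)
Your $(\Leftarrow)$ direction rests on a false claim, though its core idea is right. In general $f+\calV$ is a sum of upper sequences that is \emph{not} LT-equivalent to the basic topology of any upper set. To see this, take $\calV$ principal and $f$ non-computable, so that $f+\calV\eclt f$. Suppose $\calW$ is an upper set with $\calW\clt f$. Consider the play in which Merlin answers every query $a$ truthfully with $f(a)$; this is always legal. Along it, Arthur's moves do not depend on the secret $A\in\calW$, so his final output lies in $\bigcap\calW$. If also $f\clt\calW$, Merlin can answer every $\calW$-query with that common element, which makes $f$ computable. Likewise, $j_f$ is not the basic topology of a family of singletons $\{\langle n,f(n)\rangle\}$. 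Such a family fails the $2$-intersection property, so by Summary Theorem~\ref{sumthm-1}(iii) it lies below no filter, whereas $f\clt\calI^*$ for a summable ideal $\calI$ (Theorem~\ref{thm:cofinal}(i)). What makes $j_f$ non-trivial is that the index $n$ is \emph{public}, and a single upper set cannot carry that information. Consequently Theorem~\ref{thm:GTK}(iii), Theorem~\ref{thm:fubini-GTK} and $\delta$-Fubini powers, all of which concern one upper set, cannot be applied to $f+\calV$. Your collapse via Summary Theorem~\ref{sumthm-1}(ii) also fails: each $\{B\mid f(n)\in B\}$ is trivial on its own, but the witnesses (output $f(n)$) are not uniform in $n$, and uniformising them is exactly what $f$ is needed for.

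The fix is to drop the detour. By Definition~\ref{def:LT-reducibility-upper-seq} and Theorem~\ref{thm:kih-bil}, $\calU\clt f+\calV$ already means that some computable Arthur $\varphi$, together with a Nimue $\eta$, wins $\mathfrak{G}(\calU,f+\calV)$. Let a non-computable Arthur in $\mathfrak{G}(\calU,\calV)$ simulate $\varphi$: he supplies $f(a)$ himself whenever $\varphi$ queries $f$ at $a$, and forwards the $\calV$-queries and the final answer. Nimue plays $\eta$ on the reconstructed history. Truthful answers are always legal in the original game, so the winning condition transfers; this also disposes of your worry about adversarial answers in the $f$-part. Only finitely many $f$-queries occur between consecutive $\calV$-moves, because every play against a winning strategy terminates. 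This is your oracle-elimination idea, and it is the paper's argument; the paper first replaces $f+\calV$ by $f\times\calV$ (Proposition~\ref{prop:sum-vs-product}) so that each $f$-query is bundled with a $\calV$-query. Your $(\Rightarrow)$ direction is essentially the paper's, with one correction. Only Arthur's part (Player II$_0$'s partial map $\lww\to\w$) is a countable object; Nimue's strategy and the trees $T_A$ are not, but only Arthur's part needs to become computable. The paper takes $f$ to be a total extension $\hat\varphi$ of that map, so each round costs one $f$-query followed by the prescribed $\calV$-query or answer. That settles the ``relativisation equals join'' point you left to be checked.
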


Theorem~\ref{mthm:preorder} shows how Turing degrees may illuminate the overall structure of the $\glt$-order. A complementary question is whether Turing degrees can also illuminate the structure of an {\em individual} combinatorial object, such as a filter on $\w$. On this front, by applying Summary Theorem~\ref{sumthm-1}, observe that every filter $\calF\subseteq\Pw$ determines an initial segment of the Turing degrees:
$$\calD_{\rm T}(\calF):=\big \{ \,[f\colon\w\to\w] \bmid f\clt \calF \big\}\,.$$
Let us call $\Dt(\calF)$ the {\em Turing degree profile} of $\calF$. 
One may reasonably wonder if filters and Turing degrees are largely orthogonal within $\clt$, with relatively few direct points of contact. 
Encouragingly, the following Cofinality Theorem shows this is very much not the case.

\begin{maintheorem}[Cofinality, Theorem~\ref{thm:cofinal}]\label{mthm:cofinality}  The following is true:
		\begin{enumerate}[label=(\roman*)]
			\item For any $f\colon\omega\to\omega$, there exists a summable ideal $\calI$ such that
			$$f \clt \calI^*,$$
			where $\calI^*$ is the dual filter.
			\item There does not exist a filter $\calF\subseteq\Pw$ such that 
			$$k\clt\calF,\qquad\text{for all Turing degree topologies}\; k\;.$$
		\end{enumerate}
	\end{maintheorem}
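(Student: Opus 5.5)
For part (i), given $f\colon\w\to\w$, I would build the summable ideal so that the heavy part of the weight sits on the initial segments of $f$. Fix a computable bijection between $\w$ and finite strings $\sigma\in\lww$, written $\sigma\mapsto\code{\sigma}$. Define a weight $g\colon\w\to[0,\infty)$ by
$$g(\code{\sigma})=\begin{cases}1 & \text{if }\sigma\text{ is an initial segment of }f,\\ 2^{-\code{\sigma}} & \text{otherwise,}\end{cases}$$
and put $\calI=\{A\subseteq\w\mid \sum_{k\in A}g(k)<\infty\}$. Since $f$ has infinitely many initial segments, $\sum_k g(k)=\infty$, so $\calI$ is a proper summable ideal.

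The reduction is then ``computability by majority'' in the sense of Section~\ref{sec:comp-majority}. On input $n$, the computation indexed by $k=\code{\sigma}$ outputs $\sigma(n)$ if $n<|\sigma|$, and $0$ otherwise. This family of computations is uniformly computable in $n$ and makes no use of $f$. The indices where the output differs from $f(n)$ lie among the strings that are not initial segments of $f$, together with the finitely many initial segments of $f$ of length $\le n$. That set has weight at most $\sum_k 2^{-k}+(n+1)<\infty$, so it lies in $\calI$. Hence the set of correct indices lies in $\calI^*$ for every $n$, and uniformly so. The remaining step is to translate this one-round majority computation into a realizer for $f\clt j_{\calI^*}$, using the explicit description of the basic topology $j_{\calA}$ (equivalently, the computable gamified Kat\v{e}tov characterisation in Theorem~\ref{mthm:main-thm}(iii)). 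Only a single round of the game is needed here, with no iterated Fubini powers, so I expect this translation to be a direct unwinding of definitions.

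For part (ii) the plan is to reduce to Phoa's theorem. Suppose $\calF$ is a proper filter with $k\clt j_\calF$ for every Turing degree topology $k$. Then $j_\calF$ dominates all the Turing degree topologies. It is non-trivial, since $\emptyset\notin\calF$ and $\calF$ is proper, so $j_\calF\neq\top$. By Phoa's result, $j_\calF$ would have to be the double-negation topology $\dn$. I would then refute this with Summary Theorem~\ref{sumthm-1}(iii). A proper filter is closed under finite intersections and omits $\emptyset$, so it has the $n$-intersection property for every $n$. By contrast, $\calB=\{\{0\},\{1\}\}$ fails the $2$-intersection property, so $j_\calB\not\clt j_\calF$. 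Since $\dn$ is the maximum non-trivial topology and $j_\calB$ is non-trivial, $j_\calB\clt\dn$, hence $\dn\not\clt j_\calF$, a contradiction.

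The main obstacle I expect is in part (i): checking carefully that the majority computation above matches the paper's formal notion of $f\clt\calI^*$, meaning a realizer of $\forall n.\,\{f(n)\}\to j_{\calI^*}(\{f(n)\})$-type conditions in Lee--van Oosten form. In particular, the ``bad'' set must be in the dual ideal uniformly in $n$, and the realizer must not depend on $f$. The choice of weights above is designed so that both hold, and only the weight function, not the algorithm, encodes $f$.
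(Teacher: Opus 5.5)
Your proposal is correct and follows the paper's proof essentially step for step. Your weights define the same ideal as the paper's $\mathrm{Sum}_g$: the sets containing only finitely many initial segments of $f$. Your one-round majority computation is the paper's LT-game, in which Nimue plays an $\calI^*$-large set of good strings and Arthur outputs $\sigma(n)$. Part (ii) is the same Phoa-plus-$2$-intersection argument.

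Two minor refinements. First, $f$ is an upper sequence rather than an upper set, so the translation step you flag is Fact~\ref{fact:LT-basic-for-upper-sequences}(i) (computable one-query reducibility implies $\clt$), not Theorem~\ref{mthm:main-thm}(iii). Second, in (ii) the first equivalence of Theorem~\ref{thm:phoa} gives $\dn\clt\calF$ directly, so you never need to check that $j_\calF$ is non-trivial.
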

\noindent In other words, every Turing degree $f$ belongs to the profile $\calD_{\rm T}(\calF)$ of some filter $\calF$, yet no single filter captures all the Turing degrees. Theorem~\ref{mthm:cofinality} therefore identifies $\calD_{\rm T}$ as a meaningful new invariant by way of an unexpected connection: one can systematically probe the combinatorial structure of filters on $\w$ by examining which Turing degrees they bound -- and which they necessarily omit. 

\medskip

Building on this perspective, we revisit and extend some results from the topos theory literature. In our language, van Oosten \cite{vO14} and Kihara \cite{Kih23} previously showed that $\calD_{\rm T}({{\rm Fin}^\ast})$ and $\calD_{\rm T}({{\rm Den}_0^\ast})$ are exactly the hyperarithmetic degrees.\footnote{Here, $\Fin$ denotes the ideal of finite sets
${\rm Den}_0$ the ideal of asymptotic density zero sets, and we take their dual filters.} It turns out this extends to all non-principal $\Delta^1_1$-filters:

\begin{maintheorem}[Theorem~\ref{thm:filter-hyperarithmetic}] Let $\calF\subseteq \Pw$ be a non-principal $\Delta^1_1$-filter. Then, for any $f\colon\w\to\w$, 
$$f\clt \calF \iff f\;\;\text{is hyperarithmetic}\;.$$
\end{maintheorem}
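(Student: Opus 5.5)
Both directions rest on the fact that the $\clt$-relation $f\clt\calF$ (identifying $f$ with the LT topology $j_f$ and $\calF$ with the basic topology $j_\calF$) is witnessed by a well-founded computable "majority" strategy. Concretely, $f\clt\calF$ means there is a computable procedure which, on input $n$, builds a well-founded tree of queries. At each node the procedure asks for an element of some set in $\calF$; the opponent answers with any point of that set. The procedure must output $f(n)$ correctly along every branch. I would use the game-theoretic description of Theorem~\ref{mthm:main-thm}(ii)–(iii) to phrase things this way.

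\textbf{Upper bound} ($f\clt\calF\Rightarrow f$ hyperarithmetic). Fix the computable witness $e$. For each $n$, the value $f(n)$ is the unique $m$ such that there is a "winning" well-founded subtree. The opponent can force a play where only values consistent with $m$ survive, and more precisely there is a well-founded $\calF$-large branching subtree all of whose leaves output $m$. Since $\calF$ is $\Delta^1_1$, the statement "there is a well-founded subtree whose branching at every node is an $\calF$-set and all of whose leaves output $m$" can be written $\Sigma^1_1$. Its negation can also be written as a $\Pi^1_1$ statement. The route is to note that, by the filter property and uniqueness of the output, the set of "good" nodes is defined by a monotone inductive definition whose stages are arithmetical in a $\Delta^1_1$ parameter. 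The closure ordinal is then bounded by a recursive ordinal (Spector–Gandy / boundedness for $\Pi^1_1$ inductions with $\Delta^1_1$ operator). Hence the graph of $f$ is $\Delta^1_1$, so $f$ is hyperarithmetic. The key point to check is that the non-principal filter gives uniqueness: two disjoint answers cannot both be forced, because two $\calF$-large trees with different outputs intersect along a branch.

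\textbf{Lower bound} (every hyperarithmetic $f$ satisfies $f\clt\calF$). Since $\calF$ is non-principal, $\Fin^\ast\subseteq\calF$ up to the trivial reduction, so the identity map gives $\Fin^\ast\clt\calF$. The known result of van Oosten \cite{vO14} gives $\calD_{\rm T}(\Fin^\ast)=$ hyperarithmetic degrees. Combined with monotonicity, this yields that every hyperarithmetic $f$ satisfies $f\clt\calF$. The direct idea behind that known result, which I would sketch for completeness, is the following. "Majority over cofinite sets" computes limits, i.e.\ jumps: $\emptyset'$ is obtained by querying a cofinite set of stages and reading off the stabilised answer. Well-founded iteration along a computable ordinal notation then reaches every $H_a$, and hyperarithmetic sets are computable from some $H_a$.

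\textbf{Main obstacle.} I expect the main obstacle to be the upper bound, specifically making the complexity computation honest. The complement of an $\calF$-set must be handled using $\calF$ being $\Delta^1_1$, and not merely $\Sigma^1_1$. One also has to ensure that the rank of the strategy trees is uniformly bounded below $\omega_1^{\mathrm{CK}}$. I would try first to show that the set of pairs $(\sigma,m)$ for which node $\sigma$ is $m$-winning is $\Pi^1_1$ and also $\Sigma^1_1$, using the dichotomy from uniqueness, and then appeal to Suslin–Kleene.
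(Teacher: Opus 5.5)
Your lower bound is the paper's argument: $\Fin^\ast\clt\calF$ via the identity, then van Oosten. The upper bound has the right skeleton. The relation $W$ of pairs $(\sigma,m)$ with $\sigma$ $m$-winning is the least fixed point of the monotone operator putting $(\sigma,m)$ into $\Gamma(X)$ iff $\Psi_n(\sigma)=m$ or $\{k:(\sigma\fr k,m)\in X\}\in\calF$. The filter property makes the winning value unique, and this is in substance the paper's labelling function $\nu_n$.

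Your route from there to $\Delta^1_1$, however, fails at two concrete points.

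First, ``there is a well-founded $\calF$-branching subtree all of whose leaves output $m$'' is not $\Sigma^1_1$ as written. After the existential set quantifier, well-foundedness (every path meets $\dom(\Psi_n)$ with value $m$) is a $\Pi^1_1$ clause, so the formula is $\Sigma^1_2$ on its face.

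Second, neither the closure ordinal of $\Gamma$ nor $W$ itself is under control, because nodes lying off every witness tree can behave arbitrarily. Here is a counterexample:
\begin{itemize}
\item Take $\calF=\Fin^\ast$ and $f\equiv 0$.
\item Let every $\Psi_n$ output $0$ at each $\langle k,i\rangle$ with $i\geq 1$.
\item Below $\langle k,0\rangle$, let it output $0$ exactly at the minimal strings outside $S_k=\{\tau:\pi_0(\tau)\in R_k\}$, where $R_k$ is the $k$-th primitive recursive tree.
\end{itemize}
The tree consisting of the root, all $\langle k\rangle$, and all extensions of the $\langle k,i\rangle$ with $i\geq 1$ witnesses $f\clt\Fin^\ast$. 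Yet $(\langle k,0\rangle,0)\in W$ iff $R_k$ is well-founded. So $W$ is $\Pi^1_1$-complete, and the (here arithmetical) induction closes at no recursive stage. Hence both ``the closure ordinal is recursive'' and ``the set of $m$-winning pairs $(\sigma,m)$ is $\Delta^1_1$'' are false. Boundedness bounds the stages of $\Sigma^1_1$ subsets of a fixed point, not its closure ordinal.

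What is missing is that no ordinal bound is needed:
\begin{itemize}
\item The least fixed point of a monotone $\Pi^1_1$ operator is $\Pi^1_1$ (the paper's Fact, via Aczel), and $\Gamma$ is even $\Delta^1_1$ because $\calF$ is. So $\{(n,m):(\mathrm{root}_n,m)\in W\}$ is $\Pi^1_1$.
\item By rank induction along the truncation of the witness tree $T_n$, this set contains $(n,f(n))$. By uniqueness it is exactly the graph of $f$.
\item Totality gives $f(n)\neq m\iff\exists m'\neq m\,(f(n)=m')$, so the complement of the graph is also $\Pi^1_1$. This is the paper's observation that a total function with $\Pi^1_1$ graph is hyperarithmetic.
\end{itemize}
Your ``dichotomy from uniqueness'' is exactly this trick, but it is valid only at the roots, where some value is guaranteed to win. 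An arbitrary node may be winning for no value, as in the example. If you want to keep boundedness, apply it to the computable set of roots rather than to the whole closure.

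Your positive formulation, in which a node enters once an $\calF$-large set of its children has entered, is actually the more robust one. The paper's transfinite recursion over $\sum_n T_{\Psi_n}$ presupposes that this forest is well-founded, and in the example it is not.
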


Finally, a key technical ingredient in the proof of Cofinality Theorem~\ref{mthm:cofinality} is Phoa's theorem \cite{Pho89}, which relates the double-negation topology to the Turing degrees. Our observation is that this is not an isolated coincidence. In fact, it reflects a more general structural phenomenon within the $\clt$-order:


	\begin{maintheorem}[Generalised Phoa, Theorem~\ref{thm:gen-phoa}]\label{mthm:gen-phoa} Let $\calU\subseteq \Pw$ be an upper set, and let $\calVS$ be a partial $\omega$-sequence of upper sets on $\w$. \underline{Then},
		$$\calU\clt \calVS \iff F\clt \calVS\;\;\text{for every}\;\calU\text{-function}\; F\quad.$$
	\end{maintheorem}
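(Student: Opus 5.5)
The plan is to prove the two directions separately. The forward direction is a monotonicity argument and the reverse direction is a diagonalisation, modelled on Phoa's original argument \cite{Pho89} for the double-negation topology. For orientation, recall that $\dn$ is $j_\calU$ for $\calU=\Pw\setminus\{\emptyset\}$, and in that case the $\calU$-functions should be exactly the total functions $\w\to\w$. So the statement specialises to Phoa's theorem, and the proof should generalise his.

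I do not have the definition of a $\calU$-function in front of me. I will work with the following reading: a $\calU$-function $F$ is an ``adversary strategy'' for the basic $\calU$-oracle. That is, it assigns to every query (or finite history of queries) a single answer lying in some set of $\calU$ that the adversary is entitled to choose. Viewing $F$ as a single-valued function then gives its Turing-degree topology.

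\textbf{($\Rightarrow$).} Fix a $\calU$-function $F$. I first show $F\clt\calU$: a computation querying the $\calU$-oracle must succeed against every admissible adversary, in particular against $F$. Hence any realiser witnessing $\calU\clt\calVS$ also witnesses $F\clt\calVS$, after composing with the reduction $F\clt\calU$. I will use the game description from Theorem~\ref{thm:GTK}(ii),(iii) to make ``succeeds against every adversary'' precise. Transitivity of $\clt$ then finishes this direction.

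\textbf{($\Leftarrow$).} I argue by contraposition. Assume $\calU\not\clt\calVS$. By Theorem~\ref{thm:GTK}(iii) and the game description, for each index $e$ the $e$-th computable strategy $\varphi_e$ fails to win the reduction game of $\calU$ to $\calVS$. I will build a single $\calU$-function $F$ with $F\not\clt\calVS$ by finite extension, defeating the indices $e=0,1,2,\dots$ in turn. At stage $e$ a finite partial adversary $F_e$ has been fixed, and I must find a finite extension $F_{e+1}$ such that $\varphi_e$, read as a reduction of $F$ to $\calVS$, is refuted.

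The key step is that, because $\varphi_e$ cannot win against the full $\calU$-adversary even on the subtree consistent with $F_e$, there is a losing play. In that play the $\calVS$-side responses are chosen by the opponent and the $\calU$-side answers are admissible. I will take that play, extend $F_e$ by the $\calU$-answers it uses, and check that the play remains a refutation of $\varphi_e$ as a computation relative to $F$. Setting $F=\bigcup_e F_e$ (extended arbitrarily but admissibly elsewhere) then gives a $\calU$-function with $F\not\clt\calVS$, contradicting the hypothesis.

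\textbf{Main obstacle.} The hard part is ensuring that the refutation of $\varphi_e$ commits only \emph{finitely} much of $F$, and that the commitment is compatible with earlier stages. Plays are well-founded but infinitely branching, and the partial sequence $\calVS$ may force the opponent's moves to depend on infinitely many potential $\calU$-answers. I would first prove a lemma reducing failure of $\varphi_e$ to the existence of a single finite branch witnessing failure. This should come from well-foundedness of the game tree, since a win by $\varphi_e$ would be a well-founded tree of successful positions. I would also need that the losing position remains losing when the $\calU$-answers are fixed to the values on that branch, and this is where the $\delta$-Fubini description from Theorem~\ref{thm:GTK}(i) would be invoked. If the finite-branch reduction fails, the fallback is to let $F$ be defined on histories rather than on single queries, so that different branches can be answered independently without conflict.
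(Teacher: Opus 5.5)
\textbf{There is a genuine gap.} Your proof rests on a misreading of the definition, and this breaks both directions.

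\textbf{The definition.} In the paper a $\calU$-function is a partial \emph{multi-valued} function $F\pcolon\w\tto\w$ with $F(n)\in\calU$ for every $n\in\dom(F)$. It is the choice problem ``given $n$ publicly, output some element of the $\calU$-large set $F(n)$''. It is not a single-valued adversary answering oracle queries. In $F\clt\calVS$, the function $F$ is the problem being solved, not an oracle.

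\textbf{Direction $(\Rightarrow)$.} Suppose $F$ is single-valued with values merely lying in $\calU$-large sets. Then every $f\colon\w\to\w$ qualifies, since $\w\in\calU$. Your key claim $F\clt\calU$ then fails: for $\calU=\Fin^*$, only hyperarithmetic $f$ satisfy $f\clt\Fin^*$. Your justification also runs the wrong way. ``Every $\calU$-computation succeeds against $F$'' would at best turn reductions \emph{to} $\calU$ into reductions to $F$; it does not give $F\clt\calU$. With the correct definition this direction is immediate. If computable $(\gamma,\Phi)$ witnesses $\calU\clt\calVS$ and $n\in\dom(F)$, then $F(n)\in\calU$. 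So there is a $(\calVS,\gamma)$-branching tree $T$ with $[T]\subseteq\dom(\Phi)$ and $\Phi[T]\subseteq F(n)$, uniformly in $n$.

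\textbf{Direction $(\Leftarrow)$.} The missing idea is that $\calU$-functions are set-valued. So the \emph{secret} set defeating a candidate reduction can be made \emph{public} as a value of $F$.
\begin{enumerate}
\item Assume $\calU\not\clt\calVS$ and enumerate the computable pairs $(\gamma_e,\Phi_e)$.
\item For each $e$, choose $A_e\in\calU$ such that no $(\calVS,\gamma_e)$-branching tree $T$ has $[T]\subseteq\dom(\Phi_e)$ and $\Phi_e[T]\subseteq A_e$.
\item Set $F(e):=A_e$. This is a total $\calU$-function.
\item Suppose computable $(\gamma,\Phi)$ witnessed $F\clt\calVS$. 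Then $e\mapsto(\gamma^{[e]},\Phi^{[e]})$ is given by a computable map $c$ on indices.
\item Kleene's recursion theorem gives $r$ with $(\gamma_{c(r)},\Phi_{c(r)})=(\gamma_r,\Phi_r)$. The tree witnessing success at input $r$ then contradicts the choice of $A_r$.
\end{enumerate}
Alternatively, diagonalise directly: let $F(e)$ be a set in $\calU$ defeating the $e$-th candidate reduction restricted to input $e$.

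Your finite-extension scheme cannot substitute for this. The failure of a candidate is witnessed by an entire set $A_e\in\calU$, not by a finite losing play. Finitely many single answers are not admissible values of a $\calU$-function when $\calU$ has no small members, for example $\calU=\Fin^*$. The finite-branch lemma and the appeal to $\delta$-Fubini powers in your ``main obstacle'' come from the misreading and are not needed.
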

\noindent Informally, any upper set $\calU$ may be regarded either as a single combinatorial notion,
 or as the generator of a family of derived computability-theoretic notions (a ``$\calU$-function'').
Theorem~\ref{mthm:gen-phoa} states that domination of $\calU$ within $\clt$ is equivalent to the uniform domination of all derived oracles associated to $\calU$. Phoa's original theorem is the case where $\calU$ corresponds to the double-negation topology $\dn$, and the associated $\calU$-functions recover the classical Turing degrees. 

What makes this perspective interesting is that many familiar notions in \textbf{computability theory} and \textbf{descriptive set theory} exhibit this fundamental relationship. To illustrate, fix an $\omega$-parametrised complexity class $\Gamma$ (e.g.~$\Sigma^i_n$ and $\Pi^i_n$), and restrict attention to the family of all $\Gamma$-sets from $\calU$. This presents a natural choice problem: given a $\Gamma$-index $e$ of a $\calU$-large set $A_e$, output an element of $A_e$. Theorem~\ref{mthm:gen-phoa} then states that an oracle notion is more complex than $\calU$ if and only if it is more complex than the $\Gamma$-restriction of $\calU$ for any complexity class $\Gamma$. Further discussion is deferred to Section~\ref{sec:generalised-Phoa}. 

	\medskip
	
	We close the introduction by returning to the broader perspective. Problem~\ref{prob:test} asked about the relationship between the Rudin-Keisler order and the $\clt$-order, and the implications for computability theory. Together, Theorems~\ref{mthm:main-thm} - ~\ref{mthm:gen-phoa} provide a fairly complete answer, but they do so by revealing deep structural links that are only beginning to come into focus. In particular, these connections give new energy to exploring how important dividing lines from one area — e.g. cardinal invariants of the continuum in set theory, separation problems in constructive reverse mathematics, combinatorial notions of independence in model theory, etc. — may find meaningful translations across fields. As such, far from closing off a line of investigation, our results provide the vocabulary for asking new questions, laying the groundwork for a wide-ranging research programme. Section~\ref{sec:test-prob} develops this view, and records some test problems for future investigation.

	\tableofcontents

	\section{Notation and Conventions}\label{sec:conv}

	\begin{convention} 	$\w$ denotes the set of natural numbers, and $\calP(I)$ the power set of a set $I$. Typically, $I\subseteq\w$.
	\end{convention}
	
	
	\begin{convention}[Strings]\label{con:string} $\ww$ denotes the set of infinite strings of natural numbers; $\lww$ denotes the set of all finite strings. For any string $x$ (finite or infinite), denote the initial segment of length $n$ as $x\rstr n$; such an initial segment is called a {\em prefix} of $x$. Further, given finite strings $\sigma,\tau\in\lww$ :
		\begin{itemize}
			\item 	Write $\sigma\preceq\tau$ if $\tau$ extends $\sigma$. 
			Call a pair $(\sigma,\tau)$ {\em incomparable} just in case $\sigma\not\preceq \tau$ and $\tau\not\preceq \sigma$. 
			
				\item The empty string is denoted $\o$; it is clear $\o \preceq \sigma$ for all $\sigma\in\lww$.
			\item  The concatenation of $\sigma$ and $\tau$ is denoted by $\sigma\fr\tau$; for a singleton $\lrangles{m}$, abbreviate $\sigma\fr\lrangles{m}$ as $\sigma\fr m$.
			\item The length of $\sigma$ is denoted $|\sigma|$. In this language, the Cartesian power $\w^n$ is the set of all (incomparable) strings $\sigma\in\lww$ such that $|\sigma|=n$. Notice this definition excludes the empty string $\o$.
		\end{itemize}
	\end{convention}
	
	\begin{convention}[Tree] A {\em tree} is a subset $T\subseteq \lww$ whereby:
	\begin{enumerate}[label=(\alph*)]
		\item $T$ contains the empty string $\o$, the {\em root} of $T$. 
		\item $T$ is downward-closed under initial segments: if $\tau \preceq \sigma \in T$, then $\tau \in T$.
	\end{enumerate}	
We often refer to the elements of $T$ as its {\em nodes}. The {\em height} of a tree $T$ is the maximum length of a string in $T$ -- e.g. the empty tree consisting only of the root $\o$ has height 0. The {$k$-truncation} of $T$, denoted $T\rstr k$ is the set of all nodes $\sigma\in T$ such that $|\sigma|\leq k$. Finally, an {\em infinite path through $T$} is a string $p \in \ww$ such that $p \rstr n \in T$ for all $n$. We write $[T]$ for the set of all infinite paths through $T$. 
	\end{convention}
	
	\begin{convention}[Partial Functions]
The notation $f\pcolon X\to Y$ denotes $f$ is a partial function from $X$ to $Y$, and the notation $f\pcolon X\tto Y$ denotes $f$ is a partial multi-valued function from $X$ to $Y$, i.e., $f\pcolon X\to\mathcal{P}(Y)$.
We often write $f(x)\dar$ if $x\in \dom(f)$ and $f(x)\uar$ otherwise;
	\end{convention}

	\begin{convention}[Partial Continuous Functions]\label{con:pcf} 	 
	A {\em partial continuous function} $$\Phi\pcolon \ww \to \w$$
		is determined by a partial function $\varphi\pcolon \lww \to \w$ satisfying:
		\begin{enumerate}[label=(\alph*)]
			\item {\em Extension.}
				$$p \in \dom(\Phi) \,\, \text{and} \,\, \Phi(p) = c \iff \exists n\in\w \,\,\text{s.t.}\, p \rstr n \in \dom(\varphi) \,\,\text{and}\,\, \varphi(p \rstr n) = c.$$
			\item {\em Coherence.} If $\sigma,\tau\in\dom(\varphi)$ and $\sigma\preceq \tau$, then $\varphi(\sigma)=\varphi(\tau).$ 
		\end{enumerate}
		
		Condition (a) says $\Phi$ extends $\varphi$ from finite to infinite strings in the obvious way; Condition (b) ensures the extension is well-defined.
Alternatively, Condition (b) may be replaced with 
	\begin{enumerate}
	\item[(b$^\prime$)] {\em Antichain.} ${\rm dom}(\varphi)$ is an {\em antichain} (i.e., a set of pairwise incomparable strings).
	\end{enumerate}
	For (b)$\Rightarrow$(b$^\prime$), restrict the domain of $\varphi$ to $\preceq$-minimal elements. For
(b$^\prime$)$\Rightarrow$(b), notice we may extend $\dom(\varphi)$ to its $\preceq$-upward closure while preserving coherence.
In such a case, we often identify $\Phi$ with $\varphi$, writing $\Phi(\sigma)$, which is just $\varphi(\sigma)$.
	\end{convention}

\begin{convention}[Upper Sets and Filters]\label{con:filters} Any collection of subsets $\calA\subseteq \Pw$ is called a {\em subset family}, and will be denoted in caligraphic font. In particular:
\begin{itemize}
	\item An {\em upper set} $\calU \subseteq \Pw$ is a subset family upward closed under $\subseteq$, i.e. if $A \in \calU$ and $A \subseteq B$, then $B \in \calU$. To avoid trivialities, we assume $\calU$ is non-empty and  $\emptyset\notin\calU$. A lower set is defined dually.
	\item  For an upper set $\calU$, its {\em dual lower set} is $\calU^*:= \{A \subseteq \w : \w \setminus A \in \calU\}$; the dual upper set is defined analogously. 
	\item A {\em filter} is an upper set closed under finite intersections; dually, an {\em ideal} is a lower set closed under finite unions. 
\end{itemize}	
\end{convention}

\begin{convention}[Base-Change]\label{con:base-change}
We often consider subset families $\calA \subseteq \calP(\w^n)$ defined on finite powers of $\w$.
To regard them as subset families in $\Pw$, as usual, fix a computable bijection
$$\lranglet{-}{-}\colon \w\times\w\longrightarrow \w$$
and iterate this construction: $\langle x_1,x_2,\dots,x_n\rangle:=\langle x_1,\langle x_2,\dots,x_n\rangle\rangle$.
In general, as in standard literature, a finite string $\sigma\in\lww$ is also implicitly identified with a natural number via a computable bijection 
\[\code{-}\colon\lww\simeq\omega.\]
To emphasise this coding for a given $A\subseteq\lww$, we use the notation $\code{A}=\{\code{\sigma}\in\w\mid\sigma\in A\}$ and for $\calA\subseteq\mathcal{P}(\lww)$, use $\code{\calA}=\{\code{A}:A\in\calA\}$.
%
%
\end{convention}

\begin{convention}[Upward Closure] For any subset family $\calA \subseteq \Pw$, its {\em upward closure} is defined by
	\[
	\calA^{\up} := \{X \subseteq \w \bmid \exists A \in \calA \text{ with } A \subseteq X\}.
	\]
 $\calA^\up$ is an upper set by construction. If $\calA \subseteq \mathcal{P}(\w^n) $, we write $\calA^{\up}$ to mean the upward closure of $\code{\calA}$ in $\Pw$.
\end{convention}

\begin{convention}[Projections]\label{con:coordinate} Every number $x\in\w$ is identified with a pair $\lranglet{a}{b}$ via our fixed computable bijection $\lranglet{-}{-}$. 
Accordingly, we let
\[
\pi_0,\pi_1\colon\omega\to\omega,\qquad 
\pi_i(\langle a,b\rangle)=
\begin{cases}
a,& i=0,\\
b,& i=1,
\end{cases}
\]
and extend both projections coordinate-wise to $\lww$ and $\w$; in the case of $\lww$, set $\pi_0(\o)=\o=\pi_1(\o)$. Finally, since $\lranglet{-}{-}$ is computable by assumption, so are all the projection maps. 
\end{convention}

\begin{convention}[Realisability]\label{con:realis} Fix once and and for all an enumeration of partial computable functions $\{\varphi_{e}\pcolon\w\to\w\}_{e\in\w}$. Here $\varphi_e$ denotes the {\em $e$th partial computable function on $\w$}, and $\varphi^\alpha_e$ denotes the {\em $e$th partial computable function relative to oracle $\alpha$}. Following Convention~\ref{con:pcf}, for any partial function $\varphi$, write $\varphi(n)\dar$ if $\varphi(n)$ is $n\in\dom (\varphi)$ (i.e. ``$\varphi(n)$ is defined''), and write $\varphi(n)\uar$ if otherwise. For any pair of subsets $A,B\subseteq \w$, the {\em realisability implication} is defined as
	$$ A\to B :=\{ e \mid \forall a\in A\,, \; \varphi_e(a)\dar\text{and}\; \varphi(a)\in B\,\}.$$
\end{convention}

\begin{convention}[$\clt$ vs. $\glt$]\label{con:orders} There are two important orders in this paper. {\em A priori}, they are unrelated but of course the point of the paper is to make the connection. 
	\begin{enumerate}[label=(\roman*)]
		\item \textbf{Lawvere-Tierney order.} Unless stated otherwise, an LT topology $j$ will always mean an endomorphism $$j\colon \Pw\to\Pw$$ validating all three conditions in Definition~\ref{def:LT-topology}.
	The $\clt$-order on LT topologies is defined as
			$$j\clt k : \iff \forall p.j(p)\to k(p) \, \text{is valid}.$$	
Explicitly, this means there exists some partial computable function $\varphi_{e}$ witnessing {\em all} realisability implications listed in Definition~\ref{def:LT-topology}, and another witnessing $j\clt k$ ; for details, see \cite[\S 2]{LvO13}. Equivalence classes in $\clt$ are referred to as {\em LT degrees}. Finally, given a pair of subset families $\calA,\calB\subseteq \Pw$, we write
	$$\calA\clt \calB$$
	to mean $j_\calA\clt j_\calB$ in the sense of Summary Theorem~\ref{sumthm-1}.
		\item \textbf{Gamified Kat\v{e}tov order.} This order is defined on upper sets over $\w$, and is denoted $$\calU\glt\calV$$ 
		for upper sets $\calU,\calV\subseteq\Pw$. For the precise definition of $\glt$, see Definition~\ref{def:GTK}. The suggestive notation anticipates the connection with the original $\clt$-order. In particular, the $\circ$ superscript stands for ``oracle'', for reasons that will be explained in due course (Conclusion~\ref{con:preorder}).
	\end{enumerate}
\end{convention}	

Finally, a remark on the exposition. To balance the demands of precision versus accessibility within the paper, the technical statements of our results are typically presented as \textbf{Theorems}, while \textbf{Conclusions} provide an informal, conceptual account of their significance.

	\section{A Brave New Order}\label{sec:brave-new-order} This section develops the set theory underlying the Gamified Kat\v{e}tov order, which will lead to connections with the Effective Topos $\Eff$. We start by laying the groundwork: Section~\ref{sec:ultf} reviews some basic preliminaries on ultrafilters, which motivates Section~\ref{sec:GTK} on the Gamified Kat\v{e}tov order. Section~\ref{sec:fubini} makes precise the slogan: ``The Gamified Kat\v{e}tov order is equivalent to the Kat\v{e}tov order closed under well-founded iterations of Fubini powers.'' Section~\ref{sec:game} explains the game-theoretic perspective. Finally, Section~\ref{sec:clt} makes the important connection: the $\clt$ order on upper sets is equivalent to the computable analogue of the Gamified Kat\v{e}tov order.
	
	\subsection{Preliminaries on Ultrafilters}\label{sec:ultf} These preliminaries are written with the non-set-theorist in mind. The expert reader may read them as a statement of intent, or skip to Section~\ref{sec:GTK} and refer back as needed.

	\begin{definition}[Ultrafilters]\label{def:ultrafilter} Let $\calU\subseteq \Pw$ be a filter.
		\begin{enumerate}[label=(\roman*)]
			\item Call $\calU$ an {\em ultrafilter} if it has an opinion  opinion on all subsets of $\w$: given any $A\subseteq\w$, either $A\in\calU$ or $\w\setminus A\in\calU$ (but not both). Notice $\calU$ is an ultrafilter iff it is maximal, i.e. not strictly contained by another filter.  
			\item Call $\calU$  {\em principal} just in case there exists some $S\subseteq\w$ such that $\calU=\{A\subseteq \w \mid S\subseteq A\}$; otherwise, call $\calU$ {\em non-principal}. Notice $\calU$ is a principal ultrafilter iff the set $S=\{\ast\}$ is a singleton.
		\end{enumerate} 
	\end{definition}
	
	In fact, Definition~\ref{def:ultrafilter} can be extended to define ultrafilters on an arbitrary set $I$, not necessarily countable. Ultrafilters have a long and varied history, see e.g.~\cite{Gold22}. Part of their subtlety has to do with their interaction with the Axiom of Choice. If we eliminate choice, then it is consistent with ZF that all ultrafilters are principal \cite{BlassUltrafilters}, and thus trivial from a combinatorial standpoint. By contrast, while invoking choice guarantees the existence of non-principal ultrafilters, their non-constructive nature makes them difficult to see directly, and thus understand. 
	
	There are various approaches to this problem. One strategy is to work indirectly: study ultrafilters via their effects on other objects, and draw our conclusions based on the interaction. 
	 Another approach is to develop a framework for comparing the complexity of ultrafilters directly, as below:
	
	\begin{definition}\label{def:rk-kat-order} Let $\calU,\calV\subseteq\Pw$ be upper sets.
		\begin{enumerate}[label=(\roman*)]
			\item \textbf{Rudin-Keisler order.} We define the relation
			$$\calU\lrk \calV$$
			just in case there is a function $h\colon\w\to\w$ such that 
			$$A\in\calU \quad \iff \quad h^{-1}[A]\in\calV \quad \text{for any}\, A\subseteq\w.$$
			\item \textbf{Kat\v{e}tov order.} We define the relation 
			$$\calU\lk\calV$$
			just in case there is a function $h\colon\w\to\w$ such that 
			$$A\in\calU \quad \implies \quad h^{-1}[A]\in\calV \quad \text{for any}\, A\subseteq\w.$$
		\end{enumerate}
	\end{definition}	
	
Informally, the upper sets $\calU,\calV$ each defines an abstract notion of largeness. The Kat\v{e}tov order says: $\calU$ is simpler than $\calV$ just in case there is a witness $h$ such that every $\calU$-large set is also seen as $\calV$-large. Rudin-Keisler strengthens this condition: a set is $\calU$-large {\em iff} it is witnessed to be $\calV$-large.
	
	\begin{remark} Notice that Definition~\ref{def:rk-kat-order} is stated for upper sets, not just ultrafilters. Some follow-ups:
		\begin{enumerate}[label=(\roman*)]
			\item The broader definition should reassure the constructive mathematician: our results will apply not only to ultrafilters (which depend on choice) but also to upper sets (which do not).\footnote{In fact, the Kat\v{e}tov order was originally introduced as an order on non-principal filters (not necessarily maximal) in order to study convergence in topological spaces \cite{Kat68}.} 
			\item As stated, the Kat\v{e}tov order is {\em a priori} coarser than Rudin-Keisler since it only requires forward implication. Nonetheless, the two orders coincide when restricted to the ultrafilters: $\calU\lrk \calV \iff \calU\lk \calV$ whenever $\calU,\calV$ are ultrafilters, see e.g. \cite[Lemma 2.1.7]{FlasPhD}.
			\item The same definition defines $\lrk$ and $\lk$ for lower sets.
		\end{enumerate}	
	\end{remark}

	With these definitions in place, questions about ultrafilters now translate into structural questions about the partial order. How many equivalence classes are there? Is it linear? Well-ordered? etc. One might hope the $\lrk$-order offers a systematic framework for separating major differences between ultrafilters from the minor ones. Unfortunately, as Blass writes, this is too optimistic: 
	\medskip
	
	\begin{fquote}[A. Blass \cite{Bl85}]
		Although much is known about the Rudin-Keisler ordering, [...] most of this information supports the view that [it is] \textbf{extremely complex} and it is not reasonable to try to classify ultrafilters up to isomorphism [i.e. up to $\erk$-equivalence]. \textbf{It is natural, in this situation, to consider coarser equivalence relations than isomorphism.}\footnote{The emphasis in the quoted passage is ours.}
	\end{fquote}
	
One source of this complexity comes from Fubini products. To see why, we recall the standard definition for both upper and lower sets.
	
	\begin{definition}[Fubini Products]\label{def:fubini-prod} Given any $A\subseteq \w^2$, define its {\em $m$-section} $A_{(m)}:=\{l\in \w\mid  (m,l) \in A\}$. 
		\begin{enumerate}[label=(\roman*)]
			\item \textbf{Upper Sets.} For upper sets $\calU,\calV\subseteq \Pw$, their {\em Fubini product} is defined
			$$A\in \calU\otimes\calV\iff  \{m\in \w\mid A_{(m)}\in\mathcal{V}\}\in\mathcal{U}.$$
			Iterating this construction yields $$\calU^{\otimes n}:=\underbrace{\calU\otimes \dots \otimes \calU}_{n\;\text{times}}\,,$$ the {\em $n$th Fubini power} of $\calU$. By convention, we set $\calU^{\otimes 1}:=\calU$.
 \item \textbf{Lower Sets.} For lower sets $\calH,\calI \subseteq \Pw$, their {\em Fubini product} is defined
 \[A\in\calH\otimes\calI \iff \{m\in\omega\mid A_{(m)}\not\in\calI \}\in\calH.\]
		\end{enumerate}
Notice that, e.g. $\calU\otimes \calV$ can always be regarded as an upper set on $\w$ via a fixed bijection $\w^2\simeq \w$ (Convention~\ref{con:base-change}).        
	\end{definition}

\begin{remark}\label{rem:fubini-consistent} The Fubini product is typically defined for filters/ideals, so it is worth verifying that the same definitions also extend consistently to upper/lower sets. Given lower sets $\calH,\calI\subseteq\Pw$, 
	$$A\in\calH\otimes\calI \Leftrightarrow \{ n\mid A_{(n)} \notin \calI \}\in\calH \Leftrightarrow \{ n\mid A_{(n)} \in \calI \}\in\calH^\ast \Leftrightarrow \{ n\mid (\omega\setminus A)_{(n)} \in \calI^\ast\}\in\calH^\ast.$$
Hence, $A\in (\calI\otimes\calI)^\ast \Leftrightarrow \{n \mid A_{(n)}\in\calI^\ast\}\in\calI^\ast$, and so $(\calI^*\otimes\calI^*)=(\calI\otimes\calI)^\ast$. 

\end{remark}

The following fact delivers the punchline:
	
	\begin{fact}[{{\cite[Prop. 1.6.10]{Gold22}}}]\label{fact:rk-max} Given any pair of non-principal ultrafilters $\calU,\calV$, their Fubini product $\calU\otimes\calV$ defines an ultrafilter such that 
		$$\calU<_{\mathrm{RK}} \calU\otimes \calV \qquad \text{and} \qquad \calV<_{\mathrm{RK}} \calU\otimes \calV.$$
	\end{fact}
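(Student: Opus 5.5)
We need to show three things: that $\calU\otimes\calV$ is an ultrafilter, that $\calU\lrk\calU\otimes\calV$ and $\calV\lrk\calU\otimes\calV$, and that both inequalities are strict. Throughout, we regard $\calU\otimes\calV$ as living on $\w^2$, transported along the fixed bijection of Convention~\ref{con:base-change}.

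\textbf{Ultrafilter.} Let $A\subseteq\w^2$. Since $\calV$ is an ultrafilter, exactly one of $A_{(m)}$ and $(\w^2\setminus A)_{(m)}=\w\setminus A_{(m)}$ lies in $\calV$, for each $m$. So the sets $\{m\mid A_{(m)}\in\calV\}$ and $\{m\mid(\w^2\setminus A)_{(m)}\in\calV\}$ are complementary in $\w$. Exactly one of them lies in $\calU$, so exactly one of $A$ and $\w^2\setminus A$ lies in $\calU\otimes\calV$.

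Closure under finite intersections uses $(A\cap B)_{(m)}=A_{(m)}\cap B_{(m)}$, together with the filter property of $\calV$ and then of $\calU$. Upward closure is immediate, and $\emptyset\notin\calU\otimes\calV$.

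\textbf{The two reductions.} For $\calU$, take the projection $h=\pi_0\colon(m,l)\mapsto m$. Then $(h^{-1}[A])_{(m)}$ equals $\w$ if $m\in A$ and $\emptyset$ otherwise, so $h^{-1}[A]\in\calU\otimes\calV$ iff $A\in\calU$.

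For $\calV$ there is no single projection that works, since $\pi_1^{-1}[B]=\w\times B$ is large iff $B\in\calV$ only when $\calU$ is non-trivial. In fact $\w\times B\in\calU\otimes\calV$ iff $B\in\calV$ does hold for every $\calU$, because the set of $m$ with $B\in\calV$ is either $\w$ or $\emptyset$. So $h=\pi_1$ also witnesses $\calV\lrk\calU\otimes\calV$.

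\textbf{Strictness.} This is the main obstacle. Suppose $g\colon\w^2\to\w$ witnesses $\calU\otimes\calV\lrk\calU$, meaning $A\in\calU\otimes\calV\iff g^{-1}[A]\in\calU$. Consider the image set $C=\{(m,0)\mid m\in\w\}\cup\dots$; rather than guess, use the standard argument. For an ultrafilter, $\lrk$-reduction via $g$ implies the image filter $g_*\calU$ equals $\calU\otimes\calV$. Hence $\calU\otimes\calV$ contains a set $g[\w]$ on which we can compare.

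The cleaner route is the following. For each $m$, the section functions show that $\calU\otimes\calV$ has a \emph{discrete} witness. Define $S=g[\w]$ and note $S\in\calU\otimes\calV$. Using the RK-preimage of $\pi_0$, we get $\pi_0\circ g$ pushing $\calU$ to $\calU$.

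By the well-known lemma that any $f$ with $f_*\calU=\calU$ is the identity on a $\calU$-large set (Frolík/Katětov's three-set lemma), $\pi_0(g(n))=n$ on some $X\in\calU$. Then $g[X]$ meets each column $\{m\}\times\w$ in at most one point. But every set in $\calU\otimes\calV$ must meet $\calU$-many columns in a $\calV$-large, hence infinite (by non-principality), set. This contradicts $g[X]\in\calU\otimes\calV$.

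For $\calV<_{\mathrm{RK}}\calU\otimes\calV$, the same argument applies to a putative $g$ reducing $\calU\otimes\calV$ to $\calV$. First, $\pi_0\circ g$ witnesses $\calU\lrk\calV$. Next, $\pi_1\circ g$ pushes $\calV$ to $\calV$, so $\pi_1\circ g$ is the identity on some $Y\in\calV$. Then $g[Y]$ meets each row $\w\times\{l\}$ in at most one point.

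I expect this step to need care. Consider the section sets $(g[Y])_{(m)}$ for $m$ in a $\calU$-large set: each must be $\calV$-large, and distinct $m$ give disjoint sections, because each $l\in Y$ yields a single point $g(l)$. Taking two distinct such $m,m'$ produces two disjoint $\calV$-large sets. This is impossible, since $\calU$ non-principal ensures there are at least two such $m$.

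The Katětov identity lemma is the one external ingredient I would cite. Everything else is direct section bookkeeping.
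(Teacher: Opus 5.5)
Your proof is correct. The paper gives no argument of its own here; it only cites Goldbring, so there is no in-text proof to compare against. What you give is the standard textbook route:
\begin{itemize}
\item the projections $\pi_0,\pi_1$ witness $\calU\lrk\calU\otimes\calV$ and $\calV\lrk\calU\otimes\calV$;
\item strictness comes from Kat\v{e}tov's lemma: if $f_*\calW=\calW$ for an ultrafilter $\calW$, where $f_*\calW:=\{A\mid f^{-1}[A]\in\calW\}$, then $\{n\mid f(n)=n\}\in\calW$;
\item applying this lemma to $\pi_0\circ g$ (resp.\ $\pi_1\circ g$) makes $g[X]$ meet each column (resp.\ $g[Y]$ meet each row) in at most one point, which is incompatible with membership in $\calU\otimes\calV$.
\end{itemize}
A good feature of your argument is that each strictness uses exactly one hypothesis. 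The first uses only that $\calV$ is non-principal, so every $\calV$-large section is infinite. The second uses only that $\calU$ is non-principal, so the $\calU$-large set of good columns has at least two elements and yields two disjoint $\calV$-large sections.

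Some slips to fix when writing this up:
\begin{itemize}
\item A witness for $\calU\otimes\calV\lrk\calU$ (or $\lrk\calV$) must be a map $g\colon\w\to\w^2$, not $\w^2\to\w$ as you wrote. Your later uses of $g[X]$ and $\pi_0\circ g$ already assume the correct signature.
\item The sentences about the set $C$, the ``discrete witness'', and $S=g[\w]$ do no work and should be deleted.
\item Your opening remark that ``no single projection works'' for $\calV$ contradicts the correct verification that follows it, so remove it.
\item State explicitly the one-line step $g[X]\in\calU\otimes\calV$, which holds because $g^{-1}[g[X]]\supseteq X\in\calU$. Likewise for $g[Y]$ with $\calV$.
\end{itemize}
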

	
Two implications stand out. One, there cannot exist an $\lrk$-maximal non-principal ultrafilter $\calU$, since we always have $\calU <_{\mathrm{RK}} \calU \otimes \calU$. More fundamentally, Fubini products generate a great deal of combinatorial noise within the Rudin-Keisler Order. In view of Blass' remark, one may wonder if there exists a framework where this noise is muted whilst still preserving meaningful distinctions. This sets the stage for our Gamified Kat\v{e}tov order, which we turn to next.


\subsection{What is the Gamified Kat\v{e}tov order?}\label{sec:GTK} We begin with a simple reformulation of the Kat\v{e}tov order; this will point to the natural direction for our generalisation. First, a basic observation:

\begin{observation}\label{obs:katetov-upset-B} Let $\calU,\calV\subseteq\Pw$ be upper sets. 
	\begin{enumerate}[label=(\roman*)]
		\item $\calU\lrk \calV $ just in case there exists a function $h\colon\w\to\w$ such that
			$$A\in\calU \quad \iff \quad h[B]\subseteq A \,\, \text{for some}\, B\in\calV.$$	\item $\calU\lk \calV $ just in case there exists a function $h\colon\w\to\w$ such that
			$$A\in\calU \quad \implies \quad h[B]\subseteq A \,\, \text{for some}\, B\in\calV.$$
	\end{enumerate}
\end{observation}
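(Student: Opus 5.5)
The plan is to unfold Definition~\ref{def:rk-kat-order} and translate each preimage condition into a condition on images, using only that $\calU$ and $\calV$ are upper sets. The basic equivalence is: for $h\colon\w\to\w$, $B\subseteq\w$ and $A\subseteq\w$,
$$h[B]\subseteq A \iff B\subseteq h^{-1}[A].$$
Together with upward closure of $\calV$, this gives, for each fixed $A$,
$$h^{-1}[A]\in\calV \iff \exists B\in\calV\;\; h[B]\subseteq A.$$
For the forward direction, take $B:=h^{-1}[A]$; then $h[B]\subseteq A$ trivially. For the backward direction, if $B\in\calV$ and $h[B]\subseteq A$, then $B\subseteq h^{-1}[A]$, so $h^{-1}[A]\in\calV$ because $\calV$ is an upper set.

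With this pointwise equivalence in hand, item (ii) follows immediately. The Kat\v{e}tov condition ``$A\in\calU \implies h^{-1}[A]\in\calV$ for all $A$'' becomes, after substituting the equivalence on the right-hand side, exactly ``$A\in\calU\implies h[B]\subseteq A$ for some $B\in\calV$''. The witness $h$ is unchanged in both directions.

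Item (i) goes the same way. The Rudin--Keisler biconditional ``$A\in\calU\iff h^{-1}[A]\in\calV$'' becomes ``$A\in\calU\iff \exists B\in\calV\, h[B]\subseteq A$'' by the same substitution, which is legitimate because the equivalence holds for every $A$ separately.

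There is no real obstacle here. The only point that needs care is that the backward direction of the key equivalence uses upward closure of $\calV$; this is why the observation is stated for upper sets. Closure of $\calU$ is not needed at all.
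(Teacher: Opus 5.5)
Your proposal is correct and follows the paper's argument exactly: the pointwise equivalence $h^{-1}[A]\in\calV \iff \exists B\in\calV\;\, h[B]\subseteq A$, which needs only the upward closure of $\calV$, substituted into each clause of Definition~\ref{def:rk-kat-order}. You spell out both directions of that equivalence, whereas the paper simply calls it ``obvious''.
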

\begin{proof} Obvious: since $\calV$ is an upper set, $h^{-1}[A]\in\calV$ iff $h[B]\subseteq A$ for some $B\in\calV$. 
\end{proof}

Our key idea is to interpret Observation \ref{obs:katetov-upset-B} (ii) as a game.
Fix upper sets $\calU,\calV$ and consider the following imperfect information two-player game:

\[
\begin{array}{rccccc}
{\rm I}\colon	& A\in\mathcal{U}	&		& x\in B	&		\\
{\rm II}\colon	&		& B\in\mathcal{V}	&		& y\in \w	
\end{array}
\]
\textbf{Rules of the game.} Call any function $h\colon\w\to\w$ a {\em strategy} of Player II. At the start of the game, Player II fixes some strategy $h$, which is then challenged by Player I. 
\begin{itemize}
	\item \textbf{Round 1a.} Player I picks some $A\in\calU$.
	\item \textbf{Round 1b.} Player II responds with $B\in\calV$.
	\item \textbf{Round 2a.} Player I picks some $x\in B$. 
	\item \textbf{Round 2b.} Player II outputs $y$, whereby $h(x)=y$, and the game terminates. 
\end{itemize}
Player II's output is called  {\em valid} if $y\in A$. In particular, Player II has a \textbf{winning strategy} if there exists an $h$ that gives a valid answer to all challenges by Player I. Stated more explicitly: given any $A\in\calU$, there exists a $B\in\calV$ such that $h[B]\subseteq A$. By Observation~\ref{obs:katetov-upset-B}, this is exactly the direct image characterisation of
$$\calU\lk\calV.$$ 

\begin{remark}[Oracle Perspective]\label{rem:oracle-katetov}  The set $B$ played in Round 1b need not be $h^{-1}[A]$; any $B\in\calV$ such that $h[B]\subseteq A$ suffices. Thus, Player II may be viewed as querying an {\em oracle} $\calV$: given the problem $A$, the oracle returns a suitable witness $B$, which Player II then accepts as a black box.
\end{remark}

This raises a natural question: if $\lk$ corresponds to a one-query game, what partial order emerges from allowing Player II to make \emph{finitely many} queries instead? That is, imagine Players I and II generate a finite sequence
\[
{\small
\begin{array}{rccccccccc}
{\rm I}\colon	& A\in\mathcal{U}	&		& x_0\in B_0	&		 & x_1\in B_1 & \dots &                               & x_k\in B_k & \\
{\rm II}\colon	&		& B_0\in\mathcal{V}	&		& B_1\in\mathcal{V}	 & &  \dots & B_k\in\mathcal{V} &                   & y\in \w
\end{array}
}\,,\]
where Player II outputs a response $y\in\w$ based solely on $(x_0,\dots, x_k)$. What would it mean for II to have a winning strategy in this multi-round setting? To capture this formally, we pass from single elements to finite sequences, and from functions $f\colon\w\to\w$ to partial maps on $\ww$. We start by introducing the notion of $\calV$-branching.

\begin{definition}\label{def:game-tree} Let $\calV\subseteq\Pw$ be an upper set, and $T\subseteq\lww$ a tree. A node $\sigma\in  T$ is called {\em $\calV$-branching} if the set of immediate successors $\{n\in\w\mid \sigma\fr n\in T\}$ is in $\calV$. A tree $T$ is {\em $\calV$-branching} just in case: 
	\begin{enumerate}[label=(\alph*)]
		\item $T$ is non-empty; and
		\item For any $\sigma\in T$, $\sigma$ is $\calV$-branching.
	\end{enumerate}
We emphasise Condition (b) includes the case when $\sigma=\o$, the root of $T$.
\end{definition}

\begin{definition}[Gamified Kat\v{e}tov order]\label{def:GTK} Let $\calU,\calV\subseteq\Pw$ be upper sets. We define the relation
	$$\calU\glt\calV$$
	just in case there exists a partial continuous function (Convention~\ref{con:pcf}) $$\Phi\pcolon \ww\to\w$$
	such that, for any $A\in\calU$, there exists some $\calV$-branching tree $T$ such that $[T]\subseteq\dom(\Phi)$ and $\Phi([T])\subseteq A$, that is, $\Phi(p)\in A$ for any infinite path $p$ through $T$. We call $\glt$ the {\em Gamified Kat\v{e}tov order}.
\end{definition}

\begin{remark} One also obtains a Gamified Kat\v{e}tov order on lower sets by dualising. Namely, given lower sets $\calH,\calI \subseteq \Pw$, we write
	$$\calH\glt\calI$$  
just in case $\calH^*\glt \calI^*$ on the dual upper sets.
\end{remark}

At first glance, it may not be obvious that Definition~\ref{def:GTK} characterises the finite-query Kat\v{e}tov game just described. The connection will be made clear in Theorem~\ref{thm:game-GTK}, whose proof we defer to Section~\ref{sec:clt}. In the meantime, it may be helpful to keep the following informal picture in mind:

\begin{discussion}\label{dis:inf-GTK} In the classical Kat\v{e}tov game, Player II fixes a function $h\colon\w\to \w$, and uses it to respond to any challenge. In the gamified setting, Player II instead fixes a ``strategy'' $\varphi\pcolon\lww\to\omega$ that assigns an output to each possible sequence of Player I's moves, which determines a partial continuous function $\Phi\pcolon \ww\to\w$ (Convention \ref{con:pcf}). The $\calV$-branching tree $T$ encodes all admissible sequences of Player I's moves (relative to Player II's strategy). The condition $[T]\subseteq\dom(\Phi)$ ensures Player II can respond to every move sequence; the condition $\Phi(T)\subseteq A$ ensures that all such responses remain inside the chosen $\calU$-large set $A$.
\end{discussion}

\subsection{Interactions with Fubini Powers}\label{sec:fubini} To illuminate the core mechanism of the Gamified Kat\v{e}tov order, we examine its relationship with Fubini products. For this purpose, Kat\v{e}tov's original definition of the Fubini product is useful.

\smallskip

\begin{definition}[Kat\v{e}tov \cite{Kat68}]\label{def:Kat-sum}~
\begin{itemize}
\item For $A\subseteq I$ and a family $(B_a)_{a\in A}$ of subsets of $J$, their set-theoretic sum is defined as:
\[\sum_{a\in A}B_
a=\{(a,x)\in I\times J\mid a\in A\mbox{ and }x\in B_a\}.\]
\item For sets $\U\subseteq \mathcal{P}(I)$ and $\V\subseteq \mathcal{P}(J)$, their concatenation $\U\ast\V\subseteq \mathcal{P}(I\times J)$ is defined as: 
\[\U\ast \V=\left\{\sum_{a\in A}B_a\ \middle|\ A\in\U\mbox{ and }(\forall a\in A)\ B_a\in\V\right\}.\]

Iterating this construction yields
\[\calU^{\ast n}:=\underbrace{\calU\ast \dots \ast \calU}_{n\;\text{times}}.\]
\end{itemize}
\end{definition}

If $\U,\V\subseteq\mathcal{P}(\omega)$, then $\U\ast\V\subseteq\mathcal{P}(\omega^2)$.
Of course, via the canonical bijection $\langle-,-\rangle\colon\omega^2\simeq\omega$, this may also be identified with a subset of $\mathcal{P}(\omega)$. In particular, Kat\v{e}tov introduces the Fubini product of $\U$ and $\V$ as $(\U\ast\V)^\uparrow$, which we justify below:

\begin{lemma}\label{lem:Katetov-Fubini-product}
For upper sets $\U,\V\subseteq\mathcal{P}(\omega)$, $(\U\otimes\V)=(\U\ast\V)^\uparrow$.
\end{lemma}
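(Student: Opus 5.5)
The plan is a double inclusion, working throughout with subsets of $\w^2$; the transfer to $\Pw$ via $\lranglet{-}{-}$ is a bijection and plays no role. The one structural fact needed is that both $\U$ and $\V$ are upper sets.

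\emph{Inclusion $(\U\ast\V)^\uparrow\subseteq\U\otimes\V$.} First check that $\U\otimes\V$ is itself an upper set. If $A\subseteq A'$, then $A_{(m)}\subseteq A'_{(m)}$ for each $m$. Since $\V$ is upward closed, this gives
\[\{m\mid A_{(m)}\in\V\}\subseteq\{m\mid A'_{(m)}\in\V\}.\]
Since $\U$ is upward closed, $A\in\U\otimes\V$ then implies $A'\in\U\otimes\V$. It therefore suffices to show $\U\ast\V\subseteq\U\otimes\V$. Take $S=\sum_{a\in A}B_a$ with $A\in\U$ and each $B_a\in\V$. For $a\in A$ the section is $S_{(a)}=B_a\in\V$, so $A\subseteq\{m\mid S_{(m)}\in\V\}$. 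Upward closure of $\U$ then puts this set in $\U$, hence $S\in\U\otimes\V$.

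\emph{Inclusion $\U\otimes\V\subseteq(\U\ast\V)^\uparrow$.} Given $X\in\U\otimes\V$, set $A:=\{m\mid X_{(m)}\in\V\}$, so $A\in\U$ by definition. For $a\in A$ put $B_a:=X_{(a)}\in\V$. Then $\sum_{a\in A}B_a=\{(a,x)\mid a\in A,\ (a,x)\in X\}\subseteq X$, and this sum lies in $\U\ast\V$. Hence $X\in(\U\ast\V)^\uparrow$.

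There is no serious obstacle here. The only point needing care is that the first inclusion relies on upward closure of \emph{both} families: closure of $\V$ handles the sections, and closure of $\U$ handles the index set.
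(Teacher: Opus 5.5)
Your proof is correct and is essentially the paper's double-inclusion argument. The inclusion $\U\otimes\V\subseteq(\U\ast\V)^\uparrow$ is identical. For the converse you first show $\U\otimes\V$ is upward closed and then check $\U\ast\V\subseteq\U\otimes\V$, whereas the paper argues directly from $\sum_{a\in A}B_a\subseteq C$ that $B_a\subseteq C_{(a)}$ for $a\in A$; both use upward closure of $\V$ on the sections and of $\U$ on the index set, just packaged differently.
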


\begin{proof}
For $\subseteq$:
If $B\in\U\otimes\V$, put $A=\{a\in\omega\mid B_{(a)}\in\V\}$.
Then, by the definition of the Fubini product, we have $A\in\U$, so $\sum_{a\in A}B_{(a)}\in\U\ast\V$.
We also have $\sum_{a\in A}B_{(a)}\subseteq B$; hence $B\in(\U\ast\V)^\uparrow$.

For $\supseteq$:
If $C\in(\U\ast\V)^\uparrow$ then $\sum_{a\in A}B_a\subseteq C$ for some $A\in\U$ and $B_a\in\V$.
Clearly, $B_a\subseteq C_{(a)}$ for any $a\in A$; hence $A\subseteq\{a\in\omega\mid C_{(a)}\in\mathcal{V}\}\in\mathcal{U}$, which means $C\in\U\otimes\V$.
\end{proof}

Read in the context of our paper, Lemma~\ref{lem:Katetov-Fubini-product} motivates closer examination of how concatenation and upward closure interact. Applying Observation \ref{obs:katetov-upset-B} (ii), we start by extending the definition of the Kat\v{e}tov order to subset families in general:
\begin{itemize}
\item For $\U,\V\subseteq\mathcal{P}(\om)$, we write $\U\preceq_{\rm K}\V$ if there exists $\varphi\colon\om\to\om$ such that for any $A\in\U$ there exists $B\in\V$ with $\varphi[B]\subseteq A$.
We write $\U\approx_{\rm K}\V$ if $\U\preceq_{\rm K}\V$ and $\V\preceq_{\rm K}\U$.
\end{itemize}
In particular, Observation \ref{obs:katetov-upset-B} (ii) states that $\U\leq_{\rm K}\V$ iff $\U\preceq_{\rm K}\V$ for any upper sets $\U,\V\subseteq\mathcal{P}(\omega)$.

\begin{lemma}\label{lem:upward-closure-basic-properties}
Let $\U,\V\subseteq\mathcal{P}(\om)$ be subset families, not necessarily upper sets.
\begin{enumerate}[label=(\roman*)]
\item $(\U\ast \V)^{\uparrow}=(\U^\uparrow\ast\V^\uparrow)^\uparrow$.
\item $\U^\uparrow\approx_{\rm K}\U$.
\end{enumerate}
\end{lemma}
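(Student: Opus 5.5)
Both items follow by unwinding the definitions of $\ast$, of upward closure, and of $\preceq_{\rm K}$. In each case I would use the identity map (or inclusion) as the witness.

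For (i), I would prove two inclusions. For $\subseteq$: since $\U\subseteq\U^\uparrow$ and $\V\subseteq\V^\uparrow$, every sum $\sum_{a\in A}B_a$ with $A\in\U$ and $B_a\in\V$ already belongs to $\U^\uparrow\ast\V^\uparrow$. Hence $\U\ast\V\subseteq\U^\uparrow\ast\V^\uparrow$, and taking upward closures (a monotone operation) gives the claim. For $\supseteq$: let $C\supseteq\sum_{a\in A'}B'_a$, where $A'\in\U^\uparrow$ and $B'_a\in\V^\uparrow$ for each $a\in A'$. Choose $A\in\U$ with $A\subseteq A'$, and for each $a\in A$ choose $B_a\in\V$ with $B_a\subseteq B'_a$. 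Then
\[\sum_{a\in A}B_a\subseteq\sum_{a\in A'}B'_a\subseteq C,\]
so $C\in(\U\ast\V)^\uparrow$. This step needs the monotonicity of the set-theoretic sum in both the index set and the summands, which is immediate from its definition. It also uses a choice of the $B_a$, which is harmless at this level.

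For (ii), I would take $\varphi=\id_\om$ in both directions. To show $\U^\uparrow\preceq_{\rm K}\U$: given $X\in\U^\uparrow$, there is $A\in\U$ with $\id[A]=A\subseteq X$. To show $\U\preceq_{\rm K}\U^\uparrow$: given $A\in\U$, take $B=A\in\U^\uparrow$, and then $\id[B]\subseteq A$. Together these give $\U^\uparrow\approx_{\rm K}\U$.

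There is no real obstacle. The only point needing care is the $\supseteq$ direction of (i), where the index set $A'$ has to be shrunk to some $A\in\U$ and each summand $B'_a$ shrunk to some $B_a\in\V$ at the same time; after that, the sum inclusion is what closes the argument.
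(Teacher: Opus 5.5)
Your proposal is correct and follows essentially the same route as the paper. For (i), you obtain $\subseteq$ from $\U\ast\V\subseteq\U^\uparrow\ast\V^\uparrow$ and get $\supseteq$ by shrinking the index set and each summand at once; for (ii), you use the identity map in both directions.
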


\begin{proof}~
(i): For $\subseteq$: Clearly $\U\ast\V\subseteq\U^\uparrow\ast\V^\uparrow$, so $(\U\ast \V)^{\uparrow}\subseteq(\U^\uparrow\ast\V^\uparrow)^\uparrow$.

For $\supseteq$: If $A\in (\U^\uparrow\ast\V^\uparrow)^\uparrow$ then $A^-\subseteq A$ for some $A^-\in\U^\uparrow\ast\V^\uparrow$.
Then, $A^-$ is of the form $\sum_{i\in B}C_i$ for some $B\in\U^\uparrow$ and $C_i\in\V^\uparrow$ for $i\in B$.
Then, there are $B^-,C^-$ such that $B\supseteq B^-\in\U$ and $C\supseteq C^-\in\V$.
This implies $$ \underbrace{\sum_{i\in B^-}C_i^-}_{\in\,\U\ast\V}\subseteq\sum_{i\in B}C_i=A^-\subseteq A.$$
In other words, $A\in(\U\ast \V)^\uparrow$.

(ii): For $\succeq_{\sf K}$: Any $A\in\U$ is also in $\U^\uparrow$, and ${\rm id}[A]=A$.

For $\preceq_{\sf K}$: By definition, for any $A\in\U^\uparrow$ there exists $A^-\in\U$ with $A^-\subseteq A$.
\end{proof}



We now explain the connection to the Gamified Kat\v{e}tov order. As a warm-up, let $\calU,\calV,\calW\subseteq\Pw$ be upper sets, and consider the following {\bf 2-query Kat\v{e}tov game}:
\[
\begin{array}{rccccccc}
{\rm I}\colon	& A\in\mathcal{U}	&		& x_0\in B_0	&		& x_1\in B_1 & & \\
{\rm II}\colon	&		& B_0\in\mathcal{V}	&		& B_1\in\mathcal{W} & & y\in \w	
\end{array}
\]
In the language of Remark~\ref{rem:oracle-katetov}, Player II consults two oracles in sequence: $\calV$ first, then $\calW$. As before, Player I chooses a target $A\in\calU$, as well as elements $x_0\in B_0, x_1\in B_1.$ Player II must then produce a valid response $y\in A$. A strategy for Player II is determined by functions 
$$\eta_0(A)=B_0, \quad \eta_1(A,x_0)=B_1,\quad h(x_0,x_1)=y,$$
where Player II's final move depends only on $(x_0,x_1)$; the intermediate queries may depend on Player I's entire play history before. Player II has a winning strategy if $y\in A$ for every choice of $(A,x_0,x_1)$.

\begin{proposition}\label{prop:fubini-GTK} Let $\calU,\calV,\calW\subseteq\Pw$ be upper sets. Then $$\calU\lk\calV\otimes \calW$$
iff Player II has a winning strategy in the 2-query Kat\v{e}tov game. 
\end{proposition}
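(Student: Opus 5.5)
We prove the two directions separately, working throughout with the direct-image form of the Kat\v{e}tov order from Observation~\ref{obs:katetov-upset-B}~(ii), and with Lemma~\ref{lem:Katetov-Fubini-product}, which identifies $\calV\otimes\calW$ with $(\calV\ast\calW)^\uparrow$. Via the fixed bijection $\lranglet{-}{-}$, a function $h\colon\w\to\w$ is the same thing as a two-variable function $h(x_0,x_1)$. The idea is that a set $\sum_{a\in B_0}B_a$ in $\calV\ast\calW$ records exactly the two successive queries made by Player II.

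\textbf{($\Rightarrow$)} Suppose $h$ witnesses $\calU\lk\calV\otimes\calW$. For $A\in\calU$ there is $C\in\calV\otimes\calW$ with $h[C]\subseteq A$.
\begin{enumerate}
\item By the $\subseteq$ direction of Lemma~\ref{lem:Katetov-Fubini-product}, $C$ contains $\sum_{a\in B_0}C_{(a)}$, where $B_0:=\{a\mid C_{(a)}\in\calW\}\in\calV$.
\item Define the strategy by $\eta_0(A):=B_0$ and $\eta_1(A,x_0):=C_{(x_0)}$. The latter lies in $\calW$ because $x_0\in B_0$.
\item Take the final move to be $h(x_0,x_1)$. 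Any play then has $(x_0,x_1)\in C$, so $h(x_0,x_1)\in A$.
\end{enumerate}
The choice of $C$ depends on $A$, which is allowed since $\eta_0$ and $\eta_1$ may depend on $A$. The final move depends only on $(x_0,x_1)$, as the game requires.

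\textbf{($\Leftarrow$)} Given a winning strategy $(\eta_0,\eta_1,h)$, take $h$ (read via the pairing) as the Kat\v{e}tov witness. For $A\in\calU$, set
$$C:=\sum_{x_0\in\eta_0(A)}\eta_1(A,x_0).$$
This lies in $\calV\ast\calW\subseteq\calV\otimes\calW$, by the $\supseteq$ direction of Lemma~\ref{lem:Katetov-Fubini-product}. Every $(x_0,x_1)\in C$ is a legal play against the strategy, so by the winning condition $h[C]\subseteq A$. Observation~\ref{obs:katetov-upset-B}~(ii) then gives $\calU\lk\calV\otimes\calW$.

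\textbf{Main obstacle.} The only delicate point is bookkeeping rather than mathematics. We must check that the final move $h$ is independent of $A$, while the queries are allowed to depend on $A$ (and $B_1$ also on $x_0$). We must also make sure the identification of $\w^2$ with $\w$ is used consistently on both sides. Once the definition of the Fubini product is unfolded as sections via Lemma~\ref{lem:Katetov-Fubini-product}, both directions are essentially immediate.
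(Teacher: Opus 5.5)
Your proof is correct and is essentially the paper's argument. The paper first rewrites $\calU\lk\calV\otimes\calW$ as $\calU\preceq_{\rm K}\calV\ast\calW$ using Lemmas~\ref{lem:Katetov-Fubini-product} and~\ref{lem:upward-closure-basic-properties}~(ii), then reads a set $\sum_{i\in I}B_i\in\calV\ast\calW$ with $h[\sum_{i\in I}B_i]\subseteq A$ as the two queries $\eta_0(A)=I$, $\eta_1(A,i)=B_i$, and conversely. The only difference is cosmetic: in the forward direction you extract the sections $C_{(a)}$ of $C\in\calV\otimes\calW$ directly rather than citing Lemma~\ref{lem:upward-closure-basic-properties}~(ii).
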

\begin{proof}
By Observation \ref{obs:katetov-upset-B} (ii) and Lemmas \ref{lem:Katetov-Fubini-product} and \ref{lem:upward-closure-basic-properties} (ii), $\U\lk\V\otimes\W$ iff $\calU\preceq_{\rm K}\calV\ast \calW$.
The latter is witnessed by some $h$ iff for any $A\in\U$ there exists $B\in\V\ast\calW$ such that $h[B]\subseteq A$.
Such a $B$ is of the form $\sum_{i\in I}B_i$ for some $I\in\V$ and $B_i\in\calW$ for $i\in I$; hence $h[B]\subseteq A$ means that for any $i\in I$ and $b\in B_i$ we have $h(i,b)\in A$.
This condition can be expressed as the following game:
\[
\begin{array}{rccccccc}
{\rm I}\colon	& A\in\mathcal{U}	&		& i\in I	&		& b\in B_i & & \\
{\rm II}\colon	&		& I\in\mathcal{V}	&		& B_i\in\mathcal{W} & & h(i,b)\in A	
\end{array}
\]

\begin{itemize}
\item If $h$ witnesses $\U\preceq_{\rm K}\V\ast\calW$ then put $\eta_0(A)=I$ and $\eta_1(A,i)=B_i$ for any $i\in I$.
Then $(\eta_0,\eta_1,h)$ yields a winning strategy for Player II as above.
\item If Player II has a winning strategy $(\eta_0,\eta_1,h)$ then, for any $A\in\U$, we have $B:=\sum_{i\in\eta_0(A)}\eta_1(A,i)\in\V\ast\calW$ and $h[B]\subseteq A$; hence $h$ witnesses $\U\preceq_{\rm K}\V\ast\calW$.
\end{itemize}
%
\end{proof}

Proposition~\ref{prop:fubini-GTK} justifies viewing the $n$th Fubini product as defining an \textbf{$n$-query Kat\v{e}tov game}. Explicitly, 
$$\calU\lk \calV_1\otimes \dots \otimes \calV_n$$ 
holds iff Player II has a winning strategy in the game 	
$$	\begin{array}{rccccccccc}
{\rm I}\colon	& A\in\mathcal{U}	&		& x_1\in B_1	&		 & x_2\in B_2 & \dots &                               & x_n\in B_n & \\
{\rm II}\colon	&		& B_1\in\mathcal{V}_1	&		& B_2\in\mathcal{V}_2	 & &  \dots & B_n\in\mathcal{V}_n &                   & y\in A.
\end{array}$$
To connect these games with our original \textbf{tree-based} Definition \ref{def:GTK} of the Gamified Kat\v{e}tov order, it is worth highlighting the underlying tree structure of Fubini powers. Say that a tree $T\subseteq\lww$ of height $n$ is {\em $\U$-branching} if any node $\sigma\in T$ of length less than $n$ is $\U$-branching.

\begin{proposition}\label{prop:n-concatenation-branching-tree}
For $\U\subseteq\mathcal{P}(\omega)$ and $n\in\omega$, the $n$-concatenation $\U^{\ast n}\subseteq\mathcal{P}(\omega^n)$ is characterised as:
\[\U^{\ast n}=\{T\cap\omega^n\mid\mbox{$T$ is a $\U$-branching tree of height $n$}\}.\]
\end{proposition}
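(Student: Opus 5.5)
I will argue by induction on $n$, keeping track of the identification of $\omega^n$ with strings of length $n$ (Convention~\ref{con:string}) and of $\U^{\ast(n+1)}$ with $\U^{\ast n}\ast\U$ as a subfamily of $\mathcal{P}(\omega^n\times\omega)=\mathcal{P}(\omega^{n+1})$. The pairs $(\sigma,x)$ are read as $\sigma\fr x$. I take the iterated concatenation to associate on the left, $\U^{\ast(n+1)}=\U^{\ast n}\ast\U$. If the paper's convention associates on the right, the argument is symmetric, with the new level added at the root instead of at the leaves. For the base case $n=1$, a $\U$-branching tree of height $1$ is $\{\o\}\cup\{\lrangles{a}\mid a\in A\}$ with $A\in\U$. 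Its intersection with $\omega^1$ is $A$, so both sides equal $\U$. The degenerate case $n=0$, if intended, is trivial: $\{\o\}$ on both sides.

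For the inductive step, I will prove both inclusions. For $\supseteq$, let $T$ be a $\U$-branching tree of height $n+1$. Its truncation $T\rstr n$ is a $\U$-branching tree of height $n$, since the branching condition only concerns nodes of length $<n$. By the induction hypothesis, $S:=T\cap\omega^n\in\U^{\ast n}$. Each $\sigma\in S$ has length $n<n+1$, so $B_\sigma:=\{x\mid\sigma\fr x\in T\}\in\U$. Moreover every node of length $n+1$ extends a node of $S$, so $T\cap\omega^{n+1}=\sum_{\sigma\in S}B_\sigma\in\U^{\ast n}\ast\U$. For $\subseteq$, take $\sum_{\sigma\in S}B_\sigma$ with $S\in\U^{\ast n}$ and each $B_\sigma\in\U$. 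By the induction hypothesis, $S=T'\cap\omega^n$ for some $\U$-branching tree $T'$ of height $n$. I then set $T:=T'\cup\{\sigma\fr x\mid\sigma\in S,\ x\in B_\sigma\}$. This $T$ is downward closed, its nodes of length $<n$ branch exactly as in $T'$, and its nodes of length $n$, namely the elements of $S$, branch into $B_\sigma\in\U$. Its height is $n+1$, because $S$ and the sets $B_\sigma$ are nonempty.

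I expect the main obstacle to be bookkeeping rather than mathematics. Two points need care. First, the nodes of length $n$ in $T'$ must be exactly $S$; this holds because the height is $n$ and $S=T'\cap\omega^n$. Second, there is an edge case where $\emptyset\in\U$ or some branching set is empty, since then the height could drop below $n$. I will note that for upper sets $\emptyset\notin\U$ by Convention~\ref{con:filters}. For general families, I will read ``height $n$'' as ``all maximal nodes have length at most $n$, and nodes of length $<n$ branch''. Under that reading, empty sections are harmless in both directions.
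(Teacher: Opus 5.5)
Your proof is correct. Like the paper's, it is an induction on $n$; the difference is where the new level is attached.

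The paper reads $\mathcal{U}^{\ast(n+1)}$ as $\mathcal{U}\ast\mathcal{U}^{\ast n}$ and works at the root. A set $\sum_{a\in A}L_a$ is realised by a root with successor set $A\in\mathcal{U}$, under which the trees for the $L_a$ (from the induction hypothesis) are grafted. Conversely, a tree of height $n+1$ is cut into the subtrees $\{\sigma\mid a\fr\sigma\in T\}$ above its level-one nodes. You read $\mathcal{U}^{\ast(n+1)}$ as $\mathcal{U}^{\ast n}\ast\mathcal{U}$ and work at the leaves: you truncate to $T\rstr n$ in one direction and add the sets $B_\sigma$ above level $n$ in the other. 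The two readings agree because $\ast$ is associative once tuples are identified with strings, so your symmetry remark is justified.

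Each route buys something. Yours matches the identity $\mathcal{U}^{\ast(n+1)}=\mathcal{U}^{\ast n}\ast\mathcal{U}$ that the proof of Proposition~\ref{prop:n-Fubini-branching-tree} actually invokes; the paper's right-associated proof reaches that identity only through the tacit associativity. The paper's root decomposition, on the other hand, is the one that extends to the limit construction $\lim_{n\to\mathcal{U}}\mathcal{V}_n$ and to fences $\{n\fr\sigma\}$, where new levels are always added at the root.

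Your edge-case remark is also more careful than the paper. If $\emptyset\in\mathcal{U}$, then $\emptyset\in\mathcal{U}^{\ast n}$, yet no tree of height exactly $n$ has an empty $n$th level. So the statement needs either $\emptyset\notin\mathcal{U}$ (automatic for the upper sets to which it is applied) or your relaxed reading of ``height $n$''.
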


\begin{proof}
Inductively assume that the claim has been verified for $n$.
By definition, $L\in\U^{\ast(n+1)}$ iff there are $A\in\U$ and $L_a\in\U^{\ast(n)}$ for $a\in A$ such that
\[
L=\sum_{a\in A}L_a=\{(a,\sigma)\in\om\times\om^n\mid a\in A\mbox{ and }\sigma\in L_a\}.
\]

By the induction hypothesis, each $L_a$ is the set of length $n$ nodes of a $\U$-branching tree of height $n$.
By identifying $(a,\sigma)$ and $a\fr\sigma$, the set $L\subseteq\om^{n+1}$ can be thought of as the set of length $(n+1)$ nodes of a $\U$-branching tree of height $n+1$.

Conversely, given a $\U$-branching tree $T$ of height $n+1$, put $A=\{a\in\omega\mid\langle a\rangle\in T\}$.
Then $T\cap\om^{n+1}=\sum_{a\in A}L_a$, where $L_a=\{\sigma\in\om^n\mid a\fr\sigma\in T\}$.
Since $T$ is $\U$-branching, we have $A\in\U$, and for each $a\in A$, $L_a$ is the set of length $n$ nodes of a $\U$-branching tree $T_a$. We know that $T_a\in\U^{\ast n}$ by the induction hypothesis. Hence, $T=\sum_{a\in A}T_a\in\U^{\ast(n+1)}$.
\end{proof}


\begin{proposition}\label{prop:n-Fubini-branching-tree}
Let $\U$ be an upper set.
The $n$th Fubini power $\U^{\otimes n}\subseteq\mathcal{P}(\omega^n)$ is characterised as:
\[\U^{\otimes n}=\{T\cap\omega^n\mid\mbox{$T$ is a $\U$-branching tree of height $n$}\}^\uparrow.\]
\end{proposition}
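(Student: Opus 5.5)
The plan is to combine Proposition~\ref{prop:n-concatenation-branching-tree} with an $n$-fold version of Lemma~\ref{lem:Katetov-Fubini-product}. Concretely, I would first establish
\[
\U^{\otimes n}=(\U^{\ast n})^\uparrow
\]
for every upper set $\U$ and every $n\geq 1$. Once this holds, Proposition~\ref{prop:n-concatenation-branching-tree} identifies $\U^{\ast n}$ with the family $\{T\cap\omega^n\mid T \text{ a } \U\text{-branching tree of height } n\}$, and taking upward closures of both sides gives the claim. As in the earlier propositions, I use the identification of $(a,\sigma)\in\omega\times\omega^n$ with $a\fr\sigma\in\omega^{n+1}$. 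This is the same coding used to regard $\U^{\otimes(n+1)}=\U\otimes\U^{\otimes n}$ as a family on $\omega^{n+1}$.

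The identity $\U^{\otimes n}=(\U^{\ast n})^\uparrow$ is proved by induction on $n$.

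\textbf{Base case.} For $n=1$ it reads $\U=\U^\uparrow$, which is trivial since $\U$ is an upper set.

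\textbf{Inductive step.} Assume $\U^{\otimes n}=(\U^{\ast n})^\uparrow$. Note that $\U^{\otimes n}$ is again an upper set: it is upward closed by the section criterion in Definition~\ref{def:fubini-prod}. So Lemma~\ref{lem:Katetov-Fubini-product} applies to the pair $\U,\U^{\otimes n}$ and gives
\[
\U^{\otimes(n+1)}=\U\otimes\U^{\otimes n}=(\U\ast\U^{\otimes n})^\uparrow=(\U\ast(\U^{\ast n})^\uparrow)^\uparrow .
\]
Lemma~\ref{lem:upward-closure-basic-properties}(i) says $(\calA\ast\calB)^\uparrow=(\calA^\uparrow\ast\calB^\uparrow)^\uparrow$. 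Applying it with $\calA=\U$ (already upward closed) and $\calB=\U^{\ast n}$ rewrites the right-hand side as $(\U\ast\U^{\ast n})^\uparrow=(\U^{\ast(n+1)})^\uparrow$. This completes the induction.

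There is no real obstacle here. The only point needing care is bookkeeping of the coding. The iterated product must be bracketed as $\U\otimes(\U\otimes\cdots)$, matching the bracketing of $\U^{\ast(n+1)}=\U\ast\U^{\ast n}$ used in the proof of Proposition~\ref{prop:n-concatenation-branching-tree}. With the convention $\langle x_1,\dots,x_n\rangle=\langle x_1,\langle x_2,\dots\rangle\rangle$ these agree. If one preferred the other bracketing, associativity of $\ast$ up to the evident bijection would settle it.
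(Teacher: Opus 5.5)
Your proof is correct and is essentially the paper's argument: an induction establishing $\U^{\otimes n}=(\U^{\ast n})^\uparrow$ via Lemma~\ref{lem:Katetov-Fubini-product} and Lemma~\ref{lem:upward-closure-basic-properties}(i), followed by Proposition~\ref{prop:n-concatenation-branching-tree}. The differences are cosmetic: you bracket the recursion as $\U\otimes\U^{\otimes n}$ (matching the $\U\ast\U^{\ast n}$ recursion used in Proposition~\ref{prop:n-concatenation-branching-tree}) where the paper writes $\U^{\otimes n}\otimes\U$, and you spell out the base case and the upward-closedness of $\U^{\otimes n}$, which the paper leaves implicit.
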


\begin{proof}
Inductively assume $\U^{\otimes n}=(\U^{\ast n})^\uparrow$.
Then, by the induction hypothesis, and Lemmas \ref{lem:Katetov-Fubini-product} and \ref{lem:upward-closure-basic-properties} (i), compute:
\[\U^{\otimes (n+1)}=(\U^{\ast n})^\uparrow\otimes\U=((\U^{\ast n})^\uparrow\ast\U)^\uparrow=(\U^{\ast n}\ast\U)^\uparrow=(\U^{\ast(n+1)})^\uparrow.\]
By Proposition \ref{prop:n-concatenation-branching-tree}, this means that $\U^{\otimes(n+1)}$ is the upward closure of the set of all $\U$-branching trees of height $n+1$.
\end{proof}

 If $n$-query games correspond taking the $n$th Fubini power, then finite-query games ought to correspond to well-founded iterations of the Fubini product. Fubini iterates have been studied before, primarily in the context of ultrafilters and ideals \cite{Rato12,Dob20}. However, the same basic idea extends to subset families: 
\begin{enumerate}[label=(\roman*)]
\item {\em Limit construction.} Consider a subset family $\U\subseteq\mathcal{P}(I)$
and a sequence of subset families $\{\V_n\}_{n\in I}$
where $\V_n\subseteq \mathcal{P}(J_n)$. Then define the following limit notion:
\[
A\in\lim_{n\to\U}\V_n
\iff
\{m\in I\mid A_{(m)}\in\mathcal{V}_m\}\in\mathcal{U}\ ,
\]
for any $A\subseteq \sum_{n\in I} J_n=\{(n,j) \mid n\in I \, , \, j\in J_n \}$.\footnote{Notice: if $\calV_n=\calV$ is the constant sequence, then this recovers the concatenation $\calU\ast \calV$ from Definition~\ref{def:Kat-sum}.} 

\smallskip

\item {\em Iterated powers.} Let $\alpha=\sup_{n\in\om}\beta_n$ be a countable limit ordinal. Assume inductively that $\U^{\otimes \beta_n}$ have been defined. Then, set
\[\U^{\otimes \alpha}:=\lim_{n\to\U}\U^{\otimes\beta_n}.\]
\end{enumerate}

\begin{discussion}\label{dis:iterate-fubini} 
If $\U,\V_n\subseteq\mathcal{P}(\omega)$ then, via $\om^2\simeq\om$, we may think of $\lim_{n\to\U}\V_n$ as a subset of $\mathcal{P}(\omega)$ rather than $\mathcal{P}(\omega^2)$. Thus, for any countable limit ordinal $\alpha$, we may regard $ \U^{\otimes\alpha}\subseteq\mathcal{P}(\omega)$. However, this presentation introduces some structural ambiguity since it depends on a choice of sequence $\{\beta_n\}_{n\in\w}$ converging to $\alpha$.

We resolve this by leveraging the underlying tree structure of Fubini powers, already highlighted in Proposition~\ref{prop:n-Fubini-branching-tree}. In particular, we introduce a combinatorial device (what we call a ``fence'') that records the critical positions of the strings within the tree. Concretely:
\begin{itemize}
\item As a warm-up, notice the $n$th Fubini power $\calU^{\otimes n}$ admits a presentation as a subfamily of $\calP(\w^n)$; this is from Proposition \ref{prop:n-Fubini-branching-tree}. In which case, we let the fence be $\w^n$. Notice $\omega^n$ defines an antichain when viewed as $\omega^n\subseteq\lww$.
\item 
More generally, consider the limit construction above $\lim_{n\to \calU}\V_n$. Suppose each subset family $\V_n$ admits a presentation as a subfamily of $\mathcal{P}(\delta_n)$ for some anitchain $\delta_n\subseteq\lww$. Then, for $\calU\subseteq\Pw$, a natural fence for
$\lim_{n\to\U}\V_n$ is
\[
\delta \;=\; \{\, n\fr\sigma \mid n\in\omega \text{ and } \sigma\in\delta_n \,\} \ .
\]
In particular, $\lim_{n\to\U}\V_n$ can be presented as a subfamily of
$\mathcal{P}(\delta)$. Moreover, $\delta$ is again an antichain whenever all
$\delta_n$ are antichains.
\end{itemize}
With this perspective in place, we now make our next move.
\end{discussion}

In the context of the Gamified Kat\v{e}tov order, we wish to specify the {\em admissible range of queries} available to our Player II. Towards this end, we introduce the combinatorial notion of a {$\delta$-Fubini power.}

	\begin{definition}[$\delta$-Fubini powers]\label{def:delta-Fubini}
	Fixing a (non-trivial) upper set $\calU\subseteq\Pw$:
		\begin{enumerate}[label=(\roman*)]
			\item  Given an antichain $\delta$, a tree $T\subseteq \lww$ is called {\em $\calU$-branching up to $\delta$} if:
			\begin{enumerate}
				\item $T$ is non-empty;
				\item For any infinite path $x\in[T]$, there exists $n\in\w$ such that $x\rstr n\in\delta$. 
				\item For any $\sigma\in T$, either $\sigma\in\delta$ or $\{n\in\w\mid \sigma\fr n\in T\}\in \calU$. 
				Informally: either $\sigma$ has permission from $\delta$ to terminate, or its set of immediate successors has to be $\calU$-large.\footnote{Here ``permission to terminate'' does not mean that such nodes always have no successors, only that they are exempt from the $\calU$-branching requirement.}
			\end{enumerate}
		\item A {\em $\calU$-fence} is a set $\delta\subseteq\lww$ such that 
		\begin{enumerate}[label=(\alph*)]
			\setcounter{enumii}{3} 
			\item $\delta$ is an antichain.
			\item There exists a $\calU$-branching tree up to $\delta$. 
		\end{enumerate}
			\item For any $\calU$-fence $\delta$, define the \emph{$\delta$-Fubini power} of $\calU$ as
			\[
			\calU^{\otimes [\delta]} \;:=\Big\{\,\code{T\cap\delta} \Bmid T \text{ is }\calU\text{-branching up to }\delta \,\Big\}^{\up},
			\]
			where $\code{-}$ is from Convention \ref{con:base-change} and $\{\text{---}\}^{\uparrow}$ denotes upward closure taken in $\Pw$.
		\end{enumerate}	
		
	\end{definition}

\begin{remark} Various technical conditions are placed within Definition~\ref{def:delta-Fubini} to prevent trivialities.
	\begin{itemize}
		\item \emph{Condition (b).}  This ensures every $T$ contributing to $\Uod$ intersects our chosen $\delta$\footnote{Why? Suppose $T\cap\delta=\emptyset$ for some tree $\calU$-branching up to $\delta$. Then, every  $\sigma\in T$ must have a $\calU$-large set of successors, so $T$ cannot have bounded height. Therefore $T$ admits an infinite path $x\in\ww$, which in turn must have a prefix in $\delta$, contradiction.}, and so
            \[
			\calU^{\otimes [\delta]} \;=\Big\{\,\code{T\cap\delta} \Bmid T \text{ is well-founded and }\calU\text{-branching up to }\delta \,\Big\}^{\up}.
			\]
        Importantly, this implies $\emptyset\notin\Uod$ for any non-trivial upper set $\calU$, and so our construction does not trivialise. Otherwise $\emptyset\in\Uod$ forces the identity $\Uod=\Pw$, thereby washing out, rather than illuminating, the combinatorial structure of $\calU$. 
		\item \emph{Condition (ii).} In principle, $\delta$-Fubini powers can be defined with respect to arbitrary antichains
		However, we prefer to exclude degenerate cases where, at least in the opinion of $\calU$, $\delta$ is too ``small'', resulting in $\Uod=\emptyset.$ 
	\end{itemize}

\end{remark}

\begin{remark}\label{rem:Fubini-generalise} Proposition~\ref{prop:n-Fubini-branching-tree} and Definition~\ref{def:delta-Fubini} present two slightly different perspectives on the usual $n$th Fubini power $\calU^{\otimes [n]}$. Both cases start with the same subfamily presentation
\[ \{T\cap\omega^n\mid\mbox{$T$ is a $\U$-branching tree of height $n$}\} \subseteq \calP(\w^n).\]
The difference lies in how this subfamily is transported to an upper set on $\w$.\footnote{ Recall: Proposition~\ref{prop:n-Fubini-branching-tree} first takes the upward closure in $\calP(\w^n)$, before identifying $\w^n$ with $\w$ via a fixed bijection $\w^n\cong \w$. By contrast, the $\delta$-Fubini framework views $\w^n\subseteq\lww$ as an antichain of finite strings, and embeds it into $\omega$ via the fixed bijection $\lww\cong\w$, before taking the upward closure in $\Pw$. } Nonetheless, the two approaches are Kat\v{e}tov equivalent (in fact, computably so, since the identifications involved are computable). In this sense, the $\delta$-Fubini power construction recovers the usual $n$th Fubini power.
\end{remark}

\begin{remark}\label{rem:dob} The set theorist may wish to compare Definition~\ref{def:delta-Fubini} with the Ramsey-theoretic construction of generating ultrafilters by trees on a front, as introduced by Dobrinen in \cite[\S 3]{Dob20}. The two constructions are closely related, except Dobrinen's definition imposes some conditions that are {\em a priori} stronger than ours.\footnote{{\em Details.} In Dobrinen's context, a {\em front} is a family $\calB$ of finite subsets of $\w$ satisfying the conditions: (1) $a\not\subseteq b$ for any $a\neq b$ in $\calB$; (2) the base $M:=\bigcup \calB$ is infinite; and (3) any infinite $X\subseteq M$ has an initial segment in $\calB$. This plays a role analogous to our notion of a $\calU$-fence $\delta$, except that Dobrinen's Condition~(3) looks to be stronger than what we require. In our case, rather than demanding that \emph{every} infinite subset of $M$ (or every infinite string using the underlying base) has an initial prefix in $\delta$, we only require that a fence $\delta$ intersects with all infinite paths within {\em some} $\calU$-branching tree $T$.} 
\end{remark}

\smallskip

Reviewing our context once more, Proposition~\ref{prop:fubini-GTK} re-interprets the relation
$$\calU\lk\calV^{\otimes n}$$
as an \textbf{$n$-query Kat\v{e}tov game}, in which Player~II wins by always making exactly $n$ queries to $\calV$. Importantly, this does \emph{not} yet capture the finite-query Kat\v{e}tov game itself. Why? Requiring $\calU\lk\calV^{\otimes n}$ for
some fixed $n$ imposes a \emph{uniform} bound on the number of queries across
all $A\in\calU$, which is too restrictive.  In the actual game, the number of queries may vary with the $A$ selected by Player I — so long as a valid answer is found after finitely many queries, Player II has a winning strategy.

Our motivation behind introducing the \emph{$\delta$-Fubini power} construction was to capture this added flexibility. By Remark~\ref{rem:Fubini-generalise}, we know that the construction generalises the classical Fubini powers. We now show that this is in fact the \emph{correct} generalisation: $\delta$-Fubini powers characterise the Gamified Kat\v{e}tov order.

\begin{theorem}\label{thm:fubini-GTK} Let $\calU,\calV\subseteq \Pw$ be upper sets. \underline{Then}, 
	$$\calU\glt \calV \iff \text{there exists a $\calV$-fence}\,\delta \;\text{such that} \; \calU\lk \calV^{\otimes [\delta]}.$$
\end{theorem}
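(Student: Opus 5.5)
We prove the two directions separately, passing between partial continuous functions $\Phi\pcolon\ww\to\w$ and Kat\v{e}tov witnesses $h\colon\w\to\w$ by treating the antichain $\delta$ as the domain of the finite-string representative $\varphi$ of $\Phi$ (Convention~\ref{con:pcf}, condition (b$^\prime$)).

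\emph{($\Leftarrow$).} Suppose $\delta$ is a $\calV$-fence and $h$ witnesses $\calU\lk\calV^{\otimes[\delta]}$. Define $\varphi\pcolon\lww\to\w$ with $\dom(\varphi)=\delta$ by $\varphi(\sigma):=h(\code{\sigma})$. Since $\delta$ is an antichain, this determines a partial continuous $\Phi$. Now let $A\in\calU$.
\begin{itemize}
\item By Observation~\ref{obs:katetov-upset-B}(ii) there is $B\in\calV^{\otimes[\delta]}$ with $h[B]\subseteq A$.
\item By the definition of upward closure there is a tree $T$ that is $\calV$-branching up to $\delta$ with $\code{T\cap\delta}\subseteq B$, so $\varphi(\sigma)\in A$ for all $\sigma\in T\cap\delta$.
\item Prune $T$ to $T':=\{\tau\in T\mid \text{no proper prefix of }\tau\text{ lies in }\delta\}$, and regrow $\calV$-branching above each $\sigma\in T'\cap\delta$ by attaching the full tree $\lww$ there. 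The result $S:=T'\cup\{\sigma\fr\rho\mid \sigma\in T'\cap\delta,\rho\in\lww\}$ is $\calV$-branching because $\w\in\calV$.
\end{itemize}
Any path $p\in[S]$ either passes through some $\sigma\in T'\cap\delta$, giving $\Phi(p)=\varphi(\sigma)\in A$, or stays in $T'\subseteq T$ forever. In the latter case condition (b) forces a prefix of $p$ in $\delta$, and the first such prefix lies in $T'$, so $p$ is of the first kind anyway. Hence $[S]\subseteq\dom(\Phi)$ and $\Phi([S])\subseteq A$, so $\Phi$ witnesses $\calU\glt\calV$.

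\emph{($\Rightarrow$).} Let $\Phi$ witness $\calU\glt\calV$, presented by $\varphi$ with $\dom(\varphi)$ an antichain. Take $\delta:=\dom(\varphi)$ and define $h(\code{\sigma}):=\varphi(\sigma)$ for $\sigma\in\delta$, with $h$ arbitrary elsewhere.

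First we check that $\delta$ is a $\calV$-fence. The upper set $\calU$ is non-empty, so pick $A\in\calU$ and let $T$ be the $\calV$-branching tree given by $\Phi$. Every $\sigma\in T$ is $\calV$-branching, so condition (c) holds trivially. For condition (b), any $x\in[T]$ lies in $\dom(\Phi)$, so by clause (a) of Convention~\ref{con:pcf} some prefix $x\rstr n$ lies in $\dom(\varphi)=\delta$.

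Next, for arbitrary $A\in\calU$ with its tree $T$, we need $\varphi(\sigma)\in A$ for every $\sigma\in T\cap\delta$. This is the main subtlety: $\Phi([T])\subseteq A$ only constrains nodes that are prefixes of paths through $T$. Here we use that $T$ is $\calV$-branching and $\emptyset\notin\calV$, so every node has a successor. By dependent choice, every $\sigma\in T$ therefore extends to some $p\in[T]$, and then $\Phi(p)=\varphi(\sigma)\in A$ by coherence. Thus $h[\code{T\cap\delta}]\subseteq A$ with $\code{T\cap\delta}\in\calV^{\otimes[\delta]}$, and Observation~\ref{obs:katetov-upset-B}(ii) gives $\calU\lk\calV^{\otimes[\delta]}$.

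The main obstacle is bookkeeping with the antichain representation: ensuring $\dom(\varphi)$ can be taken to be an antichain (by the (b)$\Rightarrow$(b$^\prime$) reduction of Convention~\ref{con:pcf}), and in ($\Leftarrow$) handling nodes of $T$ lying above $\delta$ via the pruning-and-regrowing step.
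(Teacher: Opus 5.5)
Your proof is correct and follows the paper's argument. In $(\Rightarrow)$ you take $\delta=\dom(\varphi)$ and $h(\code{\sigma})=\varphi(\sigma)$, and in $(\Leftarrow)$ you set $\varphi(\sigma)=h(\code{\sigma})$ on $\delta$ and pad the tree with full branching above its $\delta$-nodes, exactly as the paper does; your preliminary pruning to $T'$ is harmless but unnecessary, since the paper adds full branching directly on top of the original tree. You are also more explicit than the paper on two points it leaves implicit: the verification of clause (b) for the witness tree, and the fact that every node of a $\calV$-branching tree lies on an infinite path, which is what justifies $h[\code{T\cap\delta}]\subseteq A$ (this needs no dependent choice, since one can always take the least successor).
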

\begin{proof}
\hfill
\begin{itemize} 
	\item[$\implies$:]  By Definition~\ref{def:GTK}, $\calU\glt\calV$ is witnessed by a partial continuous function $\Phi\pcolon \ww\to\w$.
	\begin{itemize}
		\item[$\diamond$] {\it Antichain.}
		By Convention~\ref{con:pcf}, $\Phi$ is determined by a partial function $\varphi\pcolon\lww\to\w$ whose domain is an antichain $\delta:={\rm dom}(\varphi)$ in $\lww$.
		\item[$\diamond$] {\it Kat\v{e}tov reduction.}
		Define a function $h\colon\w\to\w$ as the total extension of $\varphi$ in the following sense:
		$$h(n):=\begin{cases} \varphi(\sigma) \;\;\quad\qquad \text{if}\; n=\code{\sigma}\;\text{for some}\; \sigma\in\delta\\
		0 \;\qquad\;\;\;\quad\quad  \text{otherwise}
		\end{cases}\quad.$$
		Notice $h$ is well-defined since $\code{-}$ is an injection.
	\end{itemize}

\noindent We now establish the implication. To see why $h$ witnesses $\calU\lk\Vod$, suppose $A\in\calU$. By definition, there exists a $\calV$-branching tree $T$ such that $[T]\subseteq \dom(\Phi) $ and $\Phi[T]\subseteq A$. Since $T$ is clearly $\calV$-branching up to $\delta$, this gives $\code{T\cap\delta}\in\Vod$.
This also shows that $\delta$ is a $\calV$-fence.
Further, given any $n\in \code{T\cap\delta}$,  we have $h(n)=\varphi(\sigma)$ for some prefix $\sigma\in\delta$, and so 
$$h(\code{T\cap\delta})=\Phi[T]\subseteq A\;.$$ 
Since $\Vod$ is upper set, conclude from Observation~\ref{obs:katetov-upset-B} that
$$ \calU\lk \Vod\;.$$ 
	\item[$\impliedby$:] Assume there exists a $\calV$-fence $\delta$ and a function $h\colon\w\to\w$ witnessing  $\calU\lk\calV^{\otimes[\delta]}$. By Observation~\ref{obs:katetov-upset-B}, this means: for any $A\in\calU$, there exists $B\in\Vod$ such that $h[B]\subseteq A$. We proceed by extracting the relevant combinatorial data, before establishing the witnessing condition.
	
\subsubsection*{Step 1: $B$ induces a $\calV$-branching tree.} Since $B\in\Vod$, there exists a tree $\wT_B$ that is $\calV$-branching up to $\delta$ with
$$\code{\wT_B\cap \delta}\subseteq B.$$
To upgrade $\wT_B$ into an honest $\calV$-branching tree (not just up to $\delta$), define
$$T_B:=\wT_B\cup\{\sigma\fr\tau\bmid \sigma\in \wT_B\cap\delta\,\text{and}\, \tau\in\lww\}.$$ 
In English: all nodes in $\wT_B$ are $\calV$-branching except for a distinguished subset selected by $\delta$; the tree $T_B$ resolves this by turning all such $\sigma\in \wT_B\cap\delta$ into dummy $\w$-branching nodes. 

\subsubsection*{Step 2: $h$ induces a partial continuous function.}	Build the finite representative $\varphi\pcolon \lww\to\w$ as follows:
\begin{itemize}
	\item[$\diamond$] For each $\sigma\in\delta$, set 
	$$\varphi(\sigma):=h(\code{\sigma}).$$
%
\end{itemize}
\smallskip
\noindent By Convention~\ref{con:pcf}, $\varphi$ extends to a partial continuous function $\Phi_h\pcolon \ww\to \w$, constant on each cone $\{\tau\in\ww \mid \tau\succeq\sigma\}$ for each $\sigma\in \delta$.

\subsubsection*{Step 3: Finish.} Let $A\in\calU$. By Steps 1 and 2, we obtain a $\calV$-branching tree $T_B$ and a partial continuous function $\Phi_{h}\pcolon \ww\to\w$. In addition, by construction, any infinite path $x\in [T_B]$ extends some prefix in $\sigma\in  \wT_B\cap\delta$ ; in which case, $x$ belongs to the domain of $\Phi_h$ and $\Phi_h(x)=h(\code{\sigma})\in A$.
In other words, $[T_B]\subseteq \dom(\Phi_h)$ and  $\Phi_h[T_B]\subseteq A,$ and thus $$\calU\glt\calV.$$	
\end{itemize}	
\end{proof}


Theorem~\ref{thm:fubini-GTK} yields several important structural corollaries.

\begin{corollary}\label{cor:GTK-filters} The $\delta$-Fubini power of a filter is still a filter. Hence, the Gamified Kat\v{e}tov order has a well-defined restriction to filters over $\w$. 
\end{corollary}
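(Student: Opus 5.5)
The proof has two parts. First, $\calF^{\otimes[\delta]}$ is a filter whenever $\calF$ is a filter and $\delta$ is an $\calF$-fence. Second, the restriction statement then follows from Theorem~\ref{thm:fubini-GTK}.

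\textbf{$\delta$-Fubini powers of filters are filters.} Fix a filter $\calF$ and an $\calF$-fence $\delta$. By construction, $\calF^{\otimes[\delta]}$ is an upward closure, so it is upward closed. It is non-empty because $\delta$ is a fence, so condition (e) supplies a tree $\calF$-branching up to $\delta$. It omits $\emptyset$ by the remark following Definition~\ref{def:delta-Fubini}: every such tree meets $\delta$, and $\emptyset\notin\calF$ since $\calF$ is a non-trivial upper set.

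The real content is closure under binary intersections. It suffices to show that if $T_1,T_2$ are both $\calF$-branching up to $\delta$, then $T:=T_1\cap T_2$ is again $\calF$-branching up to $\delta$. Indeed, since $\code{-}$ is injective,
\[
\code{T\cap\delta}=\code{T_1\cap\delta}\cap\code{T_2\cap\delta},
\]
and supersets of $\code{T_1\cap\delta}$ and of $\code{T_2\cap\delta}$ then have intersection containing $\code{T\cap\delta}$. We check the conditions of Definition~\ref{def:delta-Fubini} for $T$:
\begin{enumerate}[label=(\alph*)]
\item $T$ is a tree containing the root $\o$, so it is non-empty.
\item Every infinite path of $T$ is a path of $T_1$, hence has a prefix in $\delta$.
\item Let $\sigma\in T\setminus\delta$. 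Then its successor sets in $T_1$ and in $T_2$ both lie in $\calF$. Its successor set in $T$ is their intersection, which lies in $\calF$ because $\calF$ is a filter.
\end{enumerate}

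\textbf{Restriction of $\glt$ to filters.} The second sentence of the corollary reads off Theorem~\ref{thm:fubini-GTK}. For filters $\calU,\calV$, the relation $\calU\glt\calV$ is equivalent to $\calU\lk\calV^{\otimes[\delta]}$ for some $\calV$-fence $\delta$, and by the first part each such $\calV^{\otimes[\delta]}$ is itself a filter. So the Gamified Kat\v{e}tov order on filters is the closure of the Kat\v{e}tov order on filters under $\delta$-Fubini powers. The order therefore never leaves the class of filters: it is the preorder $\glt$ restricted to filters, and it is expressed entirely in filter-theoretic terms.

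\textbf{Expected obstacle.} The only point needing care is step (c). Nodes in $\delta$ are exempt from branching, so one must confirm that intersecting the trees does not create a node outside $\delta$ whose successor set fails to be large. This is handled by the observation that membership of $\sigma$ in $\delta$ is the same for $T_1$, $T_2$ and $T$.
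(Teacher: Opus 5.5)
Your proof is correct and follows the paper's argument. In both, the key step is that $T_1\cap T_2$ is again $\calF$-branching up to $\delta$, because the successor set of a node outside $\delta$ is an intersection of two $\calF$-large sets; injectivity of $\code{-}$ then gives $\code{(T_1\cap T_2)\cap\delta}=\code{T_1\cap\delta}\cap\code{T_2\cap\delta}$.

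Your extra checks that $\calF^{\otimes[\delta]}$ is non-empty and omits $\emptyset$, which the paper leaves implicit, are also correct. One small quibble: calling $\glt$ a ``preorder'' in your last paragraph anticipates Corollary~\ref{cor:preorder}, which the paper proves only later, though not circularly.
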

\begin{proof} Let $\calF$ be a filter, and let $\delta$ be an $\calF$-fence. By construction, $\calF^{\otimes[\delta]}$ is an upper set. It remains to check closure under pairwise intersection. Suppose $X,Y\in \calF^{\otimes[\delta]}$. By definition, there exists subsets $A\subseteq X$ and $B\subseteq Y$ so that  $A=\code{T_A\cap\delta}$ and $B=\code{T_B\cap\delta}$, where $T_A,T_B$ are $\calF$-branching trees up to $\delta$. 

First observation: if we can show $A\cap B \in\calF^{\otimes[\delta]}$, then this implies $X\cap Y\in\calF^{\otimes[\delta]}$ since $A\cap B\subseteq X\cap Y$ and $\calF^{\otimes[\delta]}$ is upward closed. Second observation: since $\calF$ is a filter, this implies $T_{A\cap B}:=T_A\cap T_B$ is also $\calF$-branching up to $\delta$.\footnote{\label{fn:intersect-branch-tree}Why? Suppose $\sigma\in T_{A\cap B}$. If $\sigma\in\delta$, then there is nothing to check; otherwise, if $\sigma\notin\delta$, then $\sigma$ has two sets of $\calF$-large successors (from both $T_A$ and $T_B$), the intersection of which is also $\calF$-large since $\calF$ is a filter. } Since $\code{-}$ is an injection, deduce that $A\cap B= \code{T_A\cap T_B\cap\delta} =\code{(T_{A\cap B})\cap\delta}\in\calF^{\otimes[\delta]}$, as desired. 	
\end{proof}

\begin{corollary}\label{cor:princ-ultra}  Any $\delta$-Fubini power of a principal ultrafilter is still a principal ultrafilter. Hence, for any principal ultrafilter $\calF$ and any non-principal ultrafilter $\calG$,
	$$\calF <^\circ_{\mathrm{LT}} \calG\;.$$
\end{corollary}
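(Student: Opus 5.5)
The proof has two parts: first compute every $\delta$-Fubini power of a principal ultrafilter, then read off the strict inequality using Theorem~\ref{thm:fubini-GTK}.

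\textbf{Step 1: the $\delta$-Fubini power of a principal ultrafilter.} Let $\calF=\{A\mid k\in A\}$ be principal, generated by $\{k\}$, and let $\delta$ be an $\calF$-fence. A tree $T$ that is $\calF$-branching up to $\delta$ must contain $\sigma\fr k$ for every node $\sigma\in T\setminus\delta$. Hence $T$ contains the constant string $\o, \langle k\rangle, \langle k,k\rangle,\dots$ until it first hits $\delta$. Condition (b) applied to the path $kkk\cdots$ guarantees that some $\sigma_\delta=k^{(n)}\in\delta$ is reached.

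We need $\sigma_\delta\in T$ and not merely a prefix of a path of $T$. Since $\delta$ is an antichain, no proper prefix of $\sigma_\delta$ lies in $\delta$. So each such prefix is forced to have its $k$-successor in $T$, and $\sigma_\delta\in T$ follows by induction on length.

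Conversely, the single chain $\{k^{(i)}\mid i\le n\}$ is itself $\calF$-branching up to $\delta$ and meets $\delta$ exactly in $\sigma_\delta$. So the minimal generator is $\{\code{\sigma_\delta}\}$, and every generator $\code{T\cap\delta}$ contains $\code{\sigma_\delta}$. Therefore
\[\calF^{\otimes[\delta]}=\{X\subseteq\w\mid \code{\sigma_\delta}\in X\},\]
which is a principal ultrafilter.

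\textbf{Step 2: the strict inequality.} For $\calF\glt\calG$ we use that all principal ultrafilters are Kat\v{e}tov-below everything. Take $h$ constant with value $k$; then for $A\in\calF$ we have $h^{-1}[A]=\w\in\calG$. This already gives $\calF\lk\calG$ directly, and equally $\calF\lk\calG^{\otimes[\delta]}$ for any $\calG$-fence $\delta$. To get the latter we need a $\calG$-fence to exist, and $\delta=\w^1$ works, witnessed by the height-one tree $\{\o\}\cup\{\langle n\rangle\mid n\in\w\}$. Theorem~\ref{thm:fubini-GTK} then gives $\calF\glt\calG$.

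For $\calG\not\glt\calF$, suppose otherwise. By Theorem~\ref{thm:fubini-GTK} there are an $\calF$-fence $\delta$ and $h$ with $\calG\lk\calF^{\otimes[\delta]}$. By Step 1, $\calF^{\otimes[\delta]}$ is principal, generated by some point $m$. The Kat\v{e}tov condition says that $h^{-1}[A]$ contains $m$, i.e. $h(m)\in A$, for every $A\in\calG$. So $h(m)\in\bigcap\calG$, and $\calG$ would be principal (ultrafilter containing a singleton), a contradiction. The only delicate point in the whole argument is Step 1's check that $\sigma_\delta$ actually lies in $T$ (rather than only being a prefix of a path), which is handled by the antichain property as above.
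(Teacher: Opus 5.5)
Your proof is correct and follows essentially the paper's route: the constant path $(k,k,k,\dots)$ meets the antichain $\delta$ in a unique node, and the chain up to that node is a witness tree giving a singleton in $\calF^{\otimes[\delta]}$; Theorem~\ref{thm:fubini-GTK} together with the Kat\v{e}tov comparison then gives the strict inequality. The only difference is in how principality is obtained: you show directly that every witness tree passes through $\sigma_\delta$, whereas the paper deduces it from the singleton via Corollary~\ref{cor:GTK-filters} (the $\delta$-Fubini power of a filter is again a filter).
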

\begin{proof} The argument proceeds by unpacking Definition~\ref{def:delta-Fubini}. Let $\calF$ be a principal ultrafilter whereby $\bigcap\calF=\{n\}$, and let $\delta$ be an $\calF$-fence. By definition, there exists a tree $T$ that is $\calF$-branching up to $\delta$. The set of successors of any non-terminating node in $T$ is $\calF$-large, and thus contains $n$ by principality. Hence, the infinite path  $p:=(n,n,n,n,\dots)$ has a prefix $p\rstr k\in\delta$ for some $k$.\footnote{Why? If $T$ has infinite height, then $p\in[T]$ and Condition~(b) of Definition~\ref{def:delta-Fubini} ensures that $p$ meets $\delta$. If $T$ has finite height, then there exists some $k$ such that $p\rstr k\in T$ terminating in $T$, which implies $p\rstr k\in\delta$ by Condition~(c).} 
	
In fact, since $\{n\}$ is $\calF$-large by principality, $p$ itself defines an $\calF$-branching tree up to $\delta$. Since $\delta$ is an antichain, this implies $\calF^{\otimes[\delta]}$ contains a singleton, namely $\code{p\cap\delta}$. By Corollary~\ref{cor:GTK-filters}, deduce that $\calF^{\otimes[\delta]}$ is a principal ultrafilter. In particular, given any {\em non-principal} ultrafilter $\calG$, we have that
$$\calG\not\lk \calF^{\otimes[\delta]}\quad.$$
 Combined with the fact that $\calF\lk\calG$, apply Theorem~\ref{thm:fubini-GTK} to conclude $\calF <^\circ_{\mathrm{LT}} \calG$. 
\end{proof}

\begin{corollary}\label{cor:coarse} Given any upper set $\calU\subseteq\Pw$, 
	$$\calU\elt \calU\otimes \calU.$$
In particular, the Gamified Kat\v{e}tov order is strictly coarser than both Kat\v{e}tov and Rudin-Keisler in ZF.
\end{corollary}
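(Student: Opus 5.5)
The plan is to prove the two reductions $\calU\glt\calU\otimes\calU$ and $\calU\otimes\calU\glt\calU$ separately, both through Theorem~\ref{thm:fubini-GTK}, and then extract strictness from Fact~\ref{fact:rk-max}.

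\emph{The reduction $\calU\glt\calU\otimes\calU$.} Here it suffices to show $\calU\lk\calU\otimes\calU$ and to check that the Kat\v{e}tov order implies $\glt$. For the first point I will take $h:=\pi_0$, projection onto the first coordinate via $\lranglet{-}{-}$. Given $A\in\calU$, the set $\sum_{a\in A}\w$ lies in $\calU\ast\calU\subseteq\calU\otimes\calU$ (Lemma~\ref{lem:Katetov-Fubini-product}, using $\w\in\calU$), and its image under $\pi_0$ is $A$. For the second point I will use the fence $\delta=\w^1$ (strings of length one). A tree that is $\calU$-branching up to $\w^1$ is just a root with a $\calU$-large set of children, so $\calU^{\otimes[\w^1]}$ is the image of $\calU$ under the computable injection $n\mapsto\code{\lrangles{n}}$, upward closed. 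Hence $\calU\lk\calU^{\otimes[\w^1]}$, and Kat\v{e}tov transitivity together with Theorem~\ref{thm:fubini-GTK} gives $\calV\lk\calW\Rightarrow\calV\glt\calW$.

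\emph{The reduction $\calU\otimes\calU\glt\calU$.} I will take the fence $\delta=\w^2$. By Remark~\ref{rem:Fubini-generalise}, together with Propositions~\ref{prop:n-concatenation-branching-tree} and~\ref{prop:n-Fubini-branching-tree}, the family $\calU^{\otimes[\w^2]}$ is Kat\v{e}tov equivalent to $\calU\otimes\calU$ via the computable identification of $\w^2$ with length-$2$ strings. Concretely, $h(\code{\lrangles{a,b}}):=\lranglet{a}{b}$ sends $\code{T\cap\w^2}$ onto the set $\sum_{a}T_a$, and any $C\in\calU\otimes\calU$ contains such a sum. It remains to note that $\w^2$ is a $\calU$-fence: it is an antichain, and the full tree of height $2$ is $\calU$-branching up to it because $\w\in\calU$. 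Theorem~\ref{thm:fubini-GTK} then yields $\calU\otimes\calU\glt\calU$. Alternatively, I could exhibit $\Phi(p)=\lranglet{p(0)}{p(1)}$ directly in Definition~\ref{def:GTK}.

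\emph{Strictness.} Let $\calU$ be a non-principal ultrafilter. Fact~\ref{fact:rk-max} gives $\calU<_{\mathrm{RK}}\calU\otimes\calU$, and on ultrafilters $\lk$ and $\lrk$ coincide. So $\calU\otimes\calU\not\lk\calU$ while $\calU\otimes\calU\elt\calU$, which shows that $\glt$ strictly coarsens both orders.

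The main obstacle is the phrase ``in ZF'', since non-principal ultrafilters need not exist without choice. To avoid them, I would run the same argument for the Fr\'echet filter $\Fin^\ast$. The claim to verify is that $\Fin^\ast\otimes\Fin^\ast\not\lk\Fin^\ast$. Suppose $h$ witnesses such a reduction. If infinitely many columns $h^{-1}[\{a\}\times\w]$ were infinite, or if some column had an infinite fibre, a diagonal choice of a set $C\in\Fin^\ast\otimes\Fin^\ast$ would make $h^{-1}[C]$ co-infinite, contradicting the reduction. Otherwise $h$ is finite-to-one on each of cofinitely many columns and only finitely many columns are hit infinitely often. Removing from each of those columns its (infinite) range yields a set in the product whose preimage is cofinite-complement-finite-fails. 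I expect this diagonalisation to be the delicate step and would check it case by case, possibly citing the standard ZF fact that $\Fin\otimes\Fin\not\le_{\rm K}\Fin$.
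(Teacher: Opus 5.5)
Your proposal is correct and follows the paper's proof: both reductions come from Theorem~\ref{thm:fubini-GTK} with the one- and two-level fences (the paper's $(\calU\otimes\calU)^{\otimes 1}$ and $\calU^{\otimes 2}$), and strictness comes from Fact~\ref{fact:rk-max} under choice or, in ZF, from $\Fin\otimes\Fin\not\lk\Fin$, which the paper simply cites from \cite{GGMA16}. Your own diagonal sketch of that ZF fact is muddled: it misses the case where every column has finite nonempty preimage. The fact itself is easy. If the range of $h\colon\w\to\w\times\w$ meets infinitely many columns, take the least preimage in each such column; the resulting set of hit points has finite columns, so it lies in $\Fin\otimes\Fin$, yet its preimage is infinite. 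Otherwise some single column $\{a_0\}\times\w\in\Fin\otimes\Fin$ has infinite preimage.
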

\begin{proof} This is easy once we know Remark~\ref{rem:Fubini-generalise} and Theorem~\ref{thm:fubini-GTK}. 
Two basic checks:
\begin{enumerate}
	\item Clearly $\calU\otimes\calU \glt\calU$, since $\calU\otimes\calU\lk\calU\otimes\calU = \calU^{\otimes 2}$.
	\item Clearly $\calU\glt\calU\otimes\calU$, since $\calU\lk\calU\otimes\calU = (\calU\otimes\calU)^{\otimes 1}$.
\end{enumerate}
Hence, by (1) and (2), we get $\calU\elt\calU\otimes\calU.$ The remark about strict coarseness follows either from Fact~\ref{fact:rk-max} (assuming Choice) or from the ZF result $\Fin <_{\rm K} \Fin\otimes\Fin$ \cite{GGMA16}.

\end{proof}


Corollary~\ref{cor:coarse} gives a principled explanation for the coarseness of the Gamified Kat\v{e}tov order relative to Rudin-Keisler: $\glt$ is uniformly less sensitive to Fubini products. This is especially suggestive in light of Blass' remarks discussed in Section~\ref{sec:ultf}. The extent of this coarseness will be examined more closely in Section~\ref{sec:separation}; by Corollary~\ref{cor:princ-ultra}, we know $\glt$ is at least sufficiently sensitive to distinguish principal from non-principal ultrafilters. For now, our results mark an important conceptual checkpoint, summarised below: 


\begin{conclusion}\label{conc:fubini}
The $\delta$-Fubini power construction both generalises and refines the classical Fubini power. Since each fence $\delta\subseteq\lww$ corresponds to subset of {\em finite strings}, the Gamified Kat\v{e}tov order may be viewed as the classical Kat\v{e}tov order closed under well-founded iterations of Fubini powers. In particular, the Gamified Kat\v{e}tov order is strictly coarser than the classical Kat\v{e}tov order.
\end{conclusion}

\subsection{Finite-query Kat\v{e}tov game}\label{sec:game} In this section, we give an explicit game-theoretic description of the Gamified Kat\v{e}tov order.
\medskip

What's changed from before? Previously, when we described the $n$-query Kat\v{e}tov game, the number of queries was fixed beforehand: Player II always makes exactly $n$ queries. When this is relaxed to allow any {\em finite} number of queries, Player II must now {\em also} declare when to make a further query (thus adding another round to the game), and when to give a final answer (thus terminating the game). To describe this accurately, it is helpful to think of Player II as a team of two players, each delegated different responsibilities. We therefore describe the \textbf{Finite-Query Kat\v{e}tov Game} as a three-player game, spelled out below:

\begin{definition}[Finite-Query Kat\v{e}tov Game]\label{def:fin-quer-kat} Let $\calU,\calV\subseteq\Pw$ be two upper sets. The corresponding {\em Finite-Query Kat\v{e}tov Game}, denoted $\GUV$, is a game between Player I vs. Players II$_0$ and II$_1$. A typical play has the shape:
	\[
	\begin{array}{rccccccccccc}
	{\rm I}\colon	& A	&		& x_1	&		& x_2	&	& \dots & x_{k-1} & & x_{k} & \\
	{\rm II}_0\colon	&		& 0	&		& 0	& 		& 0	& \dots & & 0 & & 1,y\\
	{\rm II}_1\colon	&		& B_0	&		& B_1	& 		& B_2	& \dots & & B_{k-1} & &
	\end{array}
	\]
\textbf{Rules of the game.} Similar as before, Players II$_0$ and II$_1$ fix their strategy beforehand, which determines their next move based on the information available to them. Player I then challenges the strategy by issuing an additional problem at the start of each round, which must all be solved within finite time.
\begin{itemize}
	\item Player I begins the exchange by choosing some $A\in \calU$.
\item 	At the $n$th round:
\begin{itemize}
	\item[$\diamond$] Player II$_0$ announces a binary decision $q_n\in\{0,1\}$:
	\begin{itemize}
		\item If $q_n=0$, an additional query is made, and play continues; 
		\item If $q_n=1$, then Player II$_0$ declares termination of the game, and outputs a value $y\in\w$. The output is {\em valid} if $y\in A$.  
	\end{itemize}
The initial decision is always $q_0=0$. 
	\item[$\diamond$] Player II$_1$ is the oracle. If Player II$_0$ makes a query, Player II$_1$ chooses a set $B_n\in\calV$. Otherwise, the oracle stays silent.
\end{itemize} 
\item At the $(n+1)$th round, Player I then responds with $x_{n+1}\in B_n.$
\end{itemize}
\textbf{Player strategies.} To formalise each player's strategy, we first clarify what information is visible to them at each stage. At the $n$th round:
\begin{itemize}
	\item Player I can see all previous moves by Players II$_0$ and II$_1$.
	\item All previous subsets $(A,B_0,\dots, B_{n-1})$ are invisible to Player II$_0$;  the binary decision $q_n$ is based soley on Player I's previous responses $x_1,\dots, x_{n-1}$.
	\item Player II$_1$ observes the current position $(A,x_1,\dots,x_n)$ before selecting $B_n$.
\end{itemize}
To capture this formally, represent Player II$_0$'s strategy as a partial function $$\psi\pcolon\lww\to\w$$ satisfying $\psi(\o)=0$, and for each non-empty string $(x_1,\dots,x_n)$, either 
$$\psi(x_1,x_2,\dots,x_n)=0 \quad\text{or}\quad \psi(x_1,x_2,\dots,x_k)=\lranglet{1}{y}.$$ 
Without loss of generality, assume: if $\sigma\in\lww$ such that $\psi(\sigma)=\lranglet{1}{y}$, then $\psi(\tau)\uar$ for any extension $\tau\succeq\sigma$. Player II$_1$'s strategy is a partial function 
$$\eta\pcolon\Pw\times\lww\to\Pw$$
such that $\eta(A,\o)=B_0$ and $\eta(A,x_1,\dots,x_n)=B_n$ in the game $\GUV$. In particular, Player II$_0$ always responds with numbers, while Player II$_1$ responds with subsets in $\w$.
\smallskip

\noindent \textbf{Winning Condition.} Player II (the pair II$_0$ + II$_1$) {\em wins the game $\GUV$} if every play terminates legally with a valid output $y\in A$ (where $A$ is Player I's first move) {\em or} Player I violates the rules before Player II does. In which case, we say that Player II has {\em a winning strategy} for the game $\GUV$.
\end{definition}
	
We are finally in a position to properly justify our decision to call $\glt$ the {\em Gamified Kat\v{e}tov order}.

	\begin{theorem}\label{thm:game-GTK} Let $\calU,\calV\subseteq\Pw$ be upper sets. \underline{Then},
		$$\calU\glt\calV$$
		if and only if Player II has a winning strategy for the finite-query Kat\v{e}tov game $\GUV$.
	\end{theorem}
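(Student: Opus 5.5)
We prove the two directions separately, translating between Player II$_0$'s strategy $\psi$ and the partial continuous function $\Phi$ of Definition~\ref{def:GTK}, and between Player II$_1$'s strategy $\eta(A,\cdot)$ and the $\calV$-branching tree $T$ chosen for each $A\in\calU$.

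\textbf{($\Rightarrow$).} Suppose $\Phi$ witnesses $\calU\glt\calV$. By Convention~\ref{con:pcf}, $\Phi$ is given by $\varphi\pcolon\lww\to\w$ whose domain is an antichain. Define Player II$_0$'s strategy by $\psi(\sigma)=\lranglet{1}{\varphi(\sigma)}$ if $\sigma\in\dom(\varphi)$, and $\psi(\sigma)=0$ if $\sigma$ has no prefix in $\dom(\varphi)$. Set $\psi(\sigma)\uar$ otherwise, and force $\psi(\o)=0$; the degenerate case $\o\in\dom(\varphi)$ is handled by an extra dummy round. For each $A\in\calU$, fix a $\calV$-branching tree $T_A$ with $[T_A]\subseteq\dom(\Phi)$ and $\Phi[T_A]\subseteq A$. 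Let Player II$_1$ play $\eta(A,\sigma):=\{n\mid\sigma\fr n\in T_A\}$ whenever $\sigma\in T_A$, which lies in $\calV$ by branching, and arbitrary otherwise. If Player I obeys the rules, every position $(x_1,\dots,x_n)$ lies in $T_A$. The play must terminate: otherwise it traces an infinite path $p\in[T_A]\subseteq\dom(\Phi)$, which has a prefix in $\dom(\varphi)$, where $\psi$ halts. The output is then $\Phi(p)$ for any path $p$ through $T_A$ extending the current position, and such a path exists since $T_A$ is branching and non-empty. Hence the output lies in $A$.

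\textbf{($\Leftarrow$).} Given a winning pair $(\psi,\eta)$, let $\delta$ be the set of strings $\sigma$ with $\psi(\sigma)=\lranglet{1}{y}$. By the WLOG clause in Definition~\ref{def:fin-quer-kat}, $\delta$ is an antichain. Let $\varphi(\sigma):=y$ on $\delta$, which yields $\Phi$. For $A\in\calU$, build $\widetilde T_A$ inductively: $\o\in\widetilde T_A$, and if $\sigma\in\widetilde T_A\setminus\delta$, then $\sigma\fr n\in\widetilde T_A$ for all $n\in\eta(A,\sigma)$. Nodes in $\delta$ are left terminal. This tree is $\calV$-branching up to $\delta$, and its nodes are exactly the legal positions. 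Since every legal play terminates, each infinite path of $\widetilde T_A$ meets $\delta$, and each $\sigma\in\widetilde T_A\cap\delta$ has $\varphi(\sigma)\in A$ by validity. Now proceed exactly as in Step~1 of Theorem~\ref{thm:fubini-GTK}: extend each node of $\widetilde T_A\cap\delta$ by the full tree $\lww$ to obtain an honest $\calV$-branching tree $T_A$. Every path of $T_A$ passes through $\delta$, so $[T_A]\subseteq\dom(\Phi)$ and $\Phi[T_A]\subseteq A$.

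\textbf{Main obstacle.} The delicate part is the bookkeeping around indexing and termination, not any hard argument. The paper's round indexing is off by one: $B_n$ is chosen at position $(x_1,\dots,x_n)$, so the correspondence is $\eta(A,\sigma)=$ the successor set of $\sigma$. Two further points need care. First, the empty string: with $\psi(\o)=0$, at least one query is always made, which requires $\emptyset\notin\calV$ and the convention that $\calV$ is non-empty. Second, the meaning of ``every play terminates'' must be the well-foundedness statement that no infinite legal play exists. With this reading, the rule ``Player I violates the rules first'' just means Player I has left the tree, and the equivalence holds verbatim.
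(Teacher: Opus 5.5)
Your proposal is correct and follows essentially the same route as the paper. Your $(\Leftarrow)$ tree, with the nodes of $\delta$ left terminal and then capped by full cones, is exactly the paper's active/lazy construction in Steps 1--3. Your explicit $(\Rightarrow)$ construction, including the dummy round when the empty string lies in $\dom(\varphi)$, spells out what the paper leaves as ``the above proof in reverse''.
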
	
	
\begin{proof} \hfill
	\begin{itemize}
		\item[$\impliedby$\;:] 
		Suppose Player~II has a winning strategy, consisting of a pair of partial functions
		\[ \psi\pcolon\lww\to\w
		\quad\text{and}\quad
		\tau\pcolon \Pw\times\lww\to\Pw \;,
		\]
		corresponding respectively to Players~II$_0$ and~II$_1$. In particular, $\psi$ and $\tau$ must obey the rules of the game $\GUV$. Leveraging this observation, we show $\calU\glt\calV$ by extracting the relevant data from the winning strategy, before establishing the witnessing condition.
		
	\subsubsection*{Step 1:  Constructing the partial continuous function.} Build a partial function
		$\varphi\pcolon\lww\to\w$ by setting:
		\[
		\varphi(\sigma)=y
		\quad\text{iff}\quad
		\psi(\sigma)= \lranglet{1}{y}
		\]
		If $\psi(\sigma)=0$, leave $\varphi(\sigma)$ undefined. Everywhere else, leave $\varphi$ undefined.
		\smallskip
		
		The domain of $\varphi$ is an antichain, since under the rules of $\GUV$, Player II$_0$ terminates the game as soon as a ``good'' string $\sigma$ is found. Examining our formalisation of player strategies, this means: once $\psi(\sigma)=\lranglet{1}{y}$, the strategy specifies no further values on extensions of $\sigma$. By Convention~\ref{con:pcf}, $\varphi$ thus determines a partial continuous function $\Phi\pcolon\ww\to\w$.
		
	\subsubsection*{Step 2: Constructing the $\calV$-branching tree.} Let $\tau\pcolon \Pw\times\lww\to\Pw$ be Player II$_1$'s strategy. Then, for any $A\in\calU$, we build a tree $T_A$ recursively by separating the ``active'' vs. ``lazy'' nodes:	
	\begin{itemize}
		\item[$\bullet$]  \textbf{Active.} A node $\sigma\in T_A$ is {\em active} if $\psi(\sigma)=0$; this includes the root $\sigma=\o$. In which case, define its set of immediate sucessors in $T_A$ to be
				\[
	\{n \mid \sigma\fr n \in T_A\} = \tau(A,\sigma).
		\]
	Any active node is $\calV$-branching: since Player II$_1$'s strategy is winning, $A\in\calU$ implies that $\tau(A,\sigma)\in\calV$.
		\item[$\bullet$] \textbf{Lazy.} All non-active nodes $\sigma\in T_A$ are called {\em lazy}. In which case, $\sigma$ is full-branching:
			\[
		\{n \mid \sigma\fr n \in T_A\} = \w\;.
		\]
		Clearly, all lazy nodes are $\calV$-branching as well.
	\end{itemize}
That $T_A$ is $\calV$-branching is thus immediate by construction.
\medskip

\noindent Here is the informal picture. Given $A\in \calU$, our tree $T_A$ corresponds to Player I's legal plays (with respect to Player II's fixed strategy). If $\psi(\sigma)=0$ (an active node), then a query is made to the oracle, who returns $\tau(A,\sigma)\subseteq \w$, the set of all legal next moves for Player I. If no query is made (a lazy node), the oracle stays silent, and no restrictions are placed on Player I. 

\subsubsection*{Step 3: Finish} Let $\Phi\pcolon\ww\to\w$ be the partial continuous function in Step 1, determined by its finite representative $\varphi\pcolon \lww\to\w$. 
\smallskip 

Fix any $A\in\calU$. By Step 2, we obtain a $\calV$-branching tree $T_A$ whose nodes represent all legal plays by Player I against Player II's fixed strategy. Now consider any infinite path $p\in [T_A]$. Since Player II's strategy is winning, Player II$_0$ must eventually declare termination: there exists $k\in \w$ such that $\psi(p\rstr k)=\lrangles{1,u}$ where $u\in A$. By construction of $\Phi$, we then have $\Phi(p) = \varphi (p\rstr k)=u$. Hence, 
$$p\in [T_A] \implies \Phi(p)\dar\quad\text{and}\quad \Phi(p)\in A.$$
Equivalently, $[T_A]\subseteq \dom(\Phi)$ and $\Phi[T_A]\subseteq A$. Since $A\in\calU$ was arbitrary, conclude that
$$\calU\glt\calV.$$
		\medskip
	\item[$\implies$\;:] Consider the above proof in reverse, with the obvious translations. 
	\end{itemize}
\end{proof}

The translation between trees and games is well-known in various fields, including set theory, so this should come as no surprise. Nevertheless, we still believe the result is worth stating as a theorem since the interplay between these two perspectives will be useful in our context.
	
\begin{conclusion}\label{conc:GTK} The $\glt$-order is a variant of the Kat\v{e}tov order admitting an explicit game-theoretic description. Hence, Definition~\ref{def:GTK} indeed defines a gamified Kat\v{e}tov order.
\end{conclusion}

\subsection{Comparison with $\clt$-order}\label{sec:clt}
As alluded to in Section~\ref{sec:Intro}, our original motivation behind the Gamified Kat\v{e}tov order came not from set theory, but from a longstanding open problem in category theory. We restate this below for convenience.

\begin{problem}\label{prob:LT} An LT topology in the Effective Topos $\Eff$ is an endomorphism 
	$$j\colon \Pw\to\Pw$$
validating all three conditions of Definition~\ref{def:LT-topology}. For any pair of LT topologies $j,k$, we write
	$$j\clt k : \iff \forall p.j(p)\to k(p) \, \text{is valid},$$	
where validity means the existence of a suitable computable witness (Convention~\ref{con:orders}). What is the structure of the $\clt$-order on LT-topologies, and what can this tell us about computable complexity? 
\end{problem}


Lee-van Oosten \cite{LvO13} and Kihara \cite{Kih23} have provided {\em combinatorial} and {\em game-theoretic} descriptions of the LT-topologies in $\Eff$, respectively. Combining the two perspectives allows us to connect the Gamified Kat\v{e}tov order with the structure of LT topologies.

\begin{discussion}\label{desc:function-complexity-as-principal-filter}
Kihara's work \cite{Kih23} originates in studying the ``complexity of functions'' in computability theory.
To connect this with the Kat\v{e}tov order, we need a unifying approach that handles the complexity of both ``functions'' and ``upper sets.''
This can be achieved by the following identifications.
\begin{enumerate}[label=(\alph*)]
\item Identify a single-value $f(n)\in\omega$ with the {\em principal ultrafilter} $\{A\subseteq\omega:f(n)\in A\}\subseteq\mathcal{P}(\omega)$
\item Identify a multi-value $g(n)\subseteq\omega$ with the {\em principal filter} $\{A\subseteq\omega:g(n)\subseteq A\}\subseteq\mathcal{P}(\omega)$.
\end{enumerate}
This gives rise to the following slogan:
\begin{enumerate}[label=(\alph*)]
\item A partial function $f\pcolon\om\to\om$ is a partial $\omega$-sequence of principal ultrafilters on $\omega$.
\item A partial multi-valued function $g\pcolon\om\tto\om$ is a partial $\omega$-sequence of principal filters on $\omega$.
\end{enumerate}
We will justify these identifications below. Our guiding thesis is that the $\clt$-order on LT-topologies in $\Eff$ can be characterised by extending the Gamified Kat\v{e}tov order from (single) upper sets $\calU$ to partial $\omega$-sequences of upper sets over $\omega$.
\end{discussion}

\begin{definition}[Upper Sequences]\label{def:upper-seeq} An {\em upper sequence on $\w$} is a partial sequence $$\overline{\mathcal U}=(\mathcal U_n)_{n\in I}$$ of upper sets over $\omega$, where $I\subseteq\omega$. If $I=\w$, then $\calUS$ is called {\em total}.
\end{definition}

\begin{example}
Any upper set $\calU\subseteq\Pw$ can be viewed either as a constant upper sequence (where $\calU_n=\calU$), or an upper sequence with singleton index set $I=\{\ast\}$.
As mentioned above, any partial multi-valued function $g\colon\om\tto\omega$ can be identified with a upper sequence consisting of principal filters.
\end{example}

\begin{definition}\label{def:Katetov-for-upper-seq}
Let $\overline{\U}=(\U_i)_{i\in I}$ and $\overline{\V}=(\V_j)_{j\in J}$ be partial $\omega$-sequences of subset families, i.e., $I,J\subseteq\omega$ and $\U_i,\V_j\subseteq\mathcal{P}(\omega)$.
\begin{itemize}
\item We say that $\overline{\U}$ is {\em Kat\v{e}tov reducible to $\overline{\V}$} if there exist $\varphi$ and $\psi$ such that for any $i\in I$ and $A\in\U_i$ there exists $B\in\V_{\varphi(i)}$ such that $\psi[B]\subseteq A$.
\item We say that $\overline{\U}$ is {\em one-query reducible to $\overline{\V}$} if there exist $\varphi$ and $\psi$ such that for any $i\in I$ and $A\in\U_i$ there exists $B\in\V_{\varphi(i)}$ such that $\psi(i,x)\in A$ for any $x\in B$.
\item If $\varphi$ and $\psi$ are computable, we say that $\overline{\U}$ is {\em computable Kat\v{e}tov/one-query reducible to $\overline{\V}$}.
\end{itemize}
Obviously:
\begin{center}
$\overline{\U}$ is Kat\v{e}tov reducible to $\overline{\V}$ $\implies$ $\overline{\U}$ is one-query reducible to $\overline{\V}$.
\end{center}
\end{definition}

\begin{observation}\label{obs:computable-Katetov-to-upseq}
Any partial $\omega$-sequences $\overline{\U}=(\U_i)_{i\in I}$ of subset families is computable Kat\v{e}tov equivalent to a upper sequence.
\end{observation}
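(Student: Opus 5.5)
The natural candidate is the termwise upward closure $\overline{\U}^{\uparrow}:=(\U_i^{\uparrow})_{i\in I}$, and I would show that $\overline{\U}$ and $\overline{\U}^{\uparrow}$ are computable Kat\v{e}tov reducible to each other with $\varphi=\psi=\id$. This is just a uniform, indexed version of Lemma~\ref{lem:upward-closure-basic-properties}~(ii).

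Two caveats are handled first.

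\emph{Empty families.} Convention~\ref{con:filters} requires an upper set to be non-empty. If some $\U_i=\emptyset$, then $\U_i^{\uparrow}=\emptyset$ too. Both reductions hold vacuously at that index, since there is no $A\in\U_i$ to test. So one either allows such degenerate terms in an upper sequence or leaves the conventions about emptiness aside; the reduction argument does not depend on it.

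\emph{Families containing $\emptyset$.} If $\emptyset\in\U_i$, then $\U_i^{\uparrow}=\Pw$ contains $\emptyset$, which the convention also excludes. Again the Kat\v{e}tov equivalence itself is unaffected: $\U_i^{\uparrow}$ and $\U_i$ are equivalent regardless. If one insists on $\emptyset\notin\U_i$, I would note that the statement is meant in the extended sense allowing these trivial terms.

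\textbf{Reduction $\overline{\U}^{\uparrow}\to\overline{\U}$.} Take $\varphi(i)=i$ and $\psi=\id$. Given $i\in I$ and $A\in\U_i^{\uparrow}$, the definition of upward closure gives some $B\in\U_i$ with $B\subseteq A$. Then $\psi[B]=B\subseteq A$, as required.

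\textbf{Reduction $\overline{\U}\to\overline{\U}^{\uparrow}$.} Take the same $\varphi$ and $\psi$. Given $A\in\U_i$, put $B=A$; this lies in $\U_i^{\uparrow}$ and satisfies $\psi[B]\subseteq A$.

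Both witnesses are computable, and the index set $I$ is unchanged, so partiality of the sequence causes no difficulty. There is no real obstacle here; the only care needed is the bookkeeping about non-emptiness and $\emptyset\notin\U_i$ described above.
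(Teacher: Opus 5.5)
Your proposal is correct and is exactly the paper's argument: pass to the termwise upward closure $\overline{\U}^{\uparrow}=(\U_i^{\uparrow})_{i\in I}$ and note that $\varphi=\psi=\id$ witnesses computable Kat\v{e}tov reducibility in both directions, as in Lemma~\ref{lem:upward-closure-basic-properties}~(ii). Your caveats are bookkeeping that the paper leaves implicit. The one about $\emptyset\in\U_i$ is a fair point: under the strict convention $\emptyset\notin\U$, a sequence with such a term is not Kat\v{e}tov reducible to any upper sequence, because the reduction would need $\emptyset$ to occur in the target, so the statement must be read in the extended sense you describe.
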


\begin{proof}
Consider $\overline{\U}^\uparrow=(\U_i^\uparrow)_{i\in I}$.
As in Lemma \ref{lem:upward-closure-basic-properties} (ii), one can easily see that $\overline{\U}^\uparrow$ is computable Kat\v{e}tov equivalent to $\overline{\U}$ via $\varphi=\psi={\rm id}$.
\end{proof}

Observation \ref{obs:computable-Katetov-to-upseq} justifies Discussion \ref{desc:function-complexity-as-principal-filter}.

\begin{remark}
Definition \ref{def:Katetov-for-upper-seq} unifies combinatorial complexity and computability-theoretic complexity:
\begin{itemize}
\item Consider constant sequences $\overline{\U}=(\U)$ and $\overline{\V}=(\V)$.
Then:
\[\mbox{$\overline{\U}$ is Kat\v{e}tov reducible to $\overline{\V}$} \iff \mbox{$\overline{\U}$ is one-query reducible to $\overline{\V}$} \iff \U\preceq_{\rm K}\V.\]
\item Consider partial multi-valued functions $f$ and $g$.
Then, via the identification in Discussion \ref{desc:function-complexity-as-principal-filter}:
\[\mbox{$f$ is computable one-query reducible to $g$} \iff \mbox{$f$ is Weihrauch reducible to $g$}.\]
We emphasise here that {\em Weihrauch reducibility} plays a central role in modern computability theory; see the recent survey \cite{BGP21}.
\item In computability theory, the notion of computable one-query reducibility (for upper sequences) is also known as {\em extended Weihrauch reducibility} \cite{Bau22,Kih23,Kih22}.
\end{itemize}
\end{remark}


To proceed, we give a quick review of the first author's game-theoretic characterisation of LT topologies in the Effective Topos $\Eff$ \cite{Kih23}.
A key idea in \cite{Kih23} is to treat each index $i$ of $\overline{\U}=(\U_i)_{i\in I}$ as {\em public} data, while treating each $A\in\U_i$ as {\em secret} data.
This yields the following imperfect information game.

\begin{definition}[LT-Game]\label{def:bilayer-game} Let $\overline{\U}=(\U_i)_{i\in I}$ and $\overline{\V}=(\V_j)_{j\in J}$ be upper sequences.
The corresponding {\em LT-Game}, denoted $\mathfrak{G}(\overline{\U},\overline{\V})$, is an imperfect information three-player game. The players are named $\mer, \art, \nim$, and a typical play has the shape:
		\[
	\begin{array}{rccccccccccc}
\mer \colon	& x_0,A_0	&		& x_1	&		& x_2	&	& \dots & x_{k-1} & & x_{k} & \\
\art\colon	&		& y_0	&		& y_1& 		&  y_2	& \dots & & y_{k-1} & & y_k\\
\nim\colon	&		& B_0	&		& B_1	& 		& B_2	& \dots & & B_{k-1} & &
	\end{array}
	\]
\textbf{Rules of the game.} $\mer$ starts by choosing $x_0\in I$ and $A_0\in\U_{x_0}$. At the $n$th round:
\begin{itemize}
	\item $\art$ reacts with $y_n=\lranglet{j}{u_n}$:
	\begin{itemize}
		\item[$\diamond$] The choice $j=0$ indicates $\art$ makes a new query $u_n$ to $\overline{\V}$; we require $u_n\in J$.
		\item[$\diamond$] The choice $j=1$ indicates $\art$ delcares termination of the game with $u_n$.
	\end{itemize}
\item  $\nim$ makes an advice parameter $B_n\in \V_{u_n}$.
\end{itemize}
After which, at the $(n+1)$th round, $\mer$ responds by selecting $x_{n+1}\in B_n$. We say that $\art$-$\nim$ {\em win the game $\mathfrak{G}(\overline{\U},\overline{\V})$} if every play terminates legally with $u_n\in A_0$, or $\mer$ violates the rules before $\art$ and $\nim$ do.

\noindent \textbf{Player strategies.} Whereas $\mer$ and $\nim$ can see the play history of all previous moves, $\art$ can only see the (public) moves $x_0, x_1, x_2\dots $ made by $\mer$. In particular, $\art$ is unable to see the parameters selected by $\nim$ (``secret input''). Moreover, we shall require that $\art$'s moves be chosen computably. Formally, $\art$'s strategy becomes a partial {\em computable} function
$$\varphi\pcolon \lww \to \w.$$
On the other hand, $\mer$ and $\nim$'s strategies can be any partial function obeying the rules of the game (not necessarily computable).
\end{definition}

\begin{remark} Perhaps an amusing comment on the choice of names, which may (or may not) reinforce intuition. $\art$ is a mortal man who seeks to defeat the evil wizard $\mer$. They engage under the rules of $\mathfrak{G}(\overline{\U},\overline{\V})$, where $\mer$ issues a public challenge $x_n$ to $\art$ for every round $n$. $\nim$ is a benevolent nymph who decides to help $\art$ on his quest: at the $n$th round, she selects a parameter $B_n$, constraining $\mer$'s next move since the wizard can only pick $x_{n+1}\in B_n.$ Since $\art$ is mortal, his abilities are limited: his strategy must be computable and he can only see $\mer$'s public challenges (and nothing else). On the other hand, $\mer$ and $\nim$ are supernatural beings: they can see the entire play history (including $\nim$'s secret inputs), and there are no computability constraints on their strategies.
\end{remark}

\begin{definition}\label{def:LT-reducibility-upper-seq}
Let $\overline{\U},\overline{\V}$ be upper sequences.
We say that $\overline{\U}$ is {\em LT-reducible} to $\overline{\V}$, written
	$$\overline{\U}\leq_{\rm LT}\overline{\V},$$
just in case there exists a winning $\art$-$\nim$ strategy for the game $\mathfrak{G}(\overline{\U},\overline{\V})$.
\end{definition}

Definitions \ref{def:bilayer-game} and \ref{def:LT-reducibility-upper-seq} can be applied to partial $\omega$-sequences of subset families as well, but this does not change the structure of the order, as we explain below.

\begin{fact}\label{fact:LT-basic-for-upper-sequences}
Let $\overline{\U},\overline{\V}$ be partial $\omega$-sequences of subset families.
\begin{enumerate}[label=(\roman*)]
\item If $\overline{\U}$ is computable one-query reducible to $\overline{\V}$, then $\overline{\U}$ is LT-reducible to $\overline{\V}$.
\item $\overline{\U}$ is LT-equivalent to an upper sequence.
\item $\leq_{\rm LT}$ defines a preorder on upper sequences.
\end{enumerate}
\end{fact}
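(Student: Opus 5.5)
We prove the three items in order, working directly with the LT-game $\mathfrak{G}(\overline{\U},\overline{\V})$ of Definition~\ref{def:bilayer-game}.

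\emph{Item (i).} Suppose computable $\varphi,\psi$ witness that $\overline{\U}$ is one-query reducible to $\overline{\V}$. $\art$ plays the following strategy, which is computable.
\begin{itemize}
\item[$\diamond$] On $\mer$'s public move $x_0=i$, he answers $y_0=\lranglet{0}{\varphi(i)}$, a single query.
\item[$\diamond$] On the next public move $x_1$, he answers $y_1=\lranglet{1}{\psi(i,x_1)}$ and terminates.
\end{itemize}
$\nim$, who sees the secret $A_0\in\U_i$, plays a set $B_0\in\V_{\varphi(i)}$ with $\psi(i,x)\in A_0$ for all $x\in B_0$. Such a $B_0$ exists by hypothesis. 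Every legal play then ends with an output in $A_0$.

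\emph{Item (ii).} We take $\overline{\U}^\uparrow$ from Observation~\ref{obs:computable-Katetov-to-upseq}. That observation gives computable Kat\v{e}tov equivalence via the identity maps. Kat\v{e}tov reducibility implies one-query reducibility, so item (i) yields both LT-reductions.

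\emph{Item (iii).} Reflexivity is item (i) applied to the identity. The main work is transitivity: given winning strategies $(\art_1,\nim_1)$ for $\mathfrak{G}(\overline{\U},\overline{\V})$ and $(\art_2,\nim_2)$ for $\mathfrak{G}(\overline{\V},\overline{\W})$, we compose them. The composed $\art$ simulates $\art_1$. Whenever $\art_1$ issues a query $u\in J$ to $\overline{\V}$, the composed $\art$ does not pass it on. Instead he opens a fresh sub-game of $\mathfrak{G}(\overline{\V},\overline{\W})$ with public move $u$ and runs $\art_2$ inside it, forwarding $\art_2$'s queries to $\overline{\W}$ and feeding $\mer$'s answers back to $\art_2$. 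When $\art_2$ terminates with some value $v$, that $v$ is handed to $\art_1$ as the answer to its query $u$. Because $\art$ sees only public moves and both $\art_1,\art_2$ are partial computable, this is a partial computable function of $\mer$'s public history: the bookkeeping of which sub-game is active is computable from that history.

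The composed $\nim$ works as follows.
\begin{itemize}
\item[$\diamond$] When $\art_1$ queries $u$, she computes the set $B=\nim_1(\text{history})\in\V_u$. She uses $B$ as the secret first move of $\mer$ in the simulated sub-game.
\item[$\diamond$] She then plays $\nim_2$'s advice inside that sub-game.
\end{itemize}
Since $\nim_2$ is winning, the sub-game terminates with some $v\in B$. This $v$ is therefore a legal response by ``$\mer$'' in the outer simulated game against $\art_1$. Because $\nim_1$ is winning, that outer game eventually terminates with a value in $A_0$.

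It remains to check termination. Any infinite play of the composed game would give either an infinite play of one sub-game against $(\art_2,\nim_2)$, or infinitely many outer rounds against $(\art_1,\nim_1)$, and each would be a legal non-terminating play. Legality is preserved, because every move of $\mer$ in the composed game lies in an advice set produced by $\nim_2$. The delicate point, and the main obstacle, is to formalise this simulation faithfully: the composed $\nim$'s moves must depend only on the full history, and the non-termination argument must be transferred carefully. In particular, partiality of $\art_1$ or $\art_2$ on illegal histories must only ever arise after $\mer$ has broken a rule.

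Alternatively, one may cite \cite{Kih23}, where this preorder property and the equivalence with $j_{\overline{\U}}\clt j_{\overline{\V}}$ are established. In that case (iii) follows immediately from transitivity of $\clt$ on LT topologies.
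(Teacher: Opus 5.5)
Your items (i) and (ii) are exactly the paper's argument. In (i), $\art$ computes $\varphi(i)$ and then $\psi(i,x)$ in a single-query play. In (ii), Observation~\ref{obs:computable-Katetov-to-upseq} is combined with (i).

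For (iii) the routes differ. The paper gives no argument of its own: it transfers \cite[Proposition 2.13]{Kih23} through the bilayer-function dictionary of Footnote~\ref{fn:bilayer-trans}. Your main argument instead composes $(\art_1,\nim_1)$ with $(\art_2,\nim_2)$. For each query of $\art_1$ you open a fresh sub-game of $\mathfrak{G}(\overline{\V},\overline{\calW})$, and $\nim_1$'s advice serves as the secret move of the simulated $\mer$. This is correct: legality, the final output lying in $A_0$, and termination all go through as you describe.

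One detail to record when you formalise it. Within a single real round, the composed $\art$ may have to run through several simulated outer rounds. This happens whenever $\art_2$ ends a sub-game without querying $\overline{\calW}$. So besides infinite plays, you must also exclude divergence inside one round. The same argument rules this out: such a divergence on a legal history yields an infinite legal play against $(\art_1,\nim_1)$.

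Your route also buys something the citation does not. Computability of $\art_1,\art_2$ is used only to see that the composite is computable. So the same composition, read in the Oracle LT-game of Definition~\ref{def:game-based-def}, proves transitivity of $\glt$ on upper sequences directly. This bypasses the relativisation argument of Theorem~\ref{thm:preorder} and Corollary~\ref{cor:preorder}, which the paper uses instead of unpacking definitions. In exchange, the paper's citation is shorter and inherits the interleaving bookkeeping from \cite{Kih23}.
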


\begin{proof}
\begin{enumerate}[label=(\roman*):]
\item Computable one-query reducibility corresponds to the following play in the one-query LT-game:
\[
	\begin{array}{rcccc}
\mer \colon	& i\in I,A\in\U_i	&		& x\in B		&  \\
\art\colon	&	& 0,\varphi(i)\in J	&		& 1,\psi(i,x)\in A \\
\nim\colon	&	& B\in\V_{\varphi(i)}	&		& 	
	\end{array}
\]

\item  This follows from Observation \ref{obs:computable-Katetov-to-upseq} and item (i) above.

\item This translates \cite[Proposition 2.13]{Kih23}; by examining definitions, it is straightforward to see that so-called ``bilayer functions'' in {\em op. cit.} correspond to the upper sequences considered here.\footnote{\label{fn:bilayer-trans}{\em Details.} An upper sequence $\overline{\U}=(\U_n)_{n\in I}$ determines a bilayer function $f$ such that ${\rm dom}(f)=\{(n\mid A):A\in\U_n\}$ and $f(n\mid A)=A$.
Conversely, a bilayer function $f$ yields a partial $\w$-sequence $\overline{\U}=(\U_n)_{n\in I}$ such that $\U_n=\{f(n\mid A):(n\mid A)\in{\rm dom}(f)\}$, where $n\in I$ iff $(n\mid A)\in{\rm dom}(f)$ for some $A$. By taking upward closures as in Observation~\ref{obs:computable-Katetov-to-upseq}, we obtain the desired upper sequence. 
} 
\end{enumerate}

\end{proof}

This game-theoretic description sets up a major result in \cite{Kih23}, giving one answer to Problem~\ref{prob:LT}.

\begin{theorem}\label{thm:kih-bil}
The $\clt$-order on upper sequences is isomorphic to the Lawvere-Tierney order on LT-topologies over the Effective Topos $\Eff$.
\end{theorem}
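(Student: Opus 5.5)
The plan is to route the isomorphism through two existing descriptions of LT topologies: Lee--van Oosten's combinatorial one (Summary Theorem~\ref{sumthm-1}) and the bilayer-function description of \cite{Kih23}, which Fact~\ref{fact:LT-basic-for-upper-sequences} and its footnote already translate into upper sequences. Concretely, I will construct an order-preserving map $\overline{\U}\mapsto j_{\overline{\U}}$ from upper sequences (preordered by $\clt$ as in Definition~\ref{def:LT-reducibility-upper-seq}) to LT topologies, and show that it is an order-embedding and essentially surjective up to $\eclt$. For $\overline{\U}=(\U_i)_{i\in I}$, I take $j_{\overline{\U}}$ to be the least LT topology $j$ for which $i\mapsto$ ``choose an element of $A$'' is realisable for $A\in\U_i$, i.e.\ such that $\{i\}\times\dots$ validates $\bigcap_{i\in I}\bigcap_{A\in\U_i}(\{i\}\to j(A))$ uniformly. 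Explicitly, $j_{\overline{\U}}(p)$ is built by the well-founded inductive definition of realisers: $e\in j_{\overline{\U}}(p)$ iff either $e=\langle 0,a\rangle$ with $a\in p$, or $e=\langle 1,i,e'\rangle$ with $i\in I$ and some $A\in\U_i$ such that $\varphi_{e'}(x)\in j_{\overline{\U}}(p)$ for all $x\in A$. This is exactly Lee--van Oosten's $L(G)$ construction, with $G$ the family indexed by the $\U_i$, and the three LT axioms follow from the standard least-fixed-point argument, with realisers obtained by the recursion theorem.

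The key step is showing that $e\in j_{\overline{\U}}(p)$ unfolds precisely into an $\art$ strategy in a game of the form $\mathfrak{G}$. A realiser is a well-founded tree of queries: $\art$ issues public indices $i$, $\nim$ secretly chooses sets $A\in\U_i$, and $\mer$ picks elements $x\in A$, until a leaf $\langle0,a\rangle$ with $a\in p$ is reached. From this I will prove
\[
\overline{\U}\clt\overline{\V}\iff j_{\overline{\U}}\clt j_{\overline{\V}}.
\]
For the forward direction, a computable $\art$ strategy together with the recursion theorem transforms each $\overline{\U}$-realiser tree into a $\overline{\V}$-realiser tree. This is done by replacing every $\U_i$-query with the $\overline{\V}$-subgame that $\art$ plays, while $\nim$'s existential choices supply the required sets $B\in\V_j$. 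For the converse, I apply the witness of $j_{\overline{\U}}\le j_{\overline{\V}}$ to the canonical element $\langle 1,i,\mathrm{id}\rangle\in j_{\overline{\U}}(A)$ for $A\in\U_i$. The resulting $j_{\overline{\V}}(A)$-realiser is, computably in $i$, an $\art$ strategy; the existential witnesses become $\nim$'s moves, and well-foundedness yields termination. This direction needs care, because the LT witness must be uniform in $p$ while $\art$ cannot see $A$. The uniformity is precisely what makes $\art$'s moves independent of the secret data, so that $\nim$ alone supplies the $A$-dependent information.

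For surjectivity, Summary Theorem~\ref{sumthm-1}(i) says every LT topology is a recursive join of basic topologies $j_{\calA}$. A computably indexed family $(\calA_i)_{i\in I}$ is itself an upper sequence after upward closure (Observation~\ref{obs:computable-Katetov-to-upseq}), and its join equals $j_{(\calA_i^\uparrow)}$ up to $\eclt$ by the leastness in the definition above. Fact~\ref{fact:LT-basic-for-upper-sequences}(iii) makes the quotient a partial order, so the result is an isomorphism of posets. I expect the main obstacle to be the converse direction of the order-embedding, specifically extracting a genuinely computable $\art$ strategy from an LT realiser that must work uniformly for all $p$. There I would first try a fixed-point argument, relying on \cite[Proposition 2.13 and main theorem]{Kih23} via the bilayer translation in the footnote to Fact~\ref{fact:LT-basic-for-upper-sequences}.
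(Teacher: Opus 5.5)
Your proposal is correct, but it takes a genuinely different route from the paper. The paper gives no argument of its own here. It imports \cite[Corollary 3.5]{Kih23} and transports it along the dictionary between bilayer functions and upper sequences in Footnote~\ref{fn:bilayer-trans}. You instead reprove that corollary directly, in three steps.
First, $j_{\overline{\U}}$ is the least fixed point of the indexed polynomial operator $X\mapsto\{\langle 0,a\rangle\mid a\in p\}\cup\{\langle 1,i,e'\rangle\mid i\in I,\ e'\in\bigcup_{A\in\U_i}(A\to X)\}$, an indexed version of Lee--van Oosten's $L(G_\calU)$.
Second, its realisers are well-founded computable \art-decision trees, whose existential witnesses are \nim's moves.
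Third, surjectivity comes from \cite[Theorem 2.3]{LvO13}.
The citation buys brevity. Your argument is self-contained and shows why the public/secret split and the computability of \art{} match exactly the uniformity of LT realisers in $p$. It also gives Fact~\ref{fact:LT-basic-for-upper-sequences}(iii) for free, and makes concrete the free-monad picture of Section~\ref{sec:poly-functors}.

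The step you flag as the main obstacle is already settled by your own remark; it needs no fixed-point argument and no appeal to \cite{Kih23}. Let $e_0$ be an index for $x\mapsto\langle 0,x\rangle$; this, not $\mathrm{id}$, is what puts $\langle 1,i,e_0\rangle$ into $j_{\overline{\U}}(A)$. If $t$ realises $j_{\overline{\U}}\clt j_{\overline{\V}}$, the code $c_i:=\varphi_t(\langle 1,i,e_0\rangle)$ depends only on $i$ and lies in $\bigcap_{A\in\U_i}j_{\overline{\V}}(A)$. \art{} unfolds $c_i$ computably along \mer's moves. \nim{} picks each witness $B\in\V_j$ from the stage at which the current code entered the fixed point. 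Ranks therefore decrease, and every play ends at some $\langle 0,a\rangle$ with $a\in A$.

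Two small caveats remain.
\begin{enumerate}
\item Under the paper's convention $\emptyset\notin\U_i$, an induction on stages gives $j_{\overline{\U}}(\emptyset)=\emptyset$. So the trivial topology is never reached, and surjectivity (like the statement itself) is onto the non-trivial topologies.
\item You can bypass \cite[Theorem 2.3]{LvO13}. For any $j$, the sequence $\calA_n:=\{p\mid n\in j(p)\}^{\uparrow}$ satisfies $j\clt j_{(\calA_n)_n}$ via $n\mapsto\langle 1,n,e_0\rangle$, and $j_{(\calA_n)_n}\clt j$ by leastness.
\end{enumerate}
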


\begin{proof}
This is \cite[Corollary 3.5]{Kih23} via the translation described in Footnote~\ref{fn:bilayer-trans}, originally used in the proof of Fact \ref{fact:LT-basic-for-upper-sequences} (iii).
\end{proof}

\begin{discussion}[Two-dimensional complexity]\label{dis:2d-complex}
This suggests that the structure of the LT-topologies reflects a two-dimensional structure, reflecting combinatorial complexity and computable complexity:
\begin{itemize}
\item A principal (ultra)filter is located at the least level in the Kat\v{e}tov order.
\item A constant (multi-valued) function is located at the least level in the Weihrauch order.
\end{itemize}
Thus, informally, combinatorial complexity measures how non-principal an upper sequence is, while computable complexity measures how non-constant it is. These complexity measures appear to be orthogonal, and they combine to determine the structure of the LT-topologies.
\end{discussion}

The reader who has understood thus far will notice that the LT-game is clearly analogous to the game-theoretic characterisation of the Gamified Kat\v{e}tov order (Theorem \ref{thm:game-GTK}).
\begin{proposition}\label{prop:computable-GK-order}
For any upper sets $\calU,\calV\subseteq \Pw$, say that $\calU$ is {\em computable Gamified Kat\v{e}tov reducible} to $\calV$ just in case:
\begin{itemize}
\item 
		Player II has a winning strategy for the game $\GUV$ (in the sense of Definition~\ref{def:fin-quer-kat}), 
        and Player II$_0$'s strategy is computable.
\end{itemize}
	\underline{Then},
	\begin{enumerate}[label=(\roman*)]
		\item $\U\leq_{\rm LT}\V$ iff $\U$ is computable Gamified Kat\v{e}tov reducible to $\V$.
		\item The computable Gamified Kat\v{e}tov order forms a preorder on upper sets.
	\end{enumerate}
\end{proposition}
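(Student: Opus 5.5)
The plan is to compare the two games directly. For single upper sets $\calU,\calV$, regarded as upper sequences with a singleton index set $I=J=\{\ast\}$ (equivalently, constant sequences), the LT-game $\mathfrak{G}(\calU,\calV)$ and the finite-query Kat\v{e}tov game $\GUV$ should coincide up to relabelling. \mer{} plays the role of Player~I, \art{} the role of Player~II$_0$, and \nim{} the role of Player~II$_1$. Once this correspondence is made precise, (i) follows from Definition~\ref{def:LT-reducibility-upper-seq}, and (ii) follows from (i) together with Fact~\ref{fact:LT-basic-for-upper-sequences}~(iii).

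For (i), I would proceed as follows.

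First, observe that when $I=\{\ast\}$, \mer's opening move $x_0$ carries no information. With $J=\{\ast\}$, every query $u_n$ made by \art{} with tag $0$ is forced to be $\ast$. So a move $y_n=\lranglet{0}{u_n}$ carries exactly the information of the decision $q_n=0$, and a move $\lranglet{1}{u_n}$ carries exactly the information of $q_n=1$ together with the output $y=u_n$.

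Second, translate strategies. Given a computable \art{} strategy $\varphi\pcolon\lww\to\w$, define $\psi(x_1,\dots,x_n)$ to be $0$ if $\varphi(x_0,x_1,\dots,x_n)$ has tag $0$, and $\lranglet{1}{u}$ if it has tag $1$ with value $u$. This $\psi$ is computable, since the coding of the fixed $x_0$ and the projections are computable. Conversely, a computable $\psi$ yields a computable $\varphi$ by prepending the trivial index and recoding $0\mapsto\lranglet{0}{\ast}$. I would normalise $\psi$ as in Definition~\ref{def:fin-quer-kat}, making it undefined above terminating nodes; this preserves computability. The oracle strategies transfer verbatim: $\eta(A,\sigma)=B_n$ plays the role of \nim's $B_n\in\calV$. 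The visibility constraints match as well. \art{} and II$_0$ see only the public moves $x_i$, while \nim{} and II$_1$ see the secret $A$.

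Third, check that the winning conditions correspond. Legal plays correspond bijectively, termination occurs with the same output, and validity means $y\in A$ in both games. Hence a winning \art-\nim{} pair with computable \art{} exists iff a winning II$_0$/II$_1$ pair with computable II$_0$ exists.

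The main obstacle is bookkeeping rather than mathematics. One has to ensure that the slight mismatch in round indexing and the initial forced query ($q_0=0$, versus \art's free first move) do not break the correspondence. If \art{} terminates immediately with $u_0$, then $u_0\in A_0$ for every $A_0\in\calU$, and this must be simulated in $\GUV$, where II$_0$ must query once. I would handle this by having II$_0$ make one dummy query and then output $u_0$ on every response. Any $B_0\in\calV$ is nonempty by the convention that $\emptyset\notin\calV$, so such plays are legal and valid, and the modified strategy stays computable. For (ii), reflexivity and transitivity are then inherited from Fact~\ref{fact:LT-basic-for-upper-sequences}~(iii) via (i), so no separate composition-of-games argument is needed.
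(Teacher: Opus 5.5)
Your proposal is correct and follows the paper's route. For (i) you identify the LT-game $\mathfrak{G}(\calU,\calV)$ with $\GUV$, with Merlin, Arthur and Nimue playing Player I, II$_0$ and II$_1$; the paper simply calls this obvious. For (ii) you deduce the result from (i) and Fact~\ref{fact:LT-basic-for-upper-sequences}~(iii), exactly as the paper does, and your dummy-query handling of the forced opening move $q_0=0$ correctly supplies a detail the paper leaves implicit.
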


\begin{proof}
(i) is obvious.
(ii) follows from item (i) here and Fact \ref{fact:LT-basic-for-upper-sequences} (iii).
\end{proof}

\begin{discussion} One also expects that the (non-computable) Gamified Kat\v{e}tov order $\glt$ forms a preorder on upper sets; as presently stated in Definition~\ref{def:GTK}, it is only a relation on upper sets. In principle, one can proceed by unpacking definitions, but the combinatorics makes this approach cumbersome. 

A more conceptual route is to isolate the non-computable aspects of the Gamified Kat\v{e}tov order. In Theorem~\ref{thm:preorder}, we will show that $\glt$ is equivalent to the $\clt$-order relativised to an arbitrary Turing oracle. 
Since $\clt$ is already known to be preorder, transitivity of $\glt$ then follows essentially for free. The main analytic work therefore shifts to clarifying the relationship between $\clt$ and $\glt$ over upper sequences, the first major objective of Section~\ref{sec:Turing}.
\end{discussion}

\subsubsection*{The path forward} Different readers will be interested in different aspects of our programme; the structure of this paper reflects this accordingly. Sections~\ref{sec:Tukey} and~\ref{sec:separation} are combinatorial set theory: Section~\ref{sec:Tukey} shows the Tukey and Gamified Kat\v{e}tov order are incomparable on filters over $\w$, whereas Section~\ref{sec:separation} analyses the induced Gamified Kat\v{e}tov order on ideals. Section~\ref{sec:Turing} introduces and develops the Gamified Kat\v{e}tov order on upper sequences, before investigating its connections with computability. All three sections can be read independently of the other; modulo some minor points of contact, they essentially only depend on the results of this section. For the reader's convenience, the theorem below summarises the main highlights.

\begin{theorem}\label{thm:GTK} The Gamified Kat\v{e}tov order defines a preorder on upper sets over $\w$. In particular:
	\begin{enumerate}[label=(\roman*)]
		\item $\calU\glt\calV$ iff there exists a $\calV$-fence $\delta$ such that $\calU\lk \calV^{\otimes[\delta]}$. In particular, $$\calU\elt \calU\otimes\calU.$$
        \item $\calU\glt \calV$ iff Player II has a winning strategy for the finite-query Kat\v{e}tov game $\GUV$.
		\item The computable Gamified Kat\v{e}tov order is equivalent to $\clt$ on upper sets over $\w$.
	\end{enumerate}
\end{theorem}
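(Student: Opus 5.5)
Items (i)–(iii) of Theorem~\ref{thm:GTK} have essentially been proved already, so the plan is to assemble them and then supply the one genuinely new ingredient: transitivity of $\glt$. Item (i) is Theorem~\ref{thm:fubini-GTK}, and the consequence $\calU\elt\calU\otimes\calU$ is Corollary~\ref{cor:coarse}. Item (ii) is Theorem~\ref{thm:game-GTK}. Item (iii) is Proposition~\ref{prop:computable-GK-order}(i): Player II$_0$ in $\GUV$ with a computable strategy is exactly $\art$ in the LT-game $\mathfrak{G}(\calU,\calV)$ for constant sequences. Item (iii) also yields the preorder property of the computable variant, via Fact~\ref{fact:LT-basic-for-upper-sequences}(iii).

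It remains to show that $\glt$ itself is reflexive and transitive. The cleanest route is to appeal to Theorem~\ref{thm:preorder}, once available: $\calU\glt\calV$ iff $\calU\clt f+\calV$ for some $f$. Given that, transitivity follows by joining oracles and using transitivity of $\clt$. To keep the argument self-contained I would instead argue directly through item (ii).

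\textbf{Reflexivity.} Player II$_0$ queries once and then outputs $x_1$. Given $A$, Player II$_1$ answers with $B_0:=A$. Equivalently, take $\Phi(p)=p(0)$, and for each $A$ take the tree whose root has successor set $A$ and is full-branching above.

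\textbf{Transitivity.} Suppose $\calU\glt\calV$ via $(\psi,\eta)$ and $\calV\glt\calW$ via $(\psi',\eta')$. We build a strategy for $\mathfrak{G}(\calU,\calW)$ by nesting simulations. Player II runs the outer $(\psi,\eta)$ game. Each time the outer oracle would issue $B=\eta(A,\sigma)\in\calV$, Player II opens an inner copy of $\mathfrak{G}(\calV,\calW)$ with target $B$, forwarding the inner queries $\eta'(B,\tau)\in\calW$ as actual queries to $\calW$. When $\psi'$ terminates with some $y\in B$, that $y$ is fed back as Player I's next move in the outer game. The outer game eventually terminates with an output in $A$, and that output is announced.

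The point to check is that Player II$_0$ may see only Player I's moves, not the sets. This holds because the composite II$_0$ needs only the history of $x$'s: the inner termination values are computed by $\psi'$ from those $x$'s, so the outer history $\sigma$ is recoverable from them, and $\psi$ is then applied to it. Player II$_1$ sees everything and so can recompute $A$, $B$ and the current inner position.

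Formally, I would define the composite partial function on $\lww$ by recursion on a parse of a string into consecutive inner blocks. Each block ends at a $\psi'$-terminating node, and its output is the next outer letter. Uniqueness of the parse uses the convention that $\psi'$ is undefined above terminating nodes, i.e. that $\dom(\psi')$-terminations form an antichain.

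\textbf{Main obstacle.} The delicate part is well-foundedness of the composite play tree: every legal play must terminate after finitely many queries. I would argue by contradiction. Take an infinite legal play. Either some inner block never terminates, contradicting that $(\psi',\eta')$ wins for target $B\in\calV$. Or infinitely many blocks terminate, giving an infinite legal play of the outer game, contradicting that $(\psi,\eta)$ wins. The tree form of this argument, via $[T]\subseteq\dom(\Phi)$, is the same reasoning phrased as a König-free path argument.
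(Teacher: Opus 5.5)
Your assembly of items (i)--(iii) matches the paper's proof, which cites the same results. The difference is in how you get the preorder property, and your route is correct. The paper does not compose strategies. In Corollary~\ref{cor:preorder} it first proves Theorem~\ref{thm:preorder}: $\calU\glt\calV$ iff $\calU\clt f+\calV$ for some $f\colon\w\to\w$. The proof makes the possibly non-computable \art\ strategy into an oracle that a computable \art\ queries. The paper then chains $\calU\clt f+\calV\clt (f+g)+\calW\clt (f\times g)+\calW$ and uses transitivity of $\clt$, which it takes from Kihara's work (Theorem~\ref{thm:kih-bil}).

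Your nested simulation checks out on each point:
\begin{itemize}
\item The composite II$_0$ depends only on Player I's moves, because $\psi'$ never sees the target $B$.
\item The greedy parse into inner blocks recovers the outer history.
\item Each fed-back move is legal, because $\psi'$ terminates inside $B=\eta(A,\sigma)\in\calV$.
\item Your two-case argument rules out infinite legal plays.
\end{itemize}

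What your route buys is self-containment. It needs neither the external transitivity of $\clt$ nor the sum/product machinery for upper sequences. It is essentially the non-computable shadow of the composition argument behind transitivity of $\clt$. The paper calls the direct approach ``cumbersome'', but that applies mainly to the tree formulation of Definition~\ref{def:GTK}, which you avoid by working through item (ii).

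What the paper's route buys is Theorem~\ref{thm:preorder} itself, which identifies $\glt$ as $\clt$ relativised to Turing oracles and is one of the paper's headline results. It also gives transitivity for upper sequences directly. Your composition would extend there too, with bookkeeping for the public indices.
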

\begin{proof} That $\glt$ defines a preorder on upper sets follows from Corollary~\ref{cor:preorder}, to be proved later. (i) is Theorem~\ref{thm:fubini-GTK} and Corollary~\ref{cor:coarse}, (ii) is Theorem~\ref{thm:game-GTK}, and (iii) is Proposition~\ref{prop:computable-GK-order}.
\end{proof}

\begin{remark}
The constant upper sequences (i.e., upper sets) correspond to Lee-van Oosten's basic LT topologies \cite{LvO13}; see Summary Theorem~\ref{sumthm-1}. Hence, Proposition \ref{prop:computable-GK-order} shows that the computable Gamified Kat\v{e}tov order is isomorphic to the structure of the basic LT topologies. This connection gives a principled explanation for the underlying combinatorics of Lee-van Oosten's results.
\end{remark}

\begin{conclusion}\label{conc:set-and-comp} The computable analogue of the Gamified Kat\v{e}tov order on upper sets coincides with the $\clt$ order on basic topologies in $\Eff$. Hence, at least in the present context, set-theoretic and computable complexity are controlled by the same underlying mechanism.
\end{conclusion}

	\section{Incomparability with Tukey}\label{sec:Tukey}

To appreciate the dimensions of a new discovery, a shift in perspective can be helpful. In this section, we compare the Gamified Kat\v{e}tov order with another well-known order: the {\em Tukey order}. This definition originates in Tukey's study of Moore-Smith convergence in topological spaces, providing an abstract framework for comparing directed sets by cofinal type. After some preliminaries (Section~\ref{sec:tukey-prelim}), Observation~\ref{obs:coarser} explains our interest: the Tukey order on filters over $\w$ is also strictly coarser than the Rudin-Keisler order. 
\smallskip

This prompts a natural question: is the Tukey order coarser than the Gamified Kat\v{e}tov order, or vice versa? Intriguingly, neither is true. Section~\ref{sec:not-tukey} establishes that the Tukey and Gamified Kat\v{e}tov orders on filters are incomparable in ZFC. This result gives compelling evidence that the Gamified Kat\v{e}tov order calibrates a fundamentally distinct notion of complexity. However, there is a nuance. When restricted to just ultrafilters, incomparability is independent of ZFC -- while choice ensures non-principal ultrafilters, further assumptions are needed to separate the two orders.

	\subsection{Preliminaries}\label{sec:tukey-prelim} A {\em directed set} is a poset $(P,\leq_P)$ such that any pair of elements have a common upper bound:  $\forall a,b\in P$,  there exists $c\in P$ such that $a\leq c$ and $b\leq c$. 
	Examples include any filter $\calF$, now viewed as $(\calF,\supseteq)$, as well as any ideal $\calI$, now viewed as $(\calI,\subseteq)$.  
	
	\begin{definition} For a directed set $(P,\leq_P)$:
	\begin{enumerate}[label=(\roman*)]
		\item A subset $S\subseteq P$ is  \emph{cofinal} if for every $p\in P$ there exists $s\in S$ with $p\leq_P s$.
		\item A subset $S\subseteq P$ is  \emph{unbounded} if for every $p\in P$ there exists $s\in S$ such that $p\nleq_P s$.
	\end{enumerate}	
	\end{definition}

	\begin{fact}[see e.g.~{\cite[Theorem 513E]{Fremlin}}]\label{fact:Tukey}
		For directed sets $(P,\leq_P), (Q,\leq_Q)$, the following are equivalent:
		\begin{enumerate}[label=(\roman*)]
			\item There exists a function $\varphi\colon P \to Q$ such that if $S\subseteq P$ is unbounded, then so is $\varphi[S]$.
			\item There exists a function $\psi\colon Q\to P$ such that if $S\subseteq Q$ is cofinal, then so is $\psi[S]$.
			\item There exists a {\em Galois-Tukey connection} $(\varphi,\psi)$, i.e. there exists functions $\varphi\colon P \to Q$ and $\psi\colon Q \to P$ such that, for any $A\in P$ and $B\in Q$, 
			$$ \psi(B)\leq_P A \implies B\leq_{Q}\varphi(A).$$
		\end{enumerate}
	\end{fact}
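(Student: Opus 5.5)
The plan is to prove the cycle (i)$\Rightarrow$(iii)$\Rightarrow$(ii)$\Rightarrow$(iii)$\Rightarrow$(i). Each step builds one map by choosing, for every element, a witness from the failure of the other property, so the Axiom of Choice is used throughout.

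\emph{Conventions first.} I read ``unbounded'' in the standard sense: $S\subseteq P$ is unbounded iff it has no upper bound in $P$. I read the Galois--Tukey condition in its standard orientation, namely $\varphi\colon P\to Q$ and $\psi\colon Q\to P$ with
$$\varphi(A)\leq_Q B\implies A\leq_P \psi(B).$$
This is the form in \cite[513E]{Fremlin}. Before writing anything down, I would check that the printed formulation of (iii) agrees with this orientation, up to renaming $\varphi$ and $\psi$. I expect this bookkeeping to be the main obstacle, since a mismatch makes the implications fail as stated. All the mathematical content is in the choice of witness sets below.

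\emph{(i)$\Rightarrow$(iii).} Fix $\varphi$ sending unbounded sets to unbounded sets. For $B\in Q$, set $T_B:=\{A\in P\mid \varphi(A)\leq_Q B\}$. Then $\varphi[T_B]$ is bounded by $B$, so by the contrapositive of (i) the set $T_B$ is bounded in $P$. Choose an upper bound and call it $\psi(B)$. Then $\varphi(A)\leq B$ gives $A\in T_B$, hence $A\leq\psi(B)$.

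\emph{(iii)$\Rightarrow$(ii) and (iii)$\Rightarrow$(i).} Both follow directly from the connection.
\begin{itemize}
\item For (ii), let $S\subseteq Q$ be cofinal and $A\in P$. Pick $s\in S$ with $\varphi(A)\leq s$. Then $A\leq\psi(s)$, so $\psi[S]$ is cofinal.
\item For (i), suppose $S\subseteq P$ is unbounded but $\varphi[S]$ has an upper bound $B$. Then every $A\in S$ satisfies $A\leq\psi(B)$, which is a contradiction.
\end{itemize}

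\emph{(ii)$\Rightarrow$(iii).} Fix $\psi$ sending cofinal sets to cofinal sets. For $A\in P$, let $C_A:=\{B\in Q\mid A\not\leq_P\psi(B)\}$. No element of $\psi[C_A]$ lies above $A$, so $\psi[C_A]$ is not cofinal. By the contrapositive of (ii), $C_A$ is not cofinal in $Q$. Choose $\varphi(A)\in Q$ lying below no element of $C_A$. Then $\varphi(A)\leq B$ forces $B\notin C_A$, i.e.\ $A\leq\psi(B)$, which is exactly the Galois--Tukey condition.
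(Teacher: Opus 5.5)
Your four implications are correct, and they are the standard argument behind \cite[Theorem 513E]{Fremlin}, which the paper cites rather than proves. In that argument $\psi(B)$ is chosen as an upper bound of $\{A\mid \varphi(A)\leq_Q B\}$, and $\varphi(A)$ is chosen as a witness that $\{B\mid A\nleq_P\psi(B)\}$ is not cofinal. Directedness is never used.

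The orientation check you deferred comes out negative, and no renaming of $\varphi$ and $\psi$ reconciles the two versions.
\begin{itemize}
\item \emph{The printed (iii) is the reverse connection.} With $\varphi\colon P\to Q$ and $\psi\colon Q\to P$ fixed, the printed condition $\psi(B)\leq_P A\implies B\leq_Q\varphi(A)$ has its hypothesis in $P$ and its conclusion in $Q$. So it is the Galois--Tukey connection witnessing $Q\lT P$, not $P\lT Q$.
\item \emph{It is genuinely inequivalent to (i) and (ii).} Take $P=\{*\}$ and $Q=\omega$. Then (i) holds vacuously and (ii) holds with the constant map. But the printed (iii) would force $B\leq\varphi(*)$ for every $B\in\omega$.
\item \emph{The definition of ``unbounded'' is also reversed.} The paper asks for some $s\in S$ with $p\nleq_P s$. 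Under that definition, $P=\omega$ and $Q=\{*\}$ satisfy (i) vacuously while (ii) fails.
\end{itemize}

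So the Fact is true only under your conventions, which are Fremlin's. These are also the conventions the paper actually relies on later, for example in the cofinal maps of Observation~\ref{obs:coarser} and Lemma~\ref{lem:FinFubini}. Read that way, your proof is complete. The printed (iii) and the definition of ``unbounded'' should be corrected, not matched.
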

\noindent If any of these conditions hold, we write $$P\lT Q\;,$$
and say that $Q$ \emph{Tukey dominates} $P$. This defines the {\em Tukey order} on directed sets. If $P\lT Q$ and $Q\lT P$, then we call $P$ and $Q$ {\em Tukey equivalent}, and write $P\eT Q$. In particular, the {\em cofinal type} of $P$ is defined as the Tukey equivalence class $[P]_\mathrm{Tuk}$.
\medskip 

To highlight the unique perspective of the Tukey order, it is useful to compare the definition with the preorders considered before. The Kat\v{e}tov order may be viewed either as an order on filters or, dually, as an order on ideals. By contrast, the Tukey order is defined on directed sets, and so provides unified framework that treats filters and ideals simultaneously. As preparation for later arguments, we collect several basic properties concerning this duality in the Tukey context.


\begin{lemma}\label{lem:Tukey} Let $\calI\subseteq\Pw$ be an ideal, and $\calI^\ast:=\{ \,\w\setminus A\mid A\in\calI \,\}$ its dual filter. \underline{Then}, 
	\begin{enumerate}[label=(\roman*)]
		\item  $(\calI^\ast,\supseteq)\equiv_{\mathrm{Tuk}} (\calI,\subseteq).$ 
		\item  $(\calI^\ast\otimes \calI^\ast,\supseteq)\eT (\calI\otimes\calI,\subseteq).$
	\end{enumerate}	
\end{lemma}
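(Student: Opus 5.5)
The plan is to exhibit an explicit order-isomorphism in both parts; an isomorphism of directed sets gives Tukey equivalence in both directions at once. For (i), take the complementation map $c\colon\calP(\w)\to\calP(\w)$, $c(A)=\w\setminus A$. It is an involution that reverses inclusion, $A\subseteq B\iff c(B)\subseteq c(A)$. By the definition of the dual filter it maps $\calI$ bijectively onto $\calI^\ast$. Hence $c$ is an order-isomorphism $(\calI,\subseteq)\to(\calI^\ast,\supseteq)$, and its inverse is again $c$. To conclude via Fact~\ref{fact:Tukey}(iii), set $\varphi=c\restr{\calI}$ and $\psi=c\restr{\calI^\ast}$. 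For $A\in\calI$ and $B\in\calI^\ast$ we have $\psi(B)\subseteq A$ iff $c(A)\subseteq B$, i.e. $B\supseteq \varphi(A)$ read in the order $\supseteq$. This is exactly the Galois--Tukey condition, and the symmetric argument gives the reverse reduction.

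For (ii), the only extra ingredient is the identity $(\calI\otimes\calI)^\ast=\calI^\ast\otimes\calI^\ast$, recorded in Remark~\ref{rem:fubini-consistent}. Concretely, for $A\subseteq\w^2$ we have $(\w^2\setminus A)_{(n)}=\w\setminus A_{(n)}$. So $A\in\calI\otimes\calI$ iff $\{n\mid A_{(n)}\notin\calI\}\in\calI$. Passing to complements, this holds iff $\{n\mid (\w^2\setminus A)_{(n)}\in\calI^\ast\}\in\calI^\ast$, which is the condition $\w^2\setminus A\in\calI^\ast\otimes\calI^\ast$.

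Two small checks are needed along the way. The first is that $\calI\otimes\calI$ is genuinely an ideal on $\w^2$, so that $(\calI\otimes\calI,\subseteq)$ is directed. Closure under finite unions holds because $\{n\mid (A\cup B)_{(n)}\notin\calI\}\subseteq\{n\mid A_{(n)}\notin\calI\}\cup\{n\mid B_{(n)}\notin\calI\}$. Downward closure is similar. The second check is that the "dual" used in Remark~\ref{rem:fubini-consistent} agrees with the convention $\calI^\ast=\{\w\setminus A\mid A\in\calI\}$ used here; it does, since complementation is an involution.

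With these checks done, (ii) is literally (i) applied to the ideal $\calJ:=\calI\otimes\calI$ on $\w^2$ (or on $\w$ via the fixed bijection $\w^2\simeq\w$ of Convention~\ref{con:base-change}), because $\calJ^\ast=\calI^\ast\otimes\calI^\ast$. The step most likely to need care is this bookkeeping, namely making sure the complementation identity holds section by section and with the correct ideal/filter convention. Everything else is routine.
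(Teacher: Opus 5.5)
Your proof is correct and follows the paper's route: for (i), the inclusion-reversing complementation map $A\mapsto\w\setminus A$ between $(\calI,\subseteq)$ and $(\calI^\ast,\supseteq)$; for (ii), part (i) combined with the identity $(\calI\otimes\calI)^\ast=\calI^\ast\otimes\calI^\ast$ from Remark~\ref{rem:fubini-consistent}. Your check that $\calI\otimes\calI$ is itself an ideal, which is what lets (i) apply to it, makes explicit a step the paper calls ``immediate''.
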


\begin{proof} (i) is obvious, since $$A\subseteq B \iff \w\setminus B \subseteq \w\setminus A$$ for any pair of subsets $A,B\subseteq \w$.  Hence, the functions $\varphi\colon \calI\to \calI^*$ and $\psi\colon \calI^*\to \calI$ defined by mapping $A\mapsto \omega\setminus A$ are both cofinal. (ii) is immediate from (i) and the fact that $\calI^*\otimes\calI^*=(\calI\otimes\calI)^*$ (Remark~\ref{rem:fubini-consistent}).
\end{proof}	

	\subsection{Incomparability}\label{sec:not-tukey} As summarised in Conclusion~\ref{conc:fubini}: the Gamified Kat\v{e}tov order is equivalent to the classical Kat\v{e}tov order closed under well-founded Fubini iterates, which explains its relative coarseness. The following observation places this perspective in relation to the Tukey order on filters over $\w$.
	
	\begin{observation}\label{obs:coarser} Let $\calF,\calG\subseteq \Pw$ be filters. \underline{Then},
		\begin{enumerate}[label=(\roman*)]
			\item $\calF\lrk\calG \implies \calF\lT \calG$. 
			\item  $\calF^{\otimes 2}\eT \calF^{\otimes 3}$\,. In particular, Tukey is strictly coarser than Rudin-Keisler.	
		\end{enumerate}
		
	\end{observation}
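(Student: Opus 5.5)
For (i), I will use Fact~\ref{fact:Tukey}(ii): it suffices to find a map from $(\calG,\supseteq)$ to $(\calF,\supseteq)$ that sends cofinal subsets to cofinal subsets. Let $h\colon\w\to\w$ witness $\calF\lrk\calG$, so that $A\in\calF\iff h^{-1}[A]\in\calG$. Define $\psi\colon\calG\to\calF$ by $\psi(B):=h[B]$. This is well defined, since $B\subseteq h^{-1}[h[B]]$ and $\calG$ is upward closed, so $h^{-1}[h[B]]\in\calG$ and hence $h[B]\in\calF$. Monotonicity is clear: $B\subseteq B'$ implies $h[B]\subseteq h[B']$. Now let $S\subseteq\calG$ be cofinal and fix $A\in\calF$. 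Then $h^{-1}[A]\in\calG$, so some $B\in S$ satisfies $B\subseteq h^{-1}[A]$. It follows that $h[B]\subseteq A$, so $\psi[S]$ is cofinal in $\calF$. (Only the forward direction of the RK condition is used here, so in fact Kat\v{e}tov already implies Tukey.)

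For (ii), one direction is immediate. The projection onto the first two coordinates, $\w^3\to\w^2$, is a Rudin--Keisler map witnessing $\calF^{\otimes2}\lrk\calF^{\otimes3}$: a set $A$ lies in $\calF^{\otimes 2}$ iff its preimage, which has all $A$-sections full, lies in $\calF^{\otimes3}$. Part (i) then gives $\calF^{\otimes2}\lT\calF^{\otimes3}$.

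The converse, $\calF^{\otimes3}\lT\calF^{\otimes2}$, is the main obstacle. My plan is to combine two facts.
\begin{enumerate}
\item The Tukey product is an upper bound: $\calF\otimes\calG\lT\calF\times\calG^{\w}$, ordered coordinatewise. To prove this, send $(A,(B_n)_n)$ to $\sum_{a\in A}B_a$, using Lemma~\ref{lem:Katetov-Fubini-product} to see that these sets are cofinal in $\calF\otimes\calG$.
\item Conversely, $\calF^{\w}\lT\calF^{\otimes2}$ when $\calF$ is non-principal. Map $C\in\calF^{\otimes2}$ to the sequence $(C_{(n)}\cup F_n)_n$ of its sections, modified by a finite patch $F_n$ so that the sections lie in $\calF$. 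This uses a cofinal subfamily where every section is $\calF$-large, as in Proposition~\ref{prop:n-Fubini-branching-tree}.
\end{enumerate}
With these in hand, I compute
\[
\calF^{\otimes3}\lT\calF\times(\calF^{\otimes2})^\w\lT\calF\times(\calF^{\w})^\w\eT\calF^\w\lT\calF^{\otimes2}.
\]
The delicate step is the middle inequality, $\calF^{\otimes2}\lT\calF^{\w}$, which I would again prove via the sum map. If bookkeeping issues arise for principal $\calF$, that case is trivial anyway: all Fubini powers are then principal and Tukey equivalent to a point.

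Finally, strictness. I will use Fact~\ref{fact:rk-max}, which gives $\calF^{\otimes2}<_{\mathrm{RK}}\calF^{\otimes3}$ for a non-principal ultrafilter $\calF$; in ZF, the analogous Kat\v{e}tov strictness for $\Fin$ from \cite{GGMA16} can be used instead. Combined with $\calF^{\otimes2}\eT\calF^{\otimes3}$, this exhibits a pair that is Tukey equivalent but not RK equivalent. Together with (i), this shows the Tukey order is strictly coarser than the Rudin--Keisler order.
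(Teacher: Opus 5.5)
Your part (i) and the easy half of (ii) are correct and match the paper. That half is the projection $\w^3\to\w^2$ as an RK map, followed by (i). For the converse the paper only cites Milovich for non-principal filters and handles principal ones as you do. The gap is your second fact, $\calF^{\w}\lT\calF^{\otimes 2}$, which carries all the content of Milovich's theorem, and the map you propose does not work. For $C\in\calF^{\otimes2}$, only the sections indexed by the $\calF$-large set $M_C=\{n : C_{(n)}\in\calF\}$ need lie in $\calF$. The other sections may be empty, and no finite patch puts $\emptyset$ into a non-principal filter. There is also no cofinal subfamily of $(\calF^{\otimes 2},\supseteq)$ whose members have every section $\calF$-large, because shrinking $C$ cannot repair an empty section. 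Proposition~\ref{prop:n-Fubini-branching-tree} only supplies generators whose sections are empty or $\calF$-large. Replacing the bad sections by $\w$ fails as well, since the image then no longer refines the target in those coordinates. You must reindex along $M_C$, and this is exactly where non-principality enters: every member of a non-principal filter is infinite.

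A working version, in the form of Fact~\ref{fact:Tukey}(i), sends $(B_n)_n\in\calF^{\w}$ to $D=\{(m,k) : k\in\bigcap_{n\le m}B_n\}$, all of whose sections lie in $\calF$. Suppose some $C\in\calF^{\otimes 2}$ satisfies $C\subseteq D$ for every image of a family $S\subseteq\calF^{\w}$. Then $M_C$ is infinite, and with $m_n:=\min(M_C\setminus n)$ you get $C_{(m_n)}\subseteq\bigcap_{k\le m_n}B_k\subseteq B_n$. Hence $(C_{(m_n)})_n\in\calF^{\w}$ bounds $S$, so unbounded families go to unbounded families. By contrast, the inequality you flag as delicate, $\calF^{\otimes2}\lT\calF^{\w}$, follows at once from your first fact, because $\calF\times\calF^{\w}\cong\calF^{\w}$. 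With this repair your chain gives $\calF^{\otimes3}\lT\calF^{\w}\lT\calF^{\otimes2}$. That is a self-contained proof of $\calF^{\otimes n}\eT\calF^{\w}$ for all $n\ge 2$ and all non-principal $\calF$, which is more than the paper obtains by citation.

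Two smaller points. First, your parenthetical in (i) is false. The step ``$h^{-1}[h[B]]\in\calG$, hence $h[B]\in\calF$'' is precisely the backward half of the RK condition, and Kat\v{e}tov does not imply Tukey for filters. For example, let $\calF$ be dual to the ideal of sets with finite trace on the evens and trace on the odds in a copy of $\Fin\otimes\Fin$. Then $n\mapsto 2n$ witnesses $\calF\lk\Fin^\ast$. But $\calF\not\lT\Fin^\ast$, since $P\lT Q$ forces $\mathrm{cof}(P)\le\mathrm{cof}(Q)$ and $\mathrm{cof}(\calF)$ is uncountable. Second, your ZF alternative for strictness needs Kat\v{e}tov strictness between $\Fin\otimes\Fin$ and $\Fin\otimes\Fin\otimes\Fin$, not between $\Fin$ and $\Fin\otimes\Fin$. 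The latter pair is not Tukey equivalent, so it cannot witness that Tukey is strictly coarser than RK.
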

\begin{proof} \hfill
\begin{enumerate}[label=(\roman*):]
	\item This is well known for ultrafilters \cite[Fact 1]{DoTo11}, but the same argument applies to filters.
	Suppose $h\colon\w\to\w$ witnesses $\calF\lrk\calG$. Then, define
	\begin{align*}
	\psi\colon \calG &\longrightarrow \calF\\
	A &\longmapsto h[A]=\{h(n)\mid n\in A \} \,\,.
	\end{align*}
   Importantly, the Rudin-Keisler condition ensures $\psi$ is well-defined. 
  One then easily checks $\psi$ is cofinal, and thus witnesses $\calF\lT\calG$. 

	\item For principal filters, all finite Fubini powers are again principal, and hence Tukey equivalent to the singleton order $\textbf{1}$. 
    For non-principal filters $\calF$ (not necessarily maximal), the stated Tukey equivalence was proved by Milovich \cite[Theorem 5.2]{Mil08}. The claim about strict coarseness follows from Fact~\ref{fact:rk-max}. 
\end{enumerate}	
\end{proof}

Observation~\ref{obs:coarser} is especially suggestive in light of Corollary~\ref{cor:coarse}, which showed that $\calF\elt \calF^{\otimes 2}$ for all filters $\calF$ on $\Pw$. It was therefore very interesting to the authors when we discovered that the two orders are, in fact, subtly misaligned. 

\begin{theorem}\label{thm:Tukey} The Gamified Kat\v{e}tov and Tukey orders are incomparable on filters over $\w$ in ZFC. More explicitly: 
	\begin{enumerate}[label=(\roman*)]
		\item There exists filters $\calF_1,\calF_2$ such that $\calF_1\elt\calF_2$ yet $\calF_1 \not\eT \calF_2$.
		\item There exists filters $\calG_1,\calG_2$ such that $\calG_1\eT\calG_2$ yet $\calG_1 \not\elt \calG_2$.
	\end{enumerate}
\end{theorem}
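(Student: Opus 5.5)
For both parts the plan is to use small explicit filters and to let earlier results do the work: Corollary~\ref{cor:coarse} and Lemma~\ref{lem:Tukey} for (i), and Corollary~\ref{cor:princ-ultra} together with Theorem~\ref{thm:fubini-GTK} for (ii). The Tukey side will be handled by elementary cofinality arguments, with no appeal to choice.

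\textbf{Part (i).} Take $\calF_1:=\Fin^\ast$ and $\calF_2:=\Fin^\ast\otimes\Fin^\ast=(\Fin\otimes\Fin)^\ast$, using Remark~\ref{rem:fubini-consistent}. By Corollary~\ref{cor:coarse} we get $\calF_1\elt\calF_2$ immediately.

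For Tukey inequivalence I use the invariant ``has a countable cofinal subset''. This invariant transfers along Tukey equivalence: if $P\lT Q$ via a map $\psi\colon Q\to P$ sending cofinal sets to cofinal sets (Fact~\ref{fact:Tukey}(ii)), and $Q$ has a countable cofinal set, then so does $P$. The filter $(\Fin^\ast,\supseteq)$ does have one, namely the cofinite sets $\{\w\setminus n\}_{n}$.

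By Lemma~\ref{lem:Tukey}, it then suffices to show that $(\Fin\otimes\Fin,\subseteq)$ has no countable cofinal subset. This is a diagonal argument. Given $C_0,C_1,\dots\in\Fin\otimes\Fin$, each $C_k$ has only finitely many infinite sections. So for each $k$ pick a column $m_k$ outside the finite set $\{m:(C_k)_{(m)}\text{ infinite}\}$ and with $m_k>m_{k-1}$; then $(C_k)_{(m_k)}$ is finite. Now let $D$ be the set consisting of the point $(m_k,\ell_k)$ for each $k$, where $\ell_k\notin(C_k)_{(m_k)}$. The set $D$ meets each column in at most one point, so $D\in\Fin\otimes\Fin$. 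On the other hand $D\not\subseteq C_k$ for every $k$, so the $C_k$ are not cofinal. Only this diagonal step needs care, and it is elementary.

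\textbf{Part (ii).} Let $\calG_1$ be the filter generated by the decreasing sets $A_n:=\{0\}\cup[n,\infty)$, and let $\calG_2:=\Fin^\ast$. Both filters are countably generated by strictly decreasing sequences with no least element, so both have cofinal type $\w$. Concretely, $n\mapsto A_n$ and $n\mapsto\w\setminus n$ give Galois–Tukey connections with $(\w,\leq)$ in both directions. Hence $\calG_1\eT\calG_2$.

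Next, $\calG_1$ is non-principal but $\bigcap\calG_1=\{0\}$. Let $\calP_0$ be the principal ultrafilter at $0$.
\begin{itemize}
\item The constant map $h\equiv 0$ witnesses $\calP_0\lk\calG_1$.
\item The identity witnesses $\calG_1\lk\calP_0$, since every $A\in\calG_1$ contains $0$ and $\{0\}\in\calP_0$.
\end{itemize}
So $\calG_1\elt\calP_0$.

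Finally, suppose $\Fin^\ast\glt\calP_0$. By Theorem~\ref{thm:fubini-GTK} we would have $\Fin^\ast\lk\calP_0^{\otimes[\delta]}$ for some fence $\delta$. But by Corollary~\ref{cor:princ-ultra} this $\delta$-Fubini power is a principal ultrafilter, say generated by a point $c$. A Katětov witness $h$ would then need $h(c)\in A$ for every cofinite $A$, which is impossible. Therefore $\calG_2\not\glt\calG_1$, and in particular $\calG_1\not\elt\calG_2$. This is the same argument as in the proof of Corollary~\ref{cor:princ-ultra}, since that proof never used maximality of $\calG$.
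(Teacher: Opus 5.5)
Your proof is correct, and both parts take a different route from the paper.

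For (i), the paper uses the polynomial growth ideal $\Poly$. It gets $\Poly\elt\Poly\otimes\Poly$ from Corollary~\ref{cor:coarse}, and separates the Tukey types via $\ww\lT\Poly\otimes\Poly$ (Lemma~\ref{lem:FinFubini}) and $\ww\not\lT\Poly$ (Louveau--Veli\v{c}kovi\'{c}, via weakly bounded sets and $\mathfrak{d}$). You take the simplest case $\Fin$ and replace both citations by the invariant ``has a countable cofinal subset'' together with a direct diagonalisation. Your argument is self-contained and choice-free. The paper's example shows something stronger: because $\Poly$ is not countably generated, the Tukey jump under Fubini squaring is not merely a matter of countable versus uncountable cofinality.

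For (ii), the paper proves the stronger Theorem~\ref{thm:not-LT}, in which both filters are non-principal ultrafilters, by a counting argument. A pair $(\calV,\Phi)$ admits at most one ultrafilter $\calU$ with $\Phi$ witnessing $\calU\glt\calV$, so an $\elt$-class contains at most $2^{\aleph_0}$ ultrafilters, while the Tukey-top class contains $2^{2^{\aleph_0}}$. That argument depends essentially on choice. Your explicit example does not, which bears on the paper's remark that a choice-free proof of incomparability was still open. The price is that your example rests on $\bigcap\calG_1=\{0\}$ versus $\bigcap\Fin^\ast=\emptyset$. Cofinal type cannot see this difference, but it drops $\calG_1$ into the bottom $\elt$-class. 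Consequently your example says nothing about free filters (those containing $\Fin^\ast$), since every countably generated free filter is Kat\v{e}tov equivalent to $\Fin^\ast$.

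Finally, your last step uses transitivity of $\glt$ (Corollary~\ref{cor:preorder}). You can bypass it: every $\calG_1$-branching tree contains the path $(0,0,0,\dots)$, so any witness $\Phi$ for $\Fin^\ast\glt\calG_1$ would put $\Phi(0,0,0,\dots)$ into every cofinite set, which is impossible.
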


	\subsubsection{First Direction} To indicate the way forward, we start with an independence result; this will tell us where {\em not} to look for answers.
	
\begin{proposition}\label{prop:indep} The following statement is independent from ZFC: ``There exists ultrafilters $\calU,\calV$ on $\w$ such that $\calU\elt \calV$, yet $\calU\not\eT\calV$.''
\end{proposition}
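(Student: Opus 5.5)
The plan is to prove the two consistency statements separately: one model where such a pair $\calU,\calV$ exists, and one model where it does not. Both directions will rest on the coarseness results of Section~\ref{sec:fubini}, namely Corollary~\ref{cor:coarse} ($\calU\elt\calU\otimes\calU$) and Corollary~\ref{cor:princ-ultra} (principal $<^\circ_{\mathrm{LT}}$ non-principal). To these I will add known results on cofinal types of ultrafilters from the literature.

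\emph{Consistency of existence.} Work under CH, or any hypothesis guaranteeing p-points. Take a p-point $\calU$ and set $\calV:=\calU\otimes\calU$. By Corollary~\ref{cor:coarse} we get $\calU\elt\calV$ for free. It remains to show $\calU\not\eT\calU\otimes\calU$. For p-points, Dobrinen--Todorcevic show $\calU\otimes\calU\eT\calU\times\calU$. So it suffices to choose the p-point so that it is strictly Tukey below its square. One route is to build it by a CH-construction in the style of Dobrinen--Todorcevic. Alternatively, observe that $\calU\otimes\calU\geq_{\mathrm{Tuk}}\omega^\omega$ for every non-principal $\calU$ (since $\calU\otimes\calU$ is not a p-point, which is where $\omega^\omega$ enters). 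One can then pick a p-point $\calU$ with $\calU\not\geq_{\mathrm{Tuk}}\omega^\omega$; such p-points exist under CH by the known constructions, e.g.\ rapid p-points or selective ultrafilters. This identification of the Tukey-side separation is the step I expect to be the main obstacle: the plan is to cite the precise theorem rather than reprove it. If needed, a fallback is to replace $\calU\otimes\calU$ by another LT-equivalent $\delta$-Fubini power $\calU^{\otimes[\delta]}$ (equivalent by Theorem~\ref{thm:fubini-GTK}) whose Tukey type is known to be strictly higher.

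\emph{Consistency of non-existence.} Here I will use the negative solution of Isbell's problem, i.e.\ the consistency of ``every non-principal ultrafilter on $\omega$ is Tukey-top'', meaning Tukey equivalent to $([\mathfrak c]^{<\omega},\subseteq)$. In such a model, suppose $\calU\elt\calV$ are ultrafilters. By Corollary~\ref{cor:princ-ultra}, either both are principal or both are non-principal. If both are principal, both are Tukey equivalent to the one-point order $\mathbf 1$. If both are non-principal, both are Tukey-top and hence Tukey equivalent. In either case $\calU\eT\calV$, so the displayed statement fails in this model.

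Combining the two models gives independence. The second direction is essentially immediate from the cited consistency result and Corollary~\ref{cor:princ-ultra}. All the real work is in correctly invoking the Tukey-theoretic separation for a p-point against its Fubini square in the first direction.
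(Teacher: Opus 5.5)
Your two-model architecture matches the paper's, and the non-existence direction (the Cancino-Manr\'{i}quez--Zapletal model plus Corollary~\ref{cor:princ-ultra}) is correct as written. The existence direction, however, rests on a false reduction and on the wrong witnesses.

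First, $\calU\otimes\calU\eT\calU\times\calU$ cannot be right. For any directed set $D$ one has $D\times D\eT D$, via $d\mapsto(d,d)$ and the projection. So this claim would make \emph{every} p-point Tukey equivalent to its Fubini square. The correct statement for a p-point is $\calU\otimes\calU\eT\calU\times\omega^\omega$. By the p-point property, the sets $\{(n,m)\mid n\in A,\ m\in Y,\ m\geq h(n)\}$ with $A,Y\in\calU$ and $h\in\omega^\omega$ are cofinal in $\calU\otimes\calU$. Hence, for a p-point, $\calU<_{\mathrm{Tuk}}\calU\otimes\calU$ holds iff $\omega^\omega\not\lT\calU$, so your Route~2 criterion is the right one.

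The problem is that rapid p-points and selective ultrafilters are exactly the ultrafilters that \emph{fail} this criterion. Suppose $\calU$ is rapid, and send each $X\in\calU$ to its increasing enumeration $e_X$. This map is monotone from $(\calU,\supseteq)$ to $(\omega^\omega,\leq)$, and rapidity says precisely that its range is cofinal. So $\omega^\omega\lT\calU$, and hence $\calU\eT\calU\otimes\calU$. With the examples you propose, the argument yields the opposite of what you need.

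The missing ingredient is a p-point that is \emph{not} Tukey above $\omega^\omega$; such a p-point is necessarily non-rapid. Producing one is the real content of \cite[Theorem 38]{DoTo11}: under $\mathfrak{p}=\mathfrak{c}$ (in particular under CH) there is a p-point $\calU$ with $\calU<_{\mathrm{Tuk}}\calU\otimes\calU$. Citing this, as the paper does, together with Corollary~\ref{cor:coarse}, completes the first direction. Your vaguer Route~1 gestures at this result. Your $\delta$-Fubini fallback is not supported by any known Tukey computation.
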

\begin{proof}
{\it Consistency.}
If $\frap=\mathfrak{c}$, then there exists a $p$-point $\mathcal{U}$ such that $\calU<_{\mathrm{Tuk}} \calU\otimes \calU$ \cite[Theorem 38]{DoTo11}. In such a model, we obtain $\calU<_{\mathrm{Tuk}} \calU\otimes \calU\elt \calU$ by Corollary~\ref{cor:coarse}.

{\it Consistency of the negation.}
It is also consistent with ZFC that all non-principal ultrafilters are Tukey equivalent \cite[Theorem 1.2]{TukeyTrivial}. In such a model, $\calU\not\eT\calV$ implies that one must be principal and the other non-principal. In which case, $\calU\not\elt\calV$ as well by Corollary~\ref{cor:princ-ultra}.
\end{proof}
	
Turning away from ultrafilters, we direct our attention to the polynomial growth ideal,
	$$\Poly:=\Big\{A\subseteq \w\Bmid (\exists k\in\omega). |A\cap 2^n|\leq n^k, \,\text{for all $n\geq 2$}\Big\}\;.$$
	The desired result will then follow from showing that $\Poly \not\geq_{\mathrm{Tuk}} \ww$ and $\Poly\otimes\Poly\geq_{\mathrm{Tuk}}\ww$.
	This relies on the following key lemma, which adapts \cite[Fact 31]{DoTo11} -- originally stated for non-principal ultrafilters. 

	\begin{lemma}\label{lem:FinFubini} Setup: 
		\begin{itemize}
			\item Let $\calI$ be an ideal such that $\Fin\subseteq \calI$;
		\item Regard any string $x\in\ww$ as a function $x\colon \w\to\w$, where $x(n):=$ the $n$th entry in string $x$.
			\item Consider $(\ww,\leq)$ as a directed set, where $x\leq y : \iff x(n)\leq y(n)$ for each $n<\w$.
		\end{itemize}  \underline{Then}, 
		$$\ww\lT \calI\otimes\calI.$$
	\end{lemma}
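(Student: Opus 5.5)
Via Fact~\ref{fact:Tukey}(ii), it suffices to build a map $\psi\colon \calI\otimes\calI\to\ww$ that sends cofinal subsets of $(\calI\otimes\calI,\subseteq)$ to cofinal subsets of $(\ww,\leq)$. By Fact~\ref{fact:Tukey}(iii), it is equivalent (and perhaps easier) to produce a monotone $\psi$ together with some $\varphi\colon\ww\to\calI\otimes\calI$ such that $\psi(\varphi(x))\geq x$. We follow the ultrafilter argument of \cite[Fact 31]{DoTo11}, translated to the ideal side. Identify $\w\times\w$ with $\w$ via the fixed pairing. Recall $A\in\calI\otimes\calI$ iff $\{m\mid A_{(m)}\notin\calI\}\in\calI$.

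\textbf{The map $\varphi$.} Given $x\in\ww$, set
\[
\varphi(x):=\{(m,l)\mid l\leq x(m)\}.
\]
Each section $\varphi(x)_{(m)}=\{0,\dots,x(m)\}$ is finite, hence in $\calI$ since $\Fin\subseteq\calI$. So the set of $m$ with non-$\calI$ section is empty, which lies in $\calI$, and therefore $\varphi(x)\in\calI\otimes\calI$. Note that $\varphi$ is monotone.

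\textbf{The map $\psi$.} For $A\in\calI\otimes\calI$, the set $E_A:=\{m\mid A_{(m)}\notin\calI\}$ lies in $\calI$. We need $\psi(A)$ to dominate every $x$ with $\varphi(x)\subseteq A$, but only in a way that survives cofinality. Define
\[
\psi(A)(m):=\min\{l\mid (m,l)\notin A\},
\]
which is well defined as long as $A_{(m)}\neq\w$. If $\varphi(x)\subseteq A$, then $\{0,\dots,x(m)\}\subseteq A_{(m)}$, so $\psi(A)(m)>x(m)$. The main obstacle is the columns where $A_{(m)}=\w$ (these lie in $E_A$, provided $\w\notin\calI$, i.e.\ $\calI$ is proper), where the formula is undefined.

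\textbf{Handling full columns.} A clean fix is to avoid the Galois-connection form and prove cofinality directly. Let $\mathcal{C}\subseteq\calI\otimes\calI$ be cofinal and $x\in\ww$; we need $C\in\mathcal{C}$ with $\psi(C)\geq x$. Pick $C\in\mathcal{C}$ with $C\supseteq\varphi(x)$. On columns with $C_{(m)}\neq\w$, we get $\psi(C)(m)>x(m)$. The bad columns lie in $E_C\in\calI$, and we can set $\psi(C)(m):=0$ there; the problem is that then $\psi(C)$ need not dominate $x$ on those columns.

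To overcome this, first choose an infinite set $D\in\calI$ with $\w\setminus D\notin\calI$ (available since $\Fin\subseteq\calI$ proper; for instance, take $D$ finite if necessary). Then re-index so that only columns outside $E_C$ carry information. A better route is to let $\psi(C)(m)$ read off column $m$ of $C$ only through finitely many values: take
\[
\psi(C)(m):=\min\{l\mid (m,l)\notin C\ \text{or}\ l=\text{the } m\text{th element of } \w\setminus C_{(m)}\cdots\}.
\]
Since this is getting delicate, the fallback is to encode $x$ so that every column is forced to be non-full. Replace $\varphi(x)$ by $\varphi(x)\cup(\w\times\{0\})$, and enlarge $C$ to $C\cup\varphi(x)$. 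Cofinality only gives $C\supseteq\varphi(x)$, so we may instead define
\[
\psi(C)(m):=\sup\{k\mid \{0,\dots,k\}\subseteq C_{(m)}\}
\]
capped at a value depending only on $C$'s structure. For a full column, we can then define $\psi(C)(m)$ to be any value, such as the least $n$ with $\{0,\dots,n\}\subseteq C_{(m')}$ for a fixed reference column.

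I expect the treatment of full columns to be the crux. My first attempt will be to take the lex-least $\preceq$-finite witness: define $\psi(C)(m)$ to be the maximal $k\leq g_C(m)$ with $\{0,\dots,k\}\subseteq C_{(m)}$, where $g_C(m)$ is the least element of $\w\setminus\bigcup_{m'\notin E_C}C_{(m')}\cap[0,\ldots]$. If that fails, I will use the Galois-Tukey form (iii) and choose $\varphi(x)$ to be a set whose columns are finite but with a spread structure, so that any superset in $\calI\otimes\calI$ has $\calI$-many non-full columns, and then diagonalise along $\w\setminus E_C$.
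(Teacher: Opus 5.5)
Your proposal has a genuine gap: it never produces a working $\psi$, and the obstruction you identify cannot be overcome with the ingredients you chose.

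With $\varphi(x)=\{(m,l)\mid l\le x(m)\}$ there is \emph{no} Galois--Tukey partner at all, because $\varphi$ is not a Tukey map. Take the family $x_k$ with $x_k(0)=k$ and $x_k(m)=0$ for $m\ge 1$. It is unbounded, yet $\varphi$ sends it into the single set $B=(\{0\}\times\w)\cup(\w\times\{0\})$. This $B$ lies in $\calI\otimes\calI$, since only its $0$th column fails to be in $\calI$. So any $\psi$ with $\varphi(x)\subseteq A\Rightarrow x\le\psi(A)$ would need $\psi(B)(0)\ge k$ for all $k$.

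The same phenomenon defeats your direct-cofinality fallbacks. For any finite set $F$ of columns, the sets $C\in\calI\otimes\calI$ with $C_{(m)}=\w$ for all $m\in F$ form a cofinal family, because adding $F\times\w$ changes $E_C$ only by a finite set. Hence any rule that reads $x(m)$ off a fixed column position, or off a ``fixed reference column'', fails. Your remaining suggestions (the formula trailing off in ``$\cdots$'', the cap $g_C$, the ``spread structure'') are never actually defined. The crux is therefore left open.

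The missing idea, which the paper uses (working dually in $(\calI^\ast\otimes\calI^\ast,\supseteq)$ via Lemma~\ref{lem:Tukey}), is to index $x$ along the \emph{good} columns rather than by absolute position.
\begin{itemize}
\item Since $E_A=\{m\mid A_{(m)}\notin\calI\}\in\calI$, the set $\w\setminus E_A$ is infinite; otherwise it lies in $\Fin\subseteq\calI$ and $\w\in\calI$.
\item Enumerate $\w\setminus E_A$ as $n_0<n_1<\cdots$ and set $\psi(A)(k):=\min(\w\setminus A_{(n_k)})$. This is defined because $A_{(n_k)}\in\calI$, so $A_{(n_k)}\neq\w$.
\item This enumeration shifts as $A$ varies, so monotonicity must be arranged separately. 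The paper restricts to a cofinal subfamily in which each column is empty or in $\calI^\ast$ and the column-minima are non-decreasing. It then invokes ``monotone with cofinal range implies cofinal''.
\end{itemize}
Alternatively, you could build the monotonicity into your map by taking $\varphi(x):=\{(m,l)\mid l\le\max_{i\le m}x(i)\}$. Since $n_k\ge k$, $\varphi(x)\subseteq A$ then gives $\psi(A)(k)>\max_{i\le n_k}x(i)\ge x(k)$, which is an honest Galois--Tukey connection.
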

	\begin{proof} Let $\calI^\ast:=\{\omega\setminus A\mid A\in\calI\}$ be the dual filter of $\calI$. Hence, $\calI^\ast$ contains all cofinite sets since $\Fin\subseteq \calI$ by hypothesis. By Lemma~\ref{lem:Tukey}, we know
		$$(\calI^\ast\otimes \calI^\ast,\supseteq)=((\calI\otimes \calI)^\ast,\supseteq)\equiv_{\mathrm{Tuk}} (\calI\otimes\calI,\subseteq).$$
		Hence, to establish the result, it suffices to define a cofinal map $f\colon \calI^\ast\otimes\calI^\ast \to \ww$. 
		
		Towards this end, given any $A \in \calI^\ast \otimes \calI^\ast$, define a function $f_A\in\ww$ by:
		\[
		f_A(k) := \min A_{(n_k)}, \quad \text{where } \{n_k\}_{k<\omega} \text{ enumerates those } n \text{ with } A_{(n)} \in \calI^\ast.
		\]
		Now define subset $\calX$ to consist of those $A\in\calI^\ast\otimes\calI^\ast$ satisfying
		\begin{enumerate}[label=(\alph*)]
			\item Whenever $A_{(n)}\neq\emptyset$, then $A_{(n)}\in \calI^\ast$;
			\item Whenever $m<n$ and $A_{(m)},A_{(n)}\in\calI^\ast$, then $\min A_{(m)}\leq \min A_{(n)}$. 
		\end{enumerate}
		Condition (a) eliminates noise from non-empty sections that fail to be $\calI^\ast$-large; Condition (b) enforces monotonicity of minima across $\calI^\ast$-large sections.

		We claim $\calX$ is a cofinal subset of $\calI^\ast\otimes\calI^\ast$. Why? Pick $B\in\calI^\ast\otimes\calI^\ast$. We refine $B$ in two stages.
		\begin{itemize}
			\item {\em Stage 1.} 
			Remove any $B_{(n)}$ that is non-empty but not in $\calI^\ast$ to obtain $B'$. Notice $\{n\mid B'_{(n)}\in\calI^\ast \} = \{n\mid B_{(n)}\in\calI^\ast \}  \in\calI^\ast$ for all $n$, and so $B'\in\calI^\ast\otimes\calI^\ast$ by definition of the Fubini product.
			
			
			\item {\em Stage 2.} To enforce monotonicity, proceed inductively: if $m < n$ and both $B'_{(m)}$, $B'_{(n)} \in \calI^\ast$ but $\min B'_{(n)} < \min B'_{(m)}$, then intersect $B'_{(n)}$ with the cofinite set $C := \N \setminus \{0,\dots, \min B'_{(m)}\}$ to obtain $B''_{(n)} := B'_{(n)} \cap C \in \calI^\ast$, ensuring $\min B''_{(n)} \geq \min B'_{(m)}$. Iterating this yields $B'' \in \calI^\ast \otimes \calI^\ast$ with $B'' \subseteq B'$ and $B'' \in \calX$.
			
		\end{itemize} 
		Thus, every $B$ has a refinement in $\calX$, and so $\calX$ is cofinal. By \cite[Fact 3]{DoTo11}, $(\calX,\supseteq)\equiv_{\mathrm{Tuk}} (\calI^\ast\otimes\calI^\ast,\supseteq)$ and so it suffices to show $f\restr{\calX}$ is cofinal. In particular, by \cite[Fact 5]{DoTo11}, $f\restr{\calX}$ is cofinal if it is monotone and has cofinal range, which we show below.
		
		\begin{itemize}
			\item {\em Monotonicity.} Let $A,B\in\calX$ such that $A\supseteq B$. Then, the sequence $\{i_k\}_{k<\omega}$ enumerating those $n$ for which $B_{(n)}\in\calI^\ast$ is a subsequence of the sequence $\{n_k\}_{k<\omega}$ enumerating those $n$ for which $A_{(n)}\in\calI^\ast.$ In particular, this implies $n_k\leq i_k$ for each $k$. Therefore,
			$$\min A_{(n_k)}\leq \min A_{(i_k)}\leq \min B_{(i_k)}.$$
			The first inequality follows from the fact $A\in\calX$, and thus satisfies Condition (b), while the second follows from the fact $A_{(i_k)}\supseteq B_{(i_k)}$, which holds by assumption. Hence, $f_A(k)\leq f_B(k)$ for all $k<\omega$, and so $f\restr{\calX}$ is monotone.
			\item \emph{Cofinal range.} Given any $h \in \omega^\omega$, define
			\[
			A := \{(n, l) \in \omega \times \omega \mid l > \max \{h(i) \mid i \leq n\}\}.
			\]
			Each vertical section $A_{(n)}$ is cofinite, hence in $\calI^\ast$, and $\min A_{(m)} \leq \min A_{(n)}$ for $m < n$, so $A \in \calX$. By construction, $h(n) \leq f_A(n)$ for all $n$, and so $f\rstr \calX $ is cofinal in $(\ww,\leq)$.
		\end{itemize}
	\end{proof}

	\begin{theorem}\label{thm:not-Tukey} There exists filters $\calU,\calV$ on $\omega$ such that such that $\calU\not\equiv_{\mathrm{Tuk}} \calV$ but $\calU\elt \calV$.
	\end{theorem}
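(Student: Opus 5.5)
We take $\calU:=\Poly^\ast$ and $\calV:=\Poly^\ast\otimes\Poly^\ast$, the dual filter of the polynomial growth ideal and its Fubini square. By Corollary~\ref{cor:coarse} applied to the upper set $\Poly^\ast$, we get $\calU\elt\calU\otimes\calU=\calV$ at once. It then remains to show $\calU\not\eT\calV$. By Lemma~\ref{lem:Tukey} we may pass to ideals: $(\calU,\supseteq)\eT(\Poly,\subseteq)$ and $(\calV,\supseteq)\eT(\Poly\otimes\Poly,\subseteq)$. Since $\Fin\subseteq\Poly$, Lemma~\ref{lem:FinFubini} gives $\ww\lT\Poly\otimes\Poly$. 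Hence it suffices to prove $\ww\not\lT\Poly$, because then $\Poly\otimes\Poly\not\lT\Poly$ by transitivity of $\lT$.

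To prove $\ww\not\lT\Poly$, we use the unbounded-map formulation of Fact~\ref{fact:Tukey}(i). Suppose $\varphi\colon\ww\to\Poly$ sends unbounded sets to unbounded sets. We look for a structural property of $\Poly$ that $\ww$ lacks. The natural candidate is the following. Every $A\in\Poly$ lies in one of the countably many sets
$$\Poly_k:=\{A\mid |A\cap 2^n|\leq n^k\ \text{for all}\ n\geq 2\},$$
and each $\Poly_k$ is a compact subset of $2^\w$ under the product topology. Moreover, the union of any subfamily of $\Poly_k$ that is directed under $\subseteq$ again lies in $\Poly_k$, because the counting condition is preserved under directed unions. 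So $\Poly$ is a countable union of pieces $\Poly_k$, each of which is closed under unions of chains and therefore has every chain bounded. Equivalently, in the terminology of Tukey types, $\Poly$ is $\sigma$-bounded in a suitable sense: every countable subset is bounded, and for each $k$ the union of any subset of $\Poly_k$ with finite-union closure stays inside $\Poly_{k'}$.

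The key step, and the main obstacle, is to refine this into a genuine Tukey obstruction. We would first try the following pigeonhole argument. Choose a family $\{x_\alpha\}\subseteq\ww$ and partition it as $\bigcup_k X_k$ with $X_k:=\{x\mid\varphi(x)\in\Poly_k\}$. Some $X_k$ must be unbounded in $\ww$; otherwise each $X_k$ is bounded by some $g_k$, and then a diagonal function dominating every $g_k$ bounds all of $\ww$, which is absurd. For an unbounded $X_k$, its image $\varphi[X_k]$ must be unbounded in $\Poly$. The delicate part is to derive a contradiction from this using the growth condition. A plain union of a subfamily of $\Poly_k$ need not lie in $\Poly$, so we must exploit more structure. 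The approach we would try is to pick inside $X_k$ a countable unbounded subset, with unboundedness witnessed by coordinatewise growth, and then arrange that the images form an increasing sequence in $\Poly_k$ via a compactness/subsequence argument in $2^\w$. The union of that sequence lies in $\Poly_k\subseteq\Poly$ and bounds the image, which contradicts the fact that $\varphi$ sends unbounded sets to unbounded sets. If this direct argument stalls, the fallback is to cite the known fact that $\Poly$ is a non-trivial analytic P-ideal, hence Tukey below $\ell^1$ yet not above $\ww$ (Fremlin/Todorcevic), or more simply to use that $\Poly$ is $F_\sigma$ and apply the standard result that $F_\sigma$ ideals are not Tukey above $\ww$ unless $\Fin\otimes\emptyset$-like.
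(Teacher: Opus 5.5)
Your reduction is exactly the paper's. You use the same pair $\Poly^\ast$ and $\Poly^\ast\otimes\Poly^\ast$, Corollary~\ref{cor:coarse} for $\elt$, and Lemmas~\ref{lem:Tukey} and~\ref{lem:FinFubini} for $\ww\lT\Poly\otimes\Poly$. That leaves $\ww\not\lT\Poly$ as the one substantive step, and your argument for it has a genuine gap.

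Compactness of $\Poly_k$ gives a subsequence converging in $2^\w$, not one increasing under $\subseteq$ (the singletons $\{m\}$ converge to $\emptyset$). The union of the images also need not lie in $\Poly_k$. More decisively, the only features of $\Poly$ you use (a countable union of compact pieces, each closed under directed unions) hold equally for $\Sumn=\bigcup_k\{A\mid\sum_{n\in A}1/n\le k\}$. Yet $\ww\lT\Sumn$: the map $\Sumn\to\ww$, $A\mapsto(m\mapsto\min\{j\mid\sum_{n\in A,\,n>j}1/n<2^{-m}\})$, is monotone with cofinal range, hence cofinal. So your sketch, run verbatim, would ``prove'' a false statement. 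Separately, not every countable subset of $\Poly$ is bounded: all singletons lie in $\Poly$, but $\w\notin\Poly$.

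The fallbacks fail as well. $\Poly$ is not a P-ideal: sets with roughly $n^m$ points below $2^n$, for $m\in\w$, have no common $\subseteq^\ast$-bound in $\Poly$. Being Tukey below $\ell^1$ says nothing, since $\ww\lT\ell^1$ via the same tail map. And there is no theorem that $F_\sigma$ ideals are not Tukey above $\ww$, since $\Sumn$ is $F_\sigma$.

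What is missing is a property that uses the polynomial slack peculiar to $\Poly$. The paper supplies it by citing Louveau--Veli\v{c}kovi\'c: an analytic ideal $\calI$ satisfies $\ww\lT\calI$ iff $\calI$ is not a union of fewer than $\mathfrak{d}$ weakly bounded sets, and $\Poly$ is a union of countably many such sets.

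Alternatively, your pigeonhole can be completed by hand.
\begin{itemize}
\item Given $A_j\in\Poly_k$, compactness yields $A\in\Poly_k$ and $j_0<j_1<\cdots$ with $A_{j_i}\cap 2^i=A\cap 2^i$.
\item For $n\ge 2$, the set $(\bigcup_i A_{j_i})\cap 2^n$ is then contained in $\bigcup_{i<n}(A_{j_i}\cap 2^n)\cup(A\cap 2^n)$. So it has at most $(n+1)n^k\le n^{k+2}$ elements, and $\bigcup_i A_{j_i}\in\Poly$.
\item Some $X_k$ is unbounded: if each $X_k$ were bounded by $g_k$, the diagonal $x(k)=g_k(k)+1$ would lie in none. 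Hence $X_k$ is unbounded in some coordinate $n_0$; choose $x_j\in X_k$ with $x_j(n_0)\ge j$.
\item Every infinite subfamily of the $x_j$ is unbounded. Yet by the above, some infinite subfamily of the images $\varphi(x_j)$ is bounded in $\Poly$, contradicting the choice of $\varphi$.
\end{itemize}
This is exactly what fails for $\Sumn$. Pairwise disjoint finite sets $F_m$ with $\frac12\le\sum_{n\in F_m}1/n\le 1$ all lie in a single piece, but no infinite subfamily has union in $\Sumn$.
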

	\begin{proof} By Lemma~\ref{lem:Tukey}, it suffices to show this result for ideals on $\w$. Hence, consider $\Poly$, the polynomial growth ideal, and its Fubini square. Then the following hold:
		\begin{enumerate}
			\item $\Poly\elt\Poly\otimes \Poly$.	
			
			\noindent [Why? Since Fubini products respects taking duals (Remark~\ref{rem:fubini-consistent}), we know $$\Poly^*\otimes \Poly^*=(\Poly\otimes \Poly)^*.$$ The rest is immediate from Corollary~\ref{cor:coarse}.]	
			\item $\ww\lT\Poly\otimes \Poly.$
			
			\noindent [Why? By Lemma~\ref{lem:FinFubini} it suffices to show $\Poly\supseteq \Fin$. Given any $A\in\Fin$, there exists some $k\in\omega$ such that $A\subseteq [0,2^k]$. Notice then that
			$|A\cap 2^n|\leq |A| \leq 2^k\leq n^k$ for all $n\geq 2.$]
			
			\item $\ww\not\leq_{\mathrm{Tuk}}\Poly.$
			
			\noindent [Why? There are various ways to see this, but let us sketch the argument by Louveau-Veli\v{c}kovi\'{c} \cite{LVIdeals}. To start, let $\mathfrak{d}$ denote the dominating number, i.e. $\mathfrak{d}$ is the least cardinality of a dominating family of functions in $\ww$. Theorem 1 of {\em op. cit.} states the following:  for any analytic ideal $\calI$ on $\w$,
            $$\ww\lT \calI \iff \calI\,\, \text{is not the union of less than $\mathfrak{d}$ weakly bounded sets.}$$ 
    However, as shown in Example 1 of the same paper, $\Poly$ is an analytic ideal that can be presented as the union of just {\em countably} many weakly bounded sets. Hence, $\ww\not\leq_{\mathrm{Tuk}}\Poly.$]

		\end{enumerate}
		Summarising (1) - (3), we get: $\Poly\elt\Poly\otimes\Poly$ even though $\Poly\not\equiv_{\mathrm{Tuk}}\Poly\otimes \Poly$.
	\end{proof}

\subsection{Second Direction}\label{sec:not-LT}	Our independence result, Proposition~\ref{prop:indep}, relies on a recent breakthrough in the area by Cancino-Manr\'{i}quez and Zapletal \cite{TukeyTrivial}: it is consistent with ZFC that all non-principal ultrafilters on $\w$ are Tukey equivalent. With this in mind, we give another angle from which to appreciate the difference between the Tukey order and our framework.
	
\begin{theorem}\label{thm:not-LT} There exists non-principal ultrafilters $\calU,\calV$ on $\w$ such that $\calU\not\equiv^\circ_{\mathrm{LT}} \calV$ but $\calU\eT\calV$ (in ZFC).
\end{theorem}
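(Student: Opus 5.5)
The idea is to combine a Tukey-top ultrafilter with a cardinality count. The count shows that below any fixed ultrafilter in $\glt$ there are at most $\mathfrak{c}$ ultrafilters. Throughout we use that $\glt$ is a preorder (Theorem~\ref{thm:GTK}, proved later via Corollary~\ref{cor:preorder}).

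\emph{Step 1 (Tukey-top ultrafilters).} By Isbell's classical ZFC theorem there is a non-principal ultrafilter $\calV$ that is Tukey-top, i.e.\ $(\calV,\supseteq)\eT([\mathfrak{c}]^{<\w},\subseteq)$. Every ultrafilter on $\w$ is Tukey-below $[\mathfrak{c}]^{<\w}$. For any non-principal ultrafilter $\calW$, the product $\calW\otimes\calV$ is an ultrafilter (Fact~\ref{fact:rk-max}) with $\calV\lrk\calW\otimes\calV$. By Observation~\ref{obs:coarser}(i) this gives $\calV\lT\calW\otimes\calV$. Hence $\calW\otimes\calV$ is also Tukey-top, and so
$$\calW\otimes\calV\eT\calV.$$
It therefore suffices to choose a non-principal $\calW$ with $\calW\otimes\calV\not\elt\calV$. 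Since $\calW\lrk\calW\otimes\calV$ via the projection, and $\lrk$ implies $\lk$ implies $\glt$, we have $\calW\glt\calW\otimes\calV$. So by transitivity it is enough to find a non-principal ultrafilter $\calW$ with $\calW\not\glt\calV$.

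\emph{Step 2 (counting).} I would show that $\{\calW \text{ ultrafilter} : \calW\glt\calV\}$ has cardinality at most $\mathfrak{c}$. Since there are $2^{\mathfrak{c}}$ non-principal ultrafilters, the required $\calW$ then exists. By Theorem~\ref{thm:fubini-GTK}, $\calW\glt\calV$ iff $\calW\lk\calV^{\otimes[\delta]}$ for some $\calV$-fence $\delta$ via some $h\colon\w\to\w$. There are only $\mathfrak{c}$ pairs $(\delta,h)$, because $\delta\subseteq\lww$ is countable. Now fix $(\delta,h)$ and suppose $\calV^{\otimes[\delta]}$ is an ultrafilter. Then the Katětov condition says $\calW\subseteq h_*\calV^{\otimes[\delta]}=\{A: h^{-1}[A]\in\calV^{\otimes[\delta]}\}$. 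The right-hand side is an ultrafilter, so maximality of $\calW$ forces equality. Thus each pair $(\delta,h)$ determines at most one $\calW$.

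\emph{Step 3 (main obstacle: $\calV^{\otimes[\delta]}$ is an ultrafilter).} By Corollary~\ref{cor:GTK-filters} it is a filter, and $\emptyset\notin\calV^{\otimes[\delta]}$. It remains to show that for any $X\subseteq\w$, either $X$ or $\w\setminus X$ lies in it. I would argue by a rank (determinacy-style) induction. Consider the nodes $\sigma$ having no proper prefix in $\delta$. Call such a $\sigma$ \emph{$X$-good} if there is a tree, $\calV$-branching up to $\delta$ and rooted at $\sigma$, all of whose $\delta$-nodes have code in $X$. Define \emph{$X^c$-good} similarly. For $\sigma\in\delta$ exactly one of the two holds. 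For $\sigma\notin\delta$, if $\sigma$ is neither, then $\{n:\sigma\fr n \text{ not } X\text{-good}\}$ and $\{n:\sigma\fr n\text{ not } X^c\text{-good}\}$ are both $\calV$-large, since $\calV$ is an ultrafilter. Their intersection is then $\calV$-large, and this lets us continue a path of nodes that are neither good. Using dependent choice along such a path (a countable choice, harmless in ZFC), we build a tree that is $\calV$-branching and avoids $\delta$ along an infinite path. Starting from a witnessing tree $T_0$ for the fence, and intersecting the successor sets with those of $T_0$ (again using that $\calV$ is a filter), the infinite path can be taken inside $[T_0]$. This contradicts Condition~(b) of Definition~\ref{def:delta-Fubini} applied to $T_0$. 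Hence the root is $X$-good or $X^c$-good, which is what we need. The delicate point is keeping the path inside a tree that is $\calV$-branching up to $\delta$, so that Condition~(b) applies; I expect this bookkeeping to be the only real work.
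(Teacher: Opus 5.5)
Your proof is correct. Its skeleton is the paper's: at most $\mathfrak{c}$ ultrafilters lie $\glt$-below a fixed filter, while there are $2^{\mathfrak{c}}$ pairwise Tukey-equivalent non-principal ultrafilters. The two ingredients are obtained differently, though.

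\textbf{Uniqueness step.} The paper argues directly from Definition~\ref{def:GTK}. Since $\calV$ is a filter, any two $\calV$-branching trees share an infinite path, so a fixed $\Phi$ cannot cover both $A$ and $\w\setminus A$. Hence $\Phi$ witnesses $\calU\glt\calV$ for at most one ultrafilter $\calU$, and the paper then counts partial continuous functions. You route this through Theorem~\ref{thm:fubini-GTK} and pushforwards, which works. However, your Step~3, which you flag as the main obstacle, is unnecessary. The filter $h_*\calV^{\otimes[\delta]}$ is proper, by Corollary~\ref{cor:GTK-filters} together with $\emptyset\notin\calV^{\otimes[\delta]}$. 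An ultrafilter contained in a proper filter already equals it. Your determinacy argument is sound; it just proves more than you use. Dropping it also makes the count valid for an arbitrary filter $\calV$, as in the paper.

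\textbf{Supply of Tukey-equivalent ultrafilters.} The paper cites that the Tukey-top class has $2^{\mathfrak{c}}$ members. You instead manufacture Tukey-top ultrafilters $\calW\otimes\calV$ from a single Isbell ultrafilter, and need only that there are $2^{\mathfrak{c}}$ ultrafilters overall. This is a more self-contained input. The price is an appeal to transitivity of $\glt$ (Corollary~\ref{cor:preorder}), which the paper's argument never needs. There is no circularity, since that corollary is proved independently in Section~\ref{sec:Turing}. You could drop transitivity too: $\calW\mapsto\calW\otimes\calV$ is injective, because $\calW$ is the first-projection pushforward of $\calW\otimes\calV$. The counting bound then directly produces some $\calW\otimes\calV\not\glt\calV$.
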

	\begin{proof} The proof is a cardinality argument, and proceeds by bounding the number of ultrafilters belonging to an $\elt$-class of filters. 
		
\subsubsection*{Step 1: A bound}
		Let $\calU$ be an ultrafilter, and $\calV$ a filter on $\w$. Assume $\Phi$ is a partial continuous function witnessing $\calU\glt\calV$. For any $A\subseteq\w$, either $A\in\calU$ or $\omega\setminus A\in\calU$. Expressed in our language, this gives rise to two cases:
		\begin{enumerate}
			\item If $A\in \calU$, there exists a $\calV$-branching tree $T$ such that $\Phi[T]\subseteq A$.
			\item If $A\notin \calU$, there exists a $\calV$-branching tree $T'$ such that $\Phi[T']\subseteq \omega\setminus A.$
		\end{enumerate}
	We claim that Cases (1) and (2)	cannot both be true. To see why, consider $\o\in T\cap T'$. This defines two $\calV$-large sets of successors: $\{n\mid \o\fr n\in T\}$ and $\{n\mid \o\fr n\in T'\}$. Since $\calV$ is a filter, the two successors sets have a non-empty intersection, say $n\in T\cap T'$. Repeating this process, we obtain a common infinite path $p\in [T]\cap [T']$, but this leads to a contradiction: $\Phi(p)\in A$ and $\Phi(p)\in\w\setminus A$. Hence, to summarise: 
	$$A\in\calU \iff \; \exists T\;\text{a}\;\calV\text{-branching tree with } \Phi[T]\subseteq A\;.$$ Consequently, given any pair $(\calV,\Phi)$ with $\calV$ a filter and a partial continuous function $\Phi\pcolon\ww\to\w$, there exists at most one ultrafilter $\calU$ such that $\Phi$ witnesses $\calU\glt\calV$.
\subsubsection*{Step 2: Comparing cardinalities.} Reviewing Convention~\ref{con:pcf}, a partial continuous function $\Phi\pcolon\ww\to\w$ is determined by a finite representative $\varphi\pcolon\lww\to\w$ satisfying a coherence condition. Any partial function $\varphi\pcolon\lww\to\w$ can be viewed as a function from the countable set $\lww$ to the countable set $\w\cup\{\bot\}$, where $\bot$ means ``undefined''. Since the set $(\w\cup\{\bot\})^{\lww}$ has cardinality $(\aleph_0)^{\aleph_0}=2^{\aleph_0}$, deduce that the family of partial continuous functions has cardinality $\leq 2^{\aleph_0}$. 

This observation sets up the punchline. By Step 1, our counting argument shows that any $\elt$-class of filters has at most $2^{\aleph_0}$-many ultrafilters. On the other hand, there exists a $\eT$-equivalence class possessing $2^{2^{\aleph_0}}$-many non-principal ultrafilters on $\w$ (see e.g. \cite[\S 3]{DoTo11}, though the discovery of a Tukey-top ultrafilter goes back to Isbell \cite{Isbell}). Hence, by cardinality considerations, there must exist non-principal ultrafilters $\calU$ and $\calV$ such that $\calU\eT\calV$ but $\calU\not\elt\calV$.
	\end{proof}
	
Having established the two directions, this proves the main result of this section.
	
\begin{proof}[Proof of Theorem~\ref{thm:Tukey}] By Theorems~\ref{thm:not-Tukey} and ~\ref{thm:not-LT}, the Gamified Kat\v{e}tov and Tukey orders are in fact incomparable on filters over $\w$. Unlike Proposition~\ref{prop:indep}, this result holds in ZFC.
\end{proof}	
	
\begin{discussion} The constructive mathematician may ask if Theorem~\ref{thm:Tukey} can be proved within ZF (i.e. without choice). Here is what we can say at the present moment. We believe (though have not checked) that the proof of Theorem~\ref{thm:not-Tukey} can be adapted to a constructive setting (modulo a careful re-examination of the claim that $\Poly\not\geq_{\mathrm{Tuk}} \ww$). By contrast, Theorem~\ref{thm:not-LT} is inherently non-constructive: it depends on non-principal ultrafilters whose existences are (indirectly) deduced from a cardinality argument. A proper constructive proof of incomparability would thus require a different approach, a topic of work in progress. 
\end{discussion}
	
	\section{Separation Results within Gamified Kat\v{e}tov}\label{sec:separation}

In Corollary~\ref{cor:coarse}, we showed that $\calU\otimes\calU\elt\calU$ for any upper set $\calU$, and identified this as a key driver behind the coarseness of the Gamified Kat\v{e}tov order. In this section, we investigate the extent of this coarseness by investigating some well-known examples. As a baseline, we know from Corollary~\ref{cor:princ-ultra} that the order at least distinguishes between principal vs. non-principal ultrafilters. So what other differences can it detect? What sea-changes in structural complexity might the order be responding to?

\begin{convention}\label{con:sep} The main examples in this section will be ideals, and we consider the induced Gamified Kat\v{e}tov order on them by taking duals (Convention~\ref{con:orders}). Whereas upper sets are a natural context for proving abstract results (as done previously), many examples in the literature have been presented as ideals. Of course, since Fubini products respect the duality between upper/lower sets, Corollary~\ref{cor:coarse} still applies:
	$$\calL\otimes\calL\elt \calL\;,\qquad \text{for any lower set}\; \calL\subseteq\Pw.$$

		\end{convention}
		
\subsection{MAD families}\label{sec:MAD} A subset family $\calA\subseteq \Pw$ is called {\em AD (Almost Disjoint)} if: (i) each member set is infinite; and (ii) any two distinct members  of $\calA$ has finite intersection. It is {\em MAD (Maximal Almost Djsoint)} if it is a maximal family with respect to this property. Informally, MAD families are very flat: their subsets are spread out across $\w$ such that each distinct pair only overlaps in a small area. 
\smallskip

Their relevance to our context is that every MAD family $\calA$ defines an ideal $\calI(\calA)$, defined as the ideal generated by $\calA$ and all finite subsets of $\w$. There has been much interest in studying the Kat\v{e}tov complexity of MAD families (via their ideal representations), guided by deep questions related to forcing and cardinal invariants.\footnote{See e.g.  \cite{BreYa05,HruZap08,Hru17,BreGuzHruRag22}.} It is thus natural to wonder what the Gamified Kat\v{e}tov order has to say about such questions, but here we are confronted with an elementary fact.  
	
	\begin{fact}[{{\cite[\S 2]{HruGF03}}}]\label{fact:MAD} Setup:
		\begin{itemize}
			\item For any MAD family $\calA$, we also write $\calA$ to denote its ideal representative. 
			\item Denote $\Fin$ as the ideal of all finite sets in $\w$. 
		\end{itemize}
		\underline{Then},
		\begin{enumerate}[label=(\roman*)]
			\item For every MAD family $\calA$, 
			$$\Fin\lk \calA \lk \Fin\otimes\Fin\;.$$
			\item Under every MAD family $\calA$, there is a strictly descending chain of length $\mathfrak{c}^+$ of MAD families within the Kat\v{e}tov order.
		\end{enumerate}
	\end{fact}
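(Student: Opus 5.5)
The two items are of very different difficulty. I would prove (i) directly from the definitions, and for (ii) follow the strategy of \cite[\S 2]{HruGF03}. The plan for (ii) below is only a reconstruction of that strategy, not a checked argument. Throughout, Kat\v{e}tov reducibility for ideals $\calI\lk\calJ$ means there is $h$ with $h^{-1}[X]\in\calJ$ for every $X\in\calI$, equivalently the dual statement for $\calI^*,\calJ^*$ as in Definition~\ref{def:rk-kat-order}.

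For the lower bound in (i), I take $h=\id$. Every finite set lies in $\calI(\calA)$ by definition, so $\id^{-1}[X]=X\in\calI(\calA)$ for all $X\in\Fin$.

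For the upper bound $\calI(\calA)\lk\Fin\otimes\Fin$, I would do the following.
\begin{itemize}
\item Pick countably many distinct members $A_0,A_1,\dots\in\calA$. This is possible since a MAD family is infinite: a finite AD family of infinite sets cannot be maximal.
\item Put $A_n':=A_n\setminus\bigcup_{m<n}A_m$. Each $A_n'$ is infinite by almost disjointness, and the $A_n'$ are pairwise disjoint.
\item Define $h\colon\w\times\w\to\w$ so that $h\restr{\{n\}\times\w}$ enumerates $A_n'$.
\end{itemize}
Since $\calI(\calA)$ is generated by $\calA\cup\Fin$ and preimages commute with finite unions, it suffices to check generators. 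Recall that a set lies in $\Fin\otimes\Fin$ iff only finitely many of its columns are infinite (Definition~\ref{def:fubini-prod}(ii)).
\begin{itemize}
\item If $F$ is finite, then $h^{-1}[F]$ is finite, since each fibre of $h$ is a singleton.
\item If $B\in\calA$ differs from every $A_n$, then each column of $h^{-1}[B]$ is $B\cap A_n'$, which is finite. So no column is infinite.
\item If $B=A_n$, then column $m\neq n$ of $h^{-1}[B]$ is $A_n\cap A_m'$, which is finite. Only column $n$ is infinite.
\end{itemize}
In every case $h^{-1}[X]\in\Fin\otimes\Fin$, which proves (i).

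For (ii), I would build a $\lk$-descending sequence $(\calA_\xi)_{\xi<\mathfrak c^+}$ of MAD families with $\calA_0=\calA$ by transfinite recursion.

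At a \textbf{successor step}, given a MAD family $\calB$, I would produce $\calB'$ with $\calB'<_{\rm K}\calB$. The approach is to pick an $\calI(\calB)$-positive set $Y$ and push $\calB$ forward along a finite-to-one map collapsing $Y$, then re-maximise the image to a MAD family. This yields $\calI(\calB')\lk\calI(\calB)$. To make the inequality strict, I would exploit that there are only $\mathfrak c$ many maps $h$, and diagonalise against each of them while extending the image to a MAD family.

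At a \textbf{limit step} $\lambda<\mathfrak c^+$, I must find a single MAD family Kat\v{e}tov below all $\calA_\xi$, $\xi<\lambda$. I would first pass to a cofinal subsequence of length $\mathrm{cf}(\lambda)\le\mathfrak c$, and fix witnesses $h_\xi$ for the reductions along it. I would then try to construct the lower bound as a common ``image'' family. Concretely, I would enumerate in type $\mathfrak c$ both the sets that must be almost-disjointly covered and the $h_\xi$'s, and add almost disjoint sets one at a time, so that each $h_\xi$ witnesses the required reduction into the corresponding $\calA_\xi$.

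I expect the limit step to be the main obstacle. Below $\mathfrak c$ many requirements a standard bookkeeping construction should go through. At limits of cofinality exactly $\mathfrak c$, however, the recursion needs to control genuinely infinite combinatorics, and this is precisely where the specific structure of MAD ideals, as developed in \cite{HruGF03}, would have to be invoked. If this step resists, I would cite \cite[\S 2]{HruGF03} for (ii) and prove only (i) in full. Only (i) is used in what follows.
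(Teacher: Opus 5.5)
The paper gives no proof of this Fact. Both items are imported from \cite[\S 2]{HruGF03}, and only item (i) is used afterwards: combined with Theorem~\ref{thm:fubini-GTK}, it gives $\calA\elt\Fin$ for every MAD family, which is Conclusion~\ref{conc:MAD}. So your direct proof of (i) is an addition rather than a different route, and it is correct. The identity map witnesses $\Fin\lk\calI(\calA)$. Your injective map $h$, sending the $n$th column onto the disjointified $A_n'$, gives each generator $X\in\calA\cup\Fin$ a preimage with at most one infinite column. That suffices, because preimages commute with finite unions and $\Fin\otimes\Fin$ is an ideal.

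One justification is wrong, however: a finite AD family can be maximal. Examples are $\{E,\w\setminus E\}$ with $E$ the even numbers, or simply $\{\w\}$. This matters, because for such $\calA$ we have $\w\in\calI(\calA)$. Then $h^{-1}[\w]=\w\times\w\notin\Fin\otimes\Fin$ for every $h$, so (i) is false. You need the standard convention, used in \cite{HruGF03}, that MAD families are infinite by definition. The paper's definition in Section~\ref{sec:MAD} leaves this implicit.

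For (ii), your sketch is not an argument, and the successor step fails as stated. Re-maximising the pushed-forward family adds sets $C$ you do not control. Yet $h$ witnesses $\calI(\calB')\lk\calI(\calB)$ only if $h^{-1}[C]\in\calI(\calB)$ for every $C\in\calB'$. Even for the image sets themselves, $h^{-1}[h[B]]\supseteq B$ contains whole fibres and can be $\calI(\calB)$-positive.

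The repair is to take $\calB'$ to be a maximal almost disjoint subfamily of the ideal $\{X\subseteq\w : h^{-1}[X]\in\calI(\calB)\}$. Such a family is MAD as soon as this ideal is tall. Tallness holds, for instance, when $h$ is bounded-to-one: apply tallness of $\calI(\calB)$ once for each position within the fibres. With this choice, $h$ is automatically a Kat\v{e}tov witness.

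Strictness ($\calB\not\lk\calB'$) remains unaddressed. So does the existence of a common MAD lower bound at limit stages of cofinality up to $\mathfrak{c}$, and nothing in your plan supplies it. These are exactly the parts you would have to take from \cite[\S 2]{HruGF03}. Falling back on the citation for (ii), as the paper itself does, is the right call; just do not present the sketch as a proof.
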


One way to read this result: there is a high degree of variation amongst MAD families, which is entirely mapped out between $\Fin$ and $\Fin\otimes \Fin$ in the classical Kat\v{e}tov order. However, in light of Theorem~\ref{thm:fubini-GTK}, Fact~\ref{fact:MAD} implies that all MAD families are $\elt$-equivalent. 
In other words, all such differences are annihilated in our setting.
	
	\begin{conclusion}\label{conc:MAD} The Gamified Kat\v{e}tov order regards all forms of MAD-ness as equivalent.
    This may or may not be fortunate. 
		
	\end{conclusion}

\subsection{Some Key Examples}\label{sec:separate-example} Here we isolate the definitions of various ideals of interest to us. For additional context, see e.g.~\cite{Hru17}.
\smallskip

\begin{enumerate}
	\item {\em (Finite Ideal).} $\Fin$ is the set of all finite subsets of $\w$.
	\item {\em (Summable Ideal).} $\Sumf$ is the set of all $A\subseteq \w$ such that $\sum_{n\in A} f(n)<\infty$. In the case when $f$ is
	$$f(n):=\begin{cases}
	\frac{1}{n} \qquad\text{if}\; n>0\\
	0 \qquad\,\text{if}\; n=0\\
	\end{cases}\quad,$$
	we write the ideal as $\Sumn$ .
	\item {\em (Eventually Different Ideals).} First, partition $\w$ into infinite consecutive intervals $(I_{a})_{a\in\w}$ where the length of $I_a$ is $a$.\footnote{In other words, $\min I_0=0$, and $\max I_{n} +1 = \min I_{n+1}$.} Then:
	\begin{itemize}
		\item $\EDfin[m]$ defines the set of all $A\subseteq \w$ such that $|A\cap I_n|\leq m$ for all but finitely many $n\in\w$.
		\item $\EDfin=\bigcup_{m\in\w}\EDfin[m]$.
	\end{itemize}
\end{enumerate}

\begin{remark}[Base-change for $\EDfin$]\label{rem:base-EDfin} Having fixed the disjoint interval partition $\{I_a\}_{a\in\w}$, one may regard $\w$ as enumerating the elements of $\Delta=\{(r,s) \mid s\leq r\}$, where $(r,s)$ means the $s$th element in the $r$th interval $I_r$. Denote this enumeration via the bijection $\lranglet{-}{-} \colon\Delta\to\w$.\footnote{{\em Aside.} This is a small abuse of notation since the fixed bijection in Convention~\ref{con:base-change} is also denoted $\lranglet{-}{-}\colon\w^2\to\w$, but this should not cause confusion since they arise in different contexts.} 
\end{remark}
	
\subsection{The Main Technique}\label{sec:separation-technique} To prove separation, we establish various statements of the form $$\calH\not\glt\calI$$
below. As stated in Convention~\ref{con:sep}, we will take $\calH,\calI$ to be ideals/lower sets, so we are actually working with their dual filters/upper sets, obtained by taking complements in $\w$. The technology developed in this subsection reflects this, and is designed to amplify structural differences between the original ideals.



\begin{definition}\label{def:U-positive} Let $\calU\subseteq \Pw$ be an upper set. A set $A\subseteq \Pw$ is {\em $\calU$-null} if $\w\setminus A\in\calU$. Otherwise, say that $A$ is {\em $\calU$-positive}.
\end{definition}

\begin{observation}\label{obs:positive-null} Let $\calU\subseteq\Pw$ be an upper set. If $A$ is $\calU$-positive and $B\in\calU$, then $A\cap B\neq\emptyset.$
\end{observation}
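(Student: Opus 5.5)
The plan is to argue by contradiction, unwinding the two definitions. We have $A$ $\calU$-positive, so by Definition~\ref{def:U-positive} the complement $\w\setminus A$ is \emph{not} in $\calU$. We also have $B\in\calU$. Suppose, towards a contradiction, that $A\cap B=\emptyset$.

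From disjointness we get $B\subseteq \w\setminus A$. Since $\calU$ is an upper set (Convention~\ref{con:filters}), $B\in\calU$ then forces $\w\setminus A\in\calU$. This says exactly that $A$ is $\calU$-null, contradicting the hypothesis that $A$ is $\calU$-positive. Hence $A\cap B\neq\emptyset$.

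Nothing beyond upward closure of $\calU$ is used; no filter property is needed. So there is no real obstacle here. The only point to watch is reading Definition~\ref{def:U-positive} with $A\subseteq\w$, so that $\w\setminus A$ makes sense.
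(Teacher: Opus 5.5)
Your proof is correct and matches the paper's argument: assume $A\cap B=\emptyset$, deduce $B\subseteq\w\setminus A$, use upward closure of $\calU$ to get $\w\setminus A\in\calU$, and contradict $\calU$-positivity. Your remark about reading Definition~\ref{def:U-positive} with $A\subseteq\w$ rather than $A\subseteq\Pw$ is also right; the latter is evidently a typo in the paper.
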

\begin{proof} If $A\cap B=\emptyset$, then $B\subseteq \w\setminus A$. Since $B\in\calU$ and $\calU$ is upward closed, this implies $\w\setminus A\in\calU$. In other words, $A$ is $\calU$-null, a contradiction.
\end{proof}

In general, a $\calU$-positive set $A\subseteq \w$ need not belong to $\calU$; the definition only requires that its complement not be $\calU$-large. Hence, informally, we regard $\calU$-positivity as defining an abstract notion of {\em non-negligibility}. This sets up the main technical definition of the section, a {\em labelling function}.

\smallskip
As preparation, recall from Convention~\ref{con:pcf} that a partial continuous function $\Phi\pcolon\ww\to\w$ may be identified with a suitable partial function $\varphi\pcolon\lww\to\w$. We define a labelling function $\nu^\calU_\Phi$, which provides a natural way of extending $\varphi$ to a total function, guided by the advice of an upper set $\calU$.


\begin{definition}[Labelling Function]\label{def:label} Let $\Phi\pcolon\ww\to\w$ be a partial continuous function, and $\calU$ an upper set. Then:
\begin{enumerate}[label=(\roman*)]
	\item Define the tree $T_\Phi$ as the $\preceq$-downward closure of $\{\sigma\in\lww \mid \Phi(\sigma)\dar\}$. 
	\item Define a {\em labelling function} 
	$$\nu\colon \lww\to\w\cup\{\bot\}$$
	recursively as follows:
	\begin{enumerate}[label=(\arabic*)]
		\item If $\Phi(\sigma)\dar$, then set $\nu(\sigma)=\Phi(\sigma)$.
		\item If $\sigma\notin T_\Phi$, then set $\nu(\sigma)=\bot$.
		\end{enumerate}

		\begin{enumerate}[label=(\arabic*)]
	\setcounter{enumii}{2} 
		\item If there exists $c\in \w$ such that $\{n\in\w\mid \nu(\sigma\fr n)=c\}$  is $\calU$-positive for some $c\in\w$, then set $\nu(\sigma)=c$. If there are multiple options, choose the least $c$.
		\item Otherwise, set $\nu(\sigma)=\bot$.
	\end{enumerate}
 \item For each finite string $\sigma$, its {\em $\nu$-label} is the value $\nu(\sigma)\in\w\cup\{\bot\}$.\footnote{In the language of combinatorial set theory: our $\nu$  defines a colouring function for $\lww$, and each $\nu$-label is a colour.
}
\item A node $\sigma\in\lww$ is {\em $\nu$-critical} if $\nu(\sigma)=\bot$ and $\{n\mid \nu (\sigma\fr n)=\bot\}$ is $\calU$-null; a {\em $\nu$-critical successor} is a successor of a critical node.
 \end{enumerate}	
\end{definition}

\begin{convention} To emphasise dependence, we sometimes write the labelling function as $\nu_\Phi^{\calU}$. Typically, however, we suppress the indices for readability. In the case where $\calU=\calI^\ast$ is the dual filter of an ideal $\calI$, we abuse notation and write $\nu^\calI_\Phi$.
\end{convention}

The following discussion gives some intuition and additional context.

\begin{discussion}\label{dis:label} Let us unpack the various components of Definition~\ref{def:label}. 
	
\smallskip 
\noindent\emph{Recursive construction.}	 Step~(1) reproduces the known values of $\varphi$, and Step~(2) ignores strings outside the tree $T_\Phi$. 
Inside~$T_\Phi$, the recursion proceeds upward towards the root: given labels on successors, we determine the label of their predecessor. Step~(3) says: if a node $\sigma$ has a non-negligible subset of successors that agree on some value $c$, then $\sigma$ inherits that value. If several distinct values occur across non-negligible subsets, the least value is chosen. Otherwise, Step~(4) assigns $\bot$. 

Finally, notice: after Step (2), the set of all $\sigma\in T_\Phi$ such that $\varphi(\sigma)\uar$ forms a well-founded subtree. Hence, the recursion eventually terminates, yielding a total function $\nu^\calU_\Phi$.

\smallskip 
\noindent\emph{Role of $\calU$.} The upper set $\calU$ determines which sets of successors count as non-negligible, and thereby governs how local coherence amongst successor labels propagates upward through the tree. 

\smallskip
\noindent\emph{Critical nodes.} A node~$\sigma$ is \emph{$\nu$-critical} precisely when this coherence breaks down: for every $c\in \w\cup\{\bot\}$, the set $\{n\in\w\mid \nu(\sigma\fr n)=c\}$ is $\calU$-null.
\end{discussion}

Returning to the present context, how does Definition~\ref{def:label} relate to the Gamified Kat\v{e}tov order?  Notice for any $\Phi$ witnessing $\calU\glt\calV$,  the tree $T_\Phi$ extends $\dom(\Phi)$. Hence, for any $A\in\calU$, there exists a $\calV$-branching tree $T_A\subseteq T_{\Phi}$ such that $\Phi[T_A]\subseteq A$ holds. The following two technical lemmas sharpen the picture.

\begin{lemma}\label{lem:separation-1}Let $T$ be a $\calV$-branching tree.
	If a node $\sigma\in T$ is $\nu^\calV_\Phi$-labeled by $c\in\w$, then there exists an infinite path $p$ through $T$ such that $\Phi(p)=c$.
\end{lemma}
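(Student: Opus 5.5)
The plan is to build the path $p$ one node at a time, starting at $\sigma$. As long as the current node carries a label but $\Phi$ is not yet defined on it, we step to a successor in $T$ that still carries label $c$. Once we reach a node $\tau$ with $\Phi(\tau)\dar$, the value of $\Phi$ on every extension of $\tau$ is fixed, so we may finish the path arbitrarily inside $T$.

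\emph{Step 1 (finishing).} Suppose $\tau\in T$ satisfies $\Phi(\tau)\dar$. By clause (1) of Definition~\ref{def:label}, $\Phi(\tau)=\nu(\tau)$, so this equals $c$ whenever $\nu(\tau)=c$. Because $T$ is $\calV$-branching and $\emptyset\notin\calV$, every node of $T$ has at least one immediate successor in $T$. Hence there is an infinite path $p\in[T]$ extending $\tau$. By coherence (Convention~\ref{con:pcf}), $\Phi(p)=\Phi(\tau)=c$.

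\emph{Step 2 (descending).} Now suppose $\tau\in T$ has $\nu(\tau)=c\in\w$ but $\Phi(\tau)\uar$. Since $c\neq\bot$, clause (2) did not apply, so $\tau\in T_\Phi$ and the label was assigned by clause (3). Thus the set $S=\{n\mid \nu(\tau\fr n)=c\}$ is $\calV$-positive. On the other hand, the successor set $\{n\mid \tau\fr n\in T\}$ lies in $\calV$ because $T$ is $\calV$-branching. By Observation~\ref{obs:positive-null} these two sets meet, so we can choose $n$ with $\tau\fr n\in T$ and $\nu(\tau\fr n)=c$.

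\emph{Step 3 (termination) — the main obstacle.} We must ensure that repeating Step 2 eventually reaches a node on which $\Phi$ is defined, rather than producing an infinite path along which $\Phi$ is never defined. I would handle this through the recursive definition of $\nu$ rather than through bare properties of $T_\Phi$. Following Discussion~\ref{dis:label}, the nodes of $T_\Phi$ on which $\varphi$ is undefined are labelled by a well-founded recursion running upward toward the root. So each node whose label comes from clause (3) receives an ordinal rank $\rho$, namely the stage at which its label is fixed, and every successor used in clause (3) has strictly smaller rank; nodes with $\Phi\dar$ get rank $0$. In Step 2 we therefore choose $n$ so that $\rho(\tau\fr n)<\rho(\tau)$, which is possible because the label $c$ at $\tau$ was computed from successors labelled $c$ at earlier stages. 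If the paper's well-foundedness claim is only intended for the subtree lying below $\dom(\varphi)$, I would instead phrase the recursion as a least-fixed-point (inductive) definition and use its stage rank in the same way. Since ordinals cannot descend forever, after finitely many applications of Step 2 we reach a node $\tau\succeq\sigma$ in $T$ with $\nu(\tau)=c$ and rank $0$, that is, $\Phi(\tau)=c$. Step 1 then gives the required path $p$.
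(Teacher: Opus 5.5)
Your argument is correct and follows the paper's skeleton. Starting from $\sigma$, you use Observation~\ref{obs:positive-null} to step to a $c$-labelled successor inside $T$; once $\varphi$ is defined you extend arbitrarily within $T$.

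Where you differ is termination, and here your version is the more reliable one. The paper chooses successors arbitrarily, obtains an infinite path of $c$-labelled nodes, and asserts that $\varphi$ must become defined along it. This tacitly uses the claim in Discussion~\ref{dis:label} that the nodes of $T_\Phi$ where $\varphi$ is undefined form a well-founded tree. That claim fails in general. Take $\calV=\Fin^*$, $T=\lww$, and $\varphi(0^n\fr m)=c$ for all $n\in\w$ and $m\geq 1$ (an antichain). Then every $0^n$ lies in $T_\Phi$ with $\varphi(0^n)\uar$ and $\nu(0^n)=c$. Always choosing the successor $0$ therefore yields $0^\omega\notin\dom(\Phi)$.

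So your primary appeal to a well-founded recursion is not available in general, but your fallback is exactly the right repair. Read $\nu$ as a stagewise (least-fixed-point) inductive definition in which labels, once assigned, are never revised; the ``least $c$'' is taken at the stage of assignment. Then a label $c$ assigned at stage $\alpha$ is witnessed by a $\calV$-positive set of successors already labelled $c$ before stage $\alpha$. Intersect \emph{that} witnessing set, rather than all of $S$, with $\{n\mid \tau\fr n\in T\}$. This gives a rank-decreasing step, and well-foundedness of the ordinals gives termination at a node where $\varphi$ is defined with value $c$.

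The only price is committing to this reading of Definition~\ref{def:label}. Other fixed-point solutions of its clauses can make the lemma false. For example, take $\calV$ the principal ultrafilter at $0$, $\dom(\varphi)=\{0^n\fr 1\mid n\in\w\}$, and $T=\{0^n\mid n\in\w\}$. Then labelling every $0^n$ by a fresh value is consistent with the clauses, yet no path of $T$ lies in $\dom(\Phi)$.
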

\begin{proof} To frame our objective, recall the following characterisation: for any infinite path $p\in [T]$, we have $\Phi(p)=c$ just in case there exists some $l\in\w$ such that $\varphi(p\rstr l)\dar$ and $\varphi(p\rstr l)=c$.	
	
Now suppose $\sigma\in T$ such that $\sigma$ is labelled $c$. The existence of a non-$\bot$ label means $\sigma\in T_\Phi$. If $\varphi(\sigma)\dar$, then we are done: since $T$ is $\calV$-branching, there exists an infinite path $p\in[T]$ extending $\sigma$ such that $\Phi(p)=c$. Suppose instead $\varphi(\sigma)\uar$. By Step (3) of Definition~\ref{def:label}, this means $\{n \in\w \mid \nu(\sigma\fr n)=c\}$ is $\calV$-positive. There is no {\em a priori} reason to assume that the $c$-labelled successors form a subset of $T$. However, since $T$ is $\calV$-branching, $\{n\in\w \mid \sigma\fr n\in T\}\in\calV$ by definition. Hence, by Observation~\ref{obs:positive-null}, there exists {\em some} $n\in\w$ such that $\sigma\fr n\in T$ {\em and} $\nu(\sigma\fr n)=c$. Perform the same argument for $\sigma\fr n$, and repeat this process to obtain an infinite path $p$ through $T$ that extends $\sigma$ such that $\nu(p\rstr l)=c$ for any $l\geq |\sigma|$. Since we are labelling these prefixes with $c$ (and not $\bot$), this means $\varphi(p\rstr l)\dar$ for sufficiently large $l$. Since $p\in[T]$ by construction, conclude that $\Phi(p)\dar$ and $\Phi(p)=\varphi(p\restr l )= c$ for some sufficiently large $l\geq |\sigma|$.
\end{proof}

\begin{lemma}\label{lem:separation-2} Let $T$ be a $\calV$-branching tree. If $[T]\subseteq\dom(\Phi)$ and its root is labeled by $\bot$, then $T$ has a $\nu^\calV_\Phi$-critical node.
\end{lemma}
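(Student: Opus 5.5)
The plan is to argue by contradiction. Suppose $T$ is $\calV$-branching, $[T]\subseteq\dom(\Phi)$, the root $\o$ has $\nu(\o)=\bot$, and no node of $T$ is $\nu$-critical. Under these assumptions I will build an infinite path $p\in[T]$ all of whose prefixes carry the label $\bot$. This contradicts $[T]\subseteq\dom(\Phi)$. Indeed, $p\in\dom(\Phi)$ forces $\varphi(p\rstr l)\dar$ for some $l$, and then Step (1) of Definition~\ref{def:label} gives $\nu(p\rstr l)=\Phi(p\rstr l)\in\w$, not $\bot$.

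The construction is a recursion along $T$ that keeps the invariant: the current node $\sigma\in T$ satisfies $\nu(\sigma)=\bot$. It starts at the root, where the invariant holds by hypothesis. For the inductive step, let $\sigma\in T$ with $\nu(\sigma)=\bot$. By assumption $\sigma$ is not $\nu$-critical, so by Definition~\ref{def:label}(iv) the set $S_\sigma:=\{n\mid \nu(\sigma\fr n)=\bot\}$ is not $\calV$-null, that is, it is $\calV$-positive. Since $T$ is $\calV$-branching, the successor set $\{n\mid\sigma\fr n\in T\}$ lies in $\calV$. Observation~\ref{obs:positive-null} then gives some $n$ with $\sigma\fr n\in T$ and $\nu(\sigma\fr n)=\bot$, which restores the invariant. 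Iterating this produces the path $p$.

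One detail needs care. Definition~\ref{def:label}(iv) calls $\sigma$ critical when $\nu(\sigma)=\bot$ and $S_\sigma$ is null, so the negation used above is exactly what we need: whenever $\nu(\sigma)=\bot$ and $\sigma$ is not critical, $S_\sigma$ is positive. I should also check that nodes outside $T_\Phi$, which are labelled $\bot$ by Step (2), cause no trouble. They do not, because the argument never uses membership in $T_\Phi$; it only derives the contradiction from $\Phi(p)\dar$ at the end. The main (mild) obstacle is confirming that Step (1) really forces a non-$\bot$ label wherever $\varphi$ is defined, so that no prefix of $p$ can lie in $\dom(\varphi)$. This is immediate from the clause ordering in Definition~\ref{def:label}, since Step (1) takes precedence over the later steps.
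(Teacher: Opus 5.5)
Your proof is correct and is essentially the paper's argument. Assuming no critical node, you use non-criticality together with Observation~\ref{obs:positive-null} to build a path through $T$ whose nodes are all labelled $\bot$. This contradicts $[T]\subseteq\dom(\Phi)$, because Step~(1) of Definition~\ref{def:label} assigns a numerical label to any prefix lying in $\dom(\varphi)$.
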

\begin{proof} Let $\sigma\in T$ be a node labelled by $\bot$, and assume for contradiction $T$ has no critical node. Then, by definition, $\{n \mid \nu(\sigma\fr n)=\bot\}$ must be $\calV$-positive. Hence, playing the same game as in Lemma~\ref{lem:separation-1}, we obtain an infinite path $p\in [T]$ that extends $\sigma$ such that $\nu(p\rstr l)=\bot$ for any $l\geq |\sigma|$. However, $[T]\subseteq \dom(\Phi)$ by hypothesis, which means $\varphi(p\rstr l)\dar$ for sufficiently large $l$. In other words, we obtain $\nu(p\rstr l)\neq\bot$ for some $l\geq |\sigma|$, a contradiction. Hence, $T$ must have a critical node.
\end{proof}


\subsection{A chain above $\Fin$}	We now leverage the previous technical lemmas to construct an infinite strictly ascending chain of lower sets above $\Fin$
	$$\Fin \sglt \EDfin[1]\sglt \dots \sglt \EDfin[m]\sglt \EDfin[m+1]\sglt \dots \sglt \EDfin\sglt \mathrm{Sum}_{1/n}\,,$$
within the Gamified Kat\v{e}tov order; the reader should contrast this with Conclusion~\ref{conc:MAD} regarding MAD families. This will be the main theorem of the section. The technical challenge posed by this result lies in showing {\em strictness} of this ascending chain. We work this out below on a case-by-case basis (Theorems~\ref{thm:EDfin} - \ref{thm:Sumf}), before assembling the final result. 
	
\begin{theorem}\label{thm:EDfin} $\EDfin[1]\not\glt\Fin$.
	
\end{theorem}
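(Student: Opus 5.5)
We argue by contradiction using the labelling technique of Section~\ref{sec:separation-technique}. Write $\calU:=\EDfin[1]^\ast$ and $\calV:=\Fin^\ast$ for the dual filters, so a set is $\calV$-positive iff it is infinite. Suppose $\Phi\pcolon\ww\to\w$ witnesses $\calU\glt\calV$, and let $\nu:=\nu^{\Fin}_\Phi$ be the labelling function of Definition~\ref{def:label}. The plan is to split on the label of the root $\o$, and in each case produce a set $A\in\calU$ that no $\calV$-branching tree can serve, contradicting the choice of $\Phi$.

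\emph{Case 1: $\nu(\o)=c\in\w$.} Since $\{c\}\in\Fin\subseteq\EDfin[1]$, we have $A:=\w\setminus\{c\}\in\calU$. Take a $\calV$-branching tree $T$ with $[T]\subseteq\dom(\Phi)$ and $\Phi[T]\subseteq A$. Its root is labelled $c$, so Lemma~\ref{lem:separation-1} gives a path $p\in[T]$ with $\Phi(p)=c\notin A$. This is a contradiction.

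\emph{Case 2: $\nu(\o)=\bot$.} Let $\sigma$ be any $\nu$-critical node. Only finitely many successors $\sigma\fr n$ are labelled $\bot$. Moreover, each value $c\in\w$ labels only finitely many successors, since otherwise the $c$-labelled successors would form a $\calV$-positive set and Step~(3) would force $\nu(\sigma)=c$. Hence the set of successor values
\[C_\sigma:=\{\nu(\sigma\fr n)\mid n\in\w\}\setminus\{\bot\}\]
is infinite. Each $C_\sigma$ is infinite and the intervals $I_a$ are finite, so $C_\sigma$ meets infinitely many intervals.

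There are only countably many critical nodes. Enumerate all pairs $(\sigma,k)$ with $\sigma$ critical and $k\in\w$. At stage $(\sigma,k)$, pick an element of $C_\sigma$ lying in an interval $I_a$ not used at any earlier stage; this is always possible since only finitely many intervals have been used. Let $B$ be the set of all chosen points. Then $|B\cap I_a|\le 1$ for every $a$, so $B\in\EDfin[1]$ and $A:=\w\setminus B\in\calU$. Each $B\cap C_\sigma$ is infinite.

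Now take a $\calV$-branching tree $T$ with $[T]\subseteq\dom(\Phi)$ and $\Phi[T]\subseteq A$. Its root is labelled $\bot$, so Lemma~\ref{lem:separation-2} yields a $\nu$-critical node $\sigma\in T$. The successor set $\{n\mid\sigma\fr n\in T\}$ is cofinite. Since $B\cap C_\sigma$ is infinite and each value has only finitely many preimages among the successors, some $n$ with $\sigma\fr n\in T$ has $\nu(\sigma\fr n)=c\in B$. Lemma~\ref{lem:separation-1} applied at $\sigma\fr n$ gives a path $p\in[T]$ with $\Phi(p)=c\in B$, contradicting $\Phi[T]\subseteq A$.

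The main obstacle is Case 2. There we must extract from criticality that infinitely many distinct labels occur among the successors; this is where the $\calV$-positive sets being exactly the infinite sets is used. We must then diagonalise across all critical nodes simultaneously while respecting the one-point-per-interval constraint of $\EDfin[1]$.
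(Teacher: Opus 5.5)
Your proof is correct and follows the paper's argument: the same case split on the label of the root, the same Case~1, and in Case~2 the same diagonalisation over critical nodes that puts at most one critical-successor label into each interval $I_a$, closed off by Lemmas~\ref{lem:separation-2} and~\ref{lem:separation-1}. The only difference is how you make sure some chosen label is realised by a successor inside $T$. The paper builds a position bound into each stage $k=\langle i,b\rangle$ by choosing $t_k\geq b$. You instead make $B\cap C_\sigma$ infinite, so the successors of $\sigma$ with labels in $B$ form an infinite set and must meet the cofinite successor set of $T$; for this step only the infinitude of $B\cap C_\sigma$ is needed, not the finiteness of preimages that you cite.
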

\begin{proof}Suppose for contradiction there exists $\Phi$ witnessing $\EDfin[1]\glt \Fin$, and consider the labelling function $\nu:=\nu^\Fin_\Phi$. There are two main cases to check, depending on the $\nu$-label of the root $\o\in T_\Phi$.
\begin{itemize}
	\item\textbf{Case 1: $\nu(\o)\in\w$}. 
	
\noindent 	Suppose the root is $\nu$-labelled $c\in\w$. It is clear $\{c\}\in \EDfin[1]$; thus, denoting $A:=\w\setminus\{c\}$, we have $A\in (\EDfin[1])^\ast$. By assumption, there exists a $\Fin^*$-branching tree $T_A$ such that $[T_A]\subseteq \dom(\Phi)$ and $\Phi[T_A]\subseteq A$. However, by Lemma~\ref{lem:separation-1}, there exists $p\in [T_A]$ such that $\Phi(p)=c\notin A$, a contradiction.
	\smallskip
	\item \textbf{Case 2: $\nu(\o)=\bot$}. 
	
\noindent We start with a key observation. Suppose $\tau\in T_\Phi$ is $\nu$-critical. By definition, $\{n \mid \nu(\tau\fr n)=c\}\in\Fin$ for every $c\in\w\cup\{\bot\}$. Hence, $\tau$ must have infinite distinct $\nu$-labels amongst its successors. Why? If there were only finitely many values of $\nu(\tau\fr n)$, then at least one of those values would occur on infinitely many successors, contradicting criticality. 
\smallskip

To proceed, it will be helpful to organise the main proof into different sections.
\medskip

\noindent \underline{Setup}. We start by introducing some preliminary data:

\smallskip

\begin{enumerate}
	\item  By Lemma~\ref{lem:separation-2} and our assumption that $\Phi$ witnesses $\EDfin[1]\glt\Fin$, $T_\Phi$ contains at least one $\nu$-critical node in $T_\Phi$. Enumerate all such critical nodes as $\{\tau_i\}_{i\in\omega}$.\footnote{
By allowing repetition in enumeration, we may assume it is indexed by $\omega$.
}
	\item Enumerate the (disjoint) partition intervals $\{I_a\}_{a\in\omega}$ as in the definition of $\EDfin[1]$.
	\item Our construction will proceed in stages. Note that each $k\in\omega$ can be written as $k=\langle i,b\rangle$. At stage $k=\langle i,b\rangle$, we focus on the critical node $\tau_i$ and its successors $\geq b$.
\end{enumerate}

\medskip

\noindent \underline{Goal and requirements}.
We construct two sequences in stages
\[
\{a_k\}_{k\in\omega}
\qquad
\{t_k\}_{k\in\omega}\ ,
\]
satisfying the following conditions for all $k=\lranglet{i}{b}$:
\smallskip
\begin{enumerate}[label=(\alph*)]
		\item ({\em Monotonicity}). $\{a_k\}_{k\in\omega}$ is strictly increasing.
		\item ({\em Well-labelled}).  The label $$c_{k}:=\nu(\tau_i\fr t_k)$$
		is a natural number; in other words, we rule out nodes labelled $\bot$.
		\item ({\em ${\rm ED}_{\rm fin}[1]$-smallness}).
		Each label $c_k$ avoids the union of all intervals up to $I_{a_{k-1}}$, i.e.
		$$c_k\notin \bigcup_{a\le a_{k-1}} I_a\quad.$$
		\item ({\em ${\rm Fin^*}$-positivity}). The sequence $\{t_k\}$ intersects every cofinite set\footnote{This is analogous to Observation~\ref{obs:positive-null}, justifying the name.}, i.e.
		$$t_k\in\{n\in\omega\mid n\geq b\} \ .$$
\end{enumerate}
\smallskip

\noindent This can be arranged by a standard diagonalisation over all $k=\langle i,b\rangle$, ensuring that -- under each critical node $\tau_i$ -- we sample (distinct) successors arbitrarily far out along $\w$. 

\medskip 
\noindent \underline{Diagonalisation}. 
At the beginning of stage $k=\langle i,b\rangle$, suppose $a_0<\dots<a_{k-1}$ has been chosen, and let $$D_k:=\bigcup_{a\le a_{k-1}}I_a \cup \{\bot\}\;.$$
Now suppose stage $k=\langle i,b\rangle$ of the diagonalisation requires us to pick a successor of the critical node $\tau_i$. Recall: since $\tau_i$ is a critical node, it has infinitely many distinct $\nu$-labels amongst its successors. Since $D_k$ is finite, there exists $s$ such that $\nu(\tau_i\fr m)\notin D_k$ for all $m\geq s$. Hence, pick $t_k> \max\{s,b\}$; set $c_k:=\nu(\tau_i\fr t_k)$, and pick $a_k$ to be the (unique) index with $c_k\in I_{a_k}$.
\smallskip

 Reviewing the requirements: the construction obviously ensures smallness and positivity.
Monotonicity is preserved at each stage, and all successors are well-labelled since we require $c_k\notin D_k$ (and so {\em a fortiori} $c_k\neq \bot$).

\medskip
\noindent \underline{Verification}. Consider the set of all labels $\{c_k\}$ from the sequence $\{t_{k}\}$, denoted
$$C:=\{ c_k \mid k\in\w\}.$$
By construction, the set $C$ lies in $\EDfin[1]$. Thus, let $A:=\w\setminus C$, so $A\in (\EDfin[1])^\ast$.\footnote{Notice: had we not ruled out nodes labelled $\bot$ from before, then $\w\setminus C$ would no longer be well-defined.}

\medskip
\noindent \underline{Contradiction}. By the assumption that $\Phi$ witnesses $\EDfin[1]\glt\Fin$, there exists a $\Fin^*$-branching tree $T_A$ such that $[T_A]\subseteq\dom(\Phi)$ and $\Phi[T_A]\subseteq A$.  By Lemma~\ref{lem:separation-2}, $T_A$ contains one of the $\nu$-critical nodes enumerated above; fix this node $\tau_i$. Since $T_A$ is $\Fin^*$-branching, the set of successors $\{n \mid \tau_i\fr n\}$ is cofinite. Explicitly: there exists some bound $s$ such that $\tau_i\fr n\in T_A$ for all $n\geq s$.
Then, for $k=\langle i,s\rangle$, by positivity, we have $t_k\geq s$; hence $\tau_i\fr t_k\in T_A$ and $\nu(\tau_i\fr t_k)=c_k\in C$.
But by Lemma~\ref{lem:separation-1}, this implies there exists an infinite path $p\in [T_A]$ such that $\Phi(p)=c_{k} \in A$, contradicting the fact that $c_k\in C$.
\end{itemize}	
\medskip
In sum: since any $\Phi$ witnessing $\EDfin[1]\glt\Fin$ leads to a contradiction, conclude $\EDfin[1]\not\glt\Fin$.
\end{proof}

For the reader who has understood thus far, the other cases are handled similarly as above, modulo some technical adjustments. Remarkably, the technique introduced in Section \ref{sec:separation-technique} is sufficiently powerful to separate a wide range of examples.

\begin{theorem}\label{thm:EDfin-m+1} $\EDfin[m+1]\not\glt\EDfin[m]$ for any $m\in\w$.
\end{theorem}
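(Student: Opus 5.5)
We argue by contradiction, mirroring the proof of Theorem~\ref{thm:EDfin}. Suppose $\Phi$ witnesses $\EDfin[m+1]\glt\EDfin[m]$, and let $\nu:=\nu^{\EDfin[m]}_\Phi$ be the labelling function, where "positive" now means $\EDfin[m]^\ast$-positive, i.e.\ not in $\EDfin[m]$. Lemmas~\ref{lem:separation-1} and~\ref{lem:separation-2} apply verbatim to $\EDfin[m]^\ast$-branching trees. We split on the label of the root.

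\textbf{Case 1: $\nu(\o)=c\in\w$.} Since $\{c\}\in\EDfin[m+1]$, take $A:=\w\setminus\{c\}\in(\EDfin[m+1])^\ast$. The tree $T_A$ supplied by the reduction has root labelled $c$, so Lemma~\ref{lem:separation-1} produces a path $p\in[T_A]$ with $\Phi(p)=c\notin A$. This is a contradiction.

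\textbf{Case 2: $\nu(\o)=\bot$.} We first record what criticality gives. At a critical node $\tau$, every label class $\{n\mid\nu(\tau\fr n)=c\}$ for $c\in\w\cup\{\bot\}$ lies in $\EDfin[m]$. Since $\EDfin[m]$ is \emph{not} closed under finite unions, we cannot simply conclude that there are infinitely many labels. However, any finite union of label classes, say over labels in a finite set $D$, has at most $m|D|$ points in cofinitely many intervals $I_a$. Since $|I_a|=a\to\infty$, for cofinitely many $a$ there are successors $n\in I_a$ with $\nu(\tau\fr n)\notin D$.

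Enumerate the critical nodes as $\{\tau_i\}$ and diagonalise over stages $k=\langle i,b\rangle$, choosing successors $t_k$ with the following properties:
\begin{itemize}
\item the labels $c_k=\nu(\tau_i\fr t_k)$ are natural numbers lying in fresh intervals $I_{a_k}$ with $a_k$ strictly increasing;
\item the $t_k$ are spread so that any $(\EDfin[m])^\ast$-large set of successors of $\tau_i$ is hit.
\end{itemize}
The hitting requirement is the new difficulty. An $(\EDfin[m])^\ast$-large set $B$ is one whose complement meets almost every interval in at most $m$ points, so $B$ contains at least $|I_a|-m$ points of $I_a$ for almost all $a$. Hence it suffices, at stages devoted to $\tau_i$ and parameter $b$, to choose an interval $I_a$ with $a\ge b$ and pick $m+1$ successors $t\in I_a$, all carrying distinct fresh numeric labels (numeric and outside the finite set $D_k$ of labels used so far). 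Any $B$ of the above kind must contain at least one of these $m+1$ points once $a$ is large.

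Securing $m+1$ \emph{distinct} fresh labels inside a single interval is the key step and the main obstacle. We argue as follows. Within $I_a$, each label class (including $\bot$ and each used label) has at most $m$ elements for almost all $a$. If a large interval had fewer than $m+1$ distinct fresh labels, then $I_a$ would be covered by at most $m(|D_k|+m)$ points, which is impossible for large $a$. We then set
\[
C:=\{c_k\}\ \text{(all labels chosen)},
\]
placing the labels chosen at stage $k$ in distinct fresh intervals, each carrying at most $m+1$ of them. This gives $C\in\EDfin[m+1]$, so $A:=\w\setminus C\in(\EDfin[m+1])^\ast$.

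Finally, the tree $T_A$ contains some critical $\tau_i$ by Lemma~\ref{lem:separation-2}, and its successor set at $\tau_i$ is $(\EDfin[m])^\ast$-large, beyond some bound $s$. The stage $\langle i,s\rangle$ then provides $t_k$ with $\tau_i\fr t_k\in T_A$. By Lemma~\ref{lem:separation-1}, this yields a path $p\in[T_A]$ with $\Phi(p)=c_k\notin A$, a contradiction.

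A remaining subtlety is how the labels are arranged into intervals. The labels $c$ from one stage might share an interval $I_{a}$, but up to $m+1$ per interval is permitted, and placing later stages in later intervals via monotonicity keeps $C\in\EDfin[m+1]$.
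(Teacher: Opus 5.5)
Your overall plan is the paper's. You use the labelling function $\nu^{\EDfin[m]}_\Phi$ and dispose of the easy case of a numerically labelled root. When the root is labelled $\bot$, you diagonalise over stages $k=\langle i,b\rangle$, picking $m+1$ successors of the critical node $\tau_i$ inside a single interval $I_u$ with $u\ge b$, so that every $(\EDfin[m])^\ast$-large successor set is hit.

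\textbf{The step you call the key one is false as argued.} The bound ``each label class has at most $m$ points in $I_a$ for almost all $a$'' holds with a threshold that depends on the label. It is uniform over the finitely many labels in $D_k$. The fresh labels occurring in $I_a$, however, vary with $a$, so you cannot charge at most $m$ points of $I_a$ to each of them, and your bound $m(|D_k|+m)$ is unjustified. Here is a configuration showing the requirement can fail. Suppose all successors $\tau\fr n$ with $n\in I_a$ carry a single label $c_a$, with the $c_a$ distinct. Then every label class is finite, hence in $\EDfin[m]$, so $\tau$ is critical. Yet no interval carries two distinct labels, so for $m\ge 1$ your requirement cannot be met.

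Distinctness of labels is never needed: repeated labels only make $C$ smaller. What you need is $m+1$ distinct \emph{positions} $t^k_0,\dots,t^k_m\in I_u$ whose labels lie outside $D_k$. Your own finite-union count gives exactly this. For $u$ large, at most $m|D_k|$ points of $I_u$ carry a label in $D_k$, so such positions exist once $|I_u|\ge m|D_k|+m+1$. This is how the paper chooses them.

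\textbf{Second, $D_k$ cannot just be the set of labels used so far (plus $\bot$).} With that choice, nothing stops a stage-$k$ label from landing in an interval that already contains labels from earlier stages. Then $C\in\EDfin[m+1]$ is no longer guaranteed, despite your stated intention that later stages land in later intervals. The avoidance set has to enforce this. Take $D_k:=\bigcup_{a\le a_{k-1}}I_a\cup\{\bot\}$, which is still finite, and let $a_k$ be the largest index of an interval containing a stage-$k$ label. Then the stage-$k$ labels lie in intervals $I_a$ with $a_{k-1}<a\le a_k$. So each interval receives labels from at most one stage, hence contains at most $m+1$ elements of $C$.

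With these two corrections, your hitting argument and the final contradiction via Lemmas~\ref{lem:separation-2} and~\ref{lem:separation-1} go through as you wrote them.
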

\begin{proof} Suppose for contradiction there exists $\Phi$ witnessing $\EDfin[m+1]\glt\EDfin[m]$, and consider the labelling function $\nu:=\nu^{\EDfin[m]}_{\Phi}$. If the root of $T_\Phi$ is $\nu$-labelled by $c\in\w$, we get a contradiction by the same argument in Theorem~\ref{thm:EDfin}. We therefore only treat the non-trivial case, as below.
		\medskip
		
\noindent\underline{Setup}. Suppose the root is $\nu$-labelled by $\bot$. Enumerate the (non-empty) set of critical nodes $\{\tau_i\}_{i\in \omega}$, and fix the disjoint interval partition $\{I_{a}\}_{a\in\w}$ as in the definitions of $\EDfin[m],\EDfin[m+1]$.
\medskip

\noindent \underline{Goal}.
We construct two sequences 
$$\{a_k\}_{k\in\w} \qquad\text{and}\qquad \{t^k_r\}_{k\in\w, r\leq m}\;,$$
where at each stage $k=\langle i,b\rangle$, $t_k$ denotes a choice of $(m+1)$-many successors of $\tau_i$ at stage $k$, say
$$\{\tau_i\fr t^k_0,\dots, \tau_i\fr t^k_r,\dots, \tau_i\fr t^k_m\},\qquad\quad 0\leq r\leq m \;,$$
whose labels are denoted $c^k_r:=\nu(\tau_i\fr t^k_r)$. Our aim is to ensure that the set of all labels, denoted 
$$C:=\{ c^k_r \mid r\leq m, \; k\in\w\}\;,$$
lies in $\EDfin[m+1]$. After which, we set $A:=\w\setminus C \in (\EDfin[m+1])^*$ and derive a contradiction.


\medskip
 
\noindent \underline{Requirements}.
The constructions of $\{a_k\}$ and $\{t^k_r\}$ are arranged so that for all $k=\langle i,b\rangle$:
\begin{enumerate}[label=(\alph*)]
\item ({\em Monotonicity}). $\{a_k\}_{k\in\omega}$ is strictly increasing.
\item ({\em Well-labelled}). No successors are labelled $\bot$; in particular, all labels lie in $\w$. 
\item ({\em Smallness}). Each $c^k_r$ avoids the union of previous intervals, i.e.
$$c_{k_r}\notin \bigcup_{a\le a_{k-1}}I_a.$$
\item ({\em Positivity}). All $m+1$ chosen
successors $t^k_0,\dots,t^k_m$ lie in a single interval $I_{u_k}$ with $u_k\ge b$.

\end{enumerate}

\medskip

\noindent \underline{Diagonalisation}. Build $\{a_k\}_{k\in\w}$ inductively.
Suppose $a_0,\dots , a_{k-1}$ have been chosen, and define
$$D_k:= \bigcup_{a\le a_{k-1}} I_{a}\cup\{\bot\}\,.$$
The stage $k=\langle i,b\rangle$ of the diagonalisation requires choosing a set of $m+1$ successors of the critical node $\tau_i$. Criticality means that any consensus amongst the successor nodes is negligible. Explicitly, in our context, this means $\{n\mid \nu(\tau_i\!\fr n)=d\}\in\EDfin[m]$ for every $d\in D_k$. Since $D_k$ is finite, we can extract two useful lower bounds on the indices of $\{I_a\}$:
\begin{itemize}
	\item[$\diamond$] For sufficiently large $s_0$, we obtain the inequality
	$$\Big|\{n \in I_u \bmid \nu(\tau_i\!\fr n)\in D_k\}\Big|\leq m\big|D_k\big |\;,\qquad\text{for all}\, u\geq s_0.$$ 
	\item[$\diamond$] For sufficiently large $s_1$, we obtain the inequality
		$$m\big|D_k\big|+m+1 \leq \Big|I_u\Big| \;,\qquad\text{for all}\, u\geq s_1.$$ 
\end{itemize}
Hence, set the uniform lower bound $$s_2:=\max\{s_0,s_1,b\},$$ 
where $b$ is the second entry of the current inductive stage $k$, and pick an interval $I_{u_k}$ where $u_k\geq s_2$. We may therefore choose $m+1$ distinct elements $$t^k_0,\dots,t^k_m\in I_{u_k}$$
 such that $c^k_r:=\nu(\tau_i\fr t^k_r)\notin D_k$ for each $r\in\{0,\dots, m\}$.\footnote{Both bounds $s_0,s_1$ are essential here. If only $u_k\geq s_0$, then the interval $I_{u_k}$ may not be large enough to contain $m+1$ elements whose labels avoid $D_k$. If only $u_k\geq s_1$, then although $I_{u_k}$ is large, we lose control over  the distribution of labels within $I_{u_k}$; it may be that every element in $I_{u_k}$ has a label in $D_k$. The additional requirement $u_k\geq b$ is only used later to ensure positivity.} 
 
 The fact that $c^k_r\notin D_{k}$ has strong implications for how these labels are distributed. Since $c^k_r\neq\bot$, each label lies in {\em some} interval $I_a\subseteq \w$. Further, each $c^k_r$ avoids all previous intervals up to $I_{a_{k-1}}$, and so
 $$ c^k_r\in I_a\qquad\qquad\text{for some (unique)}\, a>a_{k-1}\;.$$ 
 Take the maximum value of these $m+1$ indices
 $$a_k:=\max\{a \mid \exists r\leq m \;\text{such that}\; c_{k_r}\in I_a\}.$$

 \smallskip
 Reviewing the requirements, our construction guarantees monotonicity, smallness, positivity and well-labelled critical successors at every stage.
 
 \medskip
\noindent \underline{Verification}.  At every stage $k$, the newly-chosen labels $c^k_r$ satisfy
$$ c^k_r\notin \bigcup_{a\le a_{k-1}} I_a\qquad\text{and}\qquad c^k_r\in \bigcup_{a\le a_{k}} I_a\;,$$
and thus lie in intervals $I_a$ such that $a_{k-1}<a\leq a_k$. This is our monotonicity, smallness, and well-labelling requirement. Informally, any fixed interval $I_a$ is only given one opportunity to receive new labels, e.g. at some stage $k$; after which, $I_a$ will be consistently excluded from consideration. Since each stage produces at most $m+1$ distinct labels, deduce that the total set of labels
$$C:=\{ c^k_r \mid r\leq m, \; k\in\w\}\;,$$
intersects at most $m+1$ many elements in each interval $I_a$. In other words, $C\in\EDfin[m+1]$, so if we take $A:=\w\setminus C$, then $A\in (\EDfin[m+1])^*$.

\medskip

\noindent \underline{Contradiction}. By the assumption $\EDfin[m+1]\glt\EDfin[m]$, there exists an 
$(\EDfin[m])^*$-branching tree $T_A$ with $[T_A]\subseteq\dom(\Phi)$ and 
$\Phi[T_A]\subseteq A$.  
By Lemma~\ref{lem:separation-1}, $T_A$ has a critical node, say $\tau_i$. Since $T_A$ is $(\EDfin[m])^*$-branching, this means
\[
\{t\mid \tau_i\fr t\in T_A\}\in(\EDfin[m])^*.
\]
Thus, there exists a lower bound $s$ such that: 
\[
|\,I_u\cap\{t\mid \tau_i\fr t\in T_A\}\,|\ \ge\ |I_u|-m \;, \qquad\text{for all}\, u\geq s.
\]
Or, equivalently, any sufficiently large interval $I_u$ contains at most $m$-many elements $t$ whereby $\tau_i\fr t\notin T_A$.

\smallskip

For $k=\langle i,s\rangle$, by positivity, we have $u_k\ge s$.
Then, for at least one of the $m+1$ chosen successors in $I_{u_k}$, we have $\tau_i\fr t^k_r\in T_A$ (for some $r\leq m$). Applying Lemma~\ref{lem:separation-1} to $c^k_r=\nu(\tau_i\fr t^k_r)$, there exists $p\in [T_A]$ such that $\Phi(p)=c^k_r\in A$, contradicting $c^k_r\in C$. This completes the proof.
\end{proof}

\begin{theorem}\label{thm:Sumf} Let $\mathbb{O}$ be the set of all functions $f\colon\w\to\Q_+$ such that $$\displaystyle\lim_{n\to\infty}f(n)=0\qquad \text{and}\qquad f(n+1)\geq f(n)\,\,\,\text{for all}\; n\in\w.$$
\underline{Then}, $\Sumf\not\glt\EDfin$ for any $f\in\mathbb{O}$. 
\end{theorem}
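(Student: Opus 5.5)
We argue by contradiction, using the labelling technique of Section~\ref{sec:separation-technique} along the lines of Theorems~\ref{thm:EDfin} and~\ref{thm:EDfin-m+1}. Suppose $\Phi$ witnesses $\Sumf\glt\EDfin$, and put $\nu:=\nu^{\EDfin}_\Phi$. The only property of $f$ I plan to use is $f(n)\to 0$; monotonicity should play no role.

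\emph{Case $\nu(\o)=c\in\w$.} Since $\{c\}\in\Sumf$, the set $A:=\w\setminus\{c\}$ lies in $\Sumf^*$. So there is an $\EDfin^*$-branching tree $T_A$ with $[T_A]\subseteq\dom(\Phi)$ and $\Phi[T_A]\subseteq A$. Lemma~\ref{lem:separation-1}, applied at the root, produces $p\in[T_A]$ with $\Phi(p)=c$, which is a contradiction.

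\emph{Case $\nu(\o)=\bot$.} Enumerate the critical nodes as $\{\tau_i\}_{i\in\w}$; there is at least one by Lemma~\ref{lem:separation-2}. The key difference from the $\EDfin[m]$ case is that the positivity requirement must now beat every level $m$ at once. A set $X$ belongs to $\EDfin^*$ exactly when there is some $m$ such that, for all large $u$, $X$ misses at most $m$ points of $I_u$. I therefore index stages by $k=\lranglet{i}{\lranglet{b}{m}}$. At stage $k$ I choose $m+1$ distinct successors $t^k_0,\dots,t^k_m$ of $\tau_i$, all lying in a single interval $I_{u_k}$ with $u_k\ge b$. Their labels $c^k_r:=\nu(\tau_i\fr t^k_r)$ must avoid the finite set $D_k:=\{0,\dots,N_k\}\cup\{\bot\}$, where $N_k$ is chosen so that $f(n)<2^{-k}/(m+1)$ for all $n>N_k$ (possible since $f\to0$). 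To see such successors exist, use criticality: for each $d\in D_k$ the set $\{n\mid\nu(\tau_i\fr n)=d\}$ lies in $\EDfin$, so it meets $I_u$ in at most $m_d$ points for all large $u$. Hence for large $u$ at most $M_k:=\sum_{d\in D_k}m_d$ elements of $I_u$ have labels in $D_k$. Since $|I_u|=u$, choosing $u_k\ge\max\{b,\,M_k+m+1\}$ and large enough leaves at least $m+1$ successors in $I_{u_k}$ with labels outside $D_k$. I expect this simultaneous control to be the main technical point: the per-label bounds $m_d$ coming from criticality have to be summed over a finite $D_k$, and $D_k$ itself depends on the stage index through $N_k$.

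\emph{Verification and contradiction.} Set $C:=\{c^k_r\mid k\in\w,\ r\le m\}$. The labels may repeat, which is harmless, and
\[
\sum_{c\in C}f(c)\;\le\;\sum_k (m+1)\cdot\frac{2^{-k}}{m+1}\;<\;\infty,
\]
so $C\in\Sumf$ and $A:=\w\setminus C\in\Sumf^*$. Take the corresponding $\EDfin^*$-branching tree $T_A$. Its root is the root of $T_\Phi$, which is labelled $\bot$, so by Lemma~\ref{lem:separation-2} it contains some critical $\tau_i$. The successor set of $\tau_i$ in $T_A$ lies in $\EDfin^*$. So there are $m,s$ such that for all $u\ge s$ at most $m$ elements $t\in I_u$ have $\tau_i\fr t\notin T_A$. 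At stage $k=\lranglet{i}{\lranglet{s}{m}}$ we chose $m+1$ successors inside $I_{u_k}$ with $u_k\ge s$, so some $\tau_i\fr t^k_r$ lies in $T_A$. Lemma~\ref{lem:separation-1} then gives $p\in[T_A]$ with $\Phi(p)=c^k_r\in C$, contradicting $\Phi[T_A]\subseteq A$.
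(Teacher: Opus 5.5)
Your proof is correct and follows the paper's argument essentially step for step. Both use the labelling function $\nu^{\EDfin}_\Phi$ and the same root case split, index stages by (critical node, threshold, number of successors), and place $m+1$ successors in a single late interval whose labels carry $f$-mass at most $2^{-k}$, ending in the same contradiction via Lemmas~\ref{lem:separation-2} and~\ref{lem:separation-1}. The one small difference is that you exclude the initial segment $\{0,\dots,N_k\}$ of labels, so you need only $f(n)\to 0$, whereas the paper excludes labels below $x_k$ and then appeals to $f$ being non-increasing; your version is the cleaner one here, since the statement's condition $f(n+1)\ge f(n)$ is evidently a typo for non-increasing (as written it cannot hold for any positive $f$ tending to $0$).
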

\begin{proof} By now, we know the way forward. It remains to check that the correct decisions are made at each stage. Suppose for contradiction there exists $\Phi$ witnessing $\Sumf\glt \EDfin,$ and consider the labelling function $\nu:=\nu_\Phi^{\EDfin}$. If the root is $\nu$-labelled by $c\in\w$, the same argument as in Theorem~\ref{thm:EDfin} gives a contradiction. Hence, we only treat the non-trivial case, as below.

\medskip

\noindent\underline{Setup}. Suppose the root is $\nu$-labelled by $\bot$. Enumerate the (non-empty) set of critical nodes $\{\tau_i\}_{i\in\omega}$. Unpacking definitions, if $\tau_i$ is a critical node, then for every label $c\in\w\cup\{\bot\}$, there exists $m_c$ such that $\{n\mid \nu(\tau_i \fr n)=c\}\in \EDfin[m_c]$. Finally, fix the function $f\in\mathbb{O}$ as in $\Sumf$, and the disjoint interval partition $\{I_{a}\}_{a\in\w}$ as in the definition of $\EDfin$.

\medskip 
\noindent \underline{Goal and requirements}.
Now every $k\in\omega$ is of the form $k=\langle i,b,l\rangle$.
We consider such a triple to handle the following data:
\begin{enumerate}
	\item  $i$ indicates the choice of critical node $\tau_i$;
	\item $b$ indicates the current bound for handling eventual finiteness;
	\item  $l$ indicates that we must produce $(l+1)$-many successors of $\tau_i$, say
	$$\{\tau_i\fr t^k_0,\dots, \tau_i\fr t^k_r,\dots, \tau_i\fr t^k_l\},\qquad\quad 0\leq r\leq l \;,$$
	with labels $c^k_r:=\nu(\tau_i\fr t^k_r)$ for each $r\leq l$.
\end{enumerate}

We will of course want to impose additional conditions for a sequence
$$\{t^k_r\}_{r\leq l} \;.$$
At each stage $k=\langle i,b,l\rangle$, our objective is to fulfill the following requirements: 
\begin{enumerate}[label=(\alph*)]
	\item ({\em Well-labelled}). No successors are labelled $\bot$; in particular, all labels lie in $\w$. 
	\item ({\em $\Sumf$-smallness}). We have the inequality: 
	$$\sum_{r\leq l}f(c^k_r)\leq \frac{1}{2^k}\quad.$$
	\item ({\em ${\rm ED}^*_{\rm fin}$-positivity}). All $l+1$ chosen successors  $t^k_0,\dots,t^k_l$ lie in a single interval $I_{u_k}$ with $u_k\ge b$.
\end{enumerate}
Two substantive adjustments worth highlighting. First, the {\em smallness} condition looks different from the one in Theorem~\ref{thm:EDfin} or \ref{thm:EDfin-m+1}: this reflects the fact that asymptotic behaviour of sets in $\EDfin$ is measured by the interval partition $\{I_a\}_{a\in\w}$ whereas in $\Sumf$, it is measured by the {\em sum-of-$f$-values}. Second, the {\em positivity} condition now has an additional parameter. As in earlier arguments, every critical node $\tau_i$ must be treated infinitely often, but we now also need to supply a set of $(l+1)$-many successors for every arity $l+1$. Hence, diagonalisation scheme will have to adapted to account for the third parameter; otherwise, the overall shape of the argument remains the same.

\medskip 
\noindent \underline{Diagonalisation}. We build $\{t^k_r\}$ inductively, arranging for our diagonalisation to visit each critical node $\tau_i$ infinitely often and account for all successor arities $l+1$. At stage $k=\langle i,b,l\rangle$ of the diagonalisation,  we must pick $l+1$ successors $\{t^k_{r}\}$ of the critical node $\tau_i$, all lying a suitably large interval $I_{u_k}$. Since $\lim f = 0$, calculate $x_k$ such that
$$f(x_k)\leq \frac{1}{(l+1) 2^k}.$$
Suppose $d<x_k$ or $d=\bot$. Since $\tau_i$ is critical, we have $\{n\mid\nu(\tau_i\fr n)=d\}\in{\rm ED}_{\rm fin}[m_d]$ for some $m_d$; hence, there exists some sufficiently large $s_d$ such that for any $u\geq s_d$, $\{n\in I_u\mid \nu(\tau_i\fr n)=d\}$ has at most $m_d$  elements. Extending this, we obtain a uniform lower bound $s=\max\{s_d\mid d<x_k\mbox{ or }d=\bot\}$ so that $\{n\in I_u \mid \nu(\tau_i\fr n)<x_k\;\text{or}\; \nu (\tau_i\fr n )=\bot\}$ has at most $m:=\sum_{d<x_k}m_d + m_\bot$ elements for any $u\geq s$.

Now pick some sufficiently large $u_k$ such that $u_k\geq \max\{s,b\}$ (where $b$ is the second entry of the current inductive stage $k$) and $|I_{u_k}|\geq m+l+1$. We  may therefore choose $(l+1)$-many elements
$$t^k_0,\dots, t^k_l\in I_{u_k}$$
such that $c^k_r:=\nu(\tau_i\fr t^k_r)\geq x_k$ for each $r\leq l$. Since $f$ is non-increasing, compute
$$\sum_{r\leq l} f(c^k_r)\leq (l+1)f(x_k)\leq \frac{l+1}{(l+1)2^k}=\frac{1}{2^k}\quad. $$
\smallskip

Reviewing the requirements, we have clearly handled well-labelling and smallness. Positivity follows from our diagonalisation scheme, and the index of our chosen interval $I_{u_k}$ satisfying $u_k\geq \max\{s,b\}$.

\medskip 
\noindent \underline{Verification}.  Define the set of all labels 
$$C:=\{c^k_r\mid k\in\w, r\leq l\}.$$
At every stage $k$, we have $(l+1)$-many labels $\{c^k_r\}_{r\leq l}$ such that $\sum_{r\leq l} f(c^k_r)\leq \frac{1}{2^k}$.
This means 
$$\sum_{n \in C} f(n)=\sum_{k\in\omega}\sum_{r\leq l}f(c^k_r)\leq \sum_{k\in\omega}\frac{1}{2^k}=2<\infty\;.$$ 
In other words, $C\in \Sumf$, so if we take $A:=\w\setminus C$, then $A\in (\Sumf)^*$.

\medskip 
\noindent \underline{Contradiction}. By the assumption $\Sumf\glt \EDfin$, there exists an $(\EDfin)^*$-branching tree $T_A$ with $[T_A]\subseteq \dom(\Phi)$ and $\Phi[T_A]\subseteq A$. By Lemma~\ref{lem:separation-2}, $T_A$ has a critical node, say $\tau_i$. Since $T_A$ is $(\EDfin)^*$-branching, this means there exists $b_0,l_0$ such that for any $u\geq b_0$, we have $\tau_i\fr t\notin T_A$ for at most $l_0$-many $t\in I_u$.
For $k=\langle i,b_0,l_0\rangle$, by positivity, we have $u_k\geq b_0$. 
Then, for at least one of the $l+1$ chosen successors in $I_{u_k}$, we have  
$\tau_i\fr t^k_r\in T_A$ (for some $r\leq l$).
Applying Lemma~\ref{lem:separation-1} to $c^k_r=\nu(\tau_i\fr t^k_r)$, there exists $p\in [T_A]$ such that $\Phi(p)=c^k_r\in A$, contradicting $c^k_r\in C$. This completes the proof.
\end{proof}

We now assemble all the pieces of the puzzle to establish the main result of the section.

\begin{theorem}\label{thm:strict-ascend} There exists an infinite strictly ascending chain
	$$\Fin \sglt \EDfin[1]\sglt \dots \sglt \EDfin[m]\sglt \EDfin[m+1]\sglt \dots \sglt \EDfin\sglt \Sumn\,$$
within the Gamified Kat\v{e}tov order on lower sets over $\w$.	
\end{theorem}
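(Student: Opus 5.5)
The plan splits the statement into two halves: the non-strict inequalities $\glt$ along the chain, and strictness at each link. Strictness will follow almost entirely from Theorems~\ref{thm:EDfin}, \ref{thm:EDfin-m+1} and \ref{thm:Sumf}.

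\emph{The upward reductions.} Here I would work at the level of the classical Kat\v{e}tov order on ideals and then invoke Theorem~\ref{thm:fubini-GTK} with the trivial fence $\delta=\w^1$, i.e. $\calU^{\otimes[\delta]}\equiv\calU$ computably, as in Remark~\ref{rem:Fubini-generalise}. This shows that $\calH\lk\calI$ implies $\calH\glt\calI$. Dualising, $\calH\lk\calI$ on ideals means there is $h\colon\w\to\w$ with $h^{-1}[C]\in\calI$ for every $C\in\calH$. In every case the identity map suffices, because each inclusion of ideals gives a Kat\v{e}tov reduction via $\id$:
\begin{itemize}
\item $\Fin\subseteq\EDfin[1]$, since a finite set meets only finitely many intervals $I_n$;
\item $\EDfin[m]\subseteq\EDfin[m+1]\subseteq\EDfin$, by definition;
\item $\EDfin\subseteq\Sumn$: if $|A\cap I_n|\le m$ for almost all $n$, then since the elements of $I_n$ are of size roughly $n^2/2$, we get $\sum_{a\in A}1/a\lesssim\sum_n m\cdot 2/n^2<\infty$ (the finitely many exceptional intervals contribute a finite amount).
\end{itemize}
Thus all the non-strict inequalities hold.

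\emph{Strictness.} The links $\EDfin[1]\not\glt\Fin$ and $\EDfin[m+1]\not\glt\EDfin[m]$ are exactly Theorems~\ref{thm:EDfin} and~\ref{thm:EDfin-m+1}. For the top link, the plan is to apply Theorem~\ref{thm:Sumf} with $f(n)=1/n$ (and $f(0)=0$). There is a mismatch with the hypothesis as written: $\mathbb O$ demands $f(n+1)\ge f(n)$, whereas the proof of Theorem~\ref{thm:Sumf} actually uses that $f$ is non-increasing and tends to $0$; $1/n$ satisfies the latter. The value $f(0)=0$ is harmless, since only labels $\ge x_k>0$ are ever chosen. So I would read $\mathbb O$ as the non-increasing class and conclude $\Sumn\not\glt\EDfin$.

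Two strictness claims remain, $\EDfin\not\glt\EDfin[m]$ for each $m$, and these must be derived rather than cited. Suppose instead that $\EDfin\glt\EDfin[m]$. Since $\EDfin[m+1]\subseteq\EDfin$ gives $\EDfin[m+1]\glt\EDfin$, transitivity would yield $\EDfin[m+1]\glt\EDfin[m]$, contradicting Theorem~\ref{thm:EDfin-m+1}. The same transitivity argument also gives the separations between non-adjacent members of the chain.

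\emph{Main obstacle.} Transitivity of $\glt$ is only established later, in Corollary~\ref{cor:preorder}. I would cite it, since Theorem~\ref{thm:GTK} already asserts that $\glt$ is a preorder. Failing that, the composition needed here is easy to verify directly, because the first step is the identity Kat\v{e}tov map. If $\Phi$ witnesses $\EDfin\glt\EDfin[m]$, then the same $\Phi$ witnesses $\EDfin[m+1]\glt\EDfin[m]$: every $A\in(\EDfin[m+1])^*$ lies in $(\EDfin)^*$, so the tree supplied for $A$ by $\Phi$ works unchanged. This gives the contradiction without any general transitivity, and the same observation handles all the other non-adjacent strictness claims.
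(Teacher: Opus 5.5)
Your proposal is correct and follows essentially the same route as the paper. The non-strict links come from Kat\v{e}tov reductions lifted to $\glt$ via Theorem~\ref{thm:fubini-GTK}; adjacent strictness comes from Theorems~\ref{thm:EDfin}, \ref{thm:EDfin-m+1} and \ref{thm:Sumf}; and $\EDfin\not\glt\EDfin[m]$ holds because any witness would also witness $\EDfin[m+1]\glt\EDfin[m]$, which the paper phrases as reusing the set $C\in\EDfin[m+1]\subseteq\EDfin$.

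The differences are cosmetic. You get $\EDfin\lk\Sumn$ from the plain inclusion $\EDfin\subseteq\Sumn$, which is valid since $\min I_n$ grows quadratically, whereas the paper uses a map sampling only the intervals $I_{2^r}$. You also rightly note that $\mathbb{O}$ should read ``non-increasing'' and that one should take $x_k\ge 1$ when applying Theorem~\ref{thm:Sumf} to $f(n)=1/n$.
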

\begin{proof} We divide the proof into two stages.
	
\subsubsection*{Step 1: Non-strict chain} It is clear that 
	$$\Fin \subseteq\EDfin[1]\subseteq \dots \subseteq \EDfin[m] \subseteq \EDfin[m+1]\subseteq \dots \subseteq \EDfin\,,$$
which therefore implies
		$$\Fin \lk \EDfin[1]\lk \dots \lk \EDfin[m] \lk\EDfin[m+1]\lk \dots \lk \EDfin\,,$$
in the classical Kat\v{e}tov order. Applying Theorem~\ref{thm:fubini-GTK}, this in turn implies 
		$$\Fin \glt \EDfin[1]\glt \dots \glt \EDfin[m]\glt \EDfin[m+1]\glt \dots \glt \EDfin\;.$$
		
It remains to show $\EDfin\lk\Sumn$. Define the map
\begin{align*}
h\colon \w&\longrightarrow \w\\
2^r+s & \longmapsto \lranglet{2^r}{s} \qquad \text{for}\; s<2^r\;,
\end{align*}
where $\lranglet{-}{-}\colon \Delta\to\w$ enumerates each interval $I_a$ in increasing order (cf. Remark~\ref{rem:base-EDfin}). In which case, for any $A\subseteq \w$ ,
 $$h^{-1}[A]= \Big\{\, 2^r +s \Bmid  \lranglet{2^r}{s}\in A\cap I_{2^r} \Big\}.$$
Thus, $h^{-1}[A]$ records only the data of $A$ on the subcollection $\{I_{2^r}\}_{r<\w}$, and discards the rest. 

Now suppose $A\in\EDfin$. Then there exists $m,l\in\w$ such that $|A\cap I_u|\leq m$ for all $u\geq l$. Use the lower bound $l$ to split the harmonic sum into two terms, as below:
$$
\sum_{n\in h^{-1}[A]} \frac1n
= \underbrace{\sum_{2^r<l} \sum_{\substack{s<2^r\\ \langle 2^r,s\rangle\in A}} \frac1{2^r+s}}_{<\infty}
\;+\;
\underbrace{\sum_{2^r\ge l} \sum_{\substack{s<2^r\\ \langle 2^r,s\rangle\in A}}
\frac1{2^r+s}}_{<\infty}\,\,.
$$
The first term is clearly finite since it is a finite sum (of finite values). For the second term, notice for $2^r\geq l$, we have $|A\cap I_{2^r}|\leq m$, and thus
$$\sum_{\substack{s<2^r\\ \langle 2^r,s\rangle\in A}} \frac1{2^r+s}
\le \sum_{i=0}^{m-1} \frac1{2^r+i}
\le \frac{m}{2^r}\,.$$
Summing over all $2^r\geq l$ shows the second term is finite too:  
$$\sum_{2^r\ge l} \sum_{\substack{s<2^r\\ \langle 2^r,s\rangle\in A}}
\frac1{2^r+s} \leq  
 m\cdot \sum_{r} \frac{1}{2^r}<\infty\,.$$
In sum: our computation shows
$$\sum_{n\in h^{-1}[A]} \frac1n
\;\;= \;\;\sum_{2^r<l} \;\;\cdots\;\;
\;\;+\;\;
\sum_{2^r\ge l} \;\;\cdots\;\;
<\infty\,,$$
and so $h^{-1}[A]\in\Sumn$ for any $A\in\EDfin$. Hence, $\EDfin\lk\Sumn$. Applying Theorem~\ref{thm:fubini-GTK} once more, we obtain the full chain:
$$\Fin \glt \EDfin[1]\glt \dots \glt \EDfin[m]\glt \EDfin[m+1]\glt \dots \glt \EDfin\glt \Sumn\;.$$

\subsubsection*{Step 2: Strictness.} To upgrade the inequalities in Step 1 to strict inequalities, we rule out $\glt$-equivalence. Most are immediate by quoting previous results:
\begin{itemize}
	\item By Theorem~\ref{thm:EDfin}, $\EDfin[1]\not\glt\Fin$. Hence, $\Fin\sglt\EDfin[1]$.
	\item By Theorem~\ref{thm:EDfin-m+1}, $\EDfin[m+1]\not\glt\EDfin[m]$. Hence, $\EDfin[m]\sglt\EDfin[m+1]$. 
	\item By Theorem~\ref{thm:Sumf}, $\Sumn\not\glt \EDfin$. Hence, $\EDfin\sglt\Sumn$.
\end{itemize}
It remains to show 
$$\EDfin[m]\sglt \EDfin\qquad\text{for any}\, m\in\w.$$
But this follows from the proof of Theorem~\ref{thm:EDfin-m+1}. In broad strokes, the argument starts by assuming (for contradiction) there exists some $\Phi$ witnessing $\EDfin[m+1]\glt\EDfin[m]$. Then, we construct a special set $C\in\EDfin[m+1]$, before taking its complement to get $\w\setminus C\in (\EDfin[m+1])^*$. Finally, applying the separation lemmas from Section~\ref{sec:separation-technique}, we verify that there cannot exist an $(\EDfin[m])^*$-branching tree $T_{\w\setminus C}$ such that $\Phi (T_{\w\setminus C})\subseteq \w\setminus C$, contradicting our assumption. Read in the present context: since $\EDfin[m+1]\subseteq \EDfin$ by definition, the same $C$ also shows that $\EDfin\not\glt\EDfin[m]$ for all $m\in\w$.\footnote{Here is an alternative argument. Suppose for contradiction $\EDfin\glt \EDfin[m]$ for some $m\geq 1$. Since $\EDfin[m]\glt \EDfin[m+1]\glt \EDfin$, this implies $\EDfin[m]\elt \EDfin[m+1]\elt \EDfin$, contradicting $\EDfin[m]\sglt \EDfin[m+1]$ from Theorem~\ref{thm:EDfin-m+1}. However, notice this argument implicitly assumes that $\glt$ is a preorder, which we have not yet shown.}
\end{proof}

\begin{conclusion}\label{conc:separation} The structure of the Gamified Kat\v{e}tov order is both coarse and subtle. Although it collapses many of the finer distinctions between ideals (Conclusion~\ref{conc:MAD}), it is still sufficiently sensitive to separate infinitely many classes.
\end{conclusion}

Conclusion~\ref{conc:separation} leads one to look for deeper explanations regarding what $\glt$ regards as significant vs. insignificant differences in ideal structure. At the present moment, we are already at the frontier, and it is not yet clear where the path may take us. One potentially illuminating perspective comes from our work in Section~\ref{sec:brave-new-order}, which points out the connection to topos theory. 


\begin{corollary}\label{cor:strict-clt} There exists an infinite strictly ascending chain
		$$\Fin \sclt \EDfin[1]\sclt \dots \sclt \EDfin[m]\sclt \EDfin[m+1]\sclt \dots \sclt \EDfin\sclt \Sumn\,$$
within the $\clt$-order on lower sets over $\w$. 	
\end{corollary}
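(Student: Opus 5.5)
Since the ideals in the chain are already compared in Theorem~\ref{thm:strict-ascend}, the plan is to transfer both the non-strict inequalities and the strict separations from $\glt$ to $\clt$. The bridge is Proposition~\ref{prop:computable-GK-order}(i): $\calU\clt\calV$ iff $\calU$ is computable Gamified Kat\v{e}tov reducible to $\calV$, which we apply to the dual filters (recall $\calH\clt\calI$ for lower sets means the relation on their duals).

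\emph{Step 1: the non-strict chain.} Each link $\calH\glt\calI$ in Theorem~\ref{thm:strict-ascend} came from a classical Kat\v{e}tov reduction $\calH\lk\calI$, witnessed by a \emph{computable} map. For the inclusions $\Fin\subseteq\EDfin[1]\subseteq\cdots\subseteq\EDfin$ the witness is the identity. For $\EDfin\lk\Sumn$ it is the map $h(2^r+s)=\lranglet{2^r}{s}$, which is computable because the interval partition and the enumeration of Remark~\ref{rem:base-EDfin} are computable. A computable Kat\v{e}tov witness $h$ gives a one-query play in the game $\GUV$: Player II$_0$ queries once and then answers $\lranglet{1}{h(x_0)}$, a computable strategy, while Player II$_1$ plays $h^{-1}[A]$. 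By Fact~\ref{fact:LT-basic-for-upper-sequences}(i), computable one-query reducibility already yields $\clt$. Hence every $\le$ in the chain holds for $\clt$.

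\emph{Step 2: strictness.} The key observation is that $\clt$ implies $\glt$. Indeed, a winning strategy in which Player II$_0$'s strategy is computable is in particular a winning strategy for $\GUV$, so Theorem~\ref{thm:game-GTK} gives $\glt$. Equivalently, the computable partial continuous function constructed in the proof of Theorem~\ref{thm:game-GTK} witnesses Definition~\ref{def:GTK}. Therefore each failure $\calH\not\glt\calI$ established in Theorems~\ref{thm:EDfin}, \ref{thm:EDfin-m+1} and \ref{thm:Sumf}, together with $\EDfin\not\glt\EDfin[m]$ from Step~2 of Theorem~\ref{thm:strict-ascend}, yields $\calH\not\clt\calI$. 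Combining this with Step~1 gives the strict chain.

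\emph{Main obstacle.} There is no real obstacle, only a bookkeeping point in Step~1: one must check that the Kat\v{e}tov witnesses are computable, and in particular that the $\EDfin\lk\Sumn$ map is computable. If one prefers to avoid Fact~\ref{fact:LT-basic-for-upper-sequences}, Step~1 can instead be read off directly from the one-round game. No appeal to transitivity of $\glt$ is needed, since $\clt$ is already known to be a preorder.
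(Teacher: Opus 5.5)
Your proposal is correct and follows the paper's own argument. The non-strict links come from computable Kat\v{e}tov witnesses (the identity and the map $h$), which already give $\clt$; strictness transfers because $\clt$ implies $\glt$, so each separation $\calH\not\glt\calI$ from Theorem~\ref{thm:strict-ascend} yields $\calH\not\clt\calI$. The paper states these two implications as ``by inspection'' and ``clearly''; you make them explicit via Fact~\ref{fact:LT-basic-for-upper-sequences}(i) and Theorem~\ref{thm:game-GTK}.
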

\begin{proof} Review the argument of Theorem~\ref{thm:strict-ascend}. 
\begin{itemize}
	\item 	\textbf{Step 1} constructs the non-strict chain of ideals in the classical Kat\v{e}tov order $\lk$, before applying Theorem~\ref{thm:fubini-GTK} to obtain the non-strict chain within $\glt$ . By inspection, all the functions $h\colon\w\to\w$ witnessing Kat\v{e}tov dominance are computable. Hence, the same non-strict chain of ideals exists within $\clt$ as well. 
	\item 	\textbf{Step 2} upgrades this to a strict chain of ideals in $\glt$ by showing that $\calH\not\glt \calI$ for each consecutive pair of ideals. Clearly, $\glt$ is coarser than $\clt$, and so $\calH\not\glt \calI$ implies $\calH\not\clt \calI$ for all ideals $\calH,\calI\subseteq\Pw$. Hence, we obtain strictness in $\clt$ essentially for free.
\end{itemize}
\end{proof}

\begin{discussion}\label{dis:separation-takeaway}This section represents the technical core of the paper and showcases the strength of the approach developed here. Beyond the specific results obtained, it opens up several conceptually rich avenues for further investigation.  We mention two particularly interesting points of contact with the literature:
\begin{itemize}
\item {\em Idealised Subtoposes.}
Restated in the language of \cite{LvO13}, Corollary~\ref{cor:strict-clt} says there exists an infinite strictly descending\footnote{Note that the order of LT topologies and the inclusion order of subtoposes are reversed.} chain of basic subtoposes of $\Eff$.
This raises a natural and compelling question: what does the internal logic and mathematics of these idealised effective subtoposes look like? What exactly is {\em realisability relative to an ideal}?

\item {\em Extended Weihrauch reducibility.} Bauer \cite{Bau22} introduced the notion of extended Weihrauch reducibility, in a setting far removed from combinatorial set theory. 
Bauer's framework brings into focus a new class of degrees, {\it non-modest degrees}, which lie outside the standard Weihrauch lattice and whose structure remains mysterious. Subsequently, the first author observed that the LT degrees of basic topologies (in the sense of Convention~\ref{con:orders}) give rise to non-modest degrees, and highlighted its analysis as an important open problem \cite[Question 1]{Kih23}.

Our work in this paper addresses this problem on two major fronts. First, Theorem~\ref{thm:GTK} shows that {\em basic} non-modest degrees are tightly connected to the Kat\v{e}tov order on upper sets, an unexpected connection between extended Weihrauch complexity and classical filter combinatorics. Second, Corollary~\ref{cor:strict-clt} establishes strong separation results within this class of degrees, and its proof strategy suggests a robust framework for obtaining further separation results in this setting.
\end{itemize}
Taken together, these interactions between the combinatorial and the computable bring into focus a wide range of questions — some potentially deep and subtle — which we discuss more fully in Section~\ref{sec:qn-comp-th-cons-log}.

%
%
%
\end{discussion}

\section{Combinatorial Complexity via Turing Degrees}\label{sec:Turing}



Here we extend the Gamified Kat\v{e}tov order from upper sets to upper sequences. The core ideas remain the same as in Section~\ref{sec:brave-new-order}, except we now allow the relevant constraints to ``evolve over time'' (as opposed to being fixed by a single upper set $\calU$). 

Section~\ref{sec:upper-seq} gives both the tree-based and the game-based definition, before showing their equivalence. Section~\ref{sec:GTK-preorder} further develops this perspective and delivers on our promise in Section~\ref{sec:brave-new-order} by showing that $\glt$ defines a pre-order. Interestingly, this follows from a new structure theorem we prove: the Gamified Kat\v{e}tov order is equivalent to the $\clt$-order relativised to an arbitrary Turing oracle. These results lay the groundwork for Sections~\ref{sec:cofinal-thm} and ~\ref{sec:generalised-Phoa}, where we isolate some surprising interactions between the combinatorial and computable within the $\clt$-order.



\subsection{Extended Gamified Kat\v{e}tov Order}\label{sec:upper-seq}
We remind the reader: an {\em upper sequence on $\w$} is a partial sequence $$\overline{\mathcal U}=(\mathcal U_n)_{n\in I}$$ of upper sets, where $I\subseteq\omega$. 
\subsubsection*{Tree-based definition}
We start by re-examining Definition~\ref{def:GTK}. To extend this version of the Gamified Kat\v{e}tov order to upper sequences, we first need to define trees with ``good'' branching behaviour. In the original setting, a single upper set $\calU$ imposes a uniform branching requirement across all nodes in a tree. By contrast, an upper sequence $\calUS=(\calU_n)_{n\in I}$
may assign different constraints at different nodes. To keep track of which upper set applies where, we introduce a partial ``stage'' function  
$$\gamma\pcolon \lww\to \w\;,$$
where $\gamma(\sigma)$ identifes the upper set $\calU_{\gamma(\sigma)}$ governing the immediate successors of $\sigma\in T$. More formally:
	
\begin{definition}\label{def:US-branching} Let $\calUS=(\calU_n)_{n\in I}$ be an upper sequence on $\w$, and let $\gamma\pcolon \lww\to\w$ be a partial function. A non-empty tree $T\subseteq \lww$ is {\em $(\calUS,\gamma)$-branching} if, for every node $\sigma\in T$
$$\gamma(\sigma)\;\text{ is defined and}\; \{n\in\w\mid \sigma\fr n\in T\}\in \calU_{\gamma(\sigma)}.$$ 
{\em Convention.} If $\calUS=(\calU)_{n\in\w}$ is a constant upper sequence, we revert to the original notion of a $\calU$-branching tree instead of explicitly defining a stage function $\gamma$ assigning $\calU$ to every node. 
\end{definition}	

In Definition~\ref{def:GTK}, one good tree $T_A$ was required for each $A\in\calU$. For upper sequences $\calUS=(\calU_n)_{n\in I}$, we must now provide one good tree for each pair \((n,A)\) with \(n\in I\) and \(A\in\mathcal U_n\). These trees are handled uniformly by a single global witness \(\Phi\) via level-wise reindexing. 

\begin{definition}[Tree-based definition]\label{def:GTK-full} 
	Let $\calUS,\calVS$ be upper sequences on $\w$, where $\calUS=(\calU_n)_{n\in I}$. Write
	$$\calUS\glt \calVS$$
	if there exists partial functions $\gamma\pcolon \lww \to \w$ and $\Phi\pcolon\ww\to \w$ satisfying the condition:
	\begin{itemize}
		\item[($\star$)] For every $n\in I$ and every $A\in\calU_n$, there exists a $(\calVS,\gamma^{[n]})$-branching tree  $$T^n_A\subseteq\lww$$ such that $[T^n_A]\subseteq \dom(\Phi^{[n]})$ and $\Phi^{[n]}[T^n_A]\subseteq A$, whereby 
		$$ \gamma^{[n]}(\sigma):=\gamma(n\fr\sigma),\;\qquad \Phi^{[n]}(p):=\Phi(n\fr p)\;.$$
	\end{itemize}	        
	This defines the {\em Gamified Kat\v{e}tov order} on upper sequences. If, in addition, $(\Phi,\gamma)$ is required to be computable, then this defines the {\em computable Gamified Kat\v{e}tov order} on upper sequences.
\end{definition}

\begin{convention} In the context of Definition~\ref{def:GTK-full}, we call $\gamma$ a {\em stage function}, $\Phi$ a {\em witness function}, and $T^n_A$ a {\em witness tree} for $A\in\calU_n$.
\end{convention}

\subsubsection*{Game-based definition}


An alternative way of introducing the Gamified Kat\v{e}tov order on upper sequences is simply to omit the computability constraint from the LT-game in Definition \ref{def:bilayer-game}.

\begin{definition}[Game-based definition]\label{def:game-based-def}
	The {\bf Oracle LT-Game} $\mathfrak G(\calUS,\calVS)$ is exactly the
	$\mathfrak G(f_{\calUS},f_{\calVS})$ (Definition~\ref{def:bilayer-game}),
	with the sole modification that $\art$ is \emph{no longer required} to use a computable strategy.
We say there exists a {\em winning strategy} for $\mathfrak G(\calUS,\calVS)$ whenever the
$\art$-$\nim$ team has a (possibly non-computable) winning strategy for $\mathfrak G(f_{\calUS},f_{\calVS})$.
\end{definition}

The following structure theorem establishes the expected picture. 

\begin{theorem}\label{thm:GTK-full} The following is true:
\begin{enumerate}[label=(\roman*)]
	\item  Let $\calUS,\calVS$ be upper sequences on $\w$. Then $$\calUS\glt\calVS$$ 
	iff there exists a winning strategy for the Oracle LT-game $\mathfrak{G}(\calUS,\calVS)$.
	\item The computable Gamified Kat\v{e}tov order on upper sequences is isomorphic to the $\clt$ order on LT topologies.
\end{enumerate}	  
\end{theorem}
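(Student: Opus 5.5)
The plan is to mimic the proof of Theorem~\ref{thm:game-GTK}, now carrying the extra bookkeeping needed for the public index $x_0\in I$ and for the queries $u_n\in J$ that Arthur issues to $\calVS$. For (ii) I will check that the translation in (i) preserves computability, and then invoke Theorem~\ref{thm:kih-bil}.

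\emph{Trees to strategies.} Suppose $(\gamma,\Phi)$ witnesses $\calUS\glt\calVS$. Arthur's strategy is read off from $(\gamma,\Phi)$ and the public history $(x_0,x_1,\dots,x_k)$. Put $n:=x_0$ and $\sigma:=(x_1,\dots,x_k)$. If $\Phi(n\fr\sigma)\dar$ with value $y$ (in the finite-representative sense of Convention~\ref{con:pcf}), Arthur plays $\lranglet{1}{y}$. Otherwise, if $\gamma(n\fr\sigma)\dar$, he queries $\lranglet{0}{\gamma(n\fr\sigma)}$. Nimue has seen Merlin's secret $A_0\in\calU_n$, so she fixes the witness tree $T^n_A$ and answers with the successor set $\{m\mid\sigma\fr m\in T^n_A\}$. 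By $(\calVS,\gamma^{[n]})$-branching this set lies in $\calV_{\gamma(n\fr\sigma)}$.

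Because Merlin must play inside Nimue's sets, every legal play traces a path in $T^n_A$. To secure termination I will take the finite representative of $\Phi$ with antichain domain. Then any infinite legal play would be a path $p\in[T^n_A]\subseteq\dom(\Phi^{[n]})$, so some prefix has $\Phi$ defined, and Arthur terminates there with $\Phi^{[n]}(p)\in A$. One detail needs care: a node of $T^n_A$ may receive a defined $\Phi$-value without the whole cone lying in the tree. This is harmless, since Arthur simply stops at that node and the path-value condition gives a value in $A$.

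\emph{Strategies to trees.} Now suppose we are given Arthur's strategy $\psi$ and Nimue's strategy $\eta$. Define $\Phi(n\fr\sigma)=y$ iff $\psi(n,\sigma)=\lranglet{1}{y}$, and $\gamma(n\fr\sigma)=u$ iff $\psi(n,\sigma)=\lranglet{0}{u}$. As in Step~2 of Theorem~\ref{thm:game-GTK}, there are nodes where Arthur has already terminated (``lazy'' nodes), and these are not covered by the query definition of $\gamma$. The fix I would try first is to let $\gamma$ at such nodes return some index $j_0\in J$, and make the subtree full-branching there. This needs $\omega\in\calV_{j_0}$, which holds because upper sets are non-empty and upward closed; if $J=\emptyset$ the game degenerates and the claim is trivial. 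The tree $T^n_A$ is then built recursively, using $\eta(A,\sigma)$ at active nodes and $\omega$ at lazy ones. The winning condition gives $[T^n_A]\subseteq\dom(\Phi^{[n]})$ with values in $A$.

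The main obstacle is matching the information structure exactly. The stage function $\gamma$, like Arthur, must not depend on $A$; both directions above respect this, because $\gamma$ and $\Phi$ are defined from public data only. For (ii), the constructions visibly send computable $(\gamma,\Phi)$ to a computable Arthur strategy and conversely; the lazy-node default $j_0$ is a fixed constant and so is computable. Hence the computable Gamified Kat\v{e}tov order coincides with $\clt$ on upper sequences (Definition~\ref{def:LT-reducibility-upper-seq}). Theorem~\ref{thm:kih-bil} then identifies this with the Lawvere-Tierney order on LT topologies.
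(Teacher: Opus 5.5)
Your translation is the paper's own. Arthur's public strategy becomes $(\gamma,\Phi)$, Nimue's secret strategy becomes the witness trees via the active/lazy construction of Theorem~\ref{thm:game-GTK}, and (ii) comes from checking effectivity and quoting Theorem~\ref{thm:kih-bil}. Your default index $j_0$ at lazy nodes even supplies a detail the paper's sketch omits.

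The effectivity check in (ii), however, does not go through as written. In the trees-to-strategies direction Arthur branches on whether $\Phi(n\fr\sigma)\dar$, i.e.\ on membership in the domain of the antichain representative $\varphi$. For partial computable $\varphi$ that domain is in general only c.e., so this strategy is not partial computable. Naive dovetailing of $\varphi(n\fr\sigma)$ against $\gamma(n\fr\sigma)$ does not rescue it. Suppose $\gamma$ converges first at the unique node of a path that lies in $\dom(\varphi)$. Then Arthur queries past it, no later node of that path is in $\dom(\varphi)$, and the play never ends.

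The fix is to normalise first. Set $\varphi'(\tau)=c$ iff $\varphi(\rho)$ converges to $c$ within $|\tau|$ steps for some $\rho\preceq\tau$. This is single-valued, since at most one prefix of $\tau$ lies in an antichain. It has a decidable upward-closed domain and represents the same $\Phi$. Arthur can now computably terminate at the first node of the play lying in $\dom(\varphi')$, and otherwise query $\gamma(n\fr\sigma)$, which converges on $T^n_A$. The converse direction is fine once $\Phi$ and $\gamma$ are read off only up to the first termination move along $\sigma$. That move is found by scanning prefixes, which converges on $T^n_A$ because $\psi$ is defined at all legal positions.

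Separately, the case $J=\emptyset$ is not trivial. Then no $(\calVS,\gamma)$-branching tree exists, since the root needs $\gamma(n)\in J$, so $\calUS\glt\calVS$ holds only if $I=\emptyset$. Yet Arthur wins by terminating in round $0$ whenever every $\bigcap\calU_{x_0}$ is non-empty, e.g.\ when $\calUS$ is a single principal ultrafilter. So (i), as literally stated, needs $\calVS$ pointed, or a branching clause that exempts nodes at which $\Phi$ has already converged. The paper's proof passes over this too.
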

\begin{proof} The main ideas have already been presented in Section~\ref{sec:brave-new-order}, so we will be brief.
\begin{enumerate}[label=(\roman*):]	
	\item The proof is analogous to Theorem~\ref{thm:game-GTK}, modulo some additional re-indexing.

Suppose $\art$-$\nim$ have a winning strategy for $\mathfrak{G}(\calUS,\calVS)$. In the opening move, $\mer$ selects some $n\in I$ and $A\in\calU_n$. At each stage, $\art$ reacts by either: 
\setlist[enumerate]{leftmargin=1.3cm}
\begin{enumerate}[label=(\arabic*)]
\item making a public query to the oracle $\calVS$, or
\item declaring an answer and terminating the game.
\end{enumerate}
Adapting Step~1 of the proof of Theorem~\ref{thm:game-GTK}, this may be reformulated as:
\begin{enumerate}[label=(\arabic*)]
    \item a partial stage function $\gamma^{[n]}\pcolon \lww\to\w$; and
    \item a partial continuous function $\Phi^{[n]}\pcolon\lww \to \w,$
\end{enumerate}
respectively. Likewise, following the same ``active vs. lazy'' construction in Step 2, one can reformulate $\nim$'s strategy as a $(\calVS,\gamma^{[n]})$-branching tree $T^n_A$.  Step 3 then applies verbatim to show $[T^n_A]\subseteq \dom(\Phi^{[n]})$ and $\Phi^{[n]}[T^n_A]\subseteq A$. 

Conversely, suppose $(\Phi,\gamma)$ witnesses $\calUS\glt\calVS$. For any $A\in\calU_n$ chosen by $\mer$, there exists a $(\calVS,\gamma^{[n]})$-branching tree $T^n_{A}$ such that $\Phi^{[n]}[T_A^n]\subseteq A$. This tree captures the set of all plays that $\art$-$\nim$ must be prepared to answer: each node represents a finite sequence of $\mer$'s moves, and the $(\calVS,\gamma^{[n]})$-branching condition ensures that a legal response exists at every stage. 

Notice: along any infinite path $p\in[T^n_A]$, $\Phi^{[n]}$ eventually defines an output in $A$.  Thus $\art$-$\nim$ obtain a winning strategy simply by following the finite initial segment of $p$ on which $\Phi^{[n]}$ becomes defined and replying according to the allowed successors in $T^n_A$. This is exactly the reverse construction of Steps~1-2 in Theorem~\ref{thm:game-GTK}, and we are done.
	\item From the proof of (i), $\art$'s strategy corresponds to a stage function $\gamma^{[n]}$ and a witness function $\Phi^{[n]}$. Consequently, the computable Gamified Kat\v{e}tov order on upper sequences is isomorphic to the induced $\clt$ order on upper sequences. Since upward closure does not affect the LT degree (Lemma~\ref{lem:upward-closure-basic-properties}), this is isomorphic to the original $\clt$ order. The rest follows from Kihara's Theorem~\ref{thm:kih-bil}.
\end{enumerate}	
	
\end{proof}
	
\begin{remark} Implicit within Definition~\ref{def:GTK-full} is the correct generalisation of a $\delta$-Fubini power to a $\delta$-Fubini {\em product}. We have left this out since it is not needed for the present paper, but the motivated reader (perhaps a curious set theorist) may wish to work out the details for themselves. In particular, Remark~\ref{rem:dob} points to interesting connections with Dobrinen's work in \cite[\S 3]{Dob20}.
\end{remark}

As Theorem~\ref{thm:GTK-full} illustrates, the game-based definition of the extended Gamified Kat\v{e}tov order highlights a direct connection with $\clt$ on LT topologies via Theorem~\ref{thm:kih-bil}.

\subsection{Gamified Kat\v{e}tov Order is Transitive}\label{sec:GTK-preorder}
In this section, we finally show that the Gamified Kat\v{e}tov order $\glt$ defines a preorder. Throughout, we work with upper sequences (i.e. not just upper sets) and adopt the game-based Definition~\ref{def:game-based-def} of the Gamified Kat\v{e}tov order.




%
%
\medskip

There are various operations defined on upper sets and multivalued functions: the Fubini product\footnote{This is studied as the compositional product (or the sequential composition) in the context of multivalued functions.}, the Cartesian product, and lattice operations (only for multivalued functions \cite{BGP21}).
Let us extend some of these notions to upper sequences.

\begin{definition}
Let $\overline{\U}=(\U_i)_{i\in I}$ and $\overline{\V}=(\V_j)_{j\in J}$ be upper sequences.
\begin{enumerate}
\item Their {\it Cartesian product} is defined by $\overline{\U}\times\overline{\V}=(\U_n\times\V_m)_{\langle n,m\rangle\in I\times J}$.
\item Their {\it sum} $\overline{\U}+\overline{\V}$ is indexed by $I+J$ and defined as 
\[
(\overline{\U}+\overline{\V})_{\langle i,n\rangle}=
\begin{cases}
\U_n&\mbox{if $i=0$ and $n\in I$,}\\
\V_n&\mbox{if $i=1$ and $n\in J$.}
\end{cases}
\]
\end{enumerate}

Here, we often identify a subset family with its upward closure.
\end{definition}

\begin{remark}
Clearly, the sum of constant sequences is not a constant sequence.
This is one reason we shift our discussion from upper sets to upper sequences.
\end{remark}

We denote by $\leq_{\rm cK}$ the computable Kat\v{e}tov reducibility for upper sequences (Definition \ref{def:Katetov-for-upper-seq}), which is finer than LT-reducibility $\leq_{\rm LT}$.

\begin{proposition}\label{}
The sum $\overline{\U}+\overline{\V}$ is the least upper bound of $\overline{\U}$ and $\overline{\V}$ in the LT-order.
\end{proposition}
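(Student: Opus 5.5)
We show two things: that $\overline{\U}+\overline{\V}$ is an upper bound of both summands, and that it lies below every common upper bound. Throughout we use the LT-game of Definition~\ref{def:bilayer-game} as the definition of $\leq_{\rm LT}$, and Fact~\ref{fact:LT-basic-for-upper-sequences} to reduce one-query reductions to LT-reductions.

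\emph{Upper bound.} We will show $\overline{\U}\leq_{\rm cK}\overline{\U}+\overline{\V}$, and symmetrically for $\overline{\V}$. The index map is $\varphi(n)=\langle 0,n\rangle$ and the output map is $\psi=\mathrm{id}$. For $n\in I$ we have $(\overline{\U}+\overline{\V})_{\langle 0,n\rangle}=\U_n$, so each $A\in\U_n$ is witnessed by $B=A$, with $\psi[B]\subseteq A$. Both maps are computable, so this is a computable Kat\v{e}tov reduction, hence a computable one-query reduction. Fact~\ref{fact:LT-basic-for-upper-sequences}(i) then gives $\overline{\U}\leq_{\rm LT}\overline{\U}+\overline{\V}$. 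The same maps with $\langle 1,n\rangle$ give $\overline{\V}\leq_{\rm LT}\overline{\U}+\overline{\V}$.

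\emph{Least.} Suppose $\overline{\U}\leq_{\rm LT}\overline{\W}$ and $\overline{\V}\leq_{\rm LT}\overline{\W}$. Fix winning $\art$-$\nim$ strategies $(\varphi_0,\eta_0)$ for $\mathfrak{G}(\overline{\U},\overline{\W})$ and $(\varphi_1,\eta_1)$ for $\mathfrak{G}(\overline{\V},\overline{\W})$, where $\varphi_0,\varphi_1$ are partial computable. We build a strategy for $\mathfrak{G}(\overline{\U}+\overline{\V},\overline{\W})$ by case split on $\mer$'s opening move.
\begin{itemize}
\item $\mer$ opens with $x_0=\langle i,n\rangle$ and $A_0\in(\overline{\U}+\overline{\V})_{\langle i,n\rangle}$.
\item $\art$ decodes $i$ from the public move $x_0$. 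On the history $(\langle i,n\rangle,x_1,\dots,x_k)$ he plays $\varphi_i(n,x_1,\dots,x_k)$.
\item $\nim$ plays $\eta_i$ on the corresponding history with opening move $(n,A_0)$. She may do this because $A_0\in\U_n$ when $i=0$, and $A_0\in\V_n$ when $i=1$.
\end{itemize}
The new $\art$ strategy is computable. From the single index of $\varphi_0$ and the single index of $\varphi_1$ we obtain one index for the combined strategy, since the branch on the first bit of $x_0$ is computable. Every play of the new game is, after stripping the tag $i$, a play of the $i$-th original game. Its queries go to $\overline{\W}$ in the same way and its final answer is the same. So the play terminates legally with an answer in $A_0$, or $\mer$ cheats first. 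Hence $\overline{\U}+\overline{\V}\leq_{\rm LT}\overline{\W}$.

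The only point needing care is the information constraint: $\art$ must base his moves solely on public data. This holds because the branch choice $i$ is encoded in the public index $x_0$ and not in the secret set $A_0$. This is exactly why the sum must be indexed by $I+J$ rather than formed as a pointwise union, and it is the step we would expect to be delicate if anything were. We should also check that partiality causes no trouble: when $n\notin I$ (resp. $n\notin J$), the move $\langle 0,n\rangle$ (resp. $\langle 1,n\rangle$) is not a legal opening, so $\mer$ never plays it.
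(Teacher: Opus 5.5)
Your proof is correct and follows the paper's own argument. The upper bound comes from the computable Kat\v{e}tov reduction $n\mapsto\langle 0,n\rangle$, $\psi=\mathrm{id}$ (and its twin with $\langle 1,n\rangle$ for $\overline{\V}$), and leastness comes from having Arthur--Nimue switch to the winning strategy $(\varphi_i,\eta_i)$ according to the public tag $i$ in Merlin's opening move $\langle i,n\rangle$.
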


\begin{proof}~
\begin{itemize}
\item {\it Upper bound.}
The pair of functions $\varphi(n)=\langle 0,n\rangle$ and $\psi(x)=x$ witnesses that $\overline{\U}\leq_{\rm cK}\overline{\U}+\overline{\V}$:
For any $A\in \U_n$, we have $A\in (\overline{\U}+\overline{\V})_{\varphi(n)}=\U_n$, and for any $x\in A$, we get $\psi(x)=x\in A$.

Similarly, the pair of functions $\varphi(n)=\langle 1,n\rangle$ and $\psi(x)=x$ witnesses that $\overline{\V}\leq_{\rm cK}\overline{\U}+\overline{\V}$.

\item {\it Least.}
Assume $\overline{\U},\overline{\V}\leq_{\rm LT}\overline{\W}$.
Let $(\varphi_0,\eta_0)$ be \art-\nim's winning strategy for $\mathfrak{G}(\overline{\U},\overline{\calW})$, and $(\varphi_1,\eta_1)$ be \art-\nim's winning strategy for $\mathfrak{G}(\overline{\V},\overline{\calW})$.
In the game $\mathfrak{G}(\overline{\U}+\overline{\V},\overline{\calW})$, if \mer's first move is $(i,n)$ then, for \art-\nim~to win, they just need to follow the strategy $(\varphi_i,\eta_i)$.
Formally, \art's strategy is $\varphi(\langle i,n\rangle,\sigma)=\varphi_i(n,\sigma)$, and \nim's strategy is $\eta(\langle i,n\rangle,\sigma)=\eta_i(n,\sigma)$.
One can easily check that this strategy witnesses $\overline{\U}+\overline{\V}\leq_{\rm LT}\overline{\calW}$.
\end{itemize}
\end{proof}

\begin{corollary}\label{fact:prod-upper-seq}
Let $\calUS,\calVS,\calWS$ be upper sequences.
\begin{enumerate}[label=(\roman*)]
     \item {\em (Associativity).} $\calUS+ (\calVS+ \calWS)\eclt (\calUS+\calVS)+ \calWS.$
    \item {\em (Monotonicity).} $\calUS\clt\calVS \implies \calUS+\calWS\clt \calVS+\calWS.$
\end{enumerate}
\end{corollary}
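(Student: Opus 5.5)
The plan is to derive both items formally from the preceding Proposition, which identifies $\overline{\U}+\overline{\V}$ as the least upper bound of $\overline{\U}$ and $\overline{\V}$ in the $\clt$-order. We also use that $\clt$ is a preorder on upper sequences (Fact~\ref{fact:LT-basic-for-upper-sequences}(iii)). Together these say that $+$ is a join in the preorder $\clt$. Associativity and monotonicity of a join hold in any preorder, so no game-theoretic construction beyond the Proposition should be needed.

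For (i), we first show $\calUS+(\calVS+\calWS)\clt(\calUS+\calVS)+\calWS$. By the ``least'' clause of the Proposition, it suffices to show that the right-hand side $\clt$-dominates both $\calUS$ and $\calVS+\calWS$.
\begin{itemize}
\item $\calUS\clt\calUS+\calVS$ by the upper bound clause, and $\calUS+\calVS\clt(\calUS+\calVS)+\calWS$ again by the upper bound clause; transitivity gives $\calUS\clt(\calUS+\calVS)+\calWS$.
\item Likewise $\calVS\clt\calUS+\calVS\clt(\calUS+\calVS)+\calWS$, and $\calWS\clt(\calUS+\calVS)+\calWS$ directly. Applying the ``least'' clause to the pair $\calVS,\calWS$ then gives $\calVS+\calWS\clt(\calUS+\calVS)+\calWS$.
\end{itemize}
A final application of the ``least'' clause to the pair $\calUS,\calVS+\calWS$ gives the desired inequality. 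The reverse inequality is entirely symmetric.

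For (ii), assume $\calUS\clt\calVS$. Then $\calUS\clt\calVS\clt\calVS+\calWS$ by the hypothesis, the upper bound clause and transitivity, and $\calWS\clt\calVS+\calWS$ by the upper bound clause. The ``least'' clause then yields $\calUS+\calWS\clt\calVS+\calWS$.

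The only point requiring care is that the ``least'' clause was proved for arbitrary upper sequences $\overline{\calW}$, including sums, whose index sets are of the form $I+J$. Since a sum of upper sequences is again an upper sequence (after identifying a subset family with its upward closure, as the definition allows), this applies without change. For extra reassurance one could also write explicit witnesses: (i) holds even for the finer $\leq_{\rm cK}$, via the computable reindexing $\langle 0,n\rangle\mapsto\langle 0,\langle 0,n\rangle\rangle$, $\langle 1,\langle 0,n\rangle\rangle\mapsto\langle 0,\langle 1,n\rangle\rangle$, $\langle 1,\langle 1,n\rangle\rangle\mapsto\langle 1,n\rangle$ with $\psi=\mathrm{id}$. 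But the abstract lattice-theoretic argument is the one I would present. I expect no genuine obstacle.
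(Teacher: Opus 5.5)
Your proposal is correct and takes the same approach as the paper, which proves this corollary in one line (``by the general property of the least upper bound''). You have spelled out the join manipulations, together with transitivity of $\clt$, that this phrase abbreviates.
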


\begin{proof}
By the general property of the least upper bound.
\end{proof}


We say that an upper sequence $\overline{\U}=(\U_i)_{i\in I}$ is {\it pointed} if $I$ is nonempty.
Note that if $\calUS$ is not pointed then it is the least element in $\glt$ (and $\clt$).

\begin{proposition}\label{prop:sum-vs-product}
If upper sequences $\overline{\U}$ and $\overline{\V}$ are pointed, we have $\overline{\U}+\overline{\V}\leq_{\rm cK}\overline{\U}\times\overline{\V}$.
\end{proposition}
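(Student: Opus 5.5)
We exhibit an explicit pair of computable functions $(\varphi,\psi)$ witnessing computable Kat\v{e}tov reducibility in the sense of Definition~\ref{def:Katetov-for-upper-seq}. Since $\overline{\U}=(\U_i)_{i\in I}$ and $\overline{\V}=(\V_j)_{j\in J}$ are pointed, we fix once and for all indices $i_0\in I$ and $j_0\in J$. These fixed indices are hard-coded into $\varphi$ and $\psi$, so no effectivity question about $I$ or $J$ arises. We then send the summand indices to product indices by padding with the fixed dummy coordinate:
\[
\varphi(\langle 0,n\rangle):=\langle n,j_0\rangle,\qquad \varphi(\langle 1,m\rangle):=\langle i_0,m\rangle .
\]
For $\psi$ we take the appropriate coordinate projection. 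Here an element of $\U_n\times\V_m$ (identified with its upward closure) is coded as a pair $\langle a,b\rangle$, so we set
\[
\psi(\langle a,b\rangle):=a \text{ when the source index is } \langle 0,n\rangle,\qquad \psi(\langle a,b\rangle):=b \text{ when it is } \langle 1,m\rangle .
\]
Since $\psi$ may depend on the source index, we use the one-query form $\psi(i,x)$. If one insists on the strict Kat\v{e}tov form, in which $\psi$ depends only on $x$, we instead tag the product so that the index is recoverable. Both $\varphi$ and $\psi$ are clearly computable from $\pi_0,\pi_1$ (Convention~\ref{con:coordinate}).

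\textbf{Verification.} Let $A\in(\overline{\U}+\overline{\V})_{\langle 0,n\rangle}=\U_n$, so $n\in I$. Choose any $B_0\in\V_{j_0}$; this is possible because upper sets are non-empty by Convention~\ref{con:filters}. Then $B:=\{\langle a,b\rangle\mid a\in A,\ b\in B_0\}$ is a generator of $\U_n\times\V_{j_0}$, and $\psi[B]=\pi_0[B]=A$ since $B_0\neq\emptyset$. The case $\langle 1,m\rangle$ is symmetric: choose $A_0\in\U_{i_0}$ and use $\pi_1$.

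\textbf{Main obstacle.} The only delicate point is bookkeeping: the definition's $\psi$ must be permitted to depend on the index, or else the product must be re-coded so that the index is recoverable from elements. Pointedness is exactly what supplies the dummy coordinates $i_0,j_0$ and the nonempty sets $A_0,B_0$. Without pointedness the padding fails; for instance, if $J=\emptyset$ then the product is empty while the sum is not.
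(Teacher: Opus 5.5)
Your construction is the paper's: the same padding $\varphi(\langle 0,n\rangle)=\langle n,j_0\rangle$, $\varphi(\langle 1,m\rangle)=\langle i_0,m\rangle$, and the same coordinate projection. The paper writes it as $\psi(x_0,x_1)=x_k$, with $k$ read off the source index $\langle k,n\rangle$. So, like the paper's argument, yours establishes computable \emph{one-query} reducibility. By Fact~\ref{fact:LT-basic-for-upper-sequences}(i) this gives $\overline{\U}+\overline{\V}\leq_{\rm LT}\overline{\U}\times\overline{\V}$, which is all that is used later (Theorem~\ref{thm:preorder}, Corollary~\ref{cor:preorder}).

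The one claim that fails is your fallback for the strict Kat\v{e}tov form. You cannot ``tag the product'', because $\overline{\U}\times\overline{\V}$ is fixed by the statement. Moreover, a $\psi$ independent of the source index does not exist in general. Take $I=J=\{0\}$, $\U_0=\{A\mid 0\in A\}$ and $\V_0=\{B\mid 1\in B\}$. Then $(\overline{\U}\times\overline{\V})_{\langle 0,0\rangle}$ is the principal ultrafilter at $\langle 0,1\rangle$, so any witness must send both summand indices to $\langle 0,0\rangle$. It would then need $\psi(\langle 0,1\rangle)\in\{0\}$ (using $\{0\}\in\U_0$) and also $\psi(\langle 0,1\rangle)\in\{1\}$ (using $\{1\}\in\V_0$), which is impossible. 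So the index-dependence you flagged is essential, not mere bookkeeping. Drop the tagging sentence and state the result as computable one-query reducibility; that is also the correct reading of the proposition.
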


\begin{proof}
For $\overline{\U}=(\U_i)_{i\in I}$ and $\overline{\V}=(\V_i)_{j\in J}$, assume $i_0\in I$ and $j_0\in J$.
Then $\overline{\U}+\overline{\V}\leq_{\rm cK}\overline{\U}\times\overline{\V}$ is witnessed by $\varphi(0,n)=\langle n,j_0\rangle$, $\varphi(1,n)=\langle i_0,n\rangle$, and $\psi(x_0,x_1)=x_k$, where $k$ as in $\lranglet{k}{n}\in I+J$.

To see this, let $\langle k,n\rangle\in I+J$ and $A\in (\overline{\U}+\overline{\V})_{\langle k,n\rangle}$ be given.
If $k=0$, we have $A\in\U_n$.
Then there is some $A\times B\in(\U\times\V)_{\varphi(0,n)}=\U_n\times\V_{j_0}$.
Now, for any $\langle x_0,x_1\rangle\in A\times B$, we have $\psi(x_0,x_1)=x_0\in A$.
A similar argument applies to $k=1$.
This shows that $(\varphi,\psi)$ witness $\overline{\U}+\overline{\V}\leq_{\rm cK}\overline{\U}\times\overline{\V}$.
\end{proof}

\begin{observation}\label{obs:product-preserves-Turing}
Let $f,g\colon\w\to\w$ be (total, single-valued) functions. \underline{Then}, there exists a single-valued total function $h$ such that
    $$f\times g \eclt h .$$
    In other words, if $f$ and $g$ are Turing oracles, then so is $f\times g$.
\end{observation}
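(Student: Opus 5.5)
Identify a single-valued total function $f\colon\w\to\w$ with the upper sequence $(\calF_n)_{n\in\w}$ of principal ultrafilters $\calF_n=\{A\mid f(n)\in A\}$ (Discussion~\ref{desc:function-complexity-as-principal-filter}). Then $f\times g$ is the upper sequence indexed by pairs $\langle n,m\rangle$ whose $\langle n,m\rangle$-th member is generated (up to upward closure, which is harmless by Observation~\ref{obs:computable-Katetov-to-upseq}) by the family $\calF_n\times\calG_m$. This family contains the singleton $\{(f(n),g(m))\}$, so its upward closure is exactly the principal ultrafilter at the pair $(f(n),g(m))$. The candidate is therefore the total function
$$h(\lranglet{n}{m}):=\lranglet{f(n)}{g(m)},$$
viewed as the upper sequence of principal ultrafilters at $h(k)$, and the plan is to show $f\times g\eclt h$ by exhibiting computable Kat\v{e}tov (indeed one-query) reductions in both directions. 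Fact~\ref{fact:LT-basic-for-upper-sequences}(i) then upgrades these to LT-reductions.

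For $f\times g\leq_{\rm cK} h$, take $\varphi=\id$ on indices and let $\psi$ be the identity under the coding $\w\times\w\simeq\w$ of Convention~\ref{con:base-change}. Given $k=\lranglet{n}{m}$ and a set $A$ in the $k$-th member of $f\times g$, we have $A\supseteq A_0\times B_0$ with $f(n)\in A_0$ and $g(m)\in B_0$, so $(f(n),g(m))\in A$. The oracle set $B=\{h(k)\}$ lies in the $k$-th member of $h$, and $\psi[B]\subseteq A$, as required.

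For $h\leq_{\rm cK} f\times g$, again take $\varphi=\psi=\id$. If $A\ni h(k)=\lranglet{f(n)}{g(m)}$, choose $B=\{f(n)\}\times\{g(m)\}$, which belongs to $\calF_n\times\calG_m$; its image under the coding is $\{h(k)\}\subseteq A$.

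The only subtlety I anticipate is bookkeeping. One must check that the Cartesian product of upper sequences, after the identification with upward closures, really yields these principal families. One must also check that the coding $\lranglet{-}{-}$ used for pairs of outputs matches the coding used in the definition of $\times$. Both reductions are via identity or computable maps, so no game-theoretic argument is needed beyond Fact~\ref{fact:LT-basic-for-upper-sequences}(i).
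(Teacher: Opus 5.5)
Your proposal is correct and follows the paper's proof: the paper defines the same $h(\lranglet{n}{m}):=\lranglet{f(n)}{g(m)}$ and simply asserts that $f\times g$ is computably equivalent to it. Your identity-map computable Kat\v{e}tov reductions in both directions, upgraded to LT-reductions via Fact~\ref{fact:LT-basic-for-upper-sequences}(i), just spell out that equivalence.
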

\begin{proof}
Recall that any $f\colon\w\to\w$ uniquely determines a sequence of elements\footnote{To obtain an honest upper sequence, one can of course represent $f$ as a sequence of principal ultrafilters generated by these elements, but the two representations are $\clt$-equivalent (cf.~Observation~\ref{obs:computable-Katetov-to-upseq}).}
$$\{f(n)\}_{n\in\w}\;.$$
 $f\times g$ is clearly well-defined. In fact, $f\times g$ is computably equivalent to the total single-valued function 
$$h(\lranglet{n}{m}):=\lranglet{f(n)}{g(m)}.$$
\end{proof}

\begin{theorem}\label{thm:preorder}
Let $\calUS,\calVS$ be upper sequences on $\w$, where $\calVS$ is pointed. \underline{Then}, $$\calUS\glt\calVS \iff \calUS\clt f+ \calVS\;,\quad\text{for some }\; f\colon\w\to\w\;.$$
\end{theorem}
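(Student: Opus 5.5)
The proof splits along the two directions. In each, a non-computable strategy for \art{} in the Oracle LT-game $\mathfrak G(\calUS,\calVS)$ (Theorem~\ref{thm:GTK-full}(i)) is traded for a computable strategy that queries a Turing oracle $f$, or the reverse. Here $f$ is viewed as an upper sequence of principal ultrafilters, as in Discussion~\ref{desc:function-complexity-as-principal-filter}. Pointedness of $\calVS$ is used only in the direction $(\Leftarrow)$.

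\textbf{($\Rightarrow$).} Suppose $(\Phi,\gamma)$ witnesses $\calUS\glt\calVS$ in the sense of Definition~\ref{def:GTK-full}. Both $\Phi$ and $\gamma$ are determined by partial functions on $\lww$, so we can code them into a single total $f\colon\w\to\w$. For a string $\sigma$, set $f(\code{\sigma})$ to be $\lranglet{0}{\gamma(\sigma)+1}$ if \art{} queries at $\sigma$, to be $\lranglet{1}{\varphi(\sigma)+1}$ if \art{} halts at $\sigma$ with output $\varphi(\sigma)$, and to be $0$ otherwise.

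We then describe a computable \art{} for $\mathfrak G(\calUS, f+\calVS)$. At a position with public history $n\fr\sigma$, \art{} first queries the $f$-component at $\code{n\fr\sigma}$. The advice for this query is a principal ultrafilter, so \mer{} is forced to reply with $f(\code{n\fr\sigma})$. \art{} decodes this value and either queries $\calV_{\gamma(n\fr\sigma)}$ or terminates with $\varphi(n\fr\sigma)$.

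The moves \mer{} makes in answer to the $f$-queries are interleaved dummy moves. \nim{} copies her original strategy, which is given by the branching trees $T^n_A$, on the genuine $\calVS$-queries. One checks that along every play the dummy moves are uniquely determined, so plays correspond bijectively to paths through $T^n_A$. Well-foundedness and correctness of the outputs then transfer from the original game. I expect this step to need only bookkeeping: we must confirm that \art{}'s computable strategy, reading only public moves, can reconstruct the original public history $\sigma$ by deleting the $f$-answers, and this works because the positions of those answers are computable from the history.

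\textbf{($\Leftarrow$).} Suppose $\calUS\clt f+\calVS$ via a computable \art{} strategy. We build a non-computable \art{} for $\mathfrak G(\calUS,\calVS)$ that simulates it. Whenever the simulated \art{} asks an $f$-query at $m$, the new \art{}, which is not required to be computable, simply computes $f(m)$ itself and feeds it into the simulation as \mer{}'s forced reply. Since the advice is principal, that reply is the only legal one, so no \nim{} move is needed for it.

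There is a subtlety: a run of the simulation may use only $f$-queries and never touch $\calVS$, yet the game format still has \nim{} choosing sets. This causes no trouble, because \art{} may terminate without ever querying. If the definition requires at least one round, we use pointedness of $\calVS$: a dummy query to some $\calV_{j_0}$ can be inserted and \mer{}'s answer ignored.

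Genuine $\calVS$-queries are passed through unchanged, with \nim{} using her original strategy. Every play of the new game then maps to a play of the original game consistent with a winning strategy, so it terminates with an output in $A_0$. Hence $\calUS\glt\calVS$ by Theorem~\ref{thm:GTK-full}(i).

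The main obstacle is the reindexing formalism in ($\Rightarrow$). We must verify that $(\calVS,\gamma)$-branching trees survive the insertion of forced principal-ultrafilter levels, which are trivially branching since they contain singletons, and that the resulting witness is computable.
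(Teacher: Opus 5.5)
Your proposal is correct and follows essentially the same route as the paper. In ($\Rightarrow$) the paper likewise takes $f$ to be a total extension $\hat\varphi$ of \art{}'s non-computable strategy, and lets a computable \art{} alternate between querying $\hat\varphi$ on the current public history and acting on the answer; in ($\Leftarrow$) a non-computable \art{} evaluates $f$ itself. The only difference is in ($\Leftarrow$): the paper first uses pointedness via Proposition~\ref{prop:sum-vs-product} to replace $f+\calVS$ by $f\times\calVS$, so the simulation runs round-by-round, whereas you absorb the $f$-queries into the simulation. Your version is fine, since a winning strategy cannot make infinitely many consecutive $f$-queries along a legal play.
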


\begin{proof} \hfill 

\begin{itemize}
\item[$\implies$:]
Assume $\overline{\U}\leq_{\rm LT}\overline{\V}$ via \art's (not necessarily computable) strategy $\varphi$ and \nim's strategy $\eta$.
A play of the game $\mathfrak{G}(\calUS,\calVS)$ proceeds as follows:
\begin{itemize}
\item[R0.] \mer\ plays the initial move $x_0\in I$ and $A\in\U_{x_0}$.
\item[] \art\ makes a query $y_0\in J$; then \nim\ chooses $B_0\in\V_{y_0}$.
\item[R1.] \mer\ responds with $x_1\in B_1$.
\item[] \art\ makes a query $y_1\in J$; then \nim\ chooses $B_1\in\V_{y_1}$.
\item[] $\vdots$
\item[R$k$.] \mer\ responds with $x_k\in B_{k-1}$.
\item[] \art\ answers $y_k\in A$, and declares the termination of the game.
\end{itemize}
Here, the strategies give the moves $\varphi(\langle x_0,x_1,\dots,x_n\rangle)=\langle i_n,y_n\rangle$ and $\eta(A,x_0,x_1,\dots,x_n)=B_n$, where $i_k\in\{\uparrow,\downarrow\}$ indicates whether to make a query or declare the termination (coded as $0$ and $1$, respectively).

Since $\varphi$ is not necessarily a computable strategy, we modify it to a computable strategy $\psi$ that queries $\varphi$.
Let $\hat{\varphi}$ be a total extension of $\varphi$, and consider the game $\mathfrak{G}(\calUS,\hat{\varphi}+\calVS)$.
The new strategy will proceed as follows:
\begin{itemize}
\item[R0.] \mer\ plays the initial move $x_0\in I$ and $A\in\U_{x_0}$.
\item[] \art\ makes a query $\langle x_0\rangle$ to $\hat{\varphi}$.
\item[R1.] \mer\ responds with $\hat{\varphi}(\langle x_0\rangle)=\langle \uparrow,y_0\rangle$.
\item[] \art\ makes a query $y_0\in J$ to $\calVS$; then \nim\ chooses $B_0\in\V_{y_0}$.
\item[R2.] \mer\ responds with $x_1\in B_1$.
\item[] \art\ makes a query $\langle x_0,x_1\rangle$ to $\hat{\varphi}$.
\item[R3.] \mer\ responds with $\hat{\varphi}(\langle x_0,x_1\rangle)=\langle \uparrow,y_1\rangle$.
\item[] \art\ makes a query $y_1\in J$ to $\calVS$; then \nim\ chooses $B_1\in\V_{y_0}$.
\item[] $\vdots$
\item[R$2k$.] \mer\ responds with $x_k\in B_{k-1}$.
\item[] \art\ makes a query $\langle x_0,x_1,\dots,x_k\rangle$ to $\hat{\varphi}$.
\item[R$2k+1$.] \mer\ responds with $\hat{\varphi}(\langle x_0,x_1,\dots,x_k\rangle)=\langle \downarrow,y_k\rangle$.
\item[] \art\ answers $y_k\in A$, and declares the termination of the game.
\end{itemize}
Here, the above is based on an example of play where the original strategy $\varphi$ declares termination in some round $k>1$.
Formally, \art's new strategy $\psi$ of is given by the following:
\begin{align*}
&\psi(\langle x_0,p_0,x_1,p_1,\dots,x_n\rangle)=\langle \uparrow,\langle 0,x_0,x_1\dots,x_n\rangle\rangle\\
&\psi(\langle x_0,p_0,x_1,p_1,\dots,x_n,p_n\rangle)=
\begin{cases}
\langle\uparrow,\langle 1,y_n\rangle\rangle&\mbox{if }p_n=\langle\uparrow,y_n\rangle\\
\langle\downarrow,y_n\rangle&\mbox{if }p_n=\langle\downarrow,y_n\rangle
\end{cases}
\end{align*}

Clearly, $\psi$ is computable.
Similarly, \nim's strategy $\eta$ is adjusted to the following strategy $\theta$.
\[\theta(A,x_0,p_0,x_1,p_1,\dots,x_n,p_n)=\eta(A,x_0,x_1,\dots,x_n).\]

Here, \nim's moves in even rounds are trivial.
Confirm that $(\psi,\theta)$ yields a winning strategy:

Inductively, we show that the history $A,x_0,p_0,x_1,p_1,\dots,x_n$ of \mer's moves up to the $(2n)$th round in $\mathfrak{G}(\calUS,\hat{\varphi}+\calVS)$ against \art-\nim's strategy $(\psi,\theta)$ yields the history $A,x_0,x_1,\dots,x_n$ of \mer's moves up to the $n$th round in $\mathfrak{G}(\calUS,\calVS)$ against \art-\nim's strategy $(\varphi,\eta)$.
\begin{itemize}
\item In the $(2n)$th round of the game $\mathfrak{G}(\calUS,\hat{\varphi}+\calVS)$, \art\ makes a query $\langle x_0,x_1,\dots,x_n\rangle$ to $\hat{\varphi}$, while \nim\ does nothing.
\item 
\mer's response $p_n$ in the $(2n+1)$st round is $\varphi(x_0,\dots,x_n)$.
By induction, this is the query by \art's strategy $\varphi$ in the $(n+1)$st round of $\mathfrak{G}(\calUS,\calVS)$.
In the $(2n+1)$st round of $\mathfrak{G}(\calUS,\hat{\varphi}+\calVS)$, \art\ adopts this response as the next move:

If $p_n$ instructs querying $y_n$, \art\ interprets this as querying $y_n$ to $\overline{\calV}$; then \nim\ chooses $B_n\in\V_{y_n}$ according to strategy $\theta$.

The value $y_n$ is the query by \art's strategy $\varphi$, so $B_n\in\V_{y_n}$ is the choice of \nim's strategy $\eta$, in the $(n+1)$st round of $\mathfrak{G}(\calUS,\calVS)$.
\item
Therefore, \mer's response $x_{n+1}\in B_n$ in the $(2n+2)$nd round of $\mathfrak{G}(\calUS,\hat{\varphi}+\calVS)$ is the same as \mer's response in the $(n+1)$th round of $\mathfrak{G}(\calUS,\calVS)$.
Thus, the claim is verified.
\end{itemize}
Since $\varphi$ is \art's winning strategy in $\mathfrak{G}(\calUS,\calVS)$, it declares the termination at some round $k$ and outputs $y_k\in A$.
Then $\psi$ receives this information at round $2k+1$ and returns the same output $y_k\in A$.
This shows that $(\psi,\theta)$ is \art-\nim's winning strategy in $\mathfrak{G}(\calUS,\hat{\varphi}+\calVS)$.

\item[$\impliedby$:]
Let us assume $\calUS\clt f+\calVS$.
Since both $f$ and $\calVS$ are pointed, by Proposition \ref{prop:sum-vs-product}, we may assume $\calUS\clt f\times\calVS$.
In this case, there exists \art's computable strategy $\varphi$ in $\mathfrak{G}(\calUS,f\times\calVS)$.
We correct this strategy to \art's strategy $\psi$ in $\mathfrak{G}(\calUS,\calVS)$:
When the computable strategy $\varphi$ makes a query $\langle a,z\rangle$ to $f\times\overline{\V}$, the strategy $\psi$ (not necessarily computable) can freely use the value of $f(a)$ using $f$ as an oracle.
Thus, $\psi$ only needs to make the query $z$ to $\overline{\V}$.

Formally, for \art's move in the $n$th round of the original game
\[
\varphi(\langle x_0,\langle f(a_0),x_1\rangle,\langle f(a_1),x_2\rangle,\dots,\langle f(a_{n-1}),x_n\rangle\rangle)=y_n,
\]
\art's new strategy $\psi$ is defined as follows:
\[
\psi(\langle x_0,x_1,\dots,x_n\rangle)=
\begin{cases}
\langle\uparrow,z_n\rangle&\mbox{if }y_n=\langle\uparrow,\langle a_n,z_n\rangle\rangle,\\
\langle\downarrow,z_n\rangle&\mbox{if }y_n=\langle\downarrow,z_n\rangle.
\end{cases}
\]
Here, $y_n$ uniquely determines the value $a_n$, and since $f$ is single-valued, given \mer's play $x_0,x_1,\dots$, one can inductively see that the strategy $\varphi$ uniquely generates the sequence $a_0,a_1,\dots$.
Therefore, $\psi$ is well-defined.

Similarly, the following modification of \nim's strategy is well-defined.
\[
\theta(A,x_0,x_1,\dots,x_n)=\eta(A,x_0,\langle f(a_0),x_1\rangle,\langle f(a_1),x_2\rangle,\dots,\langle f(a_{n-1}),x_n\rangle).
\]

It is easy to see that $(\psi,\theta)$ is a winning strategy:
As mentioned above, the history $A,x_0,x_1,\dots,x_n$ of \mer's moves up to the $n$th round in $\mathfrak{G}(\calUS,\calVS)$ against $(\psi,\theta)$ uniquely determines the history $A,x_0,\langle f(a_0),x_1\rangle,\langle f(a_1),x_2\rangle,\dots,\langle f(a_{n-1}),x_n\rangle$ of \mer's moves up to the $n$th round in $\mathfrak{G}(\calUS,\hat{\varphi}\times\calVS)$ against $(\varphi,\eta)$. 
Since $\varphi$ is winning in the latter game, it declares the termination at some round $k$ and outputs $y_k\in A$.
Then $\psi$ receives this information at round $k$ and returns the same output $y_k\in A$.
This shows that $(\psi,\theta)$ is \art-\nim's winning strategy in $\mathfrak{G}(\calUS,\calVS)$.
\end{itemize}
\end{proof}

\begin{corollary}\label{cor:preorder}  $\glt$ defines a preorder on upper sequences (or indeed, upper sets).
\end{corollary}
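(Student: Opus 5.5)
Reflexivity and transitivity are the two things to check, and I would use Theorem~\ref{thm:preorder} to move both questions into the $\clt$-order. That order is already a preorder by Fact~\ref{fact:LT-basic-for-upper-sequences}(iii), equivalently Theorem~\ref{thm:kih-bil}. The upper-set case then follows by viewing an upper set as a constant (or singleton-indexed) upper sequence, since Definition~\ref{def:GTK-full} specialises to Definition~\ref{def:GTK} in that case.

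\emph{Reflexivity.} Every computable Gamified Kat\v{e}tov reduction is in particular a Gamified Kat\v{e}tov reduction; in game terms, a computable strategy for \art\ is still a strategy. So $\calUS\clt\calUS$ gives $\calUS\glt\calUS$.

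\emph{Transitivity.} Suppose $\calUS\glt\calVS$ and $\calVS\glt\calWS$. First I handle degenerate pointedness cases directly. If $\calUS$ is not pointed, it is the least element and there is nothing to prove. If $\calUS$ is pointed but $\calVS$ is not, then no $A\in\calU_n$ can be answered without a query, so \art\ must terminate immediately with an output in every $A\in\calU_n$. That answer works equally well against $\calWS$. A similar argument applies if $\calWS$ is not pointed.

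So assume all three are pointed. Theorem~\ref{thm:preorder} then gives $f,g\colon\w\to\w$ with $\calUS\clt f+\calVS$ and $\calVS\clt g+\calWS$. By Monotonicity in Corollary~\ref{fact:prod-upper-seq} (sum with $f$), together with Associativity and transitivity of $\clt$,
\[
\calUS\clt f+\calVS\clt f+(g+\calWS)\eclt (f+g)+\calWS .
\]
Next I replace $f+g$ by a single total function. Proposition~\ref{prop:sum-vs-product} gives $f+g\leq_{\rm cK} f\times g$, hence $f+g\clt f\times g$ by Fact~\ref{fact:LT-basic-for-upper-sequences}(i). Observation~\ref{obs:product-preserves-Turing} gives a total $h$ with $f\times g\eclt h$. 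Applying monotonicity once more yields $\calUS\clt h+\calWS$, and the $\Leftarrow$ direction of Theorem~\ref{thm:preorder} turns this into $\calUS\glt\calWS$.

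The main obstacle is bookkeeping rather than ideas. Monotonicity of $+$ is stated only on the left of Corollary~\ref{fact:prod-upper-seq}(ii), so I would note that $+$ is commutative up to computable Kat\v{e}tov equivalence (swap the tags $0,1$); this lets monotonicity be applied in either slot. I also need to keep track that the hypotheses of Theorem~\ref{thm:preorder} hold: $\calVS$ and $\calWS$ must be pointed, and functions viewed as upper sequences are total and hence pointed. These are exactly the cases dispatched above.
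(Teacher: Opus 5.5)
Your proof is correct and is essentially the paper's argument. Reflexivity comes from a computable (hence oracle) reduction. Transitivity applies Theorem~\ref{thm:preorder} twice, chains $\calUS\clt f+\calVS\clt(f+g)+\calWS\clt(f\times g)+\calWS$ via Corollary~\ref{fact:prod-upper-seq}, Proposition~\ref{prop:sum-vs-product} and Observation~\ref{obs:product-preserves-Turing}, and then converts back. The one difference is the non-pointed cases, which you treat directly. The paper instead asserts that pointedness of $\calUS$ propagates to $\calVS$ and $\calWS$; this holds for the tree-based Definition~\ref{def:GTK-full}, but not literally for the game-based one, where \art\ may terminate at round $0$. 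So your treatment is a small improvement, though the phrase ``no $A\in\calU_n$ can be answered without a query'' should read ``no query can be made''.
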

\begin{proof} We need to show $\glt$ is reflexive and transitive.
\begin{itemize}
    \item {\em Reflexivity.} 
   Obviously, $\calUS\leq_{\rm cK}\calUS$, so $\calUS\glt\calUS$.

    \item {\em Transitivity.} Given any triple of upper sequences $\calUS,\calVS,\calWS$, suppose 
$$\calUS\glt \calVS\qquad\text{and}\qquad \calVS\glt\calWS\;.$$ 
If $\calUS$ is not pointed then $\calUS$ is $\glt$-least, so trivially $\calUS\glt\calWS$.
Thus, we can assume that $\calUS$ is pointed.
In this case, $\calVS$ is pointed by $\calUS\glt \calVS$, and then $\calWS$ is pointed by $\calVS\glt\calWS$.

By Theorem~\ref{thm:preorder} there exists $f,g\colon\w\to\w$ such that 
$$\calUS \clt f+ \calVS \qquad\text{and}\qquad \calVS\clt g+ \calWS\;.$$

Totality of $f$ and $g$ ensure their pointedness, so we have $f+g\clt f\times g$ by Proposition \ref{prop:sum-vs-product}.
Applying Fact~\ref{fact:prod-upper-seq} and Observation \ref{obs:product-preserves-Turing}, this implies $$\calUS\clt f+ \calVS  \clt (f+ g)+ \calWS\clt(f\times g)+\calWS,$$
where we regard $f\times g$ as a single-valued total function on $\w$. Since $\clt$ is transitive (Theorem~\ref{thm:kih-bil}), apply Theorem~\ref{thm:preorder} once more 
to conclude $\calUS\glt\calWS$. 
\end{itemize}

\end{proof}

\begin{conclusion}\label{con:preorder} The extended Gamified Kat\v{e}tov order is the relativisation of $\clt$ to arbitrary Turing oracles; this perspective is reflected in the $\mathsf{o}$ notation in `` $\glt$ ''. In particular, $\glt$ is a preorder.
\end{conclusion}

\subsection{Combinatorial Complexity via Oracle Power}\label{sec:cofinal-thm} Let us pause to review our work. It is consistent with previous results that Turing degrees and filters are generally incomparable within $\clt$, and so direct points of contact between them (without, say, taking sums or Cartesian products) are rare. However, encouragingly, this is in fact very much not the case. As it turns out, the knowledge of any Turing oracle can always be encoded into an ideal, as follows:

\begin{proposition}\label{prop:turing-coding} For any $f\colon\w\to\w$, there exists a summable ideal $\calI$ on $\w$ such that 
	$$f\clt \calI^\ast.$$
\end{proposition}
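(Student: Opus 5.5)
The plan is to prove the statement by a one-query computable Kat\v{e}tov reduction, which suffices by Fact~\ref{fact:LT-basic-for-upper-sequences}(i). Here $f$ is regarded, as in Discussion~\ref{desc:function-complexity-as-principal-filter}, as the upper sequence of principal ultrafilters $(\{A\mid f(n)\in A\})_{n\in\w}$, and $\calI^\ast$ as a constant upper sequence. So I need computable $\varphi,\psi$ such that for every $n$ there is $B\in\calI^\ast$ with $\psi(n,x)=f(n)$ for all $x\in B$. The idea is to build the summable ideal so that every number carrying a \emph{correct} long initial segment of $f$ has non-negligible total weight, while all incorrect codes have negligible total weight.

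\textbf{Construction.} Identify $\w$ with pairs $\lranglet{\sigma}{k}$, where $\sigma\in\lww$ and $k\in\w$, via the computable codings of Convention~\ref{con:base-change}. Define a weight $g\colon\w\to\Q_+$ by
\[
g(\lranglet{\sigma}{k}):=\begin{cases}\frac{1}{k+1} & \text{if } |\sigma|=k \text{ and } \sigma=f\rstr k,\\ 2^{-\lranglet{\sigma}{k}} & \text{otherwise,}\end{cases}
\]
and let $\calI:=\mathrm{Sum}_g=\{X\mid\sum_{x\in X}g(x)<\infty\}$, which is a summable ideal. Write $C_f:=\{\lranglet{f\rstr k}{k}\mid k\in\w\}$.

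\textbf{Step 1: non-triviality.} Since $\sum_k\frac{1}{k+1}=\infty$, we have $\w\notin\calI$. Hence $\emptyset\notin\calI^\ast$, and $\calI^\ast$ is a genuine upper set as required by Convention~\ref{con:filters}.

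\textbf{Step 2: the tails of $C_f$ lie in $\calI^\ast$.} For each $n$, put $B_n:=\{\lranglet{f\rstr k}{k}\mid k>n\}$. Its complement $\w\setminus B_n$ consists of two parts:
\begin{itemize}
\item the finitely many points $\lranglet{f\rstr k}{k}$ with $k\le n$, which have finite total weight;
\item points outside $C_f$, whose total weight is at most $\sum_x 2^{-x}<\infty$.
\end{itemize}
So $\w\setminus B_n\in\calI$, that is, $B_n\in\calI^\ast$.

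\textbf{Step 3: the reduction.} Let $\varphi(n)$ be the single index of the constant sequence, and set $\psi(n,\lranglet{\sigma}{k}):=\sigma(n)$ when $n<|\sigma|$, and $0$ otherwise; both are computable. Given $n$ and $A\ni f(n)$, the oracle returns $B_n$. Every $x\in B_n$ has the form $\lranglet{f\rstr k}{k}$ with $k>n$, so $\psi(n,x)=f(n)\in A$. This gives a computable one-query reduction, and hence $f\clt\calI^\ast$.

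The only delicate point, and the one I expect to be the real obstacle, is the choice of weights. A naive column encoding, with codes $\lranglet{n}{m}$ and $m=f(n)$, fails: a set in $\calI^\ast$ can only omit a summable amount, so it cannot be confined to column $n$ when other columns carry infinite weight. Merlin could then answer with a correct code for the wrong $n$. Encoding initial segments $f\rstr k$, with infinite weight spread one point per length, avoids this, because every tail of $C_f$ is co-summable and every element of it decodes all values $f(n)$ with $n<k$.
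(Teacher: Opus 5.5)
Your proof is correct and is essentially the paper's argument: the paper likewise makes the true prefixes of $f$ the heavy points of a summable ideal, has Nimue (your oracle) answer with the co-summable tail of true prefixes of length $>n$, and has Arthur output the $n$th entry of whatever prefix Merlin plays. The only difference is the weight function. The paper takes $g\in\{0,1\}$, so $\mathrm{Sum}_g$ is just the ideal of sets meeting $\{f\rstr k\mid k\in\w\}$ in a finite set, whereas your strictly positive weights tending to $0$ give a summable ideal in the stricter standard sense (tall, with divergent total weight); this is a harmless strengthening.
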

	\begin{proof} Some conventions: 
		\begin{enumerate}[label=(\alph*)]
			\item Each $\sigma\in\lww$ is identified with a natural number via the fixed computable bijection $\lww\simeq \w$. 
			\item We write $f\rstr n$ to mean the finite string $(f(0),f(1),f(2),\dots f(n))$. 
		\end{enumerate}

\noindent Now define the function
	\begin{align*}
	g\colon \lww&\longrightarrow \w \\
	\sigma&\longmapsto \begin{cases}1 \qquad\text{if}\; \exists \,n\in\w\; \text{such that}\; \sigma=f\rstr n\\
	0\qquad \text{otherwise\;}
	\end{cases}.
	\end{align*}
	In which case, $A\in \mathrm{Sum}_g$ iff $|A\cap \{f\rstr n \mid n\in\w\}|<\infty$, where we regard $\mathrm{Sum}_g$ as an ideal on $\w$ via the computable bijection in Convention (a). Taking the dual filter, we claim that
	$$f\clt (\mathrm{Sum}_g)^*\quad. $$
		
	Why? Consider the corresponding Bilayer Game below:
			\[
		\begin{array}{rccccccccccc}
		\mer \colon	& n	&		&& \sigma	 \\
		\art\colon	&		&  0	&		&&&&& y_1=\lranglet{1}{\sigma\rstr n} \\
		\nim\colon	&		& \{ f\rstr m\in\lww \mid  m\geq n\}	&		
		\end{array}
		\]
In English: $\mer$ starts the game by selecting some $n\in\w$, challenging $\art$ to determine $f(n)$. $\nim$ replies by picking the subset $A=\{ f\rstr m\in\lww \mid  m\geq n\}$, which clearly belongs to $(\mathrm{Sum}_g)^*$. As per the rules of the game, $\mer$ picks some $\sigma\in A$, and so $\sigma=f\rstr m$ for some $m\geq n$. In response, $\art$ outputs the answer $\sigma\rstr n = f(n)$, successfully terminating the game.
	\end{proof}
	
\begin{remark} The proof of Proposition~\ref{prop:turing-coding} gives a prototypical argument for showing LT-reducibility via games. Informally, $\nim$ has the power to defeat $\mer$ whenever she can force the wizard to produce a response so that $\art$ only needs to make a finite number of checks to deduce the correct answer.\footnote{In particular, this finite bound ensures $\art$'s strategy remains {\em computable}.} For more examples, see \cite{Kih23}. 

\end{remark}
	
Proposition~\ref{prop:turing-coding} shows that the Turing degrees embed cofinally into the $\clt$-order on filters over $\w$. We next show that the cofinality is non-trivial: while each Turing degree is dominated by some filter, there exists no universal filter bounding every degree. In this sense, we can systematically sound out the (combinatorial) complexity of filters by identifying which Turing degrees they fail to bound within $\clt$. 

\smallskip 

It is not difficult to see why no such universal filter exists -- this follows almost immediately from a result by Phoa \cite[Proposition 3]{Pho89}.
%
%

\begin{theorem}[Phoa {{\cite{Pho89}}}]\label{thm:phoa} Let $j$ be an LT topology in $\Eff$, and $\dn$ the double negation topology. \underline{Then}, $$\dn \clt j \iff k\clt j \;\;\text{for every Turing degree topology}\; k\;.$$
	 In particular, if $j$ is non-trivial, then $j\eclt \dn$ .
\end{theorem}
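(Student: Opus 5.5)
The forward direction is the easy half. Every Turing degree topology $k=j_A$ is non-trivial, so by Pitts' result recalled in the introduction $k\clt\dn$. Hence $\dn\clt j$ gives $k\clt j$ by transitivity of $\clt$ (Theorem~\ref{thm:kih-bil}).

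For the converse I will work in the game picture of Theorem~\ref{thm:kih-bil} and Definition~\ref{def:bilayer-game}, and argue by contradiction using the recursion theorem.

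\textbf{Setting up the games.} First I identify $\dn$, up to $\eclt$, with the upper set $\calU_{\dn}$ of all non-empty subsets of $\w$, viewed as a one-index upper sequence. In the LT-game this is the choice problem: \mer\ secretly picks a non-empty $A$, and \art\ must eventually output an element of $A$. By Summary Theorem~\ref{sumthm-1}(ii) and the maximality of $\dn$, this basic topology is non-trivial and not the identity, so it should coincide with $\dn$; I will check this against \cite{LvO13}. Fix an upper sequence $\calVS$ representing $j$, and suppose $\calU_{\dn}\not\clt\calVS$. For each index $e$, read $\varphi_e$ as a computable \art-strategy in $\mathfrak{G}(\calU_{\dn},\calVS)$. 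Since \nim\ may play any legal moves, no \art-\nim\ pair built on $\varphi_e$ wins. The key claim to extract is then the following: for each $e$ there is a non-empty $A_e$ such that no \nim-strategy makes $\varphi_e$ win against $A_e$. (This requires care, because \nim\ sees $A$; I would take $A_e$ witnessing failure for the best \nim-response to $\varphi_e$.) Define $f(e):=\min A_e$.

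\textbf{Diagonalising against the strategy for $f$.} By hypothesis $f\clt j$, so there is a computable \art-strategy $\psi$ and a \nim-strategy $\eta$ winning $\mathfrak{G}(f,\calVS)$, where \mer's public move is an input $e$ and the correct output is $f(e)$. By the recursion theorem, pick $e^\ast$ such that $\varphi_{e^\ast}$ simulates $\psi$ on the public input $e^\ast$, passing along \mer's moves and finally outputting whatever $\psi$ outputs. Against $A_{e^\ast}$, let \nim\ play $\eta$ as in the simulated game on input $e^\ast$. These moves are legal, since they lie in the same $\calV_y$ as in the simulation, and they do not depend on $A$. Every play then terminates with output $f(e^\ast)=\min A_{e^\ast}\in A_{e^\ast}$, so $\varphi_{e^\ast}$ wins against $A_{e^\ast}$, which contradicts the choice of $A_{e^\ast}$.

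\textbf{Main obstacle and the final clause.} I expect the hardest step to be the quantifier structure in choosing $A_e$. Failure of $\varphi_e$ to be part of any winning pair must be converted into a single set $A_e$ that defeats $\varphi_e$ together with every \nim-response. This works because \nim's strategy may depend on $A$, so winning for all $A$ is equivalent to winning for each $A$ separately. Once $A_e$ is in hand, the construction above exhibits a particular \nim-response that wins, which is exactly the contradiction needed. For the ``in particular'' clause: if $j$ is non-trivial then $j\clt\dn$ by Pitts' maximality result, and combining this with $\dn\clt j$ gives $j\eclt\dn$.
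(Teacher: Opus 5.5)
Your proof is correct and is essentially the paper's argument. The paper obtains Phoa's theorem by specialising its Generalised Phoa theorem to $\calC_{!2}=\{\{0\},\{1\}\}$, which is $\eclt$ to your family of all non-empty sets. That theorem is proved by exactly your diagonalisation: pick for each computable strategy a set it cannot handle, observe that the resulting total function is $\clt j$, and contradict this with Kleene's recursion theorem. The paper phrases this with witness trees instead of \nim-strategies, which makes your quantifier-exchange step automatic, and it gets the easy direction from the direct reduction $F\clt\calU$ rather than from Pitts' maximality.

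One point to tidy: your reason why the basic topology of all non-empty sets is $\dn$ (``non-trivial and not the identity'') does not suffice on its own. It does follow from \cite[Prop.~3.1]{LvO13}, or directly: every upper sequence whose members omit $\emptyset$ one-query reduces to it via the identity map, so it is the top non-trivial degree.
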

	


In Section \ref{sec:generalised-Phoa}, we will explain the underlying mechanism of this theorem by placing it in the right context, which points the way to a more general theorem.
For now, we apply Theorem~\ref{thm:phoa} to establish the following cofinality result.

\begin{theorem}[Cofinality]\label{thm:cofinal} The following is true:
		\begin{enumerate}[label=(\roman*)]
		\item For any $f\colon\omega\to\omega$, there exists a summable ideal $\calI$ such that
$$f \clt \calI^*,$$
where $\calI^*$ is the dual filter.
			\item There does not exist a filter $\calF\subseteq\Pw$ such that 
					$$k\clt\calF,\qquad\text{for all Turing degree topologies}\; k\;.$$
		\end{enumerate}

\end{theorem}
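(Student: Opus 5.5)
Part (i) is exactly Proposition~\ref{prop:turing-coding}, so nothing new is needed there: given $f$, we take $\calI:=\mathrm{Sum}_g$ with $g$ the indicator of the set of initial segments $\{f\rstr n\mid n\in\w\}$ (transported to $\w$ via $\code{-}$). The bilayer-game strategy exhibited there witnesses $f\clt\calI^*$.

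For part (ii), I would argue by contradiction using Phoa's Theorem~\ref{thm:phoa}. Suppose $\calF$ is a filter with $k\clt\calF$ for every Turing degree topology $k$. Let $j_\calF$ be the basic topology of Summary Theorem~\ref{sumthm-1}. The hypothesis says $k\clt j_\calF$ for all Turing degree topologies $k$. Theorem~\ref{thm:phoa} then gives $\dn\clt j_\calF$. Provided $j_\calF$ is non-trivial, the same theorem yields $j_\calF\eclt\dn$. So it suffices to show that $j_\calF$ is non-trivial and strictly below $\dn$.

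Non-triviality comes from the convention $\emptyset\notin\calF$. Trivial means $j_\calF(\emptyset)$ is inhabited, i.e.\ $\calF\clt\{\emptyset\}^{\up}$, and this fails because no play can produce an element of $\emptyset$. For $\dn$, I would identify it with the upper set $\calU_{\dn}:=\Pw\setminus\{\emptyset\}$, the family of all non-empty sets (equivalently, $j_{\calU_{\dn}}$ sends $\emptyset\mapsto\emptyset$ and everything else to $\w$). Then $\dn\clt\calF$ means $\calU_{\dn}\clt\calF$, and since $\glt$ is coarser than $\clt$, this gives $\calU_{\dn}\glt\calF$.

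The key step, which I expect to be the main obstacle, is refuting $\calU_{\dn}\glt\calF$ for a filter $\calF$. I would use the intersection argument from Step~1 of Theorem~\ref{thm:not-LT}. Let $\Phi$ be a witness. Take $A_0=\{0\}$ and $A_1=\{1\}$, both in $\calU_{\dn}$, and let $T_0,T_1$ be the corresponding $\calF$-branching trees with $\Phi[T_i]\subseteq A_i$. Since $\calF$ is a filter, at each common node the two $\calF$-large successor sets intersect, so we can build a common infinite path $p\in[T_0]\cap[T_1]$. Then $\Phi(p)\in\{0\}\cap\{1\}=\emptyset$, a contradiction. (Equivalently, the argument uses item~(iii) of Summary Theorem~\ref{sumthm-1}: $\calF$ has the $2$-intersection property while $\calU_{\dn}$ fails it, so $j_{\calU_{\dn}}\not\clt j_\calF$. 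Citing this would bypass the game argument entirely.)

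The remaining bookkeeping is to check that $j_{\calU_{\dn}}$ is indeed $\eclt$ to $\dn$ and that Theorem~\ref{thm:phoa} applies to basic topologies $j_\calF$. Both follow directly from the definitions and from Summary Theorem~\ref{sumthm-1}.
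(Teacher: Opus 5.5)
Your proof is correct and is essentially the paper's. Part (i) is Proposition~\ref{prop:turing-coding}, and part (ii) combines Phoa's Theorem~\ref{thm:phoa} with the fact that a filter has the $2$-intersection property while the generating family of $\dn$ lacks it, which is exactly your Summary Theorem~\ref{sumthm-1}(iii) citation; the paper uses the generating family $\{\{0\},\{1\}\}$, which is equivalent to your $\Pw\setminus\{\emptyset\}$, and your tree-intersection argument simply proves this $n=2$ case by hand, in fact giving the stronger $\Pw\setminus\{\emptyset\}\not\glt\calF$. The non-triviality detour is unnecessary, since $\dn\not\clt j_\calF$ already contradicts Phoa directly, and as written its inequality is reversed: triviality of $j_\calF$ means $\{\emptyset\}^{\up}\clt\calF$, not $\calF\clt\{\emptyset\}^{\up}$, which always holds.
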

	\begin{proof} \hfill 
\begin{enumerate}[label=(\roman*):]
	\item By Proposition~\ref{prop:turing-coding}.
	\item  This is easy, but we elaborate.
	Suppose for contradiction that there exists a filter $\calF\subseteq\Pw$ such that $k\clt \calF$ for all Turing degree topologies $k$.
	 Theorem~\ref{thm:phoa} implies that $$\dn\clt \calF \;.$$	 
	 Observe from \cite[Prop.~3.1]{LvO13} that $\calA_{\dn}=\{\{0\},\{1\}\}$ is the generating subset family for the double-negation topology (see also Example \ref{ex:A-function-turing} below). 
However, it is clear that $\calA_{\dn}$ fails the 2-intersection property [since $\{0\}\cap \{1\}=\emptyset$], whereas $\calF$ satisfies it [being closed under finite intersection]. Applying Summary Theorem~\ref{sumthm-1}, this implies $$\dn\not\clt\calF\;,$$ 
a contradiction, so we are done. 
\end{enumerate}
	\end{proof}
	
\begin{discussion}
 It is natural to ask if we also get cofinality in the converse direction. Namely, for any filter $\calF\subseteq\Pw$, does there exist a Turing degree $k$ such that $\calF\clt k$? However, this is emphatically false: given any filter $\calF$ and any Turing degree $k$, it is easy to see that the relation $\calF\clt k$ forces $\calF$ to be principal \cite[Prop. 3.4.4]{Lee}; see also Discussion \ref{desc:function-complexity-as-principal-filter} (regarding a Turing degree as a ``varying principal ultrafilter''). One can therefore read item (ii) of Cofinality Theorem~\ref{thm:cofinal} as giving the next best possible result: for any filter $\calF$, there exists a Turing degree $k$ such that $k\not\clt \calF$.
\end{discussion}

\begin{conclusion}\label{conc:turing} Every filter $\calF$ determines an {\em initial segment} of the Turing degrees,
$$\calD_{\rm T}(\calF):=\big \{ \,[f\colon\w\to\w] \bmid f\clt \calF \big\}\,,$$
which we call the {\em Turing degree profile of $\calF$}. 

By the Cofinality Theorem~\ref{thm:cofinal}, every Turing degree lies in the profile of some filter, while each $\calD_{\rm T}(\calF)$ omits at least one degree.  In this surprising sense, Turing degrees calibrate the relative complexity of filters by measuring their computable strength within $\clt$.
\end{conclusion}

Conclusion~\ref{conc:turing} presents a remarkable suggestion: given the $\clt$ order, we may use the Turing degrees to probe the structural differences between filters on $\w$. As a first step in developing this insight, we identify a large class of filters whose Turing degree profiles are precisely the hyperarithmetic degrees.



\medskip

We recall the definition of hyperarithmetic (see also \cite{Sac90,ChYu15}), before stating the theorem.

\begin{definition}[Hyperarithmetic] A subset $A\subseteq \w$ is $\Pi^{1}_1$ if it can be defined in the language of second-order arithmetic by
$$A=\{x\mid \forall X\,.\, \psi(X,x)\}\,,$$
where $\forall X$ is the only second-order quantifier in $\forall X\,.\, \psi(X,x)$. A set is $\Sigma^{1}_1$ if its complement is $\Pi^1_1$; and a set is $\Delta^1_1$ or {\em hyperarithmetic} if it is both $\Sigma^1_1$ and $\Pi^1_1$. A function $f\colon\omega\to\omega$ is {\em hyperarithmetic} if 
$$\mathrm{graph}(f):=\{\lranglet{n}{f(n)} \mid  n\in\w\, \}$$
is a hyperarithmetic set.
\end{definition}

\begin{theorem}[Hyperarithmetic Strength]\label{thm:filter-hyperarithmetic} Let $\calF\subseteq \Pw$ be a non-principal $\Delta^1_1$ filter. Then, for any $f\colon\w\to\w$, 
$$f\clt \calF \iff f\;\;\text{is hyperarithmetic}\;.$$
\end{theorem}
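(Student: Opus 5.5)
The plan is to prove the two directions separately. The backward direction goes through $\Fin^\ast$ and van Oosten's theorem; the forward direction is a descriptive-set-theoretic analysis of the LT-game.

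\emph{($\Leftarrow$).} By van Oosten \cite{vO14} (in our language), every hyperarithmetic $f$ satisfies $f\clt\Fin^\ast$. It therefore suffices to show $\Fin^\ast\clt\calF$ and then use transitivity of $\clt$ (Theorem~\ref{thm:kih-bil}). Non-principality alone only gives that $\bigcap\calF=\emptyset$, not that $\calF$ contains the cofinite filter. So instead I would establish $\Fin^\ast\clt\calF$ through the game $\mathfrak{G}(\Fin^\ast,\calF)$, using an $\omega$-query strategy that extracts infinitely many distinct elements.

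The naive strategy has \mer\ choose a cofinite set $A$, and \art\ repeatedly queries $\calF$ and records the elements \mer\ plays. The difficulty is that \art\ must output an element of $A$ without knowing $A$. This is exactly where the complexity of $\calF$ matters. Because $\calF$ is non-principal, for each finite $S$ the set $\w\setminus S$ is $\calF$-positive, though not necessarily in $\calF$. I would instead reduce to the known case $\Fin^\ast\clt \calF\restriction X$ on a positive set $X$ where $\calF$ extends the Fr\'echet filter. Concretely, if $\calF$ contains no cofinite sets, then $\calF$ is not an ultrafilter, and we could pass to $\calF\otimes\calF$, which is $\elt$-equivalent to $\calF$. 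Alternatively, I would try a computable one-query reduction from $\Fin^\ast$ to $\calF$. I expect this to be a small technical step. If it resists, I would instead cite the argument of \cite{Kih23} for ${\rm Den}_0^\ast$, since that argument only uses that every $\calF$-large set is infinite, which holds for non-principal filters after restricting to a suitable positive set.

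\emph{($\Rightarrow$).} Suppose $f\clt\calF$, witnessed by a computable \art-strategy $\varphi$ and some \nim-strategy. Fix $n$. Since $\calF$ is a filter, the argument of Step~1 in Theorem~\ref{thm:not-LT} adapts: for the challenge $A=\{f(n)\}$ there is an $\calF$-branching tree $T$ (via the Theorem~\ref{thm:GTK-full} translation, with $\gamma,\Phi$ computable) such that $\Phi^{[n]}[T]=\{f(n)\}$. Moreover, any two $\calF$-branching trees share a common path, because a filter has the intersection property. It follows that
\[ f(n)=c \iff \exists T\,(T \text{ is } (\calF,\gamma^{[n]})\text{-branching},\ [T]\subseteq\dom\Phi^{[n]},\ \Phi^{[n]}[T]\subseteq\{c\}).\]
Since $\calF$ is $\Delta^1_1$ and $\Phi,\gamma$ are computable, the condition ``$T$ is $\calF$-branching'' is $\Delta^1_1$ in $T$. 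The condition $[T]\subseteq\dom\Phi^{[n]}\wedge\Phi^{[n]}[T]\subseteq\{c\}$ is $\Pi^1_1$ in $T$. Hence the graph of $f$ is $\Sigma^1_2$ on its face, which is too weak.

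The main obstacle is to bring this down to $\Sigma^1_1$. My plan is to replace ``exists a tree'' with a well-founded labelling in the spirit of Definition~\ref{def:label}. Define $\nu$ on the computable tree $T_\Phi$ by recursion from the leaves: $\nu(\sigma)=c$ if $\Phi(\sigma)=c$, and otherwise $\nu(\sigma)=c$ if the set $\{m\mid \nu(\sigma\fr m)=c\}\in\calF_{\gamma(\sigma)}$. Then $f(n)=c$ iff $\nu(n)=c$. The two directions use Lemma~\ref{lem:separation-1}-style path building for the intersection property, and the existence of the witness tree in the other direction. This recursion runs along a well-founded computable relation (the nonterminating part of $T_\Phi$ below each $n$ is well-founded along any $\calF$-branching tree). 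It is a $\Delta^1_1$-inductive definition with $\Delta^1_1$ steps, since membership in $\calF$ is $\Delta^1_1$. So $\nu$ is defined by hyperarithmetic recursion along a recursive well-founded tree, and by standard results (e.g.\ \cite{Sac90}) its graph is $\Pi^1_1$. Because $f$ is total and single-valued, the complement of the graph is also $\Pi^1_1$: $f(n)\neq c\iff\exists c'\neq c\,(\nu(n)=c')$. Therefore the graph is $\Delta^1_1$ and $f$ is hyperarithmetic. The delicate point is to verify that $\nu(n)$ is defined, i.e.\ that $\nu(n)\neq\bot$ at the relevant root, using that the winning tree exists. I would argue this by contradiction via Lemma~\ref{lem:separation-2}: a $\bot$-labelled root would force a critical node, and at a critical node a filter cannot make all label classes null while also containing the branching set.
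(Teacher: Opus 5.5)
Your overall route is the paper's: ($\Leftarrow$) goes through van Oosten's theorem for $\Fin^\ast$ plus $\Fin^\ast\clt\calF$, and ($\Rightarrow$) goes through a labelling defined by a $\Delta^1_1$ inductive definition, a $\Pi^1_1$ graph, and totality of $f$. The ($\Leftarrow$) direction is left open, however, because of a false claim. For a filter, $\bigcap\calF=\emptyset$ \emph{does} give $\Fin^\ast\subseteq\calF$: for each $n$ some $A_n\in\calF$ omits $n$, so $\w\setminus\{n\}\in\calF$ by upward closure, and closure under finite intersections puts every cofinite set in $\calF$. Hence the identity is a computable Kat\v{e}tov reduction witnessing $\Fin^\ast\clt\calF$, which is exactly the paper's one-line argument. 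None of your workarounds (passing to $\calF\otimes\calF$, restricting to a positive set, importing the ${\rm Den}_0$ argument) is carried out, and none is needed. Your reading of ``non-principal'' as $\bigcap\calF=\emptyset$ is the right one. Under the literal definition in the paper, $\{A\mid 0\in A\text{ and }\w\setminus A\text{ is finite}\}$ is a non-principal $\Delta^1_1$ filter with $\bigcap\calF=\{0\}$, hence of trivial LT degree, and the statement would fail.

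In ($\Rightarrow$), the reason you give for the root being labelled is false as stated. A filter can make every label class at a node null while containing the branching set, because countably many null sets can cover a member (singleton classes for $\Fin^\ast$; these are exactly the critical nodes used in Theorem~\ref{thm:EDfin}). What is missing is that inside the witness tree $T_n$ only the labels $f(n)$ and $\bot$ occur, by Lemma~\ref{lem:separation-1} and $\Phi^{[n]}[T_n]\subseteq\{f(n)\}$. So at a critical node of $T_n$ the branching set would be covered by the $f(n)$-class and the null $\bot$-class, which forces the $f(n)$-class into $\calF$ and gives the contradiction. More simply, as in the paper: a node $\sigma\in T_n$ not labelled $f(n)$ lies outside $\dom\varphi$ and has a successor in $T_n$ not labelled $f(n)$ (otherwise the $f(n)$-class contains $\{m\mid\sigma\fr m\in T_n\}\in\calF$). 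Iterating from the root gives a path of $T_n$ outside $\dom(\Phi^{[n]})$, which is impossible.

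Separately, your recursion runs over all of $T_\Phi$, whose undefined part need not be well-founded. For example, $\dom\varphi=\{n\fr 0^k\fr 1\mid k\in\w\}$ leaves the infinite path $n\fr 0^\omega$. Well-foundedness inside witness trees does not help, since the definition cannot see them; the paper's proof makes the same tacit assumption. The clean fix is to take $\nu$ to be the least fixed point of the operator that labels $\sigma$ by $c$ when $\varphi(\sigma)=c$ or $\{m\mid\sigma\fr m\text{ has label }c\}\in\calF$. This operator is monotone because $\calF$ is upward closed, $\Delta^1_1$ because $\calF$ is, and single-valued because $\calF$ is a filter. Fact~\ref{fact:inductive-operator} then makes its graph $\Pi^1_1$, unlabelled nodes count as $\bot$, and the path argument above shows the root of the $n$-th tree receives label $f(n)$.
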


The actual proof of the theorem requires some preparation, so it will be helpful to divide the argument into two stages.

\subsubsection*{Step 1: Reduction to $\Pi^1_1$} First a basic observation, before some useful known facts in computability theory and descriptive set theory.

\begin{observation} A (total, single-valued) function $f\colon\w\to\w$ is hyperarithmetic iff the graph of $f$ is $\Pi^1_1$.
\end{observation}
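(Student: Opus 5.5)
The statement to prove is: a total function $f\colon\w\to\w$ is hyperarithmetic iff $\mathrm{graph}(f)$ is $\Pi^1_1$. One direction is immediate, since every $\Delta^1_1$ set is in particular $\Pi^1_1$. The whole content lies in the converse, and the plan there is to use totality and single-valuedness of $f$ to express the complement of the graph as a $\Pi^1_1$ set. Suslin--Kleene then finishes the job.

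For the converse, assume $\mathrm{graph}(f)$ is $\Pi^1_1$. The key identity is
$$\lranglet{n}{m}\notin\mathrm{graph}(f)\iff \exists k\,\big(k\neq m \wedge \lranglet{n}{k}\in\mathrm{graph}(f)\big),$$
together with the clause that the code is not of the form $\lranglet{n}{m}$ at all, which is a computable condition. The right-to-left direction uses that $f$ is single-valued. The left-to-right direction uses that $f$ is total: if $\lranglet{n}{m}$ is not in the graph, then $k:=f(n)$ is a witness with $k\neq m$.

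Next, $\Pi^1_1$ is closed under number quantifiers, conjunction with recursive predicates, and disjunction with recursive predicates. Hence the complement of the graph is $\Pi^1_1$, so the graph itself is $\Sigma^1_1$. Being both $\Sigma^1_1$ and $\Pi^1_1$, the graph is $\Delta^1_1$, which by Suslin--Kleene (as recalled via \cite{Sac90,ChYu15}) is exactly hyperarithmetic. By the definition just given, $f$ is then hyperarithmetic.

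The only step needing care is the closure of $\Pi^1_1$ under the existential number quantifier $\exists k$. I would justify it by the standard normal form: a $\Pi^1_1$ predicate has the form $\forall X\,\exists j\,R$ with $R$ recursive, and $\exists k\,\forall X$ can be commuted to $\forall Y\,\exists k$ by letting $Y$ code the countable family $(X_k)_k$. I do not expect a genuine obstacle here; the real point is simply to notice that totality is what makes the complement of the graph definable by a number quantifier over the graph itself.
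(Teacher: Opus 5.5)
Your proposal is correct and follows the paper's own argument. Totality and single-valuedness give $f(n)\neq m \iff \exists k\,(k\neq m \wedge f(n)=k)$, so the complement of the graph is $\Pi^1_1$ by closure under number quantifiers, and the graph is therefore $\Delta^1_1$. (Since the paper defines hyperarithmetic as $\Delta^1_1$, you do not need to invoke Suslin--Kleene; and your extra clause for codes not of the form $\lranglet{n}{m}$ is vacuous, because the pairing function is a bijection.)
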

\begin{proof}
 The forward implication is immediate, so we prove the converse. 
Since $f$ is total and single-valued, 
$$ f(n)\not=k \iff \exists k'\neq k \,.\underbrace{f(n)=k'}_{\Pi^1_1}\,.$$
Since $\Pi^1_1$ is closed under number quantification, 
conclude that $\w\setminus \mathrm{graph}(f)$, defined by the above expression, is also $\Pi^1_1$.
\end{proof}

\begin{definition}
An operator $\Phi\colon\mathcal{P}(\omega)\to\mathcal{P}(\omega)$ is {\em monotone} if $A\subseteq B$ implies $\Phi(A)\subseteq\Phi(B)$. 
\begin{enumerate}[label=(\roman*)]
    \item If $\Gamma$ is a class (of sets) such that $\{(A,n):n\in\Phi(A)\}\in\Gamma$, then call $\Phi$ a {\em $\Gamma$-operator}.
    \item Denote the {\em least fixed point of $\Phi$} as $\mu\Phi$.
\end{enumerate}
\end{definition}

\begin{fact}[see e.g.~{\cite[Proposition 2.2.2]{Aczel}}]\label{fact:inductive-operator}
The least fixed point of a monotone $\Pi^1_1$-operator is $\Pi^1_1$.
\end{fact}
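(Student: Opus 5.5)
The plan is to avoid the ordinal-stage description of $\mu\Phi$ and use the impredicative, Knaster--Tarski characterisation instead. That characterisation expresses $\mu\Phi$ with a single universal set quantifier in front of a formula whose complexity we can control.

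\emph{Step 1 (Knaster--Tarski).} First check that for a monotone $\Phi\colon\Pw\to\Pw$,
\[
\mu\Phi=\bigcap\{A\subseteq\w\mid \Phi(A)\subseteq A\}.
\]
Call the right-hand side $M$. If $\Phi(A)\subseteq A$, then $M\subseteq A$, so monotonicity gives $\Phi(M)\subseteq\Phi(A)\subseteq A$. Hence $\Phi(M)\subseteq M$. Applying monotonicity again gives $\Phi(\Phi(M))\subseteq\Phi(M)$, so $\Phi(M)$ is itself a prefixed point. Therefore $M\subseteq\Phi(M)$, and $M$ is a fixed point. Every fixed point is a prefixed point and so contains $M$; hence $M$ is the least fixed point. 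This yields
\[
n\in\mu\Phi\iff\forall A\,\big(\Phi(A)\subseteq A\;\rightarrow\; n\in A\big).
\]

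\emph{Step 2 (complexity count).} Here $\Phi$ being a $\Pi^1_1$-operator means the relation $R(A,m):\iff m\in\Phi(A)$ is $\Pi^1_1$ in $(A,m)$. Rewrite the matrix as
\[
\neg\,\forall m\,\big(\neg R(A,m)\vee m\in A\big)\;\vee\; n\in A .
\]
The formula $\neg R(A,m)\vee m\in A$ is $\Sigma^1_1$, since $m\in A$ is arithmetic in $A$. The class $\Sigma^1_1$ is closed under number quantifiers, so ``$\Phi(A)\subseteq A$'' is $\Sigma^1_1$ in $A$. Its negation is $\Pi^1_1$, and a disjunction with the arithmetic clause $n\in A$ stays $\Pi^1_1$. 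Prefixing the universal set quantifier $\forall A$ keeps the formula $\Pi^1_1$, because adjacent universal second-order quantifiers contract. We conclude that $\mu\Phi$ is $\Pi^1_1$.

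\emph{Main obstacle.} The only delicate point is bookkeeping. We must treat $\Pi^1_1$ as a class of relations with a set parameter $A$, so that the closure properties are applied uniformly in $A$. In particular, we must ensure that the hypothesis on $\Phi$ occurs only \emph{negatively inside a negated antecedent}, so that it contributes $\Pi^1_1$ rather than $\Sigma^1_1$ complexity. The same bookkeeping must also handle the normal-form manipulations: contracting $\forall A\,\forall X$ into one set quantifier, and absorbing number quantifiers. These are standard facts about the analytical hierarchy, and we would cite them rather than reprove them.
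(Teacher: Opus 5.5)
Your proof is correct. The Knaster--Tarski identity $\mu\Phi=\bigcap\{A\subseteq\omega\mid \Phi(A)\subseteq A\}$ shows that $n\in\mu\Phi\iff\forall A\,(\neg(\Phi(A)\subseteq A)\vee n\in A)$, and your count (``$\Phi(A)\subseteq A$'' is $\Sigma^1_1$ in $A$, so the matrix is $\Pi^1_1$ and a single $\forall A$ keeps it $\Pi^1_1$) is the standard argument behind the result the paper cites from Aczel without giving a proof of its own. When you cite the closure properties, the one worth naming is that closure of $\Sigma^1_1$ under $\forall m$ uses countable choice, which is harmless here.
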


\begin{construction} Let $T\subseteq\lww $ be a well-founded tree. For each $\sigma\in T$, set $\mathrm{ar}(\sigma):=\{i\in\omega \mid \sigma\fr i\in T\}$.
Given $g\colon\sum_{\sigma\in T}X^{\mathrm{ar}(\sigma)}\to X$, one can recursively construct $R_g\colon T\to X$ as follows:
\begin{itemize}
\item Base step: If $\sigma$ is a leaf, i.e., ${\rm ar}(\sigma)=\emptyset$, then $R_g(\sigma)=g(\sigma,\ast)$.
\item Inductive step: 
Otherwise, $R_g(\sigma)=g(\sigma,\langle R_g(\sigma\fr i)\rangle_{i\in{\rm ar}(\sigma)})$.
\end{itemize}
A similar construction can be performed for a well-founded forest $T\subseteq\omega\times\omega^{<\omega}$.
    
\end{construction}

\begin{fact}[$\Pi^1_1$ transfinite recursion]\label{fact:transfinite-recursion}
Let $T$ be a well-founded $\Pi^1_1$ forest.
If a function $g\colon \sum_{\sigma\in T}\omega^{\mathrm{ar}(\sigma)}\to \omega$ is $\Pi^1_1$, then $R_g$ is hyperarithmetic.
\end{fact}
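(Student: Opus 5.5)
The plan is to show that $\mathrm{graph}(R_g)=\{\lranglet{\sigma}{v}\mid \sigma\in T,\ R_g(\sigma)=v\}$ is $\Pi^1_1$. Since $R_g$ is total on $T$ and single-valued, the argument of the Observation above then makes the complement $\Pi^1_1$ as well (relative to $T$): $R_g(\sigma)\neq v$ iff $\exists v'\neq v$ with $R_g(\sigma)=v'$. So $R_g$ is $\Delta^1_1$, i.e.\ hyperarithmetic. The same remark applied to $g$ first shows that $g$ is in fact $\Delta^1_1$, since $g(\sigma,w)\neq v$ iff $\exists v'\neq v\;g(\sigma,w)=v'$. Thus both $g(\sigma,w)=v$ and $g(\sigma,w)\neq v$ may be used freely, in positive and in negative positions.

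I would try two routes, in order.

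\emph{Route 1: least fixed point.} Use Fact~\ref{fact:inductive-operator}. Define a monotone operator $\Gamma$ on sets $G\subseteq T\times\w$ by putting $\lranglet{\sigma}{v}\in\Gamma(G)$ iff $\sigma\in T$ and there is a family $w=\langle w_i\rangle_{i\in\mathrm{ar}(\sigma)}$ with $\lranglet{\sigma\fr i}{w_i}\in G$ for every $i\in\mathrm{ar}(\sigma)$ and $g(\sigma,w)=v$. By well-foundedness of $T$, an induction on the rank of $\sigma$ shows that $\mu\Gamma=\mathrm{graph}(R_g)$: every stage of the inductive definition is contained in the graph, and every node gets its value at the stage equal to its rank. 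The problem is that $\Gamma$ contains an existential quantifier over the real $w$, so it is only $\Sigma^1_1$, not $\Pi^1_1$. I would try to replace it by a universal form: ``for all $w$, if $\lranglet{\sigma\fr i}{w_i}\in G$ for all $i$ then $g(\sigma,w)=v$'', together with ``$\forall i\in\mathrm{ar}(\sigma)\,\exists u\,\lranglet{\sigma\fr i}{u}\in G$''. But here $G$ occurs negatively, which destroys monotonicity. So Route 1 probably needs modification.

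\emph{Route 2: direct $\Pi^1_1$ definition.} If Route 1 fails, I would define the graph directly by a universal quantifier over candidate recursion functions. The claim is that $\lranglet{\sigma}{v}\in\mathrm{graph}(R_g)$ iff $\sigma\in T$ and
\[
\forall F\colon\lww\to\w\;\Big[\big(\forall\tau\succeq\sigma\ (\tau\in T\to g(\tau,\langle F(\tau\fr i)\rangle_{i\in\mathrm{ar}(\tau)})=F(\tau))\big)\to F(\sigma)=v\Big].
\]
The right-to-left direction holds because $R_g$ extended arbitrarily off $T$ is such an $F$, which gives existence. The left-to-right direction holds because, by induction on rank in the well-founded $T$, any $F$ satisfying the hypothesis agrees with $R_g$ on the cone of $T$ above $\sigma$, which gives uniqueness. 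For the complexity count: the hypothesis sits in a negative position, so it must be $\Sigma^1_1$. The clause $\tau\in T$ is $\Pi^1_1$ and occurs negatively inside the hypothesis, hence is harmless. The equation involving $g$ is $\Delta^1_1$.

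\emph{Main obstacle.} The delicate point is the argument tuple $\langle F(\tau\fr i)\rangle_{i\in\mathrm{ar}(\tau)}$. Its domain $\mathrm{ar}(\tau)$ is defined from the $\Pi^1_1$ set $T$, so $T$ appears both positively and negatively there. I would first try to pass $g$ the whole sequence $\langle F(\tau\fr i)\rangle_{i\in\w}$, regarding $g$ as only reading the coordinates in $\mathrm{ar}(\tau)$. This would require checking that the resulting extended $g$ is still $\Pi^1_1$, since its graph would now refer to $T$ only through the original $\Pi^1_1$ definition of $g$. Failing that, I would fall back on the standard boundedness phenomenon: by the Spector--Gandy and boundedness theorems, $R_g$ is computable in a hyperarithmetic jump hierarchy along the rank of $T$, which is a recursive ordinal because $T$ is a well-founded $\Pi^1_1$ forest. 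This ordinal is bounded below $\omega_1^{\mathrm{CK}}$, and that should give hyperarithmeticity via effective transfinite recursion on a recursive well-order.
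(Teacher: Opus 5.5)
Your objection to Route~1 is correct, and it applies to the paper's own proof, which is exactly your Route~1. The paper puts $\langle\sigma,v\rangle$ into $\Phi(A)$ when, for each $i\in\mathrm{ar}(\sigma)$, some $a_i$ has $\langle\sigma\fr i,a_i\rangle\in A$ and $v=g(\sigma,\langle a_i\rangle_{i\in\mathrm{ar}(\sigma)})$. It declares this ``clearly $\Pi^1_1$'' because only number quantifiers appear, and then applies Fact~\ref{fact:inductive-operator}. But choosing the family $\langle a_i\rangle_i$ is an existential real quantifier. That is not harmless: a monotone $\Sigma^1_1$ operator can have a non-$\Pi^1_1$ least fixed point (take a constant operator whose value is $\Sigma^1_1$-complete). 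Your Route~2 is a genuinely different mechanism that avoids this. It defines the graph as the set of $\langle\sigma,v\rangle$ on which every solution $F$ of the recursion equations agrees, with uniqueness coming from induction along the well-founded $T$. The only real quantifier is the outer $\forall F$, the recursion clause sits in a negative position where $\Sigma^1_1$ is exactly what is needed, and no bound on the rank of $T$ is used.

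The gap is that your ``main obstacle'' is left open, and neither exit works as stated.

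\emph{The fallback rests on a false claim.} Boundedness controls ranks of $\Sigma^1_1$ well-founded relations, not $\Pi^1_1$ ones. Let $\mathrm{WF}$ be the $\Pi^1_1$-complete set of indices of well-founded computable trees $T_e$. Then $\{\langle\rangle\}\cup\{\langle e\rangle\fr\tau: e\in\mathrm{WF},\ \tau\in T_e\}$ is a well-founded $\Pi^1_1$ tree of rank $\omega_1^{\mathrm{CK}}$.

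\emph{The first exit cannot be carried out for non-$\Delta^1_1$ $T$.} There the statement itself fails when $g$ only receives tuples indexed exactly by $\mathrm{ar}(\sigma)$. First, $\{\mathrm{WF}\}$ is a $\Pi^1_1$ singleton: $X=\mathrm{WF}$ iff $X$ is closed under the arithmetical monotone operator generating $\mathrm{WF}$ and $X\subseteq\mathrm{WF}$. Now take $T=\{\langle\rangle\}\cup\{\langle n\rangle: n\in\w\}\cup\{\langle n,m\rangle: m\in\mathrm{WF}\}$. Let $g$ be $0$ at the root and at the leaves, and let $g(\langle n\rangle,w)=1$ iff $n\in\mathrm{dom}(w)$. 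The graph of $g$ is $\Pi^1_1$, since its only non-arithmetical ingredients are $m\in\mathrm{WF}$ and $\mathrm{dom}(w)=\mathrm{WF}$. Yet $n\mapsto R_g(\langle n\rangle)$ is the characteristic function of $\mathrm{WF}$.

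\emph{The repair.} You need an extra hypothesis: either $T$ is $\Delta^1_1$, or $g$ is given with $\Pi^1_1$ graph on all $u\in\w^\w$ and reads only the coordinates in $\mathrm{ar}(\tau)$. Both hold in the paper's application, where $T=\sum_nT_{\Psi_n}$ is $\Sigma^0_1$ and $g$ is defined on all $\alpha\in(\w\cup\{\bot\})^\w$.
\begin{itemize}
\item If $T\in\Delta^1_1$, then ``$w=u\upharpoonright\mathrm{ar}(\tau)$'' is $\Delta^1_1$. Hence $\tilde g(\tau,u)=v\iff\forall w\,[w\neq u\upharpoonright\mathrm{ar}(\tau)\vee g(\tau,w)=v]$ is $\Pi^1_1$.
\item In either case, write $u^\tau_F(i):=F(\tau\fr i)$. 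Your clause becomes $\tau\notin T\vee\forall v'\neq F(\tau)\,\neg(\tilde g(\tau,u^\tau_F)=v')$, which is $\Sigma^1_1$, and your complexity count then goes through.
\end{itemize}
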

\begin{proof}
Fact \ref{fact:inductive-operator} $\Rightarrow$ Fact \ref{fact:transfinite-recursion}:
We construct a monotone $\Pi^1_1$-operator $\Phi$:
Let $A$ and $m$ be given.
Declare $m\in\Phi(A)$ if there is $\sigma\in T$ such that, for each $i\in{\rm ar}(\sigma)$, $\langle\sigma\fr i,a_i\rangle\in A$ for some $a_i\in\omega$, and $m=\langle \sigma,g(\sigma,\langle a_i\rangle_{i\in{\rm ar}(\sigma)})\rangle$.
This condition is clearly $\Pi^1_1$ (since it contains only number quantifiers, excluding references to $g$).

As usual (see e.g.~\cite{Aczel}), $\mu\Phi$ is constructed by the transfinite iteration $\Phi^\alpha(\emptyset)$:
$\Phi(\emptyset)$ corresponds to the base step of the construction of $R_g$, and $\Phi^{\alpha+1}(\emptyset)$ corresponds to the inductive step of the construction of $R_g$.
By transfinite induction, one can easily verify that for each $\sigma\in T$ there is a unique $m$ such that $(\sigma,m)\in\mu\Phi$, and indeed, $m=R_g(\sigma)$.
\begin{itemize}
\item Base step:
If $\langle\sigma,m\rangle\in\Phi(\emptyset)$, then $\sigma$ must be a leaf.
In this case, $m=g(\sigma,\ast)=R_g(\sigma,\ast)$.
\item Inductive step:
If $\langle\sigma,m\rangle\in\Phi(\Phi^\alpha(\emptyset))$, then for each $i\in{\rm ar}(\sigma)$, $\langle\sigma\fr i,a_i\rangle\in\Phi^\alpha(\emptyset)$ for some $a_i$.
By the induction hypothesis, $a_i=R_g(\sigma\fr i)$.
By our definition of $\Phi$, we have 
\[m=g\left(\sigma,\langle R_g(\sigma\fr i)\rangle_{i\in{\rm ar}(\sigma)}\right)=R_g(\sigma).\]
\end{itemize}
Thus, the least fixed point $\mu\Phi$ is the $\Pi^1_1$ graph of $R_g$. 
\end{proof}

\begin{proof}[Step 2: Proof of Theorem~\ref{thm:filter-hyperarithmetic}] \hfill 
\begin{itemize}
    \item[$\impliedby$:] If $\calF$ is a non-principal filter, then $\Fin^*\subseteq \calF$; in particular, the identity map $\id\colon\w\to\w$ defines a witness for $\Fin^*\clt\calF$. By a result of van Oosten \cite[Theorem 2.2]{vO14}, $f\clt \Fin^*$ iff $f$ is hyperarithmetic. Hence, for any hyperarithmetic $f$, we have $f\clt \Fin^*\clt \calF$.
       \item[$\implies$:] 
Assume $f\leq_{\sf LT}\mathcal{U}$ via a partial computable function $\Phi$; that is, for any $n\in\omega$ there is an $\mathcal{U}$-branching tree $T_n$ such that $\Phi(n,p)=f(n)$ for any $p\in[T_n]$.
Consider $\Psi_n$ defined as $\Psi_n(p)=\Phi(n,p)$.
Recall from Definition \ref{def:label} that $\mathcal{U}$ and $\Psi_n$ yield a labeling function $\nu_n:=\nu^\mathcal{U}_{\Psi_n}$.

Let us show that $\nu_n$ is a $\Delta^1_1$-function.
Note that $T_{\Psi_n}$ in Definition \ref{def:label} is a $\Sigma^0_1$ tree.
The forest $\sum_nT_{\Psi_n}$ is also $\Sigma^0_1$.
\begin{itemize}
\item[$\bullet$] Base step:
The labeling according to (1) in Definition \ref{def:label} corresponds to the base step of the recursive construction $g(n\fr\sigma,\ast)=\Psi_n(\sigma)$.
\item[$\bullet$] Inductive step:
The labeling according to (3),(4) in Definition \ref{def:label} corresponds to the inductive step of the recursive construction.
The idea is that if we correctly know the labels $\alpha(i)$ of the $i$th successor $\sigma\fr i$, we can determine the labels of $\sigma$ inductively.
Formally, given $\alpha\in(\omega\cup\{\bot\})^\omega$, put 
\[L(\alpha,c)=\{i\in\omega\mid i\in{\rm ar}(\sigma)\mbox{ and }\alpha(i)=c\}.\]

Since $n\in \mathrm{ar}(\sigma)$ is a $\Sigma^0_1$ relation, so is $L(\alpha,c)$.
Declare $(\sigma,\alpha,c)\in G_g$ if either $c\in\omega$ and $L(\alpha,c)\in\mathcal{U}$ or $c=\bot$ and $L(\alpha,d)\not\in \mathcal{U}$ for any $d\in\omega$.
This is a $\Delta^1_1$ condition since $\mathcal{U}$ is $\Delta^1_1$.

Moreover, since $\mathcal{U}$ is a filter, for any $(\sigma,\alpha)$ there is a unique $c$ such that $(\sigma,\alpha,c)\in G_g$.
Thus, $G_g$ gives the graph of a hyperarithmetical function $g$.
\end{itemize}

By Fact \ref{fact:transfinite-recursion}, we get a hyperarithmetical function $R_g$.
By well-founded induction, one can verify that $R_g(n\fr \varepsilon)=\nu_n(\varepsilon)$, where $\varepsilon$ is the root of the tree $T_{\Psi_n}$.

We claim that the root $\varepsilon$ must be $\nu_n$-labeled by $f(n)$.
Otherwise, similar to Lemmas \ref{lem:separation-1} and \ref{lem:separation-2}, we can obtain an infinite path $T_n$ consisting of nodes with labels other than $f(n)$.
To see this, inductively assume that we have chosen a node $\sigma\in T_n$ whose $\nu_n$-label is not $f(n)$.
If $\nu_n(\sigma)\not=\bot$, by Lemma \ref{lem:separation-1}, there is an infinite path $p\in[T_n]$ such that $\Psi_n(p)=\nu_n(\sigma)\not=f(n)$, which contradicts our assumption.
Hence, we have $\nu_n(\sigma)=\bot$,
In this case, $\{k\in\omega\mid \nu_n(\sigma\fr k)=f(n)\}$ is $\mathcal{U}$-null, so there is $k$ such that $\sigma\fr k\in T_n$ with $\nu_n(\sigma\fr k)\not=f(n)$ since $T_n$ is $\mathcal{U}$-branching.
Eventually, we must construct a path through $T_n$ consisting of nodes with the label $\bot$, but as shown in Lemma \ref{lem:separation-2}, this is impossible.

Consequently, the value $f(n)$ is obtained as the $\nu_n$-label of the root $\varepsilon$.
This means $f(n)=R_g(n\fr\varepsilon)$, which is hyperarithmetic.
\end{itemize}    
\end{proof}

\medskip

\begin{discussion} Theorem~\ref{thm:filter-hyperarithmetic} is anticipated by several earlier results. Restated in our language, work of van Oosten \cite{vO14} and Kihara \cite{Kih23} shows that
\[
\calD_{\rm T}({\rm Fin}^\ast)
\;=\;
\calD_{\rm T}({\rm Den}_0^\ast)
\;=\;
\Delta^1_1,
\]
where ${\rm Fin}$ and ${\rm Den}_0$ denote the ideals of finite sets and asymptotic density zero sets, respectively. Our Theorem~\ref{thm:filter-hyperarithmetic} significantly generalises these results by identifying a broad class of filters with the same invariant. From this perspective, ${\rm Fin}^\ast$ (Pitts’ topology) is not particularly special: in fact, there exists many natural filters whose structure corresponds to the ``world of hyperarithmetical mathematics.''
\end{discussion}

\begin{discussion}\label{dis:hyper-DTF} Combined with Conclusion~\ref{conc:turing}, Theorem~\ref{thm:filter-hyperarithmetic} illustrates how the invariant $\calD_{\rm T}$ is coarse yet non-trivial. One reading of this coarseness is positive: any distinction detected by $\calD_{\rm T}$ should be regarded as significant (since Turing degrees are in some sense very forgiving), prompting a closer structural analysis of the underlying filter. On the other hand, for large classes of filters — such as $\Delta^1_1$ filters —$\calD_{\rm T}$ may collapse too much information. In which case, finer invariants, such as Weihrauch complexity, may be better suited to detect differences in complexity invisible at the level of single-valued computation. This motivates the shift in focus in the next section.

\end{discussion}

\subsection{Hierarchies of Choice Problems}\label{sec:generalised-Phoa}
%

In modern computability theory, the computable complexity of ``{\em choice problems}'' is of central interest, and represents a very active area of research.\footnote{For the uninitiated reader, there is a substantial literature on the Weihrauch degrees of choice problems -- see e.g. ~\cite{BG11,BdBP12,BGH15,KiPa16,KiPa19,BLMP19,AnKi21,BGP21}.} This perspective extends naturally to our context: 
\begin{itemize}
\item A subset family $\calA \subseteq \mathcal{P}(\omega)$ may be regarded as the problem of choosing an element $n \in A$ from a given $A \in \calA$.
\end{itemize}
For instance, stated in this language, the cofinite filter ${\rm Fin}^*\subseteq\Pw$ defines the ``{\it cofinite choice} problem.'' Moreover, by imposing {\bf complexity restrictions} on $\Fin^*$, we obtain a class of derived choice notions, as below:

\begin{example}\label{exa:cofinite-choice} Fix $k\in\omega$. We define the {\em $\Pi_k$-cofinite choice} problem as the multivalued function $$\Pi_k\text{-Cof}\colon\omega\tto\omega,$$ 
defined informally as follows.
\begin{itemize}
\item Input: Any $\Pi_k$-index of a cofinite set $A\subseteq\omega$, and so
\[{\rm dom}(\Pi_k\text{-Cof})=\{e\in\omega\mid\text{the $e$th $\Pi_k$ subset of $\omega$ is cofinite}\}\ .\]
\item Output: Any element $n\in A$ is accepted, and so
\[
\Pi_k\text{-Cof}(e)=\text{the $e$th $\Pi_k$ subset of $\omega$.}
\]
\end{itemize}
More generally, for any $\om$-parameterised pointclass $\Gamma$ (i.e., a complexity class indexed by $\omega$), the $\Gamma$-cofinite choice problem $\Gamma\text{-Cof}$ can be expressed as a multivalued function.
\end{example}

\begin{remark} Notice the LT-degree of a multivalued function is a purely computational notion (i.e. a Gamified Weihrauch degree).
The first author \cite[\S 5.2]{Kih23} has shown that  the LT-degree of $\Pi_{1+\alpha}\text{-Cof}$ (for computable ordinals $\alpha$) is exactly the $(\alpha+1)$st Turing jump. 
 In other words, the hyperarithmetic hierarchy of cofinite choice coincides precisely with the hyperarithmetic degrees. This provides an alternative explanation of the hyperarithmetic lower bound on the complexity of ${\rm Fin}$ in \cite{vO14}. To see why, think of $\Gamma\text{-Cof}$ as a subproblem of the full cofinite choice problem ${\rm Fin}^\ast$, and so $\Gamma\text{-Cof}\clt{\rm Fin}^\ast$.
Hence,
\[\emptyset^{(\alpha+1)}\equiv_{\rm LT}\Pi_{1+\alpha}\text{-Cof}\leq_{\rm LT}{\rm Fin}^\ast,\text{ and therefore \ } \ \Delta^1_1\subseteq\calD_{\rm T}({\rm Fin}^\ast)\ .\]
\end{remark}

Importantly, the underlying idea in Example \ref{exa:cofinite-choice} can be extended to arbitrary subset families, not necessarily $\Fin^*$. We consider the notion of an $\omega$-parametrisation of a subset family as follows:

\begin{definition}\label{def:A-function} Let $I\subseteq \w$ and a subset family $\calA\subseteq \calP(I)$ be given. An  {\em $\calA$-function} $F$ is a partial multifunction 
	$$F\pcolon \w\rightrightarrows I $$
whereby $F(n)\in\calA$ for any $n\in\dom(F)$.
\end{definition}

\begin{example}
$\Pi_n\text{-Cof}$ is a total ${\rm Fin}^\ast$-function.
\end{example}

Informally, an $\calA$-function is the {\em $\calA$-choice problem restricted to a complexity class} (an $\omega$-parametrised pointclass).
How does this relate to the original ``unrestricted'' choice problem $\calA$? In Kihara’s language of bilayer functions \cite{Kih23}, the distinction is whether the input is {\bf public} or {\bf secret}.
\begin{itemize}
\item A restricted choice problem $\Gamma\text{-}\calA$ (i.e. an $\calA$-function):
\begin{itemize}
\item Input: A $\Gamma$-code of $A\in\calA$ is given {\bf publicly}.
\item Output: Choose an $n\in A$.
\end{itemize}
\item An (unrestricted) choice problem $\calA\subseteq\mathcal{P}(I)$:
\begin{itemize}
\item Input: $A\in\calA$ is given {\bf secretly}.
\item Output: Choose an $n\in A$.
\end{itemize}
\end{itemize}
Framed this way, it is natural to conjecture that solving the unrestricted choice problem $\calA$ is equivalent to solving all restricted choice problems $\Gamma\text{-}\calA$. Generalised Phoa confirms this intuition, as below:

	\begin{theorem}[Generalised Phoa]\label{thm:gen-phoa} Let $\calU\subseteq \Pw$ be an upper set, and let $\calVS$ be an upper sequence on $\w$. \underline{Then},
		\begin{align*}
        \calU\clt \calVS &\iff F\clt \calVS\;\;\text{for every}\;\calU\text{-function}\; F\\
        &\iff F\clt \calVS\;\;\text{for every total}\;\calU\text{-function}\; F\quad.
        \end{align*}
	\end{theorem}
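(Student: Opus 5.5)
The two forward implications are immediate, so the plan concentrates on the last converse. First, if $\calU\clt\calVS$ and $F$ is any $\calU$-function, then $F\clt\calU$ by a computable one-query reduction (Fact~\ref{fact:LT-basic-for-upper-sequences}(i)). In the LT-game, \mer\ plays a public index $n\in\dom(F)$ and the secret set $F(n)$. Since $F(n)\in\calU$, \art\ queries the single index of the constant sequence $\calU$. \nim\ answers $B=F(n)$, and \art\ outputs \mer's reply $x\in B$ verbatim. Transitivity of $\clt$ (Theorem~\ref{thm:kih-bil}) then gives $F\clt\calVS$. Second, every total $\calU$-function is in particular a $\calU$-function. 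It therefore remains to show: if $F\clt\calVS$ for every total $\calU$-function $F$, then $\calU\clt\calVS$.

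For this I argue by contraposition, in the spirit of Phoa's diagonal argument (Theorem~\ref{thm:phoa}). Suppose $\calU\not\clt\calVS$. Enumerate all candidate computable \art-strategies $\varphi_0,\varphi_1,\dots$ for the game $\mathfrak G(\calU,\calVS)$; these are just partial computable functions on $\lww$. The goal is to build a total $\calU$-function $F$ by setting $F(e)$ to be a set $A_e\in\calU$ that defeats the $e$th strategy. Suppose such $A_e$ exist. Then $F\clt\calVS$, say via a computable strategy $\psi$. From $\psi$ I build, via the recursion theorem, a computable strategy for $\mathfrak G(\calU,\calVS)$ with index $e$ that runs $\psi$ on public input $e$. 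Its \nim-partner can be taken to be the \nim\ that wins against input $A_e$. The game for $F$ at input $e$ has secret data $A_e$, and a winning play there terminates in an element of $A_e$. So this strategy, paired with that \nim, wins on $A_e$, contradicting the choice of $A_e$.

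The key step, and the main obstacle, is choosing the $A_e$. For each $e$ I need $A_e\in\calU$ such that \emph{no} \nim-strategy makes $\varphi_e$ win on $A_e$. Here the hypothesis enters: $\calU\not\clt\calVS$ says that for every computable $\varphi$ there is \emph{some} $A\in\calU$ on which no \nim\ rescues $\varphi$. For otherwise $\varphi$ together with a choice function of rescuing \nim's would witness $\calU\clt\calVS$; this uses choice, and the Oracle LT-game allows \nim\ arbitrary strategies. So I set $A_e$ to be such an $A$ for $\varphi_e$.

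Two points need care and I expect to spend the effort there. First, the fixed-point step must actually produce the index $e$ of the composite strategy. Concretely, the strategy with index $e$ simulates $\psi$ with \mer's first public move replaced by $e$, and then forwards $\psi$'s queries to $\calVS$ and its final answer. This is a legal strategy in $\mathfrak G(\calU,\calVS)$ once \mer's secret $A_e$ is reinterpreted as the secret input of $F(e)$. Second, the win must transfer: a \nim\ strategy witnessing $F\clt\calVS$ at input $(e,A_e)$ is directly a \nim\ strategy rescuing $\varphi_e$ on $A_e$. This contradicts the choice of $A_e$, and so proves $\calU\clt\calVS$.
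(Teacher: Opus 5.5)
Your proposal is correct and follows the paper's argument. Assuming $\calU\not\clt\calVS$, you pick for each $e$ a set $A_e\in\calU$ that defeats the $e$th computable witness, set $F(e)=A_e$, and apply Kleene's recursion theorem to the computable map sending $e$ to the witness for $F$ reindexed at public input $e$ (the paper's $(\gamma^{[e]},\Phi^{[e]})$), which yields the contradiction.

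The differences are only presentational. You work in the game formulation, so you need choice to glue the rescuing \nim-strategies, whereas the paper uses the tree-based definition and only needs one tree per $A$. You also obtain the forward direction via the one-query reduction $F\clt\calU$ plus transitivity, rather than by reusing the witness trees directly.
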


\begin{proof} 
\hfill
\begin{itemize}
	\item[$\implies$:] Assume $\calU\clt\calVS=(\calV_n)_{n\in I}$, witnessed by partial computable functions $\gamma,\Phi$. Let $F$ be a $\calU$-function. By definition: for any $n\in\dom (F)$, we have $F(n)\in\calU$, and so there exists a $(\calVS,\gamma)$-branching tree $T$ such that $\Phi[T]\subseteq F(n)$. This clearly implies $F\clt \calVS$.
	\smallskip
	\item[$\impliedby$: ]  For contradiction, assume $\calU\not\clt\calVS$. Fix an enumeration  $$(\gamma_n,\Phi_n)_{n\in\w}$$ of all pairs of partial computable functions as in Definition~\ref{def:GTK-full}. Since $\calU\not\clt\calVS$, for each $n$ there must exist some $A_n\in\calU$ such that $\Phi_{n}[T]\not\subseteq A_n$ for any $(\calVS,\gamma_{n})$-branching tree $T$. Define $F(n)=A_n$. It is clear that $F$ is a total $\calU$-function.  
	
	We can now apply the given hypothesis. Suppose $F\clt\calVS$ is witnessed by a computable pair $(\gamma,\Phi)$. This means: for any $n$, there exists a $(\calVS,\gamma^{[n]})$-branching tree $T$ such that $\Phi^{[n]}[T]\subseteq F(n)$. This induces a computable enumeration $c$ whereby $(\gamma_{c(n)}, \Phi_{c(n)})=(\gamma^{[n]},\Phi^{[n]})$. By Kleene's recusion theorem (see e.g.~\cite[Theorem 11.2.10]{Odi89}), there exists $r$ (a ``fixed point'') such that $$(\gamma_{c(r)},\Phi_{c(r)})=(\gamma_r, \Phi_r).$$
	
By construction:
\begin{enumerate}
	\item $\Phi_r[T] \not\subseteq A_r$ for any $(\calVS,\gamma_r)$-branching tree $T$.
	\item $\Phi^{[r]}[T]\subseteq F(r)=A_r$ for some $(\calVS,\gamma^{[r]})$-branching tree $T$.
\end{enumerate}
\smallskip
Since $r$ was chosen such that $\Phi_r=\Phi^{[r]}$ and $\gamma_r=\gamma^{[r]}$, items (1) and (2) yield a contradiction. Hence, conclude that $\calU\clt\calVS$.
\end{itemize}	

\end{proof}

\begin{remark} Notice the proof of Theorem~\ref{thm:gen-phoa} uses the tree-based definition of the $\clt$-order, as opposed to game-based definition (which we used in the proof of Theorem~\ref{thm:preorder}).
\end{remark}

\begin{remark} Theorem~\ref{thm:gen-phoa} becomes natural once subset families and multi-valued functions are viewed uniformly as choice problems. Outside this context, however, their connection becomes less obvious. 
Indeed, since subset families capture the combinatorial aspects of LT topologies and multi-valued functions capture the computational aspects, there are ways in which the two notions may be regarded as orthogonal in $\clt$ (cf. Discussion~\ref{dis:2d-complex}).
\end{remark}



\subsubsection*{Phoa's Theorem in context} We now explain how Theorem \ref{thm:gen-phoa} generalises the original Phoa's Theorem \ref{thm:phoa}, which relates Turing complexity to the double-negation topoplogy $\dn$.
The key observation is that Phoa's theorem admits a natural interpretation within the computable analysis of the hierarchy of the law of excluded middle ($\neg\neg$-elimination).

For context, non-constructive principles such as ${\rm LEM}$ (law of excluded middle) and ${\rm DNE}$ (double negation elimination) admit natural hierarchies according to their logical and computational complexity. In proof theory, such principles can be organised into an arithmetical hierarchy, which has been studied extensively.\footnote{See e.g.~ \cite{ABHK04,FuKu21,FuKu22,Nak24}.} From the viewpoint of computability theory, non-constructive principles at restricted complexity levels may be represented as multi-valued functions, and thus analysed in terms of their computational strength \cite{BGP21}. As a concrete example:

\begin{example}
For $n,k\in\om$, we express $\Sigma_n\text{-WLEM}$ ($\Sigma_n$ weak law of excluded middle) and $\Sigma_n\text{-DML}_{1/k}$ ($\Sigma_n$ de Morgan's law for $k$ tuples) as follows:\footnote{Again, we do not give the formal details here, but see e.g.~\cite{BGP21} (notice that the notation is slightly different).}
\begin{itemize}
\item $\Sigma_n\text{-WLEM}\colon\om\to 2$
\begin{itemize}
\item Input: An index of a $\Sigma_n$-sentence.
\item Output: Whether it is true or false.
\end{itemize}
\item $\Sigma_n\text{-DML}_{1/k}\pcolon\om\tto k$
\begin{itemize}
\item Input: A $k$ tuple of $\Sigma_n$ sentences $\varphi_1,\dots,\varphi_k$, where at most one $\varphi_i$ is true.
\item Output: Any $i$ such that $\varphi_i$ is false.
\end{itemize}
\end{itemize}
$\Sigma_1$-WLEM and $\Sigma_1$-DML$_{1/2}$ are sometimes referred to as LPO and LLPO, respectively.
\end{example}

This picture has a natural translation to the setting of LT topologies. Since we are interested in the original Phoa's theorem, let us focus on the example of double-negation:

\begin{example}[Double negation topology]\label{ex:A-function-turing} Logically, the semantics (of the sheaf topos) relativised to the $\neg\neg$-topology is given by the G\"odel-Gentzen $\neg\neg$-translation (which embeds classical logic into intuitionistic logic).\footnote{More generally, the semantics relativised to the LT topology $j$ is given by the so-called $j$-translation \cite{vdB19}.} Hence, the $\dn$-topology plays the formal role of $\dn$-elimination, $$\neg\neg\varphi\to\varphi\ .$$ We express this as a choice problem as follows.
\begin{itemize}
\item Input: A realisable sentence $\varphi$ is given secretly.
\item Output: Return any realiser for $\varphi$.
\end{itemize}
This choice problem expresses the following subset family:
\[\calC_{{\rm DNE}}=
\{\text{the set of all realisers of $\varphi$}\mid \varphi\mbox{ is a realisable sentence}\}
=\mathcal{P}(\om)\setminus\{\emptyset\}
\]

Considering the (unrestricted) unique choice problem $\calC_{!2}=\{\{0\},\{1\}\}$, it is easy to verify that $\calC_{{\rm DNE}}\equiv_{\rm LT}\calC_{!2}$, which corresponds to the double-negation topology \cite[Prop.~3.1]{LvO13}.
A $\calC_{!2}$-function is merely a single-valued function; for instance, $\Sigma_n\text{-WLEM}$ is a $\calC_{!2}$-function.
\end{example}

\begin{example}\label{exa:Ck-DML}
Let $\calC_{(k)}=\{A\subseteq k\mid k\setminus A\mbox{ has at most one element}\}$.
Its upward closure has been studied in \cite{LvO13} and \cite{Kih23} as $\mathcal{O}^k_1$ and ${\rm Error}_{1/k}$, respectively.
Clearly, $\Sigma^0_n\text{-DML}_{1/k}$ is a partial $\calC_{(k)}$-function.
\end{example}

\begin{proof}[Proof of Phoa's Theorem \ref{thm:phoa} using Generalised Phoa's Theorem \ref{thm:gen-phoa}]
Observe 
	that double negation corresponds to the basic topology generated by $\calC_{!2}=\{\{0\},\{1\}\}$ as mentioned in Example \ref{ex:A-function-turing}.
	Then, a total $\calC_{!2}$-function is exactly a single-valued (total) function $F\colon\w\to 2$.
Thus, $\calC_{!2}$-functions recover the classical Turing degrees.
By Lemma~\ref{lem:upward-closure-basic-properties}, passing to the upward closure does not change its LT degree, so we may regard $\calC_{!2}$ as an upper set.
\end{proof}

	\section{Further Discussion and Questions}\label{sec:test-prob}

As a benchmark for progress, it is worth revisiting the opening remarks of Lee-van Oosten in their paper \cite{LvO13}, written in the early 2010's:

\smallskip

\begin{fquote}[Lee-van Oosten \cite{LvO13}]
    The lattice of local operators [i.e. LT topologies] in $\Eff$ is vast and notoriously difficult to study. We seem to lack methods to construct local operators and tell them apart.
\end{fquote}
Part of the original difficulty stems from the fact that the Effective Topos $\Eff$ is not a Grothendieck topos, and so many standard tools from topos theory are no longer available. The results of this paper locate a subtler issue: the structure of the $\clt$-order in $\Eff$ is tightly connected to the combinatorics of filters on $\w$. In particular, its complexity is driven by the same deep structural questions studied in the theory of ultrafilters, now also intertwined with questions arising from computability theory. This surprising connection reveals a rich interplay between combinatorial and computable complexity, which underscores the order's power but also contributes to its difficulty: progress requires advances in combinatorial set theory on the one hand (e.g. separation techniques), and advances in computability-theoretic analyses on the other. 

\smallskip

We are now at an exciting juncture, where the framework developed here both connects and clarifies several previously disparate lines of work, thereby opening up a wide range of new questions. We conclude the paper with a broad list of inter-related test problems, laying the groundwork for future work.


\subsection{Combinatorial Set Theory}\label{sec:qn-set-theory}

\begin{problem} Is $\glt$ non-linear on ideals? If yes, what is the size of its maximum anti-chain?
\end{problem}

\begin{discussion} It would be consistent with Theorem~\ref{thm:strict-ascend} to conjecture that $\glt$ defines a countable linear order on the equivalence classes of ideals. However, in forthcoming work, we show that this is emphatically not the case. In particular, we construct an embedding of $\Pw/\Fin$ into $\glt$. Hence, the order has continuum many classes as well as an antichain of size continuum.
\end{discussion}

In light our proof of Theorem~\ref{thm:Tukey}, the following problem is natural:

\begin{problem} Give a constructive proof that the Gamified Kat\v{e}tov order and the Tukey order are incomparable. 
\end{problem}

A potentially rich line of inquiry would be to explore deeper comparisons with the Kat\v{e}tov order on ideals. Conclusion~\ref{conc:MAD} shows that the Gamified Kat\v{e}tov order collapses all MAD families into a single class, but there are other interesting structural questions to investigate.

\begin{problem} Let $\calR$ be the random graph ideal, and $\calS$ be Solecki's ideal. To our knowledge, it is still an open problem whether $\calR\lk\calS$ \cite[Question 5.4]{Hru17}. What about $\calR\glt\calS$?
\end{problem}

\begin{discussion}\label{dis:coarse-katetov} The fact that the Gamified Kat\v{e}tov order is strictly coarser than the classical Kat\v{e}tov order significantly reorganises the landscape, which may open up new lines of attack. For instance, it is known that $\Fin\otimes\Fin\not\lk \EDfin$ \cite[\S 2]{Hru17}, yet $\Fin\otimes \Fin\elt \Fin \glt \EDfin$ in our context (see Section~\ref{sec:separate-example} for the definitions). 
\end{discussion}

\begin{problem} More generally, the literature contains many negative results of the form $\calH\not\lk\calI$ , see e.g. \cite{HrMe12, Hru17, FKK24}. It is worth revisiting them and investigating if the same holds in the Gamified Kat\v{e}tov order. A systematic analysis of such examples may lead to new separation techniques tailored to our setting. As a concrete test case, is it true that
\[
\mathrm{Den}_0 \not\glt \Sumn\; ?
\]
\end{problem}

\begin{problem}[Category Dichotomy]\label{prob:dichotomy} A key structure theorem in the study of ideals is Hru\v{s}\'{a}k's Category Dichotomy \cite{Hru17}, which states: for any Borel ideal $\calI$ on $\w$, either 
\[
\calI \lk \mathrm{nwd}
\quad\text{or}\quad
\exists X \in \calI^+ \text{ such that } \mathrm{ED} \lk \calI\rstr X,
\]
where $\mathrm{nwd}$ is the Nowhere Dense ideal and $\mathrm{ED}$ the Eventually Different ideal. This dichotomy is known to hold for Borel ideals, but to fail (or be independent of ZFC) for other classes of ideals; see \cite[\S 1]{DFGH26}. 

By our Theorem~\ref{thm:fubini-GTK}, the Category Dichotomy for Borel ideals automatically transfers to the Gamified Kat\v{e}tov setting. What can we say about other classes of ideals?
\end{problem}

\begin{discussion} Problem~\ref{prob:dichotomy} raises the possibility that an ideal may satisfy the gamified version of the Category Dichotomy, even if it fails the classical one. 
Discussion~\ref{dis:coarse-katetov} may be relevant here. The fact that Hru\v{s}\'{a}k's original proof \cite[Theorem 3.1]{Hru17} relies on game-theoretic arguments is also suggestive.
\end{discussion}

For those interested in cardinal invariants of the continuum, a compelling problem to consider is:

\begin{problem}\label{prob:card-char} How does $\glt$ interact with the cardinal invariants of ideals? What about  $\clt$? 
\end{problem}

\begin{discussion} There are two levels to Problem~\ref{prob:card-char}.
\begin{enumerate}
    \item It is well-established that the classical Kat\v{e}tov order on ideals has a close connection with their cardinal invariants. For instance, if $\calH\lk\calI$, then $\mathrm{cov}^*(\calI)\geq \mathrm{cov}^*(\calH)$ \cite[Prop. 3.1]{HHH07}
    as well as $\mathfrak{p}_{\rm K}(\calH)\leq\mathfrak{p}_{\rm K}(\calI)$ \cite[Prop. 2.9]{BNF11}. To what extent does this picture extend to the gamified setting?
    \item Independently, there is a very interesting line of work \cite{RupPhD,BrendleEffCardinal,GKT19} developing some formal analogies between cardinal invariants of the continuum and highness of Turing oracles. with a view towards reproducing appropriate analogues of Cicho\'{n}'s diagram. Since the $\clt$-order simultaneously encodes computable and combinatorial information (Theorem~\ref{thm:GTK-full}), it provides a natural framework from which to re-investigate this connection.
\end{enumerate}
\end{discussion}

\begin{discussion}[Pseudo-Intersection] In fact, given {\em any} partial order $\sqsubseteq$ on ideals over $\w$, Borodulin-Nadzieja and Farkas \cite[Prop. 2.9]{BNF11} define a $\sqsubseteq$-intersection number $\frap_{\sqsubseteq}$ such that 
$$\calH\sqsubseteq \calI \implies \frap_{\sqsubseteq}(\calH)\leq \frap_{\sqsubseteq}(\calI)\,.$$  
This motivates a systematic study of pseudo-intersection phenomena across $\lk$, $\glt$, and $\clt$. Additionally, in light of the discussion in Section~\ref{sec:qn-MT}, it is also very interesting to ask how these ideas relate to Malliaris and Shelah's framework of cofinality spectrum problems \cite{MS15,MS16,MS17}, which played a key role in their celebrated discovery that $\frap=\frat$ in ZFC.
\end{discussion}

\begin{discussion}[On the Tukey Order] Cardinal invariants also have a close interaction with the Tukey order, e.g. if $\calH\lT\calI$, then $\mathrm{add}^*(\calH)\leq \mathrm{add}^*(\calI)$ \cite[Prop. 2.1]{HHH07}. This makes the interaction between cardinal invariants and $\glt$ potentially subtle, since the Gamified Kat\v{e}tov order and the Tukey order
are incomparable on filters/ideals (Theorem~\ref{thm:not-Tukey}).\footnote{As a small indication that cardinal invariants already play a role in understanding $\glt$ in ZFC, notice the dominating number $\mathfrak{d}$ appears implicitly in our proof of Theorem~\ref{thm:not-Tukey}.} 
\end{discussion}

\begin{discussion} It is also interesting to ask about potential connections with game-theoretic variants of cardinal invariants \cite{CGH24}. Their setup also makes use of filter games, originating with Laflamme \cite{La96}, except their games are of infinite length (in contrast with our finite-query Kat\v{e}tov game).
\end{discussion}

For those interested in forcing methods:

\begin{discussion} There exists a notion of forcing, known as the Laver-Prikry forcing (see e.g.~\cite{MiHi14,CGGH16,Kh17}), where each forcing condition is a $\calU$-branching tree for a fixed filter $\calU$. The connection is suggestive and invites further investigation -- are there substantive links between forcing and our work?
\end{discussion}

\subsection{Computability Theory and Constructive Logic}\label{sec:qn-comp-th-cons-log} 

\subsubsection{Computability by Majority}\label{sec:comp-majority}
From the perspective of computability theory, the key contribution of this paper is establishing a uniform framework for investigating {\bf computability by majority}, a new way of parametrising computability based on filter combinatorics. Here is a simple working definition:
\medskip


\begin{definition}[$\calI$-computability]\label{def:comp-majority} Let $\calI$ be any ideal on $\w$. Now run $\omega$-many computations $(\Phi_n)_{n\in\omega}$ in parallel. If the majority of these computations are correct, that is $$\{n\in\omega:\Phi_n\mbox{ is incorrect}\}\in\mathcal{I},$$ then the computation is considered successful. In which case, we say the computation is {\em $\calI$-computable}.
\end{definition}

This marks an important shift in understanding. In the context of this paper, each filter is regarded as defining an abstract notion of largeness, which the $\clt$-order then organises into different notions of computability (Theorem~\ref{thm:GTK}). Definition~\ref{def:comp-majority} gives a concrete interpretation: each filter $\calF$ determines a dual ideal $\calF^*$ specifying which sets of computational errors are to be regarded as negligible. More fundamentally, this framework unifies an interesting range of computability notions scattered throughout the literature. For example, random and generic oracles (computational complexity theory) as well as {\em basic} non-modest degrees\footnote{To understand the qualifier `basic'', see Discussion~\ref{dis:separation-takeaway}.} (Bauer's extended Weihrauch reducibility \cite{Bau22}) may all be viewed as instances of computability by majority. 

\smallskip
Crucially, Theorem~\ref{thm:GTK} shows that the $\clt$-order is isomorphic to the computable Gamified Kat\v{e}tov order on upper sets. This identification isolates the underlying combinatorics of this new hierarchy of computability degrees, revealing a rich internal structure and opening it up to systematic study. So we may ask:

\begin{problem}\label{problem:LT-degree-filters} What is the structure of the $\clt$-order on filters (or upper sets)? What about the computable Kat\v{e}tov order?
\end{problem}

\begin{discussion}\label{dis:LT-deg-filt} There are many interesting variants of Problem~\ref{problem:LT-degree-filters}. One may explore order-theoretic or first-order properties of the $\clt$-order for all filters, or for special classes such as analytic ideals, $F_\sigma$-ideals, $P$-ideals, etc. Either positive or negative results could be very interesting.

\end{discussion}

\begin{discussion}[Comparisons with other orders]
Related attempts to compare the computable strength of filters include
Blass's work on the Kleene degrees of ultrafilters \cite{Bl85} and the study
of (ultra)filter jumps by Andrews et al.~\cite{ACDS23}.%
\footnote{%
Andrews et al.\ associate to each filter a notion of jump defined via filter
limits, inducing a preorder on filters according to the strength of the
resulting jump.  From a technical perspective, this construction appears
closely related to a non-uniform variant of our computable Kat\v{e}tov
order.  A systematic comparison between these orderings remains an
interesting open direction.
}
Although not formulated in terms of computability by majority, these works share a certain resonance with our framework, and it would be interesting to investigate potential connections.
\end{discussion}

We can also ask about the computable strength of an {\em individual} filter within $\clt$. Recalling Conclusion~\ref{conc:turing}, a general test problem would be:

\begin{problem}\label{prob:DTF} Given a filter $\calF$, characterise its Turing degree profile $\Dt(\calF)$.
\end{problem}

\begin{discussion} 

Again there are many interesting approaches to this problem. Some natural directions:
\begin{itemize}
    \item Determine the Turing degree profile for specific classes of filters, as in Theorem~\ref{thm:filter-hyperarithmetic}. 
    \item Explore the converse: given a natural class of Turing degrees (e.g. hyperarithmetic degrees), determine the class of filters whose Turing degree profile coincides with it?\footnote{For instance, Theorem~\ref{thm:filter-hyperarithmetic} shows that $\Delta^1_1$ filters have the hyperarithmetic degrees as their profile, but is this a characterisation? Are there non-$\Delta^1_1$ filters whose profiles coincide with the hyperarithmetic degrees?}
    \item Since $\Dt$ is a relatively coarse invariant (Discussion~\ref{dis:hyper-DTF}), one may consider profiles with respect to other degree notions, e.g. Weihrauch degrees.
    \item For any filter, how does its $\Dt$ relate to cardinal invariants of its dual ideal? See also Problem~\ref{prob:card-char}.
\end{itemize}  
\end{discussion}

\subsubsection{Reverse Mathematics and Separation Problems} Another important application of this paper's results lies in the connection betweeen constructing realisability models (idealised subtoposes) and the separation of non-constructive logical principles. Here, the separation results in Section~\ref{sec:separation} are central: they show how distinct filters (upper sets) yield genuinely different realisability models. This sets up the natural test problem of investigating the internal logic of these models:

\begin{problem}\label{prob:log-principle} Which logical/mathematical principles hold true in the $\calU$-realisability associated to a filter (upper set) $\calU$?
\end{problem}

\begin{discussion} As an example of previous work in this direction, van Oosten \cite{vO14} has analysed principles that hold in the realisability model based on the Pitts' topology ${\rm Fin}^\ast$.\footnote{Note, however, that a small error is contained in \cite[Proposition 3.3]{vO14}. Any computable, finite-branching, infinite tree does in fact possess an arithmetical path, indeed, a $\emptyset''$-computable path; see e.g.~\cite[Theorem 2.6 (b)]{CeRe98}. Only an {\bf infinite}-branching tree may lack a hyperarithmetical path.} What about realisability models based on, say, the dual filters of $\EDfin$ or $\Sumn$?
\end{discussion}

\begin{discussion}[Modest vs. non-modest LT degrees] Gamified Weihruach degrees were originally introduced to analyse derivation relations in classical reverse mathematics \cite{HiJo16}; see also \cite{DHR22}. 
Subsequently, using a completely different approach, the first author \cite{Kih20} linked the Gamified Weihrauch degrees to constructive reverse mathematics, obtaining a range of separation results for mathematical principles via game-theoretic techniques. However, these results appear to rely almost exclusively on {\em modest} LT degrees. 

By contrast, the present paper focuses on {\em basic topologies}, which lie in the non-modest region of the $\clt$-order. In this setting, comparatively less is known about the separation of mathematical principles, motivating Problem~\ref{prob:log-principle}. Two further comments, which may orient the interested reader. First, analogous to Gamified Weihrauch reducibility, Bauer's extended Weihrauch reducibility \cite{Bau22} also arose from connections with certain derivation relations within constructive mathematics. Second, basic topologies appear to correspond more closely to purely logical principles; see e.g. Example \ref{exa:Ck-DML}, where the upper set $\calC_{(k)}$ corresponds to a weak version of De Morgan's law. 
\end{discussion}

\begin{discussion}[Operations on Degrees]
Recently, Brattka \cite{Brattka25} introduced an infinite loop operation on Weihrauch problems.  Yoshimura subsequently observed (in unpublished work) that closure under this infinite loop operation is closely connected to the axiom of {\em dependent choice}. Building on this insight, the first author has obtained various separation results along these lines, to appear in future work. 

It remains unclear how to lift such operations on Weihrauch problems
(multi-valued functions) to general LT-degrees (upper sequences), and
techniques for analysing closure under these operations are largely
undeveloped. In particular, in the setting of non-modest degrees (which includes the basic topologies), the absence of effective complexity bounds (recall Section
\ref{sec:generalised-Phoa}) poses a key obstacle; new methods are required to analyse these operations at the level of upper sequences, or at least upper sets.  
\end{discussion}

\subsection{Category Theory}\label{qn-Category} Beyond the initial surprise, why else might the category theorist find the connection between the $\clt$-order in $\Eff$ and the Gamified Kat\v{e}tov order interesting?

\subsubsection{Sites and Classifying Toposes} One answer is that by sharpening our understanding of what separates LT degrees, we also clarify what separates subtoposes at a semantic level. To develop this remark, it is worth revisiting, with some appreciation, the beautiful definition of a classifying topos.\footnote{For additional background on classifying toposes, see e.g. the second author's PhD thesis \cite[Chapter 2]{NgThesis}.} 

\smallskip

The {\em classifying topos} of a geometric theory\footnote{For the model theorist, we emphasise that we are presently working in the context of {\em geometric logic}, an infinitary positive logic with important differences from classical first-order logic; see \cite[\S1.1]{NgBerk} for a discussion.} $\thT$ is a Grothendieck topos $\baseS[\thT]$ satisfying the following universal property: for any Grothendieck topos $\calE$, there is a natural equivalence of categories
$$\mathbf{Geom}(\E,\baseS[\thT])\simeq \Tmod(\E),$$
where {\em geometric morphisms} $f\colon \E\to \calS[\thT]$ correspond to {\em models} of $\thT$ internal to $\calE$. Two remarks are worth highlighting:
\begin{enumerate}
    \item Classifying toposes capture the {\em semantic expressiveness} of theories. In practice, there are many theories that are syntactically-inequivalent, yet define equivalent categories of models.\footnote{A standard example is provided by the Dedekind reals, which admits well-known axiomatisations as both a propositional geometric theory and a predicate geometric theory; see \cite[\S 4.7]{Vi07}. In particular, although predicate logic is strictly more expressive than the propositional fragment, the two theories are shown to have equivalent classifying toposes.} Topos theory resolves this tension by providing a more nuanced criterion for logical equivalence: two theories are equivalent just in case their classifying toposes are equivalent as categories. 
    \item Classifying toposes  characterise Grothendieck toposes: every Grothendieck topos $\E$ classifies some geometric theory $\thT_\E$, and every geometric theory $\thT$ has a classifying topos $\baseS[\thT]$.

It is this structure theorem that gives classifying toposes much of their power. However -- and this is an important point -- the standard proofs make crucial use of the fact that Grothendieck toposes admit site presentations: 
{\it if $\E \simeq \mathbf{Sh}(\mathcal{C},J)$, then $\E$ classifies the theory of $J$-continuous flat functors on $\mathcal{C}$ \cite[\S 5.3]{Vi07}, while the classifying topos of a theory $\thT$ is constructed as the category of sheaves $\textbf{Sh}(\calC_\thT,J_\thT)$, where $(\calC_\thT,J_\thT)$ is the syntactic site associated to $\thT$ \cite[D3.1]{Elephant}.}
\end{enumerate}

\medskip 
It is natural to ask if the framework of classifying toposes extend to our setting. Unfortunately, as is well-established, the Effective Topos $\Eff$ is not a Grothendieck topos \cite[p. 222]{HJP80}, and thus cannot be presented as a category of sheaves on a Grothendieck site. Consequently, Remark (2) does not apply:  $\Eff$ cannot be regarded as a classifying topos in the usual sense. This motivates the following test problem.

\begin{problem}\label{prob:Eff-site} With the framework of classifying toposes in mind, develop an analogue of a Grothendieck site applicable to subtoposes of $\Eff$. 
\end{problem}

\begin{discussion} Problem~\ref{prob:Eff-site} brings into focus the same tension between syntactic presentation and semantic invariance highlighted in Remark (1). 
\begin{itemize}
    \item For any elementary topos $\calE$, subtoposes are classified by LT topologies: each subtopos is equivalent to the category of $j$-sheaves  $\textbf{sh}_j(\calE)$
    for some LT topology $j$ \cite[A.4.4]{Elephant}. However, this is a fairly abstract characterisation. Taken by itself, it is not clear how to extract a first-order theory from this description, as in the case of Grothendieck toposes.
    \item On the other hand, realisability toposes (such as $\Eff$) admit syntactic presentations by {\em triposes} \cite{HJP80,Pi99}, with the associated toposes obtained via the {\em tripos-to-topos} construction. Concretely, a tripos is a particular kind of hyperdoctrine, and thus corresponds to a  first-order intuitionistic theory in the sense of Seely \cite{Seely83}. Nonetheless, while certainly powerful, the tripos perspective is fundamentally organised around logical presentations. Even setting aside the significant overhead in terms of syntactic details, it is also generally difficult to determine how distinct triposes compare at the level of their associated toposes.
\end{itemize}
Some interesting recent work has explored more algebraic presentations of triposes, e.g. via implicative and arrow algebras \cite{Miq20,vdBB23}. Our results suggest a different approach, independent of triposes. By Theorem~\ref{thm:GTK-full}, LT topologies in $\Eff$ admit a concrete presentation in terms of upper sequences, which can be leveraged to separate LT degrees (and thus the corresponding subtoposes).\footnote{Consider, for instance, Corollary~\ref{cor:strict-clt}.} Can we extend this description to obtain a useful site presentation? Related ideas appear in the work of Moerdijk-Palmgren \cite{MP97}, in particular their construction of a syntactic site using so-called {\em provable filter bases}. 
\end{discussion}

For readers familiar with S. Vickers' work, we mention two interesting points of contact.

\begin{discussion}[Arithmetic Universes] In a series of papers \cite{Vi17AU,Vi19}, Vickers extends the framework of classifying toposes to Arithmetic Universes (AUs), a very general class of categories that includes all elementary toposes with nno, such as $\Eff$. 
This is encouraging for Problem~\ref{prob:Eff-site}, but a familiar caveat remains. 
Strictly speaking, Vickers works with the presentations of AUs (using the framework of {\em sketches}) rather than with AUs themselves. In his words: ``It is as if we defined Grothendieck toposes to be the geometric theories they classify, with no attempt to identify equivalent presentations'' \cite[\S 1]{Vi19}.\footnote{Nonetheless, Vickers’ notion of AU-functors is interesting. This seems a good occasion to revisit previous work on geometric morphisms involving realisability toposes \cite{AwBau08,Joh13,FvO14}, particularly in light of the various non-existence and triviality results, and compare it with the AU perspective.} 
\end{discussion}

\begin{discussion}[Point-free perspective] Vickers advocates the view that a (Grothendieck) topos should be regarded as a ``generalised space'' whose points are models of the theory it classifies \cite{Vi07,VickersPtfreePtwise}. In the case of propositional geometric theories, global points correspond to completely prime filters, and subtoposes (``generalised subspaces'') may be seen as imposing conditions on which opens these points belong to. By contrast, the Kat\v{e}tov order \cite{Kat68} instead organises filters by their convergence patterns, a perspective especially suggestive in light of Theorem~\ref{thm:GTK-full}.
\end{discussion}

Finally, a very natural question, posed to us by M. Maietti:

\begin{problem} To what extent do our results on the LT topologies in $\Eff$ extend to other elementary toposes with nno?
\end{problem}

\subsubsection{Sheaf Cohomology} The notion of a Grothendieck topos originates in Grothendieck’s development of \'{e}tale cohomology, which eventually led to the resolution of the Weil Conjectures in algebraic geometry. One may wish to extend this framework to our setting:

\begin{problem} Develop an account of sheaf cohomology applicable to subtoposes of $\Eff$.
\end{problem}

\begin{discussion} A recurrent theme in mathematics is that locally consistent data may fail to assemble into a global structure, which can often be detected by an appropriate cohomology. While most familiar from geometry, analogous phenomena also arise in logic, even when not initially framed as a local/global issue. 

In a remarkable paper \cite{Bla83}, Blass shows how sheaf cohomology can detect failures of the Axiom of Choice, a perspective suggestive in light of the connection between Phoa's Theorem~\ref{thm:phoa} and semi-constructive principles (cf. Section~\ref{sec:generalised-Phoa}). Similar cohomological techniques were later applied by Talayco \cite{Tal95,Tal96} in his study of combinatorial objects such as Hausdorff gaps; might they also say something meaningful about the structure of upper sequences?
\end{discussion}

\subsubsection{Polynomial Functors and $\calW$-types}\label{sec:poly-functors} 

In Theorem~\ref{thm:fubini-GTK} we showed that the Gamified Kat\v{e}tov order can be obtained by closing the classical Kat\v{e}tov order under well-founded Fubini powers. Combinatorially this was captured by the notion of a $\delta$-Fubini power, but the core idea also admits a compact description in terms of \emph{polynomial} constructions. We give a brief overview.

\medskip 
\noindent
{\it Gamified Kat\v{e}tov Order:}
Let $\calU,\calV\subseteq\Pw$. The key observation is that $\calU\ast \calV$ is not only a {\em set of sums} (as originally defined), but is {\em itself} a sum of some kind (which we express as a polynomial). Explicitly, unpacking Definition~\ref{def:Kat-sum}, 
\[
\U\ast \V=\left\{\sum_{a\in A}B_a\ \middle|\ A\in\U\mbox{ and }(\forall a\in A)\ B_a\in\V_a\right\}
=\left\{\sum_{a\in A}f(a)\ \middle |\ A\in\calU\mbox{ and }f\colon A\to\calV\right\}\,\,,
\]
and so we may write:
\[
\calU\ast\calV\simeq\sum_{A\in\mathcal{U}}\calV^A=\left\{(A,f)\ \middle |\ A\in\calU\mbox{ and }f\colon A\to\calV\right\}\ .
\]
As already seen in Section \ref{sec:fubini}, $\calU\ast\calU=\sum_{A\in\calU}\calU^A$ is the set of all $\mathcal{U}$-branching trees of height $2$
and its upward closure is the Fubini product $\mathcal{U}\otimes\mathcal{U}$. This can also be described as iterations of the following ``polynomial'' function:
\[
P_\mathcal{U}(X)=\sum_{A\in\mathcal{U}}X^A=\{(A,f)\mid A\in\mathcal{U}\mbox{ and }f\colon A\to X\}.
\]
In particular, notice: 
\[P_{\mathcal{U}\ast\mathcal{U}}=P_\mathcal{U}\circ P_\mathcal{U}\ .\]
Proposition \ref{prop:n-concatenation-branching-tree} can therefore be rephrased as: the set of all $\mathcal{U}$-branching trees of height $n$ is exactly
\[\underbrace{P_\mathcal{U}(P_\mathcal{U}(\dots P_\mathcal{U}}_{n\text{ times}}(1))).\]
Informally, this suggests the Gamified Kat\v{e}tov order can also be obtained by taking the least fixed point of the operator $X\mapsto P_\calU(X)+Y$ (where $Y$ is a chosen type for possible outputs).

\medskip
\noindent
{\it Basic LT Topologies:} The same mechanism appears implicitly in Lee-van Oosten's construction of basic topologies \cite[\S 2]{LvO13}. Their construction proceeds in two main steps.
\begin{itemize}
    \item {\em Step one.} Given any subset family $\calU\subseteq\Pw$,  defining the monotone endomorphism\footnote{By ``monotone endomorphism'', we mean a function $j\colon\Pw\to\Pw$ satisfying Condition (1) of Definition~\ref{def:LT-topology}.
    }
    \begin{align*}
	G_\calU\colon\Pw &\longrightarrow \Pw\\
	p&\longmapsto \bigcup_{A\in\calU}(A\to p) \quad.
\end{align*}
Interpreting $\bigcup$ as the union $\sum$, we may rewrite  $G_\calU(X)$  suggestively as $\sum_{A\in\calU}(A\to X)$, or in fact as a polynomial
$$\sum_{A\in\calU}X^A\ .$$

\smallskip

\item {\em Step two.} Apply the reflector map $L$ to obtain an LT topology
\[
j_\calU:=L(G_\calU),
\]
which is the $\clt$-least LT topology satisfying $G_\calU(p)\leq_{\mathrm{LT}}j_\calU(p)$ \cite[Prop.~2.1]{LvO13}. That is, basic topologies arise canonically as least fixed points associated with polynomial operators of the form $X\mapsto\sum_{A\in\calU}X^A$.
\end{itemize}

\medskip

Viewed this way, both constructions are governed by a common underlying principle: taking the least fixed points of polynomial operators. In categorical terms, this corresponds to taking the initial algebra of a {\it polynomial functor} \cite{NiSp25}, i.e.~a $\calW$-type. Moreover, for any polynomial functor $P$, the construction $$\lambda Y.\mu X.P(X)+Y$$
is precisely the construction of the {\it free monad} generated by the functor $P$. In light of Theorem~\ref{thm:GTK-full}'s characterisation of the $\clt$-order, a natural test problem would be:

\begin{problem}\label{prob:poly} Develop a purely category-theoretic account of the $\clt$-order on upper sequences using the language of polynomial functors, $\calW$-types, and free monads.
\end{problem}

\begin{discussion} This provides an alternative framework for making Theorem~\ref{mthm:main-thm} (i) precise. Instead of regarding ``well-founded iterations of Fubini powers'' as $\delta$-Fubini powers, we may present them as least fixed points of polynomial operations, a perspective already anticipated in, e.g. \cite{Aczel,AMM18}.    
\end{discussion}

\begin{discussion} 
The guiding idea behind Problem~\ref{prob:poly} is that {\bf oracles} may be identified  with {\bf polynomials}. Concretely, each polynomial $$\sum_{q\in Q} X^{F(q)}$$ 
represents a ``{\it computable algorithm using an oracle $F$ (exactly once)}'': each pair $(q,\psi)\in \sum_{q\in Q} X^{F(q)}$ represents a query $q$ to the oracle $F$ and an algorithm $\psi$ that converts the oracle's response $z\in F(q)$ into a solution $\psi(z)$ of type $X$.
As supporting evidence for this perspective, it was recently observed that morphisms between polynomial functors in an appropriate category correspond to (extended) Weihrauch reductions \cite{PrPr25,AhBa25}.
\end{discussion}

\begin{discussion} Fact \ref{fact:transfinite-recursion} in the proof of Theorem \ref{thm:filter-hyperarithmetic} can also be obtained as a computational rule for $\calW$-types. Here, $\Pi^1_1$ yields a partial combinatory algebra (see e.g.~\cite{BeeBook}), and the category of assemblies over a partial combinatory algebra has $\calW$-types \cite{vdBerg00}.
\end{discussion}

	\subsection{Perspectives from Model Theory}\label{sec:qn-MT} The starting point for this paper actually came from model theory. Recently, two embeddings of the Turing degrees into two {\em a priori} unrelated preorders have emerged:
	
	\begin{itemize}
		\item Hyland \cite{HylandEffective}: the Turing Degrees embed (effectively) into the $\clt$-order in $\Eff$.
		\item Malliaris-Shelah \cite{MSTuring}: the Turing Degrees embed (effectively) into Keisler’s order on simple unstable theories.\footnote{Not to be confused with the Rudin-Keisler order from before.} 
	\end{itemize}
The juxtaposition of these two results is both striking and puzzling: the $\clt$-order arises in category theory, whereas Keisler's order uses ultrafilters\footnote{The ultrafilters in the definition of Keisler's order are regular (i.e. they have a regularising family), and possibly on uncountable cardinals. For details on Keisler's order (including the importance of considering uncountable cardinals), see \cite[\S 7 -8]{MalliarisUltrafilter}.}  to compare the complexity of first-order theories.
We were thus led to ask if there exists a hidden combinatorial core organising these disparate manifestations of computability.
	This prompted us to carefully re-examine Lee-van Oosten's results in \cite{LvO13}, and to notice the connection with a different preorder: the Rudin-Keisler order on ultrafilters over $\w$. The rest of this paper emerged as a development of that insight. 
	
\smallskip
	
	Returning to model theory,  are there productive connections between simple unstable theories and the structure of the LT degrees within $\Eff$?  At present this remains unclear; our understanding of both settings is still incomplete. Nonetheless, our results give some very interesting clues on where to start looking for answers. 
	
\subsubsection{Keisler's Order} The proofs in Malliaris-Shelah's work on Turing degrees \cite{MSTuring} were informed by an important result from an earlier paper \cite[Theorem 11.10]{MSK-not-simple}, which we roughly translate as: Keisler's order induces a well-defined preorder on ideals containing $\mathrm{Fin}$. Let us call this derived order the {\em Malliaris-Shelah order}, written as
$$\calH\leq_{\mathrm{MS}} \calI\qquad  \text{ideals} \  \, \calH,\calI \subseteq \Pw, \quad \mathrm{Fin}\subseteq\calH,\calI \ .$$
Informally,  Malliaris-Shelah's result highlights a certain coherence amongst regular ultrafilters (used to measure the complexity of theories), governed by combinatorics from the countable. In light of this paper's results, there are some natural questions to ask regarding the nature of this coherence.


\begin{problem} How does $\leq_{\mathrm{MS}}$ compare with the Kat\v{e}tov order on ideals? With $\glt$? In particular, how does $\leq_{\mathrm{MS}}$ interact with Fubini powers? 
\end{problem}

Recalling Phoa's Theorem~\ref{thm:phoa} and the fact many important basic oracles do {\em not} correspond to ideals/filters on $\w$, one may also ask:

\begin{problem}\label{prob:MS-lowersets} Is there a meaningful extension of $\leq_{\mathrm{MS}}$ from ideals to lower sets in $\Pw$ (possibly containing $\Fin$ if necessary)? What are the implications for locating the Keisler-maximal simple theory, if such a theory exists?
\end{problem}

\subsubsection{Game Semantics} Since simple theories generalise stable theories, another natural approach is to look for meaningful extensions of results that characterise stability. A classical result by Shelah states: a formula $\varphi(x;y)$ of a first-order theory is {\em stable} iff  it has finite 2-rank \cite{Sh90}. Chase-Freitag subsequently observed that 2-rank could be reformulated using game semantics \cite{ChFr19}, which we now adapt to our context. 


\begin{definition}\label{def:online-game} Let $X$ be an arbitrary set, and let $\calH\subseteq\calP(X)$ be a subset family. The {\em Prediction Game} is an imperfect information game between three players: $\mer$ vs. $\art$ and $\nim$. A typical play has the following shape:
	\[
	\begin{array}{rccccccccccc}
	{\rm \mer}\colon	& x_0 &		& x_1	&		& x_2	&	& \dots & x_{k-1} & & x_{k} & \quad \;\;\dots\\
	{\rm \art} \colon	&		& y_0	&		& y_1	& 		& y_2	& \dots & & y_{k-1} & & y_k\;\;\dots\\
	{\rm \nim} \colon	&		& z_0	&		& z_1	& 		& z_2	& \dots & & z_{k-1} & & z_k\;\;\dots
	\end{array}
	\]

\noindent \textbf{Rules.} The (deceitful) wizard $\mer$ claims have fixed some $A\in\calH$, and invites $\art$ to guess which elements belong to it. At the $n$th round, $\mer$ plays $x_n\in X$. $\art$ predicts membership by choosing $y_n\in\{0,1\}$, where $y_n=1$ means “$x_n\in A$”. Merlin always declares Arthur’s prediction incorrect.

$\nim$ knows $\mer$ cannot be trusted, and thus checks $\calH$ if there indeed exists  $A\in\calH$ satisfying the negation of all of $\art$'s predictions (i.e. $\lnot y_0, \dots, \lnot y_n$). If so, she outputs $z_0=0$, and $\mer$ chooses a new challenge $x_{n+1}$; otherwise $z_1=1$, and the game terminates. 

\smallskip

\noindent \textbf{Winning Condition.} $\mer$ aims is to maximise the length of the mistake-run. The {\em Littlestone dimension} $\mathrm{Ldim}(\calH)$ is the largest $d$ for which Merlin can force $d$ consecutive mistakes, regardless of $\art$'s strategy. $\art$-$\nim$ win if and only if $\mathrm{Ldim}(\calH)<\infty$. 
\end{definition}

Readers familiar with Online Learning (in the sense of \cite[\S18]{BDSS14}) will recognise that our Prediction Game encodes the same mistake tree.\footnote{The difference is essentially presentational: here, we split the adversary into $\mer$, who issues challenges to the learner, and $\nim$, who enforces the constraints on the adversary.} In particular, when $\calH=\{\varphi(x;a)\mid a\in A\}$ is the family of definable sets determined by a formula $\varphi(x;y)$, the Littlestone dimension $\mathrm{Ldim}(\calH)$ coincides with the model-theoretic $2$-rank of $\varphi$ -- as originally observed by Chase-Freitag.

\medskip

This motivates the natural question:

\begin{problem}\label{prob:game} What is the relationship between the Prediction Game (Definition~\ref{def:online-game}) and our finite-query Kat\v{e}tov games (Definition~\ref{def:fin-quer-kat})? Do these games illuminate the relationship between stable and simple theories?
\end{problem}

\begin{discussion} There are many interesting angles from which to examine Problem~\ref{prob:game}.
For instance, if $\calH$ is a subset family with $\mathrm{LDim}(\calH)=n$, then $\art$'s mistake tree is necessarily of bounded height $n$. What changes if one merely requires the mistake tree to be well-founded, rather than uniformly bounded? 
\end{discussion}

    \bibliography{Eff}

\end{document}